\apptocmd{\thebibliography}{\raggedright}{}{}
\newcommand{\one}{\mathds{1}}
\newcommand{\eps}{\varepsilon}
\newcommand{\E}{\mathbb{E}}
\newcommand{\hE}{\widehat{\mathbb{E}}}
\newcommand{\hPsi}{\widehat{\Psi}}
\newcommand{\hX}{\widehat{X}}
\newcommand{\tiX}{\widetilde{X}}
\newcommand{\Prob}{\mathbb{P}}
\newcommand{\rhobar}{\bar\rho}
\newcommand{\bB}{\bar{B}}
\newcommand{\R}{{\mathbb R}}
\theoremstyle{plain}
\newtheorem{theorem}{Theorem}[section]
\newtheorem{lemma}[theorem]{Lemma}
\newtheorem{atheo}{Theorem}
\newtheorem{assumption}{Assumption}
\theoremstyle{remark}
\newtheorem{definition}[theorem]{Definition}
\newtheorem{example}[theorem]{Example}
\newtheorem{remark}[theorem]{Remark}
\author{
Peter K. Friz
\thanks{Weierstrass Institute for Analysis and Stochastics and TU Berlin, Germany; \href{mailto:friz@math.tu-berlin.de}{friz@math.tu-berlin.de}.}
,
Benjamin Jourdain
\thanks{CERMICS, Ecole des Ponts, IP Paris, INRIA, Marne-la-Vall\'ee, France; \href{mailto:benjamin.jourdain@enpc.fr}{benjamin.jourdain@enpc.fr}}
,
Thomas Wagenhofer
\thanks{TU Berlin, Germany; \href{mailto:wagenhof@math.tu-berlin.de}{wagenhof@math.tu-berlin.de}.
}, Alexandre Zhou}
\date{\today}
\title{On the Weak Error for Local Stochastic Volatility Models}
\begin{document}
\makeatletter
    \def\@maketitle{%
        \newpage
        \null
        \vskip 0.1em%
        \begin{center}%
            \let \footnote \thanks
            {\LARGE \@title \par}%
            \vskip 1.5em%
                {\large
            \lineskip .5em%
            \begin{tabular}[t]{c}%
                \@author
            \end{tabular}\par}%
            \vskip 1em%
                {\large \@date}%
        \end{center}%
        \par
        \vskip 1.5em}
    \makeatother

\maketitle

\begin{abstract}
    Local stochastic volatility refers to a popular model class in applied mathematical finance that allows for ``calibration-on-the-fly'', typically via a particle method, derived from a formal McKean-Vlasov equation. Well-posedness of this limit is a well-known problem in the field;  the general case is largely open, despite recent progress in Markovian situations. 
    
    Our take is to start with a well-defined Euler approximation to the formal McKean-Vlasov equation, followed by a newly established \emph{half-step}-scheme, allowing for good approximations of conditional expectations. In a sense, we do Euler first, particle second in contrast to previous works that start with the particle approximation.
    
    We show weak order one for the Euler discretization, plus error terms that account for the said approximation. The case of particle approximation is discussed in detail and the error rate is given in dependence of all parameters used. 
\end{abstract}
\noindent
{\bf Keywords:} Local stochastic volatility, weak error rate, McKean-Vlasov dynamics. 
\vspace{0.3cm}

\noindent
{\bf 2020 AMS subject classifications:}\\
Primary 91G60, 60H35; Secondary 91G80, 65C30

\tableofcontents

\section{Introduction}
Consider a  process $(\xi_t)_{t \in [0,T]}$ and a Brownian motion $(W_t)_{t\in[0,T]}$, both one-dimensional. Motivated by finance applications, we consider dynamics of the form
\begin{gather}\label{Eq:NonMKV}
\begin{aligned}
    dX_t&= \frac{\xi_{t}}{\sqrt{\E\bigl[\xi_{t}^2\big|X_{t} \bigr]}}\sigma(t,X_t)\,dW_t, \qquad t \in [0,T],
    \\
    X_0&=x_0.
\end{aligned}
\end{gather}
Under suitable assumptions on $\xi$ and $\sigma$ this process is a martingale, but in general not a Markov process. This SDE arises naturally in math finance, as particular form of a Bachelier model with randomized volatility, as specified. In case $\xi \equiv  1$ the process $X$ is known as Dupire's local volatility models, see for example \cite{Dupire1994} or \cite{Dupire1997}. In case $\sigma(t,X_t) \equiv \sqrt{\E\bigl[\xi_{t}^2\big|X_{t} \bigr]} $ this models class contains general stochastic volatility models (in Bachelier form).

A priori, it is not clear that this SDE admits a (unique) solution.  However, it can be shown, that the solution of the SDE, if it exists, has the same one-dimensional marginals as the solution of the ``simpler'' SDE 
\begin{align*}
    dY_t&=\sigma(t,Y_t)\,dW_t, \qquad t \in [0,T],
    \\
    Y_0&=y_0,
\end{align*}
a result mostly attributed to Gy\"ongy \cite{Gyongy1986}. 

Preceding Gy\"ongy's findings, Kellerer  \cite{Kellerer1972} showed that for every process that is increasing in convex order (also referred to as \emph{peacock}), there exists a Markovian martingale with the same one-dimensional marginals. Further literature on peacocks can be found in \cite{HirschProfetaEtAl2011}.  A particularly interesting question is the case that $\sigma(t,x)\equiv 1$, which connects local stochastic volatility models with the concept of \emph{fake Brownian motion}. This originates back to a problem posed by Hamza \& Klebaner \cite{HamzaKlebaner2007},  of constructing martingales with Gaussian marginals with variance $t$ that differ from Brownian motion. This question has been answered by many different authors, one of the most recent constructions can be found in \cite{BeiglböckLowther2021}.

In \cite{BrunickShreve2013}, the authors show how to match the one-dimensional distributions of certain path-dependent functionals of an It\^o process with the functional of a Markovian SDE. These functionals include for example the running maximum or running average. More recent results/extensions of Gy\"ongy's Lemma can be found in \cite{BentataCont2012} and the references therein. 

Existence of \eqref{Eq:NonMKV} is a very delicate and difficult matter. Under a suitable initial distribution, the authors show in \cite{AbergelTachet10} local existence of solutions. In \cite{JourdainZhou2020}, the authors show existence in the case that $\xi$ is a pure jump process taking finitely many values. In \cite{LackerShkolnikovZhang2019} the authors show existence, when $\xi$ is a Markovian solution of an SDE driven by an independent Brownian motion and if the joint SDE of $(\xi,X)$ is already started in the stationary distribution. More recent progress includes  \cite{Djete22}, also in a Markovian setting. 

Typically, convergence of Markovian mimicking is proved by PDE-methods, in particular by considering the Kolmogorov forward equation i.e.\ the Fokker-Planck equation. 
Conditional expectations $\E[\xi_t^2|X_t]$ can be characterised by the joint law of $\xi_t$ and $X_t$, thus allowing an interpretation as a McKean-Vlasov SDE. 
Guyon \& Henry-Labord\`ere \cite{GuyonHL2013,GuyonHL12} proposed a particle method for approximating these conditional expectations, together with some regularisation to avoid singularities coming from the denominator in \eqref{Eq:NonMKV}. A weak error rate for this kind of particle approximation has been established by Reisinger \& Tsianni \cite{ReisingerTsianni2023}. 

In this paper, we view the problem of approximating the solution of \eqref{Eq:NonMKV} from a different perspective: the marginals of the target processes are known and the  difficulty lies in the computation of the conditional expectation, especially in the time-continuous setting. Therefore for a stepsize $h$ such that $\frac Th\in \mathbb{N}$ we start directly with the Euler-approximation for $j\in \{0,\dots,\frac Th-1\}$
\begin{align*}
     X_{jh+h}^h&=X_{jh}^h+\frac{\xi_{jh}}{\sqrt{\E\bigl[\xi_{jh}^2\big|X_{jh}^h\bigr]}}\sigma\bigl(jh,X_{jh}^h\bigr) \bigl( W_{jh+h}-W_{jh} \bigr),
     \\
     X_0&=x_0.
\end{align*}
without the use of any regularisation for conditional expectation. Existence for the Euler-scheme now does not cause any issue, and we were still able to show that the marginals of this scheme converge to the marginals of the solution of \eqref{Eq:NonMKV} with \emph{weak rate one}, see Theorem \ref{Thm_weakdiffrate}.

Interestingly enough, for this theorem the correlation structure between $\xi$ and $W$ does not matter. However, knowing such a structure allows for a much deeper analysis. Therefore, we assume for the following that $W=\rho B+\rhobar \bB$ or a given correlation parameter $\rho\in[0,1]$ and its associated conjugate $\rhobar=\sqrt{1-\rho^2}$ and with $B$  and $\bB$ two independent Brownian motions such that $\xi$ is measurable w.r.t.\ the filtration generated by $B$. We consider the SDE
\begin{align*}
    dX_t&= \frac{\xi_{t}}{\sqrt{\E\bigl[\xi_{t}^2\big|X_{t} \bigr]}}\sigma(t,X_t)\,\Bigl(\rho dB_t+\rhobar d\bB_t\Bigr), \qquad t \in [0,T],
    \\
    X_0&=x_0.
\end{align*}

We will work under the following mild
\begin{assumption}\label{Ass:Half_Step}
    $B$ and $W$ are not fully correlated, i.e.\ $\rho^2<1$.
\end{assumption}
The condition on the correlation $\rho$ is according to current calibration standards where negative values close to, but away from $-1$ are chosen.  For technichal reasons we also assume the following.

\begin{assumption}\label{Aspt_new}
    The volatility function $\sigma$ is bounded, four times continuously differentiable with bounded derivatives and uniformly elliptic, i.e.\ there are constants $c$ and  $C$ such that $c\le \sigma(t,x)\le C$ for all $t \in [0,T],x\in \mathbb{R}$.
    
    Furthermore the stochastic volatility process $\xi=(\xi_t)_{t\in[0,T]}$ is uniformly bounded from above and below, i.e.\ there are $0<a<1<b<\infty$ such that $a\le\xi_t\le b$ for all $t \in [0,T]$.
    Let $c_{min}$ be such that $2c_{min}=\frac{ac}{b}$.
    In particular 
    for all $t \in [0,T]$ and any random variable $Z$
    \begin{align*}
        2{c_{min}} \le \frac{\xi_t \sigma(t,.)}{\sqrt{\E\bigl[\xi_t^2 \big|Z\bigr]}}.
    \end{align*}
\end{assumption}
Boundedness of the volatility $\sigma$ is not really essential but avoids additional distraction and technicalities.
Under these assumptions, we consider the \emph{half-step-scheme} introduced in \cite{Zhou2018}. Given yet another Brownian motion $Z$, independent of $B$ and $\bB$  the process $X^h$ has the same one-dimensional marginals as $\tiX^h$ given by
\begin{gather}
\begin{aligned}
     \tiX^h_{jh+h/2}&=\tiX^{h}_{jh}+\frac{\xi_{jh}\sigma\bigl(jh,\tiX_{jh}^h\bigr)}{\sqrt{\E\bigl[\xi_{jh}^2\big|\tiX_{jh}^h\bigr]}}\rho \bigl( B_{jh+h}-B_{jh} \bigr)
     \\
     &\qquad +
     \biggl(\frac{\xi_{jh}^2\sigma^2\bigl(jh,\tiX_{jh}^h\bigr)}{{\E\bigl[\xi_{jh}^2\big|\tiX_{jh}^h\bigr]}}
     -{c_{min}^2}
     \biggr)^{1/2}
     \rhobar 
     \bigl( \bB_{jh+h}-\bB_{jh} \bigr),
     \\
     \tiX^h_{jh+h}&=\tiX_{jh+h/2}^h + {{c_{min}}}\rhobar \bigl( Z_{jh+h}-Z_{jh} \bigr),
    \\
    \tiX^h_0&=x_0.
\end{aligned}
\end{gather}
This scheme has the big advantage that it allows for exact computation of conditional expectation: if we denote by $\varphi_\lambda$ the density of the centered Gaussian distribution with  variance $\lambda=hc_{min}^2\rhobar^2$, then
\begin{align*}
    \E\bigl[\xi_{jh}^2\big|\tiX_{jh}^h=y\bigr] = \frac{\E\bigl[\xi_{jh}^2\varphi_\lambda\bigl(\tiX_{jh-h/2}^h-y\bigr) \bigr] }{\E\bigl[\varphi_\lambda\bigl(\tiX_{jh-h/2}^h-y\bigr) \bigr] }.
\end{align*}
For such a simulation scheme we introduce a particle system  and a regularisation parameter $\delta$ with which we write
\begin{align*}
   \frac{\E\bigl[\xi_{jh}^2\varphi_\lambda\bigl(\tiX_{jh}^h-y\bigr) \bigr] }{\E\bigl[\varphi_\lambda\bigl(\tiX_{jh}^h-y\bigr) \bigr] } \approx 
    \frac{\frac1N\sum_{k=1}^N\bigl(\xi_{jh}^{(i)}\bigr)^2\varphi_{\lambda}\bigl(\tiX_{jh}^{h,(k)}-y\bigr) +\delta }
    {\frac1N\sum_{k=1}^N\varphi_\lambda \bigl(\tiX_{jh}^{h,(k)}-y\bigr)+\delta}. 
\end{align*}
Let $\bigl(\tiX_{T}^{h,(k),N,\delta}\bigr)_{k=1,...,N}$ be the resulting particle approximation, then we can show the following.
{
\begin{atheo}\label{Thm_Particle_Rate_Intro}
    Let $f$, $\sigma$ be sufficiently smooth functions and $\xi$ satisfying Assumptions \ref{Ass:Half_Step} and \ref{Aspt_new}. Let $\delta,h$ be small. Then there is a constant $C>0$ such that
    \begin{align*}
         \Bigl|\E\bigl[ f(Y_T)\bigr]-\E\bigl[f\bigl(\tiX_T^{h,(1),N,\delta}\bigr)\bigr]\Bigr|
        \lesssim 
        h + \delta^{1-}
        + \frac1{\sqrt{N}} \exp\Bigl( Ch^{-1-}\delta^{0-}\Bigr).
    \end{align*}
    Here $\delta^{1-}$ means that this term decays faster than $\delta^{1-\gamma}$ as $\delta \rightarrow 0$ for all $\gamma >0$.
\end{atheo}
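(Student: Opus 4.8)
The plan is to telescope the error along the two exact ``mimicking'' identities already recorded in the paper. By Gy\"ongy's lemma the marginals of $Y$ coincide with those of the (conjectural) solution of the correlated McKean--Vlasov SDE, and by the half-step construction one has $X^h_T\overset{d}{=}\tiX^h_T$; moreover, because Assumption~\ref{Aspt_new} gives $\xi^2\sigma^2/\E[\xi^2\,|\,\cdot\,]\ge 4c_{min}^2$, both the ideal scheme and its $\delta$-regularised/particle versions are well defined (the square-root coefficient never vanishes, and $\tfrac{\xi\sigma}{\sqrt{\text{regularised ratio}}}\ge 2c_{min}$ is preserved since the empirical numerator is at most $b^2$ times the empirical denominator). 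Introducing the intermediate ``mean-field'' scheme $\tiX^{h,\delta}$ that uses the $\delta$-regularised ratio $(\E[\xi_{jh}^2\varphi_\lambda(\cdot)]+\delta)/(\E[\varphi_\lambda(\cdot)]+\delta)$ but still evaluated against the true law, I would write
\begin{align*}
 \Bigl|\E\bigl[f(Y_T)\bigr]-\E\bigl[f(\tiX_T^{h,(1),N,\delta})\bigr]\Bigr|
 &\le \Bigl|\E\bigl[f(Y_T)\bigr]-\E\bigl[f(\tiX^h_T)\bigr]\Bigr|
 +\Bigl|\E\bigl[f(\tiX^h_T)\bigr]-\E\bigl[f(\tiX^{h,\delta}_T)\bigr]\Bigr|\\
 &\quad+\Bigl|\E\bigl[f(\tiX^{h,\delta}_T)\bigr]-\E\bigl[f(\tiX^{h,(1),N,\delta}_T)\bigr]\Bigr|.
\end{align*}
The first term equals $|\E[f(Y_T)]-\E[f(X^h_T)]|$ by the marginal identity and is $\lesssim h$ by Theorem~\ref{Thm_weakdiffrate}; it remains to control the regularisation term by $\delta^{1-}$ and the statistical term by $N^{-1/2}\exp(Ch^{-1-}\delta^{0-})$.

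For the regularisation term I would rerun the weak-error (Talay--Tubaro type) expansion underlying Theorem~\ref{Thm_weakdiffrate}, now for $\tiX^{h,\delta}$. Replacing the exact conditional expectation by its $\delta$-regularisation changes the one-step coefficients by an amount that is, pointwise in the current position $y$, of order $\delta/(p_{\tiX^h_{jh}}(y)+\delta)$, because the exact ratio lies in $[a^2,b^2]$ and the denominator $\E[\varphi_\lambda(\tiX^h_{jh-h/2}-y)]$ equals $p_{\tiX^h_{jh}}(y)$. The independent increments $c_{min}\rhobar\,(Z_{(j+1)h}-Z_{jh})$ of variance $\asymp h$, nondegenerate by Assumption~\ref{Ass:Half_Step} and combined with the uniform ellipticity of Assumption~\ref{Aspt_new}, yield Gaussian two-sided bounds for the densities $p_{\tiX^h_{jh}}$, in particular a lower bound of the form $p_{\tiX^h_{jh}}(y)\gtrsim e^{-C|y|^2}$. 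Since the perturbation enters each one-step local error multiplied by an $h$-factor (diffusive scaling) and by bounded derivatives of the backward PDE solution, summing the $T/h$ local contributions and using the tail trade-off $\int_{\R}\frac{\delta}{\delta+e^{-C|y|^2}}\,dy\lesssim \delta^{1-\gamma}$ (optimising the cutoff radius at $\sqrt{\log(1/\delta)}$, for every $\gamma>0$) produces the bound $\delta^{1-}$ with an $h$-independent constant.

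The statistical term is the heart of the argument and the main obstacle. I would couple the particle system $(\tiX^{h,(k),N,\delta})_{k\le N}$ with $N$ i.i.d.\ copies of $\tiX^{h,\delta}$ driven by the same $(B^{(k)},\bB^{(k)},Z^{(k)})$ and $\xi^{(k)}$, and set $e_j^2:=\E\bigl[(\tiX^{h,(1),N,\delta}_{jh}-\tiX^{h,\delta,(1)}_{jh})^2\bigr]$. At each grid point the two families differ only through the argument of the regularised conditional-expectation map, which I would split into (i) its spatial variation --- Lipschitz with constant $\lesssim h^{-1}\delta^{-o(1)}$ after using that the $\varphi_\lambda$-kernels are evaluated against measures with bounded mollified densities --- feeding a recursion for $e_j$, and (ii) the fluctuation of the i.i.d.\ empirical averages of $\xi^2\varphi_\lambda$ and $\varphi_\lambda$ around their means, which is $\lesssim N^{-1/2}$ times the relevant kernel norms. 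The decisive point is to carry the estimate in $L^2$ and exploit that the perturbed coefficients multiply conditionally mean-zero increments, so the cross terms vanish and the per-step amplification is additive rather than multiplicative in the Lipschitz constant: one obtains a recursion of the form $e_{j+1}^2\le \bigl(1+Ch^{-1}\delta^{-o(1)}\bigr)e_j^2+C\,h\,N^{-1}$, and iterating it over the $T/h$ steps gives $e_{T/h}^2\lesssim N^{-1}\exp\!\bigl((T/h)\log(1+Ch^{-1}\delta^{-o(1)})\bigr)=N^{-1}\exp\!\bigl(Ch^{-1-}\delta^{0-}\bigr)$. Finally $|\E[f(\tiX^{h,\delta}_T)]-\E[f(\tiX^{h,(1),N,\delta}_T)]|\le\|f'\|_\infty\,e_{T/h}$ closes the estimate. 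The genuinely delicate part is bookkeeping the exact exponents: verifying that, after the $L^2$/martingale reduction and a careful treatment of the empirical denominator ($\ge\delta$) and of the $\varphi_\lambda$-norms ($\|\varphi_\lambda\|_\infty\asymp h^{-1/2}$, $\|\varphi_\lambda'\|_\infty\asymp h^{-3/2}$), the per-step amplification is only $h^{-1}$ up to logarithmic/arbitrarily-small-power corrections, rather than the cruder $h^{-3/2}\delta^{-1}$ a pathwise Gr\"onwall bound would give.
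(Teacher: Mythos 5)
Your decomposition, the Talay--Tubaro expansion for the $h$ and $\delta$ contributions, the density lower bound coming from the additional Gaussian half-step, and the $L^2$ coupling with a discrete Gronwall recursion all match the paper's argument (Theorems~\ref{Thm_weakdiffrate}, \ref{Thm:Rate_CondExp}, \ref{thm_HScondExp_err}, Lemma~\ref{Lem_regularisation} and Theorems~\ref{Thm:HS_Prop_of_chaos}, \ref{Thm_Particle_Rate}), so this is essentially the same route. Two small technical slips worth noting: the displayed integral $\int_{\R}\frac{\delta}{\delta+e^{-C|y|^2}}\,dy$ diverges --- the integrand must carry the density weight, i.e.\ $\int \frac{\delta\,p(y)}{\delta+p(y)}\,dy$, which is exactly how Lemma~\ref{Lem_regularisation} obtains $\delta\sqrt{-\log\delta}$; and the final step should not invoke $\|f'\|_\infty$ since the hypotheses only allow polynomial growth of $f'$ (one needs H\"older's inequality plus the moment bounds of Lemma~\ref{lem:Tail_estim}, as in the paper's proof of Theorem~\ref{Thm_Particle_Rate}). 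Also, the claimed per-step amplification $h^{-1}\delta^{-o(1)}$ does not quite match the paper's $h\,\delta^{-1}\lambda^{-3/2}\asymp\delta^{-1}h^{-1/2}$ coming from Lemma~\ref{lem:estim_AB}, though both fit inside the $\exp(Ch^{-1-}\delta^{0-})$ envelope of the coarse statement.
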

\addtocounter{atheo}{-1}
}
 
    The novelty of this theorem is that it quantifies all errors adding up to final particle scheme.
    Moreover the result shows, that any error is achievable and furthermore provides a way of choosing the parameters $h, \delta$ and $N$ such that a difference of that order is obtained.

    Here is a brief outline of this paper: Section \ref{Sec:Euler_Conv} studies the convergence rate without the particle system and works with the true conditional expectation. In this setting we derive a weak rate of order \emph{one}. Also, we show that it is possible to consider non-smooth test functions.
    In Section \ref{Sec_Approximation} we introduce an estimator for conditional expectation and quantify the error with regard to this approximation. We furthermore formally introduce the \emph{half-step} scheme. 

    Section \ref{Sec_PoC} is about particle approximations and establishes the setting and proof of Theorem \ref{Thm_Particle_Rate_Intro}. For this we introduce a particle system and show a propagation of chaos result, following a well-established method, see e.g. \cite[Proposition 4.1]{KohatsuShigeyoshi1997}. 
    In Section \ref{Sec:Numerics} we visualize our findings through some simple examples.

    In Section \ref{Sec_Particles} we consider a non-parametric estimator for conditional expectation, namely a Nadaraya-Watson type estimator which is introduced in Section \ref{Sec:Cond_Exp_Estimators}. To do so we need to introduce a new parameter $\eps$, which gives rise to additional error terms. These error terms are studied further  in Section \ref{Sec:Cond_exp_estim}. Similar to Theorem \ref{Thm_Particle_Rate_Intro}, Theorem \ref{Thm_Particle_Rate2} states the weak error depending on all the parameters used. By not exploiting the Gaussian structure as well as via the half-step scheme we see an additional error term arising in this setting. Here again, we establish propagation of chaos (Section \ref{Sec:POC}). 
    
     The advantage of the half-step scheme is best appreciated by comparing the estimate (in the more precise form given in Theorem \ref{Thm_Particle_Rate} (with half-step) with those of Theorem
    \ref{Thm_Particle_Rate2} (without half-step) where we analyze a seemingly more straightforward algorithm (based on kernel regression). 
    From a technical perspective, the half-step scheme has also the advantage that no ad-hoc choice of $\varepsilon$ (kernel regularisation) is necessary, which simplifies the proof (no tracking of $\varepsilon$) and is of obvious practical appeal. 
    
     Let us now mention cases where Assumption \ref{Ass:Half_Step} does not hold: some authors in the literature argue for path-dependent volatility models, see for example \cite{GuyonLekeufack2023}. For these models, as the name suggests, one assumes that the volatility depends on the whole path of the stock $S$ (or log-stock $X$), therefore in such cases it is unreasonable to assume that $|\rho|\not =1$. See Remark \ref{Remark_Correlation} for a brief discussion of that setting. 

\medskip    

{\bf Acknowledgment:} PKF and TW acknowledge support from DFG CRC/TRR 388 ``Rough Analysis, Stochastic Dynamics and Related Fields", Projects A02 and B04. TW was initially supported by MATH+, as PhD student in the Berlin Mathematical School (BMS), and then by IRTG 2544 ”Stochastic Analysis in Interaction” -  DFG project-ID 410208580. BJ acknowledges support of the “Chaire Risques Financiers”, Fondation du Risque. 
TW would also like to thank Christoph Reisinger for many fruitful discussions. 

\section{Notation}

We denote by $B=(B_t)_{t\ge 0}, \bB=(\bB_t)_{t\ge 0}$ as well as $Z=(Z_t)_{t\ge 0}$ Brownian motions adapted to a filtration $\mathbb{F}=(\mathcal{F}_t)_{t\ge 0}$ satisfying the usual condition.
We assume that $B, \bB$ and $Z$ are mutually independent and denote by $\xi=(\xi_t)_{t\ge 0}$ an $\mathbb{F}$-adapted process. Starting in Section \ref{Sec:Half_Step} the process $\xi$ will be adapted to $\mathcal{F}^B$, the filtration generated by $B$. 

For some $p\ge 1$ the space $L^p$ consists of all $\mathbb{F}$-measurable random variables $X$ such that $\|X\|_{L^p}^p=\E\bigl[|X|^p\bigr]<\infty.$ Given such a random variable we denote 
its law by $\mathcal{L}(X)$. The space $\mathcal{P}_2(\mathbb{R}^n)$ denotes the set of probability measures $\mu$ on $\mathbb{R}^n$ such that $\int_{\mathbb{R}^n}|x|^2\,\mu(dx)<\infty$. For $x \in \mathbb{R}^n$ we denote by $\delta_x$ the Dirac measure at point $x$.

For $\lambda>0$ we denote by $\varphi_\lambda(x)= \frac1{\sqrt{2\pi \lambda}} \exp\bigl(-\frac{x^2}{2\lambda} \bigr)$ the density of the centered Gaussian density with variance $\lambda$ and by $\Phi$ the cumulative distribution function of the standard Gaussian distribution, i.e.\ $\Phi(x)=\int_{-\infty}^x \varphi_1(y)\,dy$. 

For a function $f: \mathbb{R}^m \rightarrow \mathbb{R}^n$ we denote $\|f\|_{L^\infty}=\operatorname*{ess\,sup} |f|$. If $f$ is continuously differentiable we denote the $C_1$ norm by 
$\|f\|_{C_1}=\|f\|_{L^\infty}+\sup\limits_{1\le i \le m}\|\partial_{x_i} f(x)\|_{L^\infty}$ and . For $\alpha\in (0,1)$ we denote by $C_\alpha$ the space of $\alpha$-H\"older continuous functions with (semi-)norm $\|f\|_{C_\alpha}=
    \sup_{\substack{x,y \in \mathbb{R}^m
        \\ x\not =y}}
    \frac{|f(x)-f(y)|}{|x-y|^\alpha}$.

In this work $C$ or $\tilde C$ denotes a positive constant that might change from line to line. Dependence on parameters will be indicated. Also, the notation $A \lesssim B$ means that there is a constant $C$ such that $A \le C B$. This hidden constant may also change from line to line. Furthermore, we say $A=\mathcal{O}(B)$ if $|A|\lesssim |B|$.

\section{Euler Scheme for LSV dynamics }\label{Sec:Euler_Conv}

Recall the following one-dimensional SDE on $[0,T]$
\begin{align*}
    dX_t&= \frac{\xi_{t}}{\sqrt{\E\bigl[\xi_{t}^2\big|X_{t} \bigr]}}\sigma(t,X_t)\,dW_t,
    \\
    X_0&=x_0.
\end{align*}
We now quote a special case of \emph{Gy\"ongy's mimicking theorem}, \cite[Theorem 4.6]{Gyongy1986} for our setting.
    \begin{theorem}\label{Thm_mimicking}
        Let $(\eta_t)_{t \ge 0}$ be a bounded adapted process that is also bounded from below by a positive constant. Let $X_t=x_0+\int_0^t \eta_s\,dW_s$. Then there
        exists a weak (Markov) solution of 
\begin{align*}
dY_t&=\widehat{\eta}\bigl(t, Y_t\bigr) \, dW_t,
\\
Y_0&=x_0,
\end{align*}
where the function $\widehat{\eta}$ is given by
$
\widehat{\eta}^2\bigl(t, X_t \bigr)=\E\bigl[ \eta_t^2 \big| X_t\bigr],
$
and it holds that
$Y_t \stackrel{d}{=} X_t$ for all $t \ge 0$.
    \end{theorem}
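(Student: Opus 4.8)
\emph{Proof plan.} The plan is to identify the one-dimensional marginal flow $\mu_t := \mathcal{L}(X_t)$ as a distributional solution of the Fokker--Planck equation associated with the mimicking coefficient $\widehat\eta$, to exhibit a weak (Markov) solution $Y$ of the target SDE whose marginal flow solves the \emph{same} equation, and then to identify the two flows by a uniqueness statement for that equation.

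First I would test against $f \in C_c^\infty(\R)$ and apply It\^o's formula to $f(X_t)$. Since $dX_t = \eta_t\,dW_t$ and $\eta$ is bounded, the stochastic integral is a genuine martingale, so taking expectations leaves
\[
\E[f(X_t)] = f(x_0) + \frac12\int_0^t \E\bigl[f''(X_s)\,\eta_s^2\bigr]\,ds .
\]
Conditioning on $X_s$ inside the integral and using $\widehat\eta^2(s,X_s) = \E[\eta_s^2\mid X_s]$ turns the right-hand side into $\int f\,d\mu_0 + \tfrac12\int_0^t\!\int f''(x)\,\widehat\eta^2(s,x)\,\mu_s(dx)\,ds$, that is, $(\mu_t)_{t\ge 0}$ solves, in the distributional sense, $\partial_t m_t = \tfrac12\partial_{xx}\bigl(\widehat\eta^2(t,\cdot)\,m_t\bigr)$ with $m_0 = \delta_{x_0}$.

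Next I would construct a weak solution $Y$ of $dY_t = \widehat\eta(t,Y_t)\,dW_t$, $Y_0 = x_0$. The coefficient $\widehat\eta$ inherits from $\eta$ a finite upper bound and a strictly positive lower bound, hence is bounded and uniformly elliptic, although it is merely Borel measurable in $x$; in dimension one this is exactly the regime in which the associated martingale problem is well posed, so a (strong Markov) weak solution exists --- via Stroock--Varadhan/Krylov theory for bounded uniformly elliptic coefficients, or equivalently via the $d=1$ scale-function and time-change construction. Running the same It\^o computation for $Y$, where the conditioning step is now trivial because the coefficient is already a function of $Y_s$, shows that $\nu_t := \mathcal{L}(Y_t)$ solves the same equation $\partial_t m_t = \tfrac12\partial_{xx}\bigl(\widehat\eta^2(t,\cdot)\,m_t\bigr)$ with the same initial datum. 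Finally I would conclude $\mu_t = \nu_t$ from uniqueness of the Fokker--Planck flow: setting $a(t,x) := \widehat\eta^2(t,x)$ (bounded, measurable, uniformly elliptic), one tests both $(\mu_s)$ and $(\nu_s)$ against the solution $u$ of the backward Cauchy problem $\partial_s u + \tfrac12 a(s,x)\partial_{xx}u = 0$, $u(t,\cdot) = f$ --- well posed, e.g.\ in a $W^{1,2}_p$ sense with Aronson-type bounds --- to obtain $\int f\,d\mu_t = \int f\,d\nu_t$ for every $f \in C_c^\infty(\R)$, hence $Y_t \stackrel{d}{=} X_t$ for all $t$.

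The hard part is the low regularity of $\widehat\eta$, which is only Borel measurable in space: at this level of regularity neither the existence of a Markov weak solution to $dY_t = \widehat\eta(t,Y_t)\,dW_t$ nor the uniqueness of the forward equation is elementary. Both rest crucially on uniform ellipticity --- guaranteed here by the positive lower bound on $\eta$ --- together with one-dimensionality, which bring in the Krylov/Stroock--Varadhan machinery (or the explicit $d=1$ scale-and-time-change arguments). This is precisely the content of \cite[Theorem 4.6]{Gyongy1986}, which we invoke rather than reprove.
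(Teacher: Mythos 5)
The paper gives no proof of this statement; it is quoted verbatim as a special case of \cite[Theorem 4.6]{Gyongy1986}, and your proposal ends by doing exactly the same thing, invoking Gy\"ongy rather than reproving. Your Fokker--Planck/martingale-problem sketch is a fair high-level account of why the result should hold, and you correctly flag the real obstruction (merely Borel coefficient $\widehat\eta$, so both existence of a Markov weak solution and uniqueness for the forward equation hinge on Krylov-type estimates and uniform ellipticity); one small point worth knowing is that Gy\"ongy's own argument proceeds a bit differently --- by mollifying the coefficient, using Krylov's $L^p$-estimates to control the approximating diffusions, and passing to a weak limit --- rather than the direct ``test the flow against a backward Cauchy solution'' uniqueness route you outline, though the two are morally close. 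Since both you and the paper ultimately defer to Gy\"ongy, there is no gap.
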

 
This allows us to find a ``target process'' $Y$, which follows a Markovian dynamic such that $Y_t\stackrel{d}{=}X_t$ for all $t \in [0,T]$, if a solution $X$ exists. In our case the target process follows the dynamic
 \begin{gather}\label{Eq:target}
     \begin{aligned}
    dY_t&=\sigma(t,Y_t)\,dW_t, \qquad t\in[0,T],
     \\
     Y_0&=x_0.
     \end{aligned}
 \end{gather}
\begin{remark}
    The mimicking theorem relates to the following problem from mathematical finance: given a volatility function $\sigma$ and a process $X_t=X_0+\int_0^t \xi_t \lambda(t,X_t)dW_t$, is there a choice for the function $\lambda$ such that the one-dimensional distributions of $X$ agree with those of \eqref{Eq:target}. The function $\lambda$ is also called leverage function, and the function $\sigma$ can for example be chosen as Dupire's local volatility. 
    Due to Theorem \ref{Thm_mimicking} we can do so by choosing $\lambda(t,x)=\frac{\sigma(t,x)}{\sqrt{\E[\xi^2_t|X_t=x]}}$.
\end{remark}

Take some parameter $h \in \mathbb{R}$, such that $\tfrac Th \in \mathbb{N}$ and consider for $j \in \{0,\dots,\tfrac Th-1\}$ and $s\in [0,h]$ the following Euler-Scheme
\begin{gather}\label{Eq:Euler_Scheme}
    \begin{aligned}
    X^h_{jh+s}&=X^{h}_{jh}+\frac{\xi_{jh}}{\sqrt{\E\bigl[\xi_{jh}^2\big|X_{jh}^h\bigr]}}\sigma\bigl(jh,X_{jh}^h\bigr) \bigl( W_{jh+s}-W_{jh} \bigr),
    \\
    X^h_0&=x_0.
\end{aligned}
\end{gather}

It is important to note that convergence of $\bigl(X^h\bigr)_{h >0}$ in a pathwise sense is highly non-trivial to show. We will show that, for the one-dimensional distribution convergence in law holds, meaning that $\lim\limits_{h \rightarrow 0} \mathcal{L}\bigl(X_T^h\bigr)=\mathcal{L}(Y_T)$. Furthermore, we can quantify the convergence speed for a broad class of test functions, only restricted by their regularity.

For $t \in [0,T]$ and $s \in [t,T]$ let $Y_s^{t,x}$ be the strong solution to \eqref{Eq:target} such that $Y_t^{t,x}=x$ given by Itô's theorem. Consider the function
    \begin{align}\label{Eq:u}
        u(t,x)= \E\Bigl[f\bigl(Y_T^{t,x}\bigr)\Bigr], \qquad (t,x) \in [0,T]\times \mathbb{R}.
    \end{align}
    The Feynman-Kac formula implies that $u$ solves the PDE
    \begin{gather}\label{Eq:FeynmanKac}
        \begin{aligned}
        \partial_t u+\frac12 \sigma^2\partial_{x}^2u&=0, \qquad (t,x)\in [0,T]\times \mathbb{R}
        \\
        u(T,x)&=f(x), \qquad x \in \mathbb{R}.
    \end{aligned}
    \end{gather}

We note that the following theorem also covers (with $l =1$) the case when $f$ is taken as call payoff, that is $f(x) = \max\{(x-K),0\}$.
The idea is to employ regularity theory for the parabolic PDE \eqref{Eq:FeynmanKac},
essentially following the well-established method by Talay and Tubaro \cite{TalayTubaro1990}. 

\begin{theorem}\label{Thm_weakdiffrate}
    Let $X^h, Y$ be defined as above, let Assumption \ref{Aspt_new} hold and assume that $f:\mathbb{R}\rightarrow \mathbb{R}$ is $l$-times weakly differentiable with derivatives which are functions of polynomial growth. Then, for $h \in (0,1)$, 
    \begin{align*}
        \Bigl| \E\bigl[f\bigl(X_T^h\bigr)\bigr]-\E\bigl[f\bigl(Y_T\bigr)\bigr] \Bigr|\lesssim \begin{cases}
            \sqrt{h} & \text{if } l=1, \\
            h\log(1/h) & \text{if } l=2, \\
            h & \text{if } l\ge 3.
        \end{cases}
    \end{align*}
\end{theorem}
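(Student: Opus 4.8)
# Proof Proposal for Theorem \ref{Thm_weakdiffrate}

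The plan is to follow the Talay--Tubaro weak-error expansion adapted to the non-Markovian setting. The key structural observation is that the target process $Y$ in \eqref{Eq:target} has the \emph{same one-dimensional marginals} as $X$ would, and crucially its diffusion coefficient $\sigma(t,x)$ matches the ``effective'' coefficient of the Euler scheme at grid points, because $\E\bigl[\xi_{jh}^2/\E[\xi_{jh}^2|X_{jh}^h]\,\big|\,X_{jh}^h\bigr]=1$. So I would introduce $u(t,x)=\E[f(Y_T^{t,x})]$ solving the PDE \eqref{Eq:FeynmanKac}, and telescope:
\begin{align*}
    \E\bigl[f(X_T^h)\bigr]-\E\bigl[f(Y_T)\bigr]
    = \sum_{j=0}^{T/h-1}\Bigl(\E\bigl[u\bigl((j+1)h,X_{(j+1)h}^h\bigr)\bigr]-\E\bigl[u\bigl(jh,X_{jh}^h\bigr)\bigr]\Bigr),
\end{align*}
using $u(T,\cdot)=f$ and $u(0,x_0)=\E[f(Y_T)]$. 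On each subinterval apply Itô's formula to $s\mapsto u(jh+s,X_{jh+s}^h)$ over $s\in[0,h]$. Because $u$ solves $\partial_t u+\tfrac12\sigma^2\partial_x^2 u=0$ and the Euler coefficient is \emph{frozen} at $jh$, the increment becomes (after taking conditional expectation given $\mathcal{F}_{jh}$) an integral of $\tfrac12\partial_x^2 u(jh+s,X^h_{jh+s})\bigl(A_{jh}^2-\sigma^2(jh,X_{jh}^h)\bigr)$ where $A_{jh}:=\xi_{jh}\sigma(jh,X_{jh}^h)/\sqrt{\E[\xi_{jh}^2|X^h_{jh}]}$; a further Itô expansion of $\partial_x^2 u(jh+s,X^h_{jh+s})$ around the left endpoint, combined with $\E[A_{jh}^2|X_{jh}^h]=\sigma^2(jh,X_{jh}^h)$, kills the leading $\mathcal{O}(h)$ term in conditional expectation and leaves a remainder of size $\mathcal{O}(h^2)$ per step times derivatives of $u$ up to order four. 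Summing over $T/h$ steps gives $\mathcal{O}(h)$ globally — provided the derivatives of $u$ are under control.

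The main obstacle, and where the case distinction in $l$ enters, is the regularity of $u$ up to the terminal time. For $f$ smooth with bounded derivatives one has $\|\partial_x^k u(t,\cdot)\|_\infty$ bounded uniformly in $t$ by Schauder/parabolic estimates (using Assumption \ref{Aspt_new}: $\sigma\in C^4_b$, uniformly elliptic), and the argument above runs cleanly to yield rate $h$. For less regular $f$ (only $l$ weak derivatives of polynomial growth, e.g. the call payoff with $l=1$), the higher derivatives of $u$ blow up as $t\uparrow T$: by the Gaussian-type bounds for the fundamental solution $p(t,x;T,y)$ of \eqref{Eq:FeynmanKac} one gets, schematically, $|\partial_x^k u(t,x)|\lesssim (T-t)^{-(k-l)/2}(1+|x|)^{q}$ for $k>l$. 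The strategy is then to split the time-sum at a level where $(T-t)$ is of order $h$: on $[0,T-h]$ use the expansion with the singular bounds, where the per-step error $\mathcal{O}(h^2)(T-jh)^{-3/2}$ (needing four $x$-derivatives of $u$, hence singularity exponent $(4-l)/2$) sums to $\int_0^{T-h}h(T-s)^{-(4-l)/2}ds$, giving $h\cdot h^{-(2-l)/2}=h^{l/2}$ for $l\in\{1,2\}$ (with a logarithm in the borderline $l=2$ case where the exponent is exactly $1$) and $\mathcal{O}(h)$ for $l\ge 3$; on the final interval $[T-h,T]$ estimate the contribution $\E[u(T,X_T^h)]-\E[u(T-h,X_{T-h}^h)]$ crudely using only one or two derivatives of $u$ and the moment bound $\E|X_T^h-X_{T-h}^h|^2\lesssim h$ from boundedness of the Euler coefficient (Assumption \ref{Aspt_new} gives $c_{min}\le A_{jh}\le$ const), which contributes $\mathcal{O}(\sqrt h)$ when $l=1$ and $\mathcal{O}(h)$ when $l\ge 2$. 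Taking the worse of the two pieces yields exactly the stated rates $\sqrt h$, $h\log(1/h)$, $h$.

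Two auxiliary points must be handled for rigor. First, polynomial growth of $f$ and its derivatives means one needs uniform-in-$h$ moment bounds $\sup_h\E\bigl[\sup_{t\le T}|X_t^h|^p\bigr]<\infty$ for all $p$; this follows from the BDG inequality and the uniform bound on the Euler diffusion coefficient, so all the $(1+|x|)^q$ factors in the derivative estimates are integrable against $\mathcal{L}(X_{jh}^h)$ with constants uniform in $j,h$. Second, the identity $\E[\xi_{jh}^2/\E[\xi_{jh}^2|X_{jh}^h]\,|\,X^h_{jh}]=1$ — the exact analogue of Gyöngy's matching at each grid time — is what makes the leading Talay--Tubaro term vanish and must be invoked carefully (it requires $\E[\xi_{jh}^2|X^h_{jh}]>0$ a.s., guaranteed by $\xi\ge a>0$). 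The correlation structure of $(\xi,W)$ is irrelevant here precisely because only this conditional expectation, and the frozen-coefficient structure, enter — consistent with the remark in the introduction. I expect the parabolic-estimate bookkeeping near $t=T$ to be the genuinely delicate part; everything else is the standard weak-error machinery.
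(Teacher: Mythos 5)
Your proposal is correct and matches the paper's proof essentially line by line: the same Feynman--Kac telescoping with $u(t,x)=\E[f(Y_T^{t,x})]$, the same cancellation via $\E[\xi_{jh}^2/\E[\xi_{jh}^2|X^h_{jh}]\,|\,X^h_{jh}]=1$, the same singular derivative bounds $|\partial_t^i\partial_x^j u(t,x)|\lesssim (T-t)^{-(2i+j-l)/2}(1+|x|^N)$ (the paper's Lemma \ref{Lem_UEstim}), the same separate treatment of the final step $[T-h,T]$ (the paper's Lemma \ref{Lem_UCont}), and the same summation leading to $h^{l/2}\sum j^{-(4-l)/2}$. The bookkeeping near $t=T$ you flag as the delicate part is indeed where the paper invests its effort, via those two lemmas.
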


Before we are able to prove this theorem, we need to establish some regularity estimates for $u$ and moment conditions for the process $X^h$. \begin{lemma}\label{Lem_UEstim}
    Let $l \in \{1,2,3\}$ and $f$ be $l$-times weakly differentiable with all derivatives which are functions of polynomial growth. Let $u(t,x)=\E\bigl[ f\bigl(Y_T\bigr)\big|Y_t=x\bigr]$ and let $0\le 2i+j\le 4$. Then there is a constant $\tilde C$ and some $N \in \mathbb{N}$ such that for all $(t,x)\in[0,T)\times\R$,
    \begin{align*}
        \partial_t^i\partial_x^j u(t,x) &\le 
            \tilde C(T-t)^{-(2i+j-l)/2} \bigl(1+|x|^N\bigr).
    \end{align*}
\end{lemma}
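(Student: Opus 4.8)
The plan is to exploit the explicit Gaussian-type structure of the transition density of $Y$, which solves $dY_t = \sigma(t,Y_t)\,dW_t$ with $\sigma$ bounded, smooth and uniformly elliptic (Assumption 2). Write $u(t,x) = \int_{\mathbb R} f(y)\, p(t,x;T,y)\,dy$ where $p(t,x;T,\cdot)$ is the transition density. By classical parabolic PDE theory (Aronson-type Gaussian bounds, or Friedman's parametrix construction), under Assumption 2 the density $p$ and its derivatives satisfy, for $0 \le 2i+j \le 4$,
\[
\bigl| \partial_t^i \partial_x^j p(t,x;T,y) \bigr| \lesssim (T-t)^{-(1+2i+j)/2} \exp\!\Bigl( -\frac{c\,(x-y)^2}{T-t} \Bigr),
\]
for some $c>0$ depending only on the ellipticity constants and the $C^4$-bounds on $\sigma$. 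This is the workhorse estimate; I would cite it rather than reprove it.

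First I would treat the case of maximal regularity of $f$, i.e. $l = 3$ (or more precisely use $\min\{l,2i+j\}$ derivatives of $f$). The idea is to integrate by parts in $y$: for $j\ge 1$ or $i \ge 1$, each spatial derivative $\partial_x$ hitting $p$ can, after integration by parts, be traded for a $\partial_y$ on $p$ and then moved onto $f$, at the cost of boundary terms which vanish because of the Gaussian decay together with the polynomial growth of $f$ and its derivatives. Doing this $\min(l, 2i+j)$ times transfers that many derivatives from $p$ to $f$; the remaining $2i+j - \min(l,2i+j)$ derivatives stay on $p$ and cost $(T-t)^{-(2i+j-\min(l,2i+j))/2}$ by the density bound above. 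One then bounds the resulting integral $\int |\partial^{(\le l)} f(y)|\,(T-t)^{-1/2}\exp(-c(x-y)^2/(T-t))\,dy$ by $\tilde C(1+|x|^N)$ using the polynomial growth of the derivatives of $f$ and standard Gaussian moment estimates (the Gaussian kernel in $y$, centered near $x$ with width $\sqrt{T-t}\le\sqrt T$, integrates polynomials in $y$ to polynomials in $x$). For time derivatives I would use the PDE \eqref{Eq:FeynmanKac} itself: $\partial_t u = -\tfrac12\sigma^2\partial_x^2 u$, and more generally $\partial_t^i u$ is a differential operator of order $2i$ in $x$ with bounded-coefficient combinations (using $\sigma \in C^4$), reducing every mixed $\partial_t^i\partial_x^j$ to pure spatial derivatives of order $\le 2i+j$, which is exactly why the exponent $(2i+j-l)/2$ appears.

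The main obstacle, and the step requiring the most care, is the bookkeeping when $l$ is smaller than the number of derivatives we want to extract — i.e. the non-smooth cases $l=1,2$ — because then the Gaussian singularity $(T-t)^{-(2i+j-l)/2}$ is genuinely present and one must be sure not to lose more powers than claimed; one must integrate by parts in $y$ \emph{exactly} $l$ times (no more), using only the weak differentiability of $f$ up to order $l$, and verify that the boundary terms at each of those $l$ steps vanish. A secondary technical point is the $l=1$ case with only weak (not classical) derivatives, where the integration by parts must be justified via approximation of $f$ by smooth functions of controlled polynomial growth, or directly via the weak-derivative definition together with Fubini. Once these are handled, collecting the powers of $(T-t)$ and the polynomial factor $(1+|x|^N)$ gives the stated bound with a uniform constant $\tilde C$ and a single exponent $N$ depending on the growth of $f$ and its derivatives.
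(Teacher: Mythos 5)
Your overall plan is the same as the paper's (write $u$ as an integral against the transition density, distribute derivatives between $f$ and the density, reduce time derivatives via the Feynman--Kac PDE), but the density estimate you quote as the workhorse is not the one that makes the argument close. The standard Aronson-type bound
$\bigl|\partial_t^i\partial_x^j p(t,x;T,y)\bigr|\lesssim (T-t)^{-(1+2i+j)/2}\exp\bigl(-c(x-y)^2/(T-t)\bigr)$
charges $(T-t)^{-1/2}$ for \emph{every} $\partial_x$, so it only gives $\partial_x^j u\lesssim (T-t)^{-j/2}(1+|x|^N)$, i.e.\ the $l=0$ estimate, and your claimed trade of $\partial_x$ for $\partial_y$ is not available from it. Integration by parts in $y$ moves $\partial_y$'s onto $f$, but you start with $\partial_x$'s on $p$, and for non-constant $\sigma$ the identity $\partial_x p=-\partial_y p$ fails; the fact that the combination $(\partial_x+\partial_y)p$ is an order less singular than either term alone is a genuine structural property of the kernel (approximate translation invariance), not a consequence of the bound you cite. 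The paper avoids this entirely by first substituting $y\mapsto y+x$, so that $u(t,x)=\int f(y+x)\,p(y+x;t,x)\,dy$ and the Leibniz rule distributes derivatives automatically, and then invoking the Friedman estimate (Lemma~\ref{Lem_DensityRegularity}) in precisely the form
$\bigl|\tfrac{d^m}{dy^m}\tfrac{d^n}{dx^n} p(y+x;t,x)\bigr|\lesssim (T-t)^{-(m+1)/2}\exp\bigl(-y^2/(C(T-t))\bigr)$,
whose crucial feature is that the $n$ pure $x$-derivatives of the \emph{shifted} kernel cost nothing. That refined estimate is exactly what encodes the cancellation you were implicitly appealing to, and without it (or an equivalent statement about $(\partial_x+\partial_y)^n p$) the bookkeeping you outline does not yield the exponent $-(2i+j-l)/2$.

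The remaining ingredients of your plan --- integrating by parts exactly $\min\{l,\cdot\}$ times and justifying vanishing boundary terms via the Gaussian decay and the polynomial growth of the weak derivatives, bounding the resulting Gaussian integral to get $1+|x|^N$, and expressing $\partial_t^i\partial_x^j u$ through spatial derivatives via the PDE --- all match the paper's proof and are fine. So the fix is localized: replace your quoted Aronson bound by the shifted-kernel Friedman estimate, and then the rest goes through as you describe.
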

The proof of this lemma can be found in the \hyperref[Sec:Appendix]{Appendix}. Using this, the following lemma shows H\"older-continuity of a certain order, depending on the payoff function $f$.

\begin{lemma}\label{Lem_UCont}
    Let $h\in (0,1)$ such $T/h\in\mathbb{N}$, $u$ be defined in \eqref{Eq:u} and $X_t^h$ be defined in \eqref{Eq:Euler_Scheme}. For $l \in \{1,2\}$ let $f$ be $l$-times weakly differentiable with derivatives of polynomial growth, then
    \begin{align*}
        \biggl| \E\Bigl[u\bigl(T,X_{T}^h\bigr)\Bigr]-\E\Bigl[u\bigl(T-h,X_{T-h}^h\bigr)\Bigr] \biggr| \lesssim h^{l/2}
    \end{align*}
\end{lemma}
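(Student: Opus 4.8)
The plan is to exploit the PDE \eqref{Eq:FeynmanKac} satisfied by $u$ together with Itô's formula along one Euler step, and then control the resulting remainder using the blow-up estimates of Lemma \ref{Lem_UEstim}. Writing $t_j = jh$, I would first observe that on $[T-h,T]$ the scheme $X^h$ is constant-coefficient: for $s \in [0,h]$ we have $X^h_{T-h+s} = X^h_{T-h} + a ( W_{T-h+s} - W_{T-h})$ with the $\mathcal{F}_{T-h}$-measurable coefficient $a = \xi_{T-h} \sigma(T-h, X^h_{T-h}) / \sqrt{\E[\xi_{T-h}^2 | X^h_{T-h}]}$, which by Assumption \ref{Aspt_new} satisfies $2c_{min} \le a \le b C / (a\sqrt{\cdot})$, i.e.\ $a$ is bounded above and below by positive constants. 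Then I would apply Itô's formula to $s \mapsto u(T-h+s, X^h_{T-h+s})$ on $[0,h]$, take expectations to kill the martingale part (using the polynomial growth of $\partial_x u$ and the moment bounds on $X^h$), and obtain
\begin{align*}
    \E\bigl[u(T, X^h_T)\bigr] - \E\bigl[u(T-h, X^h_{T-h})\bigr] = \E\biggl[ \int_0^h \Bigl( \partial_t u + \tfrac12 a^2 \partial_x^2 u \Bigr)(T-h+s, X^h_{T-h+s})\, ds \biggr].
\end{align*}
Using $\partial_t u = -\tfrac12 \sigma^2 \partial_x^2 u$ from \eqref{Eq:FeynmanKac}, the integrand becomes $\tfrac12 \bigl(a^2 - \sigma^2(T-h+s, \cdot)\bigr) \partial_x^2 u(T-h+s, X^h_{T-h+s})$; note this difference of squares does \emph{not} vanish, since $a$ is frozen at time $T-h$ while $\sigma$ is evaluated at the running time and position, and also because $a^2$ carries the conditional-expectation factor rather than $\sigma^2$ itself — so this is a genuine local discretization error of size $\mathcal{O}(\sqrt{s}\,)$ in a suitable sense.

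The key quantitative input is then Lemma \ref{Lem_UEstim} with $i=0, j=2$, which gives $|\partial_x^2 u(r,x)| \le \tilde C (T-r)^{-(2-l)/2}(1+|x|^N)$. For $l = 2$ this is simply $|\partial_x^2 u(r,x)| \lesssim 1 + |x|^N$, uniformly in $r$, so after bounding the $a^2 - \sigma^2$ factor by a constant and using $\sup_{s\le h}\E[(1+|X^h_{T-h+s}|^N)] \lesssim 1$ (uniform moment bounds, which I would either cite or establish from boundedness of the coefficients via BDG), the integral over $s \in [0,h]$ is $\lesssim h = h^{l/2}$. For $l = 1$ the estimate reads $|\partial_x^2 u(r,x)| \lesssim (T-r)^{-1/2}(1+|x|^N)$, and $\int_0^h (T-(T-h+s))^{-1/2}\,ds = \int_0^h (h-s)^{-1/2}\,ds = 2\sqrt{h}$, which yields the claimed $h^{1/2} = h^{l/2}$. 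In both cases one must also control the $L^2$ norm of $\partial_x^2 u(T-h+s, X^h_{T-h+s})$ together with the coefficient difference, so I would run the estimate through Cauchy–Schwarz: bound $\E[|a^2 - \sigma^2|^2 \cdot |\partial_x^2 u|^2]$ by a constant times $(T-r)^{-(2-l)}\E[1+|X^h|^{2N}]$, and the time-singularity $\int_0^h (h-s)^{-(2-l)/2}\,ds$ is integrable exactly for $l \in \{1,2\}$.

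The main obstacle I anticipate is the interplay between the time singularity of $\partial_x^2 u$ at $r = T$ and the fact that we are integrating over the very last step $[T-h,T]$, where that singularity is active; the borderline case $l=1$ gives an integrable but only just-integrable singularity $(h-s)^{-1/2}$, and one has to be careful that the constant is genuinely uniform in $h$ (it is, since the integral evaluates to $2\sqrt{h}$ with no hidden $h$-dependence in the prefactor). A secondary technical point is justifying that the stochastic integral in Itô's formula is a true martingale — this needs the polynomial-growth bound on $\partial_x u$ from Lemma \ref{Lem_UEstim} (with $j=1$, which for $l\ge 1$ has a singularity no worse than $(T-r)^{-1/2}$ that is square-integrable near $T$) combined with uniform moment bounds on $X^h$ — and that the moment bounds on $X^h$ hold uniformly in $h$, which follows from Assumption \ref{Aspt_new} since the Euler coefficient is bounded. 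I would not expect the case $l\ge 3$ to be needed here since the lemma is only stated for $l\in\{1,2\}$, consistent with the $h\log(1/h)$ versus $h$ dichotomy appearing one level up in Theorem \ref{Thm_weakdiffrate} where the remaining steps $[t_j, t_{j+1}]$ for $j < T/h - 1$ are handled separately.
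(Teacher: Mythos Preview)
Your approach is correct but takes a genuinely different route from the paper. Instead of applying It\^o's formula and the PDE on the terminal step, the paper splits
\[
\E\bigl[u(T,X_T^h)\bigr] - \E\bigl[u(T-h,X_{T-h}^h)\bigr] = \E\bigl[u(T,X_T^h) - u(T-h,X_T^h)\bigr] + \E\bigl[u(T-h,X_T^h) - u(T-h,X_{T-h}^h)\bigr]
\]
and handles each piece elementarily. For the time piece it uses the definition $u(T,x)=f(x)$, $u(T-h,x)=\E[f(Y_T^{T-h,x})]$ directly: for $l=1$ a first-order expansion of $f$ together with $\|x-Y_T^{T-h,x}\|_{L^p}\lesssim\sqrt h$ gives $\sqrt h$; for $l=2$ a second-order Taylor expansion plus the martingale identity $\E[Y_T^{T-h,x}]=x$ kills the linear term and leaves $h$. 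For the space piece it invokes Lemma \ref{Lem_UEstim} at the \emph{fixed} time $T-h$ (so no time singularity enters) with $(i,j)=(0,1)$ for $l=1$ and $(i,j)=(0,2)$ for $l=2$, again exploiting that the conditional mean of the Euler increment vanishes to remove the first-order term when $l=2$. Your route is more unified with the rest of the proof of Theorem \ref{Thm_weakdiffrate}---the same It\^o-plus-PDE machinery on every subinterval---and succeeds because the blow-up $(h-s)^{-(2-l)/2}$ of $\partial_x^2 u$ near $s=h$ is integrable precisely for $l\ge 1$; the paper's route is more elementary, sidesteps the singular integral entirely by freezing time at $T-h$, and makes the role of the martingale structure of both $Y$ and the Euler increment more transparent.
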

\begin{proof}
    We start with the time increment. Let $x\in \mathbb{R}$ and recall the process $Y$ from \eqref{Eq:target}. By polynomial growth condition for $f'$ and existence of moments for $Y$, see Lemma \ref{lem:Tail_estim} with $\bar \beta\equiv 0$, there is some $N \in \mathbb{N}$, $C>0$ such that 
    \begin{align*}
        \Bigl|u(T,x)-u(T-h,x)\Bigr|&=\Bigl|\E\Bigl[ f(x)-f\bigl(Y_T^{T-h,x})  \Bigr]\Bigr|
        \le 
        C \E\Bigl[ |x-Y_T^{T-h,x}| \bigl(1+|x|+|Y_T^{T-h,x}|\bigr)^N \Bigr]\\&\lesssim
        \sqrt{h}(1+|x|)^N.
    \end{align*}
    Here we used Hölder inequality and the fact that $\|x-Y_T^{T-h,x}\|_{L^p}\lesssim \sqrt{h}$ for all $p\ge 1$.
    If $l=2$ we furthermore have due to polynomial growth of $f''$
    \begin{align*}
        \Bigl|\E\Bigl[ f(x)-f\bigl(Y_T^{T-h,x}\bigr)  \Bigr]\Bigr|
        &\le 
        \Bigl|\E\Bigl[ f'(x)(x-Y_T^{T-h,x})\Bigr]\Bigr|+C\E\Bigl[ |x-Y_T^{T-h,x}|^2 \bigl(1+|x|+|Y_T^{T-h,x}|\bigr)^N \Bigr]\\&
        \lesssim
        h (1+|x|)^N.
    \end{align*}
    For the spatial component, by Lemma \ref{Lem_UEstim} applied with $(i,j)=(0,1)$ and Lemma \ref{lem:Tail_estim},
    \begin{align*}
        \Bigl| \E\Bigl[&u(T-h,X_T^h)-u(T-h,X_{T-h}^h)\Bigr]\Bigr| 
        = 
        \\
        &
        =\Bigl| \E\Bigl[u\Bigl(T-h,X_{T-h}^h+\frac{\xi_{T-h}}{\E[\xi_{T-h}^2|X_{T-h}^h]^{1/2}}\sigma\bigl(T-h,X_{T-h}^H\bigr)(W_T-W_{T-h})\Bigr)-u(T-h,X_{T-h}^h\Bigr]\Bigr| 
        \\
        &\le C
        \E\Bigl[ \Bigl|\frac{\xi_{T-h}}{\E[\xi_{T-h}^2|X_{T-h}^h]^{1/2}}\sigma\bigl(T-h,X_{T-h}^H\bigr)(W_T-W_{T-h}) \Bigr|\Bigl(1+|X_T^h|^N+|X_{T-h}^h|^N\Bigr) \Bigl|\Bigr] 
        \\
        &\lesssim \sqrt{h}.
    \end{align*}
    If $l=2$ we use Taylor expansion like before and deduce from Lemma \ref{Lem_UEstim} applied with $(i,j)=(0,2)$ that $\bigl| \E\bigl[u(T-h,X_T^h)-u(T-h,X_{T-h}^h\bigr]\bigr| \lesssim h$.
\end{proof}

We are now able to prove the main theorem of this section.

\begin{proof}[Proof of Theorem \ref{Thm_weakdiffrate}]

For convenience we define $\alpha_{jh}^h=\frac{\xi_{jh}}{\sqrt{\E[\xi_{jh}^2|X_{jh}^h]}}$ for $j \in \{0,\dots,\frac Th-1\}$. 
     By construction of $u$ we can write the difference as a telescoping sum 
    \begin{align*}
        \E\Bigl[ f(X_T^h)-f(Y_T)\Bigr] = \E\Bigl[ u(T,X_T^h) - u(0,x_0)\Bigr]= \sum_{j=0}^{T/h-1} \E\Bigl[u\bigl(jh+h,X_{jh+h}^h\bigr)\Bigr]-\E\Bigl[u\bigl(jh,X_{jh}^h\bigr)\Bigr]. 
    \end{align*}
    We first consider the case that $j < T/h-1$.
    Recall the process $X^h$ defined by Equation \eqref{Eq:Euler_Scheme}. By applying Ito formula we see that
    \begin{align*}
        u(jh+h,X_{jh+h}^h)-u(jh,X_{jh}^h)&=\int_{jh}^{jh+h} \partial_x u(s,X_{s}^h) \alpha_{jh}^h \sigma(jh,X_{jh}^h)\,dW_s
        \\
        &\quad  +\int_{jh}^{jh+h} \partial_t u(s,X^h_{s})+\frac12 \partial_{x}^2 u(s,X_{s}^h)
        \bigl(\alpha_{jh}^h\bigr)^2 \sigma^2(jh,X_{jh}^h)\,ds
    \end{align*}
    Therefore
    \begin{align*}
        \E\Bigl[u\bigl(jh+h,X_{jh+h}^h\bigr)\Bigr]-&\E\Bigl[u\bigl(jh,X_{jh}^h\bigr)\Bigr]
        \\
        &=\int_{jh}^{jh+h} \E\Bigl[\partial_t u(s,X^h_{s})+\frac12 \partial_{x}^2 u(s,X^h_{s})
        \bigl(\alpha_{jh}^h\bigr)^2 \sigma^2(jh,X_{jh}^h)\Bigr]
        \,ds
    \end{align*}
    The next step is to exploit the Feynman-Kac PDE \eqref{Eq:FeynmanKac}, for this we rewrite everything as
    \begin{align}
        \E\Bigl[\partial_t u(s,X^h_{s})+\frac12 \partial_{x}^2 u(s,X_{s}^h)&
        \bigl(\alpha_{jh}^h\bigr)^2 \sigma^2(jh,X_{jh}^h)\Bigr]\nonumber
        \\
        &=\E\Bigl[\partial_t u(jh,X_{jh}^h)+\frac12 \partial_{x}^2 u(jh,X_{jh}^h)
        \bigl(\alpha_{jh}^h\bigr)^2 \sigma^2(jh,X_{jh}^h)\Bigr]\label{Eq:Proof34_1}
        \\
        &+\E\Bigl[\partial_t u(s,X_{s}^h)-\partial_t u(jh,X_{jh}^h)\Bigr]\label{Eq:Proof34_2}
        \\
        &+\E\Bigl[\frac12 \Bigl(\partial_{x}^2 u(s,X_{s}^h)-\partial_{x}^2 u(jh,X_{jh}^h)\Bigr)
        \bigl(\alpha_{jh}^h\bigr)^2 \sigma^2(jh,X_{jh}^h)\Bigr]. \label{Eq:Proof34_3}
    \end{align}
   Note that, by construction, 
    \begin{align*}
        \E\bigl[\bigl(\alpha_{jh}^h\bigr)^2\big|X_{jh}^h\bigr]=\E\Bigl[ \frac{\xi_{jh}^2}{\E[\xi_{jh}^2|X_{jh}^h]}|X_{jh}^h\Bigr]=\frac{\E[\xi_{jh}^2|X_{jh}^h]}{\E[\xi_{jh}^2|X_{jh}^h]}=1.
    \end{align*}
    Therefore, by conditioning and applying the tower property as well as the Feynman-Kac PDE we observe
    \begin{align*}
       \E\Bigl[\partial_t u(jh,X_{jh}^h)+\frac12 \partial_{x}^2 u(jh,X_{jh}^h)&
        \bigl(\alpha_{jh}^h\bigr)^2 \sigma^2(jh,X_{jh}^h)\Bigr]
        \\
        &=
         \E\Bigl[\partial_t u(jh,X_{jh}^h)+\frac12 \partial_{x}^2 u(jh,X_{jh}^h)
         \sigma^2(jh,X_{jh}^h)\Bigr]
         \\&=0.
    \end{align*}
    Applying Itô's formula to \eqref{Eq:Proof34_2} as before and using Lemma \ref{Lem_UEstim} we see that there is some $\tilde C>0$ and $N \in \mathbb{N}$ such that
    \begin{align*}
        \Bigl|\E\Bigl[\partial_t u(s,X_{s}^h)-\partial_t u(jh,X_{jh}^h)\Bigr]\Bigr|
        &=\Bigl|\int_{jh}^s \E\Bigl[\partial_{t}^2 u(r,X_{r}^h)+\frac12 \partial_{x}^2\partial_tu(r,X_r^h)\Bigr]\,dr\Bigr|
        \\
        &\le \int_{jh}^s \E\Bigl[\bigl|\partial_{t}^2 u(r,X_{r}^h)\bigr|+\frac12 \bigl|\partial_{x}^2\partial_tu(r,X_r^h)\bigr|
        \Bigr]\,dr
        \\
        &\le \tilde C \int_{jh}^s \E\Bigl[ (T-r)^{-(4-l)/2}\Bigl(1+\bigl|X_r^h\bigr|^N \Bigr)\Bigr]\,dr.
    \end{align*}
    Moment estimates for $X$, see Lemma  \ref{lem:Tail_estim}, imply that there exists a constant independent of $j$ and $h$ such that for $s \in [jh,jh+h)$
    \begin{align*}
        \Bigl|\E\Bigl[\partial_t u(s,X_{s}^h)-\partial_t u(jh,X_{jh}^h)\Bigr]\Bigr|\le C(s-jh)(T-jh-h)^{-(4-l)/2}.
    \end{align*}

    For \eqref{Eq:Proof34_3}, by applying Ito formula we observe
    \begin{align*}
        \E\Bigl[\Bigl(\partial_{x}^2 u(s,X_{s}^h)-\partial_{x}^2 u(jh&,X_{jh}^h)\Bigr)
        \bigl(\alpha_{jh}^h\bigr)^2 \sigma^2(jh,X_{jh}^h)\Bigr]
        \\
        &=\E\Bigl[
        \biggl(\int_{jh}^s \partial_x^2\partial_t u(r,X_r^h)
        \,dr\biggr)
        \bigl(\alpha_{jh}^h\bigr)^2 \sigma^2(jh,X_{jh}^h)
        \Bigr]
        \\
        &\quad +\E\Bigl[
        \biggl(\int_{jh}^s \partial_x^3 u(r,X_r^h)\alpha_{jh}^h \sigma(jh,X_{jh}^h)\,dW_r\biggr)
        \bigl(\alpha_{jh}^h\bigr)^2 \sigma^2(jh,X_{jh}^h)
        \Bigr]
        \\
        &\quad + \frac 12\E\Bigl[
        \biggl(\int_{jh}^s \partial_x^4 u(r,X_r^h)\bigl(\alpha_{jh}^h\bigr)^2 \sigma^2(jh,X_{jh}^h)\,dr\biggr)
        \bigl(\alpha_{jh}^h\bigr)^2 \sigma^2(jh,X_{jh}^h)
        \Bigr]
    \end{align*}
    By $L^2$ orthogonality for stochastic integrals the second term on the right hand side vanishes. Using Fubini's theorem, we see that
    \begin{align*}
        \Bigl|\E\Bigl[\Bigl(\partial_{x}^2 u(s,X_{s}^h)-\partial_{x}^2 u(jh,X_{jh}^h)\Bigr)&
        \bigl(\alpha_{jh}^h\bigr)^2 \sigma^2(jh,X_{jh}^h)\Bigr]\Bigr|
        \\
        &\le
        \int_{jh}^s \E\Bigl[
         \bigl|\partial_x^2\partial_t u(r,X_r^h)\bigr|
        \bigl(\alpha_{jh}^h\bigr)^2 \sigma^2(jh,X_{jh}^h)
        \Bigr]
        \,dr\\
        &\quad + \frac 12\int_{jh}^s \E\Bigl[
         \bigl|\partial_x^4 u(r,X_r^h)\bigr|
         \bigl(\alpha_{jh}^h\bigr)^4 \sigma^4(jh,X_{jh}^h)
        \Bigr]
        \,dr
    \end{align*}
    By Lemma \ref{Lem_UEstim} with $(i,j)=(1,2)$ and $(i,j)=(0,4)$, respectively, we can find some $\tilde C$ and $N\in \mathbb{N}$ such that
    \begin{align*}
        \Bigl|\E\Bigl[\Bigl(\partial_{x}^2 u(s,X_{s}^h)-\partial_{x}^2 u&(jh,X_{jh}^h)\Bigr)
        \bigl(\alpha_{jh}^h\bigr)^2 \sigma^2(jh,X_{jh}^h)\Bigr]\Bigr|
        \\
        &\le
        \int_{jh}^s \E\Bigl[ (T-r)^{-(4-l)/2}
        \Bigl(1+\bigl|X_r^h\bigr|^N\Bigr)\Bigl(1+ \bigl(\alpha_{jh}^h\bigr)^4 \sigma^4(jh,X_{jh}^h)\Bigr)
        \Bigr]
        \,dr
    \end{align*}
    Boundedness of $\sigma$ and $\alpha$, according to Assumption \ref{Aspt_new} and existence of moments, Lemma \ref{lem:Tail_estim}, therefore again imply that there is a constant $C>0$, independent of $j$ and $h$ such that
    \begin{align*}
        \Bigl|\E\Bigl[\Bigl(\partial_{x}^2 u(s,X_{s}^h)-\partial_{x}^2 u(jh,X_{jh}^h)\Bigr)&
        \bigl(\alpha_{jh}^h\bigr)^2 \sigma^2(jh,X_{jh}^h)\Bigr]\Bigr|
        \le C(s-jh)(T-jh-h)^{-(4-l)/2}.
    \end{align*}
    We now have shown that for $s \in [jh,jh+h)$ there is a constant $C>0$, independent of $j,h$ and $s$ such that
    \begin{align*}
        \Bigl|\E\Bigl[\partial_t u(s,X_{s})+\frac12 \partial_{x}^2 u(s,X_{s}^h)
        \bigl(\alpha_{jh}^h\bigr)^2 \sigma^2(jh,X_{jh}^h)\Bigr]\Bigr|\le 
        C(s-jh)(T-jh-h)^{-(4-l)/2}, 
    \end{align*}
    implying that 
    \begin{align*}
        \Bigl| \E\Bigl[u\bigl(jh+h,X_{jh+h}^h\bigr)\Bigr]-\E\Bigl[u\bigl(jh,X_{jh}^h\bigr)\Bigr]\Bigr|&\le C \int_{jh}^{jh+h} (s-jh)(T-jh-h)^{-(4-l)/2} \,ds
        \\
        &\le \frac C2 h^2 (T-jh-h)^{-(4-l)/2}.
    \end{align*}
    We  use Lemma \ref{Lem_UCont} to see that
    \begin{align*}
        \biggl| \E\Bigl[u\bigl(T,X_{T}^h\bigr)\Bigr]-\E\Bigl[u\bigl(T-h,X_{T-h}^h\bigr)\Bigr] \biggr| \lesssim h^{\min(l,2)/2}.
    \end{align*}
    By using triangle inequality and changing the order of summation we conclude that
    \begin{align*}
        \biggl| \E\bigl[f\bigl(X_T^h\bigr)\bigr]-\E\bigl[f\bigl(Y_T\bigr)\bigr]\biggr| &\le  \sum_{j=0}^{T/h-1} \biggl|\E\Bigl[u\bigl(jh+h,X_{jh+h}^h\bigr)\Bigr]-\E\Bigl[u\bigl(jh,X_{jh}^h\bigr)\Bigr]\biggr|
        \\
        &\le h^{\min(l,2)/2} + \sum_{j=0}^{T/h-2} h^2 (T-jh-h)^{-(4-l)/2} 
        \\
        &\le h^{\min(l,2)/2} + h^{l/2}\sum_{j=1}^{T/h-1}  j^{-(4-l)/2} 
        \\
        &\lesssim \begin{cases}
            \sqrt{h} & \text{if } l=1, \\
            h\log(1/h) & \text{if } l=2, \\
            h & \text{if } l=3.
        \end{cases}
    \end{align*}

\end{proof}

\section{Approximation of Conditional Expectation and Half-Step Scheme}\label{Sec_Approximation}
\subsection{Approximation of Conditional Expectation}\label{Approximation}

When simulating via Euler scheme, a key issue is the implementation of conditional expectation, a classical problem in statistics. 
We will already assume that we have such a method and denote that approximation by $\hE$. Given such an approximation for $T,h$ such that $\frac Th \in \mathbb{N}$ and for $j \in \{0,\dots,\frac Th-1\}$ we consider the modified dynamics
\begin{gather}
\begin{aligned}\label{eq_Xhat}
    \hX^h_{jh+t}&=\hX^{h}_{jh}+\frac{\xi_{jh}}{\sqrt{\hE\bigl[\xi_{jh}^2\big|\hX_{jh}^h\bigr]}}\sigma\bigl(jh,\hX_{jh}\bigr) \bigl( W_{jh+t}-W_{jh} \bigr),
    \\
    \hX^h_0&=x_0.
\end{aligned}
\end{gather}The next theorem quantifies the weak error with respect to the approximation $\hE$. The formulation mimics a well-known approximation result from BSDEs, see for example \cite[Theorem 4.1]{BouchardTouzi04}.

\begin{theorem}\label{Thm:Rate_CondExp}
    Let $h,T$ be such that $\frac Th\in \mathbb{N}$, $\sigma$ and $\xi$ satisfy Assumption \ref{Aspt_new} and $f$ be $3$-times weakly differentiable with derivatives of polynomial growth. Assume that $\left(\hE\bigl[\xi_{jh}|\,\hX_{jh}\bigr]\right)_{0\le j\le T/h-1}$ is uniformly bounded from above and below by positive constants not depending on $h$. Then for all $p>1$
    \begin{align*}
        \Bigl|\E\bigl[ f(Y_T)\bigr]-\E\bigl[f(\hX_T^h)\bigr]\Bigr|\lesssim h+\sup_{j\in\{0,\dots,\frac Th-1\}} \bigl\| {\hE\bigl[\xi_{jh}^2\big|\hX_{jh}^h\bigr]}
        -{\E\bigl[\xi_{jh}^2\big|\hX_{jh}^h\bigr]}\bigr\|_{L^p}.
    \end{align*}
\end{theorem}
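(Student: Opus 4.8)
The plan is to compare $\hX^h$ with the exact-conditional-expectation Euler scheme $X^h$ from \eqref{Eq:Euler_Scheme}, and then invoke Theorem \ref{Thm_weakdiffrate} (with $l=3$) to absorb the pure Euler error into the $h$-term. So first I would write
\[
\Bigl|\E\bigl[f(Y_T)\bigr]-\E\bigl[f(\hX_T^h)\bigr]\Bigr|
\le
\Bigl|\E\bigl[f(Y_T)\bigr]-\E\bigl[f(X_T^h)\bigr]\Bigr|
+
\Bigl|\E\bigl[f(X_T^h)\bigr]-\E\bigl[f(\hX_T^h)\bigr]\Bigr|,
\]
where the first term is $\lesssim h$ by Theorem \ref{Thm_weakdiffrate}. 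It remains to bound the second term by the stated $L^p$-supremum of the conditional-expectation error.

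For the second term I would re-run the telescoping-sum/PDE argument of Theorem \ref{Thm_weakdiffrate}, but now applied to $\hX^h$ rather than $X^h$. Writing $\hat\alpha_{jh}^h=\xi_{jh}/\sqrt{\hE[\xi_{jh}^2|\hX_{jh}^h]}$ and using $u$ from \eqref{Eq:u}, the telescoping sum of $\E[u(jh+h,\hX_{jh+h}^h)]-\E[u(jh,\hX_{jh}^h)]$ together with Itô's formula produces, on each step, the term $\E[\partial_t u + \tfrac12\partial_x^2 u\,(\hat\alpha_{jh}^h)^2\sigma^2(jh,\hX_{jh}^h)]$ plus time/space increment remainders that are handled exactly as before and contribute the $O(h)$ (using $l=3$ so that $\partial_t^i\partial_x^j u$ are bounded up to order four by Lemma \ref{Lem_UEstim}, and the moment bounds from Lemma \ref{lem:Tail_estim}, whose hypotheses hold because $\hE[\xi_{jh}|\hX_{jh}]$ is assumed uniformly bounded below so $\hat\alpha^h$ is bounded). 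The new feature is that $\E[(\hat\alpha_{jh}^h)^2|\hX_{jh}^h]=\E[\xi_{jh}^2|\hX_{jh}^h]/\hE[\xi_{jh}^2|\hX_{jh}^h]$ is no longer identically $1$. Using the PDE \eqref{Eq:FeynmanKac}, the leading term on each step becomes, after conditioning on $\hX_{jh}^h$ and the tower property,
\[
\E\Bigl[\tfrac12\partial_x^2 u(jh,\hX_{jh}^h)\,\sigma^2(jh,\hX_{jh}^h)
\Bigl(\tfrac{\E[\xi_{jh}^2|\hX_{jh}^h]}{\hE[\xi_{jh}^2|\hX_{jh}^h]}-1\Bigr)\Bigr].
\]

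The core estimate is therefore to bound this quantity by $C\,h\,\bigl\|\hE[\xi_{jh}^2|\hX_{jh}^h]-\E[\xi_{jh}^2|\hX_{jh}^h]\bigr\|_{L^p}$. Here I would factor the difference as
\[
\tfrac{\E[\xi_{jh}^2|\hX_{jh}^h]}{\hE[\xi_{jh}^2|\hX_{jh}^h]}-1
=\tfrac{\E[\xi_{jh}^2|\hX_{jh}^h]-\hE[\xi_{jh}^2|\hX_{jh}^h]}{\hE[\xi_{jh}^2|\hX_{jh}^h]},
\]
note the denominator is bounded below by a positive constant (by the uniform-lower-bound assumption, since $\hE[\xi_{jh}^2|\hX_{jh}^h]\ge \hE[\xi_{jh}|\hX_{jh}^h]^2$ by conditional Jensen, or directly from the assumption as stated), and then apply Hölder's inequality with exponents $p$ and $q=p/(p-1)$ to split off the $L^p$-norm of the numerator from the $L^q$-norm of $\partial_x^2 u(jh,\hX_{jh}^h)\sigma^2(jh,\hX_{jh}^h)$. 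The factor $\partial_x^2 u(jh,\cdot)$ is the only delicate point: by Lemma \ref{Lem_UEstim} with $l=3$, $\partial_x^2 u(jh,x)$ is bounded by $\tilde C(T-jh)^{-(2-3)/2}(1+|x|^N)=\tilde C(T-jh)^{1/2}(1+|x|^N)$, i.e.\ it is uniformly bounded in $jh\in[0,T]$ (the negative-exponent singularity does not occur because $2i+j=2<3=l$), so its $L^q$-norm is controlled by moments of $\hX_{jh}^h$, uniformly in $j$ and $h$. Summing over the $T/h$ steps turns the per-step $O(h)$ into $O(1)$ times the supremum, giving the claimed bound.

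The main obstacle is the bookkeeping to make sure the conditional-expectation error genuinely enters only through the $L^p$-norm with a step-size weight that sums to $O(1)$ rather than $O(1/h)$; this hinges on the leading PDE term being the \emph{only} place where $(\hat\alpha_{jh}^h)^2$ fails to average to $1$, and on $\partial_x^2 u$ being \emph{bounded} (not merely integrable against a singular weight) when $l=3$, which is exactly why the theorem requires three weak derivatives of $f$. A secondary technical point is verifying that $\hX^h$ satisfies the moment estimates of Lemma \ref{lem:Tail_estim} with coefficients bounded via the assumed lower bound on $\hE[\xi_{jh}|\hX_{jh}]$, and that the last-step increment is handled by (the proof of) Lemma \ref{Lem_UCont}, which only uses $l\le 2$-type Taylor expansions and hence applies verbatim.
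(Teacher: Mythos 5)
Your proposal is correct and follows essentially the same route as the paper: telescoping sum with the Feynman--Kac solution $u$, It\^o's formula, extracting the leading term $\tfrac12\E\bigl[\sigma^2\partial_x^2u\,\bigl(\tfrac{\xi^2}{\hE[\xi^2|\hX]}-\tfrac{\xi^2}{\E[\xi^2|\hX]}\bigr)\bigr]$ by conditioning and the PDE, bounding it via H\"older using that $\partial_x^2u$ is bounded when $l=3$, and absorbing the remainders into the $O(h)$ term. The only departure is the preliminary triangle inequality through $X^h$, which is redundant: the telescoping argument applied to $\hX^h$ directly yields $\E[f(\hX_T^h)]-\E[f(Y_T)]$, so the paper (correctly) skips that step.
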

\begin{remark}
    In the statement above the process $\hX$ is the condition in both the conditional expectation and the estimator thereof. If $\hX$ has a \lq nice\rq{} structure, this structure could be used for optimizing the error. We will later exploit this via the half-step scheme.
\end{remark}

\begin{proof}[Proof of Theorem \ref{Thm:Rate_CondExp}]
     Let $Y$ be defined as in Equation \eqref{Eq:target} and $u$ as in Equation \eqref{Eq:u}.
    It follows from the proof of Theorem \ref{Thm_weakdiffrate} (with the same function $u$) that
    \begin{align*}
        \E\Bigl[ f(\hX_T^h)-f(Y_T)\Bigr] &= \E\Bigl[ u(T,\hX_T^h) - u(0,x_0)\Bigr]
        \\
        &= \sum_{j=0}^{\frac Th-1} \E\Bigl[ u(jh+h,\hX_{jh+h}^h) - u(jh,\hX_{jh}^h)\Bigr].
    \end{align*}
    We now fix some $j$. Similar to Theorem  \ref{Thm_weakdiffrate} we write
    
    \begin{align}
        \E\Bigl[u\bigl(jh+h,\hX_{jh+h}^h\bigr)\Bigr]-&\E\Bigl[u\bigl(jh,\hX_{jh}^h\bigr)\Bigr]\nonumber
        \\
        &=\int_{jh}^{jh+h} \E\biggl[\partial_t u(s,\hX^h_{s})+\frac12 \partial_{x}^2 u(s,\hX^h_{s})
        \frac{\xi^2_{jh}}{\hE\bigl[\xi_{jh}^2\big|\hX_{jh}^h\bigr]} \sigma^2(jh,\hX_{jh}^h)\biggr]\label{Eq:Int41}
        \,ds.
    \end{align}

    We develop the integrand around time point $jh$ and see that
    \begin{align}
         \E\biggl[\partial_t u(s,\hX^h_{s})+\frac12 \partial_{x}^2 u(s,\hX_{s}^h)&
         \frac{\xi^2_{jh}}{\hE\bigl[\xi_{jh}^2\big|\hX_{jh}^h\bigr]} \sigma^2(jh,\hX_{jh}^h)\biggr]\nonumber
        \\
        &=\E\biggl[ \partial_tu(jh,\hX_{jh}^h)+\frac12\frac{\xi^2_{jh}}{\hE\bigl[\xi_{jh}^2\big|\hX_{jh}^h\bigr]}\sigma^2\bigl(jh,\hX_{jh}^h\bigr)\partial_{x}^2u\bigl(jh,\hX_{jh}^h\bigr)\biggr] 
        \nonumber
        \\
        &+\E\biggl[\partial_t u(s,\hX_{s}^h)-\partial_t u(jh,\hX_{jh}^h)\biggr]\label{Eq:Proof41_2}
        \\
        &+\E\biggl[\frac12 \Bigl(\partial_{x}^2 u(s,\hX_{s}^h)-\partial_{x}^2 u(jh,\hX_{jh}^h)\Bigr)
        \frac{\xi^2_{jh}}{\hE\bigl[\xi_{jh}^2\big|\hX_{jh}^h\bigr]} \sigma^2(jh,\hX_{jh}^h)\biggr]. \label{Eq:Proof41_3}
    \end{align}    
    Similar to the proof of Theorem \ref{Thm_weakdiffrate} we see by conditioning that
    \begin{align*}
        \E\biggl[\frac{\xi^2_{jh}}{\E\bigl[\xi_{jh}^2\big|\hX_{jh}^h\bigr]}\sigma^2\bigl(jh,\hX_{jh}^h\bigr)\partial_{x}^2u\bigl(jh,\hX_{jh}^h\bigr)\biggr]
        = \E\Bigl[\sigma^2\bigl(jh,\hX_{jh}^h\bigr)\partial_{x}^2u\bigl(jh,\hX_{jh}^h\bigr)\Bigr].
    \end{align*}
    The Feyman-Kac formula therefore implies that, again similar to the proof of Theorem \ref{Thm_weakdiffrate},
    \begin{align*}
        \E\Bigl[ \partial_tu(jh,\hX_{jh}^h)\Bigr]=-\E\Bigl[\frac12\frac{\xi^2_{jh}}{\E\bigl[\xi_{jh}^2\big|\hX_{jh}^h\bigr]}\sigma^2\bigl(jh,\hX_{jh}^h\bigr)\partial_{x}^2u\bigl(jh,\hX_{jh}^h\bigr)\Bigr].
    \end{align*}
    Therefore, when considering the term of order $h$ we can calculate
    \begin{align*}
        \E\Bigl[ \partial_tu(jh,\hX_{jh}^h)&+\frac12\frac{\xi^2_{jh}}{\hE\bigl[\xi_{jh}^2\big|\hX_{jh}^h\bigr]}\sigma^2\bigl(jh,\hX_{jh}^h\bigr)\partial_{x}^2u\bigl(jh,\hX_{jh}^h\bigr)\Bigr]
        \\
         &=\frac12\E\Bigl[ \Bigl(\frac{\xi^2_{jh}}{\hE\bigl[\xi_{jh}^2\big|\hX_{jh}^h\bigr]}-\frac{\xi^2_{jh}}{\E\bigl[\xi_{jh}^2\big|\hX_{jh}^h\bigr]}\Bigr)\sigma^2\bigl(jh,\hX_{jh}^h\bigr)\partial_{x}^2u\bigl(jh,\hX_{jh}^h\bigr)\Bigr].
    \end{align*}
    By boundedness of $\frac{\xi_{jh}}{\sqrt{\hE[\xi_{jh}^2|\hX_{jh}^h]}}$ we can apply Lemma \ref{lem:Tail_estim}, which together with Lemma \ref{Lem_UEstim} implies that $\partial_{x}^2u\bigl(jh,\hX_{jh}^h\bigr) \in L^m$ for all $m \ge 1$ and, by boundedness of $\sigma$, the same holds true for $\sigma^2\bigl(jh,\hX_{jh}^h\bigr)\partial_{x}^2u\bigl(jh,\hX_{jh}\bigr)$.
    Using the H\"older-inequality, the boundedness of $\xi$ and the assumptions on $\hE[\xi|\cdot]$  it follows that with hidden constants depending on $p > 1$
    \begin{align*}
        \frac12\Bigl|\E\Bigl[ \Bigl(\frac{\xi^2_{jh}}{\hE\bigl[\xi_{jh}^2\big|\hX_{jh}^h\bigr]}-\frac{\xi^2_{jh}}{\E\bigl[\xi_{jh}^2\big|\hX_{jh}^h\bigr]}\Bigr)\sigma^2\bigl(jh,\hX_{jh}^h\bigr)&\partial_{x}^2u\bigl(jh,\hX_{jh}^h\bigr)\Bigr]\Bigr|
        \\
        &\lesssim \E\Bigl[\Bigl(\frac{1}{\hE\bigl[\xi_{jh}^2\big|\hX_{jh}^h\bigr]}
        -\frac{1}{\E\bigl[\xi_{jh}^2\big|\hX_{jh}^h\bigr]}\Bigr)^p\Bigr]^{1/p}
        \\
        &\lesssim\E\Bigl[\Bigl({\hE\bigl[\xi_{jh}^2\big|\hX_{jh}^h\bigr]}
        -{\E\bigl[\xi_{jh}^2\big|\hX_{jh}^h\bigr]}\Bigr)^p\Bigr]^{1/p}.
    \end{align*}
    For the last inequality we used that $\xi_{jh}$ is bounded from below. 

    Again, by moment estimates due to Lemma \ref{lem:Tail_estim}, we observe that Lemma \ref{Lem_UCont} holds with $X^h_t$ replaced by $\hX^h_t$ and $\E\bigl[\xi_t|X_t^h\bigr]$  replaced by $\hE\bigl[\xi_t|\hX_t^h\bigr]$. By a similar calculation as for \eqref{Eq:Proof34_1} and \eqref{Eq:Proof34_2}  in the proof of Theorem \ref{Thm_weakdiffrate}, using the fact that we do not need to apply Feynman-Kac formula to study these terms, we obtain
    \begin{align*}
        \bigl|\eqref{Eq:Proof41_2}+\eqref{Eq:Proof41_3}\bigr|\lesssim h.
    \end{align*}
    
    Calculating the integral \eqref{Eq:Int41}, which gives another term of order $h$, we summarize that
    \begin{align*}
        \E\Bigl[ u(jh+h,\hX_{jh+h}^h) - u(jh,\hX_{jh}^h)\Bigr]\lesssim h \E\Bigl[\Bigl({\hE\bigl[\xi_{jh}^2\big|\hX_{jh}^h\bigr]}
        -{\E\bigl[\xi_{jh}^2\big|\hX_{jh}^h\bigr]}\Bigr)^p\Bigr]^{1/p}+ h^2.
    \end{align*}
    Taking the supremum over all $j \in \{0,\dots,\frac Th-1\}$ and summing up over the same set, the claim follows.
\end{proof}

\subsection{Half-Step Euler Scheme and Regularisation }\label{Sec:Half_Step}

In this section we consider the dependence structure between the stochastic volatility term $\xi$ and the driving Brownian motion $W$. We establish a novel \emph{half-step} scheme and simultaneously define a family of estimators for the conditional expectation. It turns out that with this tool we can find a closed formula for the conditional expectation, see Lemma \ref{Lem_ExactCondexp} below. In general we will denote by $\tiX$ processes defined via half-step schemes and by $\hX$ processes given by \say{classical} Euler schemes.

To establish the half-step scheme we introduce correlation: we denote by $\rho\in(-1,1)$ the correlation between the Brownian motion $B$, which generates $\xi$ and the Brownian motion $W=\rho B+\rhobar \bB$, that drives the SDE\ eqref{Eq:NonMKV}. Here we denote by $\bB$ a Brownian motion independent of $B$ and define $\rhobar=\sqrt{1-\rho^2}$. We emphasize again, that the fact that $\rho\in(-1,1)$, i.e.\ Assumption \ref{Ass:Half_Step} is crucial for establishing this scheme.

We assume that the process $\xi$ is measurable with respect to the filtration generated by $B$. In particular $\xi$ is independent of $\bB$. The SDE \eqref{Eq:NonMKV} writes
\begin{align*}
    dX_t&= \frac{\xi_{t}}{\sqrt{\E\bigl[\xi_{t}^2\big|X_{t} \bigr]}}\sigma(t,X_t)\,\Bigl(\rho dB_t+\rhobar d\bB_t\Bigr), \qquad t \in [0,T]\\
    X_0&=x_0.
\end{align*}
Recall furthermore that we  work under Assumption \ref{Aspt_new}. Here the existence of the constant $c_{min}$ allows us to rewrite the Euler scheme of this equation in such a way, that in each step a (scaled) increment of an independent Brownian motion is added, which basically translates to the density being convolved with a Gaussian density. This facilitates studying conditional expectation. 
\pagebreak

By independence of $\xi$ and $\bB$ we can introduce yet another Brownian motion $Z$. 
The half-step scheme as in \cite{Zhou2018} and the associated  family of estimators $\hE_{\delta}$ can be defined recursively for $h$ such that $\frac Th\in\mathbb{N}$ and $j \in \{0,\dots,\frac Th-1\}$ via 
\begin{gather}
    \label{Eq:Half_step}
\begin{aligned}
     \tiX^{h,\delta}_{jh+h/2}&=\tiX^{h,\delta}_{jh}+\frac{\xi_{jh}\sigma\bigl(jh,\tiX_{jh}^{h,\delta}\bigr)}{\sqrt{\hE_\delta\bigl[\xi_{jh}^2\big|\tiX_{jh}^{h,\delta}\bigr]}}\rho \bigl( B_{jh+h}-B_{jh} \bigr)
     \\
     &\qquad+
     \biggl(\frac{\xi_{jh}^2\sigma^2\bigl(jh,\tiX_{jh}^{h,\delta}\bigr)}{{\hE_\delta \bigl[\xi_{jh}^2\big|\tiX_{jh}^{h,\delta}\bigr]}}
     -{c_{min}^2}
     \biggr)^{1/2}
     \rhobar 
     \bigl( \bB_{jh+h}-\bB_{jh} \bigr),
     \\
     \tiX^{h,\delta}_{jh+h}&=\tiX_{jh+h/2}^{h,\delta} + {{c_{min}}}\rhobar \bigl( Z_{jh+h}-Z_{jh} \bigr),
    \\
    \tiX^h_0&=x_0,
\end{aligned}
\end{gather}
where for $\lambda=c_{min}^2\rhobar^2h$
\begin{align}\label{eq_hatEdelta}
        \hE_\delta\bigl[ \xi_{jh}^2|\tiX^{h,\delta}_{jh}=x\bigr]=\begin{cases}
            \E[\xi^2_{0}] & \text{if } j=0,
            \\
            \frac{\E\bigl[\xi_{jh}^2\varphi_{\lambda}\bigl(x-\tiX_{jh-h/2}^{h,\delta}
        \bigr)\bigr]+\delta}{\E\bigl[
        \varphi_{\lambda}
        \bigl(x-\tiX_{jh-h/2}^{h,\delta}
        \bigr)
        \bigr]+\delta} & \text{if } j\ge1.
        \end{cases}
\end{align}
Due to the recursive structure all objects are well-defined. Also, by a direct calculation, Assumption \ref{Aspt_new} implies that the terms in the square root are bounded from below by $3c_{min}^2$.

While $\delta = 0$ is perfectly possible here, and leads to some exact expressions, as Lemma \ref{Lem_ExactCondexp} below shows, it will be necessary later on to take $\delta >0$ to achieve propagation of chaos, as in Theorem \ref{Thm:HS_Prop_of_chaos}, with rates depending on $\delta$.

\begin{remark}
    Given a probability measures $\mu$ on $\mathbb{R}^2$ and some $j \in \mathbb{N}$ 
     we define a function $\Psi_j$ via 
    \begin{align}\label{Eq:Psi_Def}
        \Psi_j(x,\mu)=\frac{\int_{\mathbb{R}\times\mathbb{R}}y^2\varphi_{\lambda}(x-z)
        \,d\mu(y,z)+\delta}{\int_{\mathbb{R}\times\mathbb{R}}\varphi_{\lambda}(x-z)
        \,d\mu(y,z)+\delta}.
    \end{align}
    For $j=0$ we set $\Psi_0(x,\mu)=\int_{\mathbb{R}\times \mathbb{R}}y^2 d\mu(y,z)$.
   
    Then for $j\ge 0$ the estimator of the conditional expectation $\hE_\delta$ can be understood as a function $ \hE_\delta\bigl[ \xi_{jh}^2|\tiX^{h,\delta}_{jh}=x\bigr]\equiv \Psi_j\bigl(x,\mathcal{L}({\xi_{jh},\tiX_{jh-h/2}^{h,\delta}})\bigr)$.
    Boundedness of $\xi$, as in Assumption \ref{Aspt_new} implies that $\hE_\delta$ is bounded from above and below.
\end{remark}
For comparison we define the regularised, \say{classical} Euler scheme for some arbitrary $\delta \in (0,1)$
\begin{gather}\label{eq_Xhdelta} 
\begin{aligned}
    \hX_{jh+h}^{h,\delta}&=\hX_{jh}^{h,\delta}+\frac{\xi_{jh}\sigma\bigl(jh,\hX_{jh}^{h,\delta}\bigr)}{\sqrt{\hE_\delta\bigl[\xi_{jh}^2\big|\hX_{jh}^{h,\delta}\bigr]}}\Bigl(\rho\bigl( B_{jh+h}-B_{jh} \bigr)+\rhobar\bigl(\bB_{jh+h}-\bB_{jh}\bigr)\Bigr),
    \\
    \hX_0^{h,\delta}&=x_0.
\end{aligned}
\end{gather}
Here, $\hE_\delta\bigl[\xi_{0}^2\big|\hX_{0}^{h,\delta}\bigr]=\E[\xi_{0}^2]$, while for $j\ge 1$, the estimator of conditional expectation is understood as 
\begin{align*}
    \hE_\delta\bigl[\xi_{jh}^2\big|\hX_{jh}^{h,\delta}\bigr]=\Psi_j\bigl(\hX_{jh}^{h,\delta}, \mathcal{L}(\xi_{jh},\tiX_{jh-h/2}^{h,\delta})\bigr),
\end{align*}
where the law of the processes $\tiX$ from the half-step scheme \eqref{Eq:Half_step} is taken.

\begin{lemma}\label{lem_laws}
    For $h$ such that $\frac Th\in \mathbb{N}$ and $j\in \{0,\dots,\frac Th\}$ let $(\hX_{jh}^{h,\delta},\xi_{jh})_{0\le j\le T/h}$ be defined in \eqref{eq_Xhdelta} and $(\tiX_{jh}^{h,\delta},\xi_{jh})_{0\le j\le T/h}$ be defined in \eqref{Eq:Half_step}. 
    
    Then $\mathcal{L}\bigl((\hX^{h,\delta}_{jh})_{j\le T/h},(B_t)_{t\le T}\bigr)=\mathcal{L}\bigl((\tiX_{jh}^{h,\delta})_{j\le T/h},(B_t)_{t\le T}\bigr)$.
    In particular their joint laws are equal, i.e.\ it holds that $\mathcal{L}\bigl( (\hX_{jh}^{h,\delta},\xi_{jh})_{j\le T/h}\bigr) =\mathcal{L}\bigl((\tiX_{jh}^{h,\delta},\xi_{jh})_{j\le T/h}\bigr)$.
\end{lemma}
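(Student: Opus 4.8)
The plan is to prove the stronger claim that the two processes, coupled through the \emph{same} Brownian motion $B$ and with the half-step process additionally using $\bB$ and $Z$, have equal joint law with $B$, namely $\mathcal{L}\bigl((\hX^{h,\delta}_{jh})_{j\le T/h},(B_t)_{t\le T}\bigr)=\mathcal{L}\bigl((\tiX_{jh}^{h,\delta})_{j\le T/h},(B_t)_{t\le T}\bigr)$, and then to note that since $\xi$ is a fixed $\mathcal{F}^B$-measurable process, appending $(\xi_{jh})_{j\le T/h}$ to both sides (it is a measurable functional of $B$, the \emph{same} $B$ in both constructions) immediately yields the in-particular statement. So the real content is the first displayed identity. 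I would prove it by induction on $j$, showing at each step that the conditional law of $(\hX^{h,\delta}_{(j+1)h},\ \hX^{h,\delta}_{jh},\dots,\hX^{h,\delta}_0,\ (B_t)_{t\le T})$ and of $(\tiX^{h,\delta}_{(j+1)h},\ \tiX^{h,\delta}_{jh},\dots,\tiX^{h,\delta}_0,\ (B_t)_{t\le T})$ coincide.

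The key observation driving the induction is that the half-step update, read from $\tiX^{h,\delta}_{jh}$ to $\tiX^{h,\delta}_{jh+h}$, produces — conditionally on $\mathcal{F}_{jh}\vee\sigma(B)$ — a Gaussian increment whose mean is $\frac{\xi_{jh}\sigma(jh,\tiX^{h,\delta}_{jh})}{\sqrt{\hE_\delta[\cdots]}}\rho(B_{jh+h}-B_{jh})$ and whose variance (coming from the $\bB$-term and the $Z$-term, which are independent of each other, of $B$, and of $\mathcal F_{jh}$) equals
\begin{align*}
\Bigl(\tfrac{\xi_{jh}^2\sigma^2(jh,\tiX^{h,\delta}_{jh})}{\hE_\delta[\cdots]}-c_{min}^2\Bigr)\rhobar^2 h + c_{min}^2\rhobar^2 h = \tfrac{\xi_{jh}^2\sigma^2(jh,\tiX^{h,\delta}_{jh})}{\hE_\delta[\cdots]}\rhobar^2 h.
\end{align*}
This is exactly the conditional variance of the classical-Euler increment $\frac{\xi_{jh}\sigma(jh,\hX^{h,\delta}_{jh})}{\sqrt{\hE_\delta[\cdots]}}\rhobar(\bB_{jh+h}-\bB_{jh})$ given $\mathcal F_{jh}\vee\sigma(B)$, and the two increments share the same conditional mean; hence conditionally on the past (including all of $B$) the two one-step updates are Gaussian with identical mean and variance, so identical law. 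Here one must also check that the estimator values $\hE_\delta[\xi_{jh}^2|\hX^{h,\delta}_{jh}]$ and $\hE_\delta[\xi_{jh}^2|\tiX^{h,\delta}_{jh}]$ agree: by definition \eqref{eq_hatEdelta}/\eqref{Eq:Psi_Def} both are $\Psi_j(\cdot,\mathcal L(\xi_{jh},\tiX^{h,\delta}_{jh-h/2}))$ evaluated at the current position, and the law in the second argument is the \emph{same} deterministic law in both schemes by construction, so the induction hypothesis (equality of joint laws of the positions with $B$, hence of $\Psi_j$ evaluated there) closes the loop — the subtlety is the half-step reference point $\tiX^{h,\delta}_{jh-h/2}$, but this is internal to the definition and identical on both sides.

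Concretely I would organize the induction as: (i) base case $j=0$: both schemes start at $x_0$ and the first estimator is the constant $\E[\xi_0^2]$, so the one-step laws match by the mean/variance computation above; (ii) inductive step: assume $\mathcal{L}\bigl((\hX^{h,\delta}_{ih})_{i\le j},(B_t)_{t\le T}\bigr)=\mathcal{L}\bigl((\tiX_{ih}^{h,\delta})_{i\le j},(B_t)_{t\le T}\bigr)$; observe that $\hX^{h,\delta}_{(j+1)h}$ is obtained from $(\hX^{h,\delta}_{ih})_{i\le j}$ and $B$ by a deterministic map plus an independent (of everything so far) Gaussian increment $\rhobar(\bB_{(j+1)h}-\bB_{jh})$ scaled by a factor that is a function of $(\hX^{h,\delta}_{jh}, B, \text{fixed law})$, while $\tiX^{h,\delta}_{(j+1)h}$ is obtained from $(\tiX^{h,\delta}_{ih})_{i\le j}$ and $B$ by the \emph{same} deterministic map plus an independent Gaussian increment with the \emph{same} variance (by the cancellation of $c_{min}^2$ above) and the same mean; conclude equality of the extended joint laws. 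The main obstacle — really the only place requiring care — is bookkeeping the independence and measurability structure: one must verify that $\bB_{(j+1)h}-\bB_{jh}$ and $Z_{(j+1)h}-Z_{jh}$ are independent of $\mathcal F_{jh}\vee\sigma(B)$ (which follows since $B,\bB,Z$ are mutually independent and $\xi$ — hence $\tiX^{h,\delta}_{ih}$ for $i\le j$ up to the earlier $\bB,Z$ increments — generates a filtration whose time-$jh$ slice is independent of the future $\bB,Z$ increments), that the square root in \eqref{Eq:Half_step} is well-defined (guaranteed by the remark that the radicand is $\ge 3c_{min}^2>0$), and that the conditional-Gaussian argument is applied to the correct $\sigma$-algebra so that "same conditional mean and variance" genuinely upgrades to "same conditional law" and then, by integrating against the common law of the conditioning data, to "same joint law." No new estimates are needed; it is a clean distributional identity.
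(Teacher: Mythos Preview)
Your proposal is correct and follows essentially the same approach as the paper: induction on $j$, computing that conditionally on the past together with $B$ both one-step updates are Gaussian with identical mean and variance (the $c_{min}^2$ cancellation), and then using $\mathcal{F}^B$-measurability of $\xi$ for the in-particular statement. The only cosmetic difference is that you carry all of $(B_t)_{t\le T}$ in the induction hypothesis from the start, whereas the paper inducts with $(B_t)_{t\le jh}$ and extends to $(B_t)_{t\le jh+h}$ at each step using independence of the future $B$-increments; your version avoids that extension step but is otherwise the same argument.
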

\begin{proof}
    We show by induction on $j$ that $\mathcal{L}\bigl((\hX^{h,\delta}_{ih})_{i\le j},(B_t)_{t\le jh}\bigr)=\mathcal{L}\bigl((\tiX_{ih}^{h,\delta})_{i\le j},(B_t)_{t\le jh}\bigr)$ for all $j\in \{0,\dots,\frac Th\}$. 
    For $j=0$ this statement is clear. For the induction step note first that by the induction hypothesis and by independence of $(B_t-B_{jh})_{t\ge jh}$ from ${\cal F}_{jh}$ it already follows for the joint laws that
    $\mathcal{L}\bigl((\hX^{h,\delta}_{ih})_{i\le j},(B_t)_{t\le jh+h}\bigr)=\mathcal{L}\bigl((\tiX_{ih}^{h,\delta})_{i\le j},(B_t)_{t\le jh+h}\bigr)$. We recall that $\xi_{jh}$ is measurable w.r.t.\ $(B_t)_{t\le jh}$. Independence of $B$ and $\bB$ as well as $Z$ imply that $(\bB_{jh+h}-\bB_{jh})$ and $(Z_{jh+h}-Z_{jh})$ are independent of each other and independent of $\bigl((\hX^{h,\delta}_{ih})_{i\le j},(B_t)_{t\le jh+h}\bigr)$ as well as $\bigl((\tiX_{ih}^{h,\delta})_{i\le j},(B_t)_{t\le jh+h}\bigr)$. By calculus for independent Gaussian random variables we deduce that the conditional law of $\tiX_{jh+h}^{h,\delta}$ given $\bigl((\tiX_{ih}^{h,\delta})_{i\le j},(B_t)_{t\le jh+h}\bigr)$ is Gaussian with expectation $\widetilde m$ and variance $\widetilde\Sigma$ given by 
    \begin{align*}
        \widetilde m&=\tiX^{h,\delta}_{jh}+\frac{\xi_{jh}\sigma\bigl(jh,\tiX_{jh}^{h,\delta}\bigr)}{\sqrt{\hE_\delta\bigl[\xi_{jh}^2\big|\tiX_{jh}^{h,\delta}\bigr]}}\rho \bigl( B_{jh+h}-B_{jh} \bigr)
        \\
        \widetilde \Sigma&= \biggl(\frac{\xi_{jh}^2\sigma^2\bigl(jh,\tiX_{jh}^{h,\delta}\bigr)}{{\hE_\delta \bigl[\xi_{jh}^2\big|\tiX_{jh}^{h,\delta}\bigr]}}
     -{c_{min}^2}
     \biggr)\rhobar^2 h
     +c_{min}^2\rhobar^2h
     =\frac{\xi_{jh}^2\sigma^2\bigl(jh,\tiX_{jh}^{h,\delta}\bigr)}{{\hE_\delta \bigl[\xi_{jh}^2\big|\tiX_{jh}^{h,\delta}\bigr]}}\rhobar^2h 
    \end{align*}
    where $\hE_\delta \bigl[\xi_{jh}^2\big|\tiX_{jh}^{h,\delta}\bigr]={\Psi_j\bigl(\tiX_{jh}^{h,\delta}, \mathcal{L}(\xi_{jh},\tiX_{jh-h/2}^{h,\delta})\bigr)}$.
    By similar calculations we again obtain that the  conditional distribution of $\hX_{jh+h}^{h,\delta}$ given $\bigl((\hX^{h,\delta}_{ih})_{i\le j},(B_t)_{t\le jh+h}\bigr)$ is Gaussian with mean $\widehat m$ and variance $\widehat{\Sigma}$ defined like $\widetilde m$ and $\widetilde{\Sigma}$ but with $\hX_{jh}^{h,\delta}$ replacing $\tiX_{jh}^{h,\delta}$. Therefore it must hold that  $\mathcal{L}\bigl((\hX^{h,\delta}_{ih})_{i\le j+1},(B_t)_{t\le jh+h}\bigr)=\mathcal{L}\bigl((\tiX_{ih}^{h,\delta})_{i\le j+1},(B_t)_{t\le jh+h}\bigr)$ and hence the induction is complete.
    
    By measurability of $\xi$ w.r.t.\ the filtration generated by $B$ it then follows that 
    \begin{align*}
        \mathcal{L}\bigl( (\hX_{jh}^{h,\delta},\xi_{jh})_{j\le T/h}\bigr) =\mathcal{L}\bigl((\tiX_{jh}^{h,\delta},\xi_{jh})_{j\le T/h}\bigr).
    \end{align*}
\end{proof}

The following lemma, taken from \cite[Proposition 3.3.1]{Zhou2018} shows the strength of the half-step scheme; it gives us a direct and exact way to write the conditional expectation, similar to the non-parametric estimator in Section \ref{Sec_Particles}.

\begin{lemma}\label{Lem_ExactCondexp}
    Let $\varphi_\lambda(x)=\frac{1}{\sqrt{2\pi\lambda}}e^{-\frac{x^2}{2\lambda}}$ be the density of a Gaussian with mean $0$ and variance 
    \begin{align}\label{Eq_lambda}
        \lambda=c_{min}^2\rhobar^2h.
    \end{align}
    Let Assumptions \ref{Ass:Half_Step} and \ref{Aspt_new} hold (guaranteeing that $\lambda >0$).
    Consider $\tiX^h$ given by \eqref{Eq:Half_step}. Then for $j \in \{1,\dots,\frac Th\}$
    \begin{align*}
        \E\bigl[ \xi_{jh}^2|\tiX^{h,\delta}_{jh}\bigr]=\frac{\E\Bigl[\xi_{jh}^2\varphi_{\lambda}\bigl(x-\tiX_{jh-h/2}^{h,\delta}
        \bigr)\Bigr]}{\E\Bigl[
        \varphi_{\lambda}
        \bigl(x-\tiX_{jh-h/2}^{h,\delta}
        \bigr)
        \Bigr]}\Bigg|_{x=\tiX^{h,\delta}_{jh}}.
    \end{align*}
    In particular if $\delta=0$, then for $\hE_\delta$ from Equation \eqref{eq_hatEdelta}
    \begin{align*}
        \hE_0\bigl[ \xi_{jh}^2|\tiX^{h,0}_{jh}\bigr]=\E\bigl[ \xi_{jh}^2|\tiX^{h,0}_{jh}\bigr].
    \end{align*}
    and in this case 
    $\mathcal{L}\bigl( (\tiX_{jh}^{h,0},\xi_{jh})_{j\le T/h}\bigr) =\mathcal{L}\bigl((X_{jh}^{h},\xi_{jh})_{j\le T/h}\bigr)$, with $X^{h}$
    from Equation \eqref{Eq:Euler_Scheme}. 
\end{lemma}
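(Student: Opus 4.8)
The point of the half-step scheme is that its second substep in \eqref{Eq:Half_step} adds a scaled increment of $Z$, independent of everything built from $B$ and $\bB$. First I would record the structural identity: since $jh-h/2=(j-1)h+h/2$, the recursion \eqref{Eq:Half_step} gives $\tiX^{h,\delta}_{jh}=\tiX^{h,\delta}_{jh-h/2}+c_{min}\rhobar(Z_{jh}-Z_{(j-1)h})$. An easy induction on $j$ using \eqref{Eq:Half_step}--\eqref{eq_hatEdelta} and the fact that $\xi$ is $\mathcal F^B$-adapted shows that $(\xi_{jh},\tiX^{h,\delta}_{jh-h/2})$ is measurable with respect to the $\sigma$-algebra generated by $(B_t)_{t\le jh}$, $(\bB_t)_{t\le jh}$ and $(Z_t)_{t\le(j-1)h}$, whereas $Z_{jh}-Z_{(j-1)h}\sim\mathcal N(0,h)$ is independent of that $\sigma$-algebra; hence these two are independent. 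Conditionally on $(\xi_{jh},\tiX^{h,\delta}_{jh-h/2})=(y,z)$ the law of $\tiX^{h,\delta}_{jh}$ therefore has density $x\mapsto\varphi_\lambda(x-z)$ with $\lambda=c_{min}^2\rhobar^2h$ as in \eqref{Eq_lambda}, and writing $\mu=\mathcal L(\xi_{jh},\tiX^{h,\delta}_{jh-h/2})$ a disintegration (Bayes) computation gives, for $\mathcal L(\tiX^{h,\delta}_{jh})$-a.e.\ $x$,
\[
    \E\bigl[\xi_{jh}^2\,\big|\,\tiX^{h,\delta}_{jh}=x\bigr]
    =\frac{\int y^2\varphi_\lambda(x-z)\,d\mu(y,z)}{\int\varphi_\lambda(x-z)\,d\mu(y,z)}
    =\frac{\E\bigl[\xi_{jh}^2\varphi_\lambda(x-\tiX^{h,\delta}_{jh-h/2})\bigr]}{\E\bigl[\varphi_\lambda(x-\tiX^{h,\delta}_{jh-h/2})\bigr]},
\]
the denominator being strictly positive since $\varphi_\lambda>0$; substituting $x=\tiX^{h,\delta}_{jh}$ yields the first claim. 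The right-hand side is exactly $\Psi_j(x,\mu)$ from \eqref{Eq:Psi_Def} at $\delta=0$, i.e.\ the $j\ge1$ branch of \eqref{eq_hatEdelta} with $\delta=0$, so $\hE_0[\xi_{jh}^2|\tiX^{h,0}_{jh}]=\E[\xi_{jh}^2|\tiX^{h,0}_{jh}]$ for $j\ge1$ (and trivially for $j=0$ too, since $\tiX^{h,0}_0=x_0$ is deterministic).

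For the last statement I would combine this with Lemma \ref{lem_laws} at $\delta=0$. By that lemma $\mathcal L(\xi_{jh},\hX^{h,0}_{jh})=\mathcal L(\xi_{jh},\tiX^{h,0}_{jh})$, so the estimator used by $\hX^{h,0}$, namely $\hE_0[\xi_{jh}^2|\hX^{h,0}_{jh}=\cdot]=\Psi_j\bigl(\cdot,\mathcal L(\xi_{jh},\tiX^{h,0}_{jh-h/2})\bigr)$, again coincides with the true $\E[\xi_{jh}^2|\hX^{h,0}_{jh}=\cdot]$; thus $\hX^{h,0}$ satisfies the same grid-point recursion \eqref{Eq:Euler_Scheme} as $X^h$, with $W=\rho B+\rhobar\bB$. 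That recursion is forward in time and at each step its coefficient $\E[\xi_{jh}^2|X^h_{jh}=\cdot]$ is determined by the joint law of $(\xi_{jh},X^h_{jh})$ alone, so an induction on $j$ shows $\mathcal L\bigl((X^h_{jh},\xi_{jh})_{j\le T/h}\bigr)$ is uniquely determined; it therefore equals $\mathcal L\bigl((\hX^{h,0}_{jh},\xi_{jh})_{j\le T/h}\bigr)=\mathcal L\bigl((\tiX^{h,0}_{jh},\xi_{jh})_{j\le T/h}\bigr)$. One can also bypass Lemma \ref{lem_laws} and prove $\mathcal L\bigl((\tiX^{h,0}_{jh},\xi_{jh})_j\bigr)=\mathcal L\bigl((X^h_{jh},\xi_{jh})_j\bigr)$ directly by the induction used in the proof of Lemma \ref{lem_laws}: once the joint laws match up to step $j$, the conditional law of $\tiX^{h,0}_{jh+h}$ given the past and $(B_t)_{t\le jh+h}$ is Gaussian with the same mean and the same variance $\frac{\xi_{jh}^2\sigma^2(jh,\cdot)}{\E[\xi_{jh}^2|\cdot]}\rhobar^2h$ as that of $X^h_{jh+h}$.

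The only genuinely delicate point I anticipate is the independence bookkeeping of the first paragraph — verifying that the increment $Z_{jh}-Z_{(j-1)h}$ has entered neither $\tiX^{h,\delta}_{jh-h/2}$ nor $\xi_{jh}$, which is precisely what forces the conditional law of $\tiX^{h,\delta}_{jh}$ to be Gaussian with variance $\lambda$. The Bayes computation and the uniqueness of the forward recursion \eqref{Eq:Euler_Scheme} are routine.
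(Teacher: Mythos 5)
Your proposal is correct and follows essentially the same route as the paper: the paper simply cites the appendix Lemma~\ref{Lem_Cond_exp_gauss} (a Bayes/disintegration identity for $\zeta=X+Z$ with $Z\sim\varphi_\lambda$ independent of $(X,Y)$) with $\zeta=\tiX^{h,\delta}_{jh}$, $X=\tiX^{h,\delta}_{jh-h/2}$, $Y=\xi_{jh}^2$, $Z=c_{min}\rhobar(Z_{jh}-Z_{jh-h})$, and then invokes Lemma~\ref{lem_laws}, which is exactly what you do—you merely reprove the disintegration identity inline and spell out the measurability/independence check (that $(\xi_{jh},\tiX^{h,\delta}_{jh-h/2})$ is $\sigma(B_{\cdot\le jh},\bB_{\cdot\le jh},Z_{\cdot\le(j-1)h})$-measurable while $Z_{jh}-Z_{(j-1)h}$ is independent of it) that the paper leaves implicit. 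Your second-part argument (identify $\hE_0$ with the true conditional expectation via Lemma~\ref{lem_laws}, then match the recursions) also mirrors the paper's, though the paper actually concludes $X^h=\hX^{h,0}$ pathwise from the common driving noise rather than via a uniqueness-in-law induction; both reach the same law identity.
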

\begin{proof}
    This follows from Lemma \ref{Lem_Cond_exp_gauss} in the Appendix with $\zeta=\hX^h_{jh}$, $X=\hX^h_{jh-h/2}$, $Y=\xi^2_{jh}$,  and $Z= c_{min} \rhobar(Z_{jh}-Z_{jh-h})$. Note that for these parameters $f_Z(x)=\varphi_\lambda(x)$.
By the definition of $\hE_\delta$, Equation \eqref{eq_hatEdelta}, it follows that $(X^{h}_{jh})_{j\le T/h}=(\hX^{h,0}_{jh})_{j\le T/h}$. With Lemma \ref{lem_laws}, we conclude that $ \mathcal{L}\bigl( (\tiX_{jh}^{h,0},\xi_{jh})_{j\le T/h}\bigr) =\mathcal{L}\bigl((\hX_{jh}^{h,0},\xi_{jh})_{j\le T/h}\bigr)=\mathcal{L}\bigl((X_{jh}^{h},\xi_{jh})_{j\le T/h}\bigr)$.
\end{proof}

\subsection{Regularisation Error Estimates}

Although the exact formula for conditional expectation is immensely helpful, working with $\hE_\delta$ as an estimator for conditional expectation has a big advantage in providing a Lipschitz-property, both in $x$ and the joint law of $(\xi,X)$. 

In this section we show that the contribution to the weak error from this additional $\delta$-parameter can be well-quantified. 

    \begin{theorem}\label{thm_HScondExp_err} Let $h,T$ be such that $\frac Th\in\mathbb{N}$ and $f:\R\to\R$ be three times weakly differentiable with derivatives of polynomial growth. Under Assumptions \ref{Ass:Half_Step} and \ref{Aspt_new}, for $\tiX^{h,\delta}$ and $\hX^{h,\delta}$ respectively defined in \eqref{Eq:Half_step} and \eqref{eq_Xhdelta} it holds for any $p>1$ and $\delta \in (0,1/2)$ that
        \begin{align*}
            \Bigl|\E\bigl[ f(Y_T)\bigr]-\E\bigl[f\bigl(\tiX_T^{h,\delta}\bigr)\bigr]\Bigr|=\Bigl|\E\bigl[ f(Y_T)\bigr]-\E\bigl[f\bigl(\hX_T^{h,\delta}\bigr)\bigr]\Bigr|\lesssim h + \delta^{\frac1p}
        \end{align*}
        Here the hidden constant also depends on $p$.
    \end{theorem}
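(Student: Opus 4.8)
The plan is to deduce the estimate from Theorem~\ref{Thm:Rate_CondExp} applied with $\hE=\hE_\delta$, and then to quantify the conditional--expectation error by means of the exact formula in Lemma~\ref{Lem_ExactCondexp}, the decisive point being that the regularised denominator appearing in \eqref{eq_hatEdelta} is, up to the additive $\delta$, nothing but the Lebesgue density of the half-step process. First, $\E[f(\tiX_T^{h,\delta})]=\E[f(\hX_T^{h,\delta})]$ is immediate from Lemma~\ref{lem_laws}, so it suffices to bound $|\E[f(Y_T)]-\E[f(\hX_T^{h,\delta})]|$. Fix the target exponent $p>1$ and set $q=\tfrac{p+1}{2}\in(1,p)$. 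The scheme \eqref{eq_Xhdelta} is of the form \eqref{eq_Xhat} with estimator $\hE_\delta$, and the hypotheses of Theorem~\ref{Thm:Rate_CondExp} hold: $f$ has the required regularity, and boundedness of $\xi$ (Assumption~\ref{Aspt_new}) together with the form \eqref{eq_hatEdelta} gives $a^2\le\hE_\delta[\xi_{jh}^2|\cdot]\le b^2$ uniformly in $h,\delta,j$. Applying Theorem~\ref{Thm:Rate_CondExp} with exponent $q$ yields
\[
 \bigl|\E[f(Y_T)]-\E[f(\hX_T^{h,\delta})]\bigr|\lesssim h+\sup_{0\le j\le T/h-1}\bigl\|\hE_\delta[\xi_{jh}^2|\hX_{jh}^{h,\delta}]-\E[\xi_{jh}^2|\hX_{jh}^{h,\delta}]\bigr\|_{L^q},
\]
with hidden constant depending on $q$ (hence on $p$), so it remains to show the supremum is $\lesssim\delta^{1/p}$.

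By Lemma~\ref{lem_laws}, $\mathcal L(\xi_{jh},\hX_{jh}^{h,\delta})=\mathcal L(\xi_{jh},\tiX_{jh}^{h,\delta})$; moreover $\hE_\delta[\xi_{jh}^2|\cdot=x]=\Psi_j\bigl(x,\mathcal L(\xi_{jh},\tiX_{jh-h/2}^{h,\delta})\bigr)$ is by definition \emph{the same} function of $x$ for both schemes, and $\E[\xi_{jh}^2|\cdot=x]$, depending only on the (common) joint law, is likewise the same function. Since $\hX_{jh}^{h,\delta}\stackrel{d}{=}\tiX_{jh}^{h,\delta}$, the $L^q$-norm above is unchanged when $\hX^{h,\delta}$ is replaced by $\tiX^{h,\delta}$. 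On the half-step scheme, Lemma~\ref{Lem_ExactCondexp} gives for $j\ge1$ that $\E[\xi_{jh}^2|\tiX_{jh}^{h,\delta}=x]=A(x)/D(x)$ with $A(x)=\E[\xi_{jh}^2\varphi_\lambda(x-\tiX_{jh-h/2}^{h,\delta})]$ and $D(x)=\E[\varphi_\lambda(x-\tiX_{jh-h/2}^{h,\delta})]$, while \eqref{eq_hatEdelta} reads $\hE_\delta[\xi_{jh}^2|\tiX_{jh}^{h,\delta}=x]=(A(x)+\delta)/(D(x)+\delta)$ (the term $j=0$ gives difference zero). A one-line computation gives
\[
 \frac{A(x)+\delta}{D(x)+\delta}-\frac{A(x)}{D(x)}=\frac{\delta\,(D(x)-A(x))}{D(x)(D(x)+\delta)},
\]
which, because $a^2D(x)\le A(x)\le b^2D(x)$, is bounded in absolute value by $c_0\,\delta/(D(x)+\delta)$ with $c_0=\max(b^2-1,1-a^2)$. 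The crucial observation is that $D$ is precisely the density of $\tiX_{jh}^{h,\delta}$, since $\tiX_{jh}^{h,\delta}=\tiX_{jh-h/2}^{h,\delta}+c_{min}\rhobar(Z_{jh}-Z_{jh-h})$ with the increment $\sim\mathcal N(0,\lambda)$ independent of $\tiX_{jh-h/2}^{h,\delta}$.

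Consequently, using $t^q\le t$ for $t\in[0,1]$ and the fact that $D$ is the density of $\tiX_{jh}^{h,\delta}$,
\[
 \bigl\|\hE_\delta[\xi_{jh}^2|\tiX_{jh}^{h,\delta}]-\E[\xi_{jh}^2|\tiX_{jh}^{h,\delta}]\bigr\|_{L^q}^q\le c_0^q\,\E\Bigl[\tfrac{\delta}{D(\tiX_{jh}^{h,\delta})+\delta}\Bigr]=c_0^q\,\delta\int_{\R}\tfrac{D(x)}{D(x)+\delta}\,dx.
\]
Splitting the integral at $|x-x_0|=R$ and bounding the integrand by $1$ on $\{|x-x_0|\le R\}$ and by $D(x)/\delta$ on $\{|x-x_0|>R\}$ gives $\int_\R\tfrac{D}{D+\delta}\,dx\le 2R+\delta^{-1}\Prob(|\tiX_{jh}^{h,\delta}-x_0|>R)$; since $\tiX^{h,\delta}$ has uniformly (in $h,\delta,j$) bounded coefficients over the finite horizon $[0,T]$, Lemma~\ref{lem:Tail_estim} provides moments of all orders, so choosing $R=\delta^{-\gamma/2}$ and a sufficiently large moment yields $\int_\R\tfrac{D}{D+\delta}\,dx\lesssim_\gamma\delta^{-\gamma}$ for every $\gamma>0$. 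Hence $\sup_j\|\cdots\|_{L^q}\lesssim_\gamma\delta^{(1-\gamma)/q}$, and the choice $\gamma=\tfrac{p-1}{2p}$ makes $(1-\gamma)/q=1/p$, which is the desired bound $\delta^{1/p}$; for $\delta$ bounded away from $0$ the estimate is trivial since $f$ has polynomial growth and all processes have moments.

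The main obstacle is exactly this last estimate: one has to recognise that the regularised denominator is the genuine density of $\tiX^{h,\delta}$, so that the seemingly delicate quantity $\E[\delta/(D+\delta)]$ collapses to the elementary integral $\delta\int_\R D/(D+\delta)\,dx$, which behaves like $\delta\sqrt{\log(1/\delta)}$; and then one must absorb this harmless logarithmic loss by running Theorem~\ref{Thm:Rate_CondExp} with an exponent $q$ strictly below the target $p$. Everything else — verifying the hypotheses of Theorem~\ref{Thm:Rate_CondExp}, the tail bounds on $\tiX^{h,\delta}$ via Lemma~\ref{lem:Tail_estim}, and the two transfers between the two schemes via Lemma~\ref{lem_laws} — is routine.
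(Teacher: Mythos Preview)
Your proof is correct and follows the same overall strategy as the paper: apply Theorem~\ref{Thm:Rate_CondExp} with an intermediate exponent $q\in(1,p)$, transfer from $\hX^{h,\delta}$ to $\tiX^{h,\delta}$ via Lemma~\ref{lem_laws}, bound the pointwise conditional-expectation error by $c_0\,\delta/(D(x)+\delta)$ with $D(x)=\E[\varphi_\lambda(x-\tiX_{jh-h/2}^{h,\delta})]$, and finally absorb the sub-polynomial loss by the gap $q<p$. The only difference is in how the resulting $L^q$-norm is controlled. The paper (Lemma~\ref{Lem_regularisation}) covers $[-M,M]$ by intervals of length $\sqrt\lambda/2$, lower-bounds $D$ on each via the density of $\tiX_{jh-h/2}^{h,\delta}$, and then interpolates from $L^1$ to $L^p$ using boundedness; you instead observe directly that $D$ \emph{is} the Lebesgue density of $\tiX_{jh}^{h,\delta}$ (by the independent Gaussian second half-step), which collapses $\E[\delta/(D(\tiX_{jh}^{h,\delta})+\delta)]$ to the explicit integral $\delta\int_\R D/(D+\delta)\,dx$ and reduces the estimate to a single radial split plus a moment bound. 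Your route is a bit cleaner and avoids the interval-covering argument; the paper's route has the minor advantage of giving the slightly sharper intermediate bound $(\delta\sqrt{\log(1/\delta)})^{1/q}$ explicitly, though both lead to the same final $\delta^{1/p}$.
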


The proof of this theorem relies on the following estimation of the  $L^p$-error arising from the estimation of the conditional expectation. We postpone the proof of that lemma after that of the theorem.

\begin{remark}
    It follows from the proof of Theorem \ref{thm_HScondExp_err}, and therefore also the proof of Lemma \ref{Lem_regularisation} that the hidden constants do not depend on $\lambda$, i.e.\ the constants from Assumptions \ref{Ass:Half_Step} and \ref{Aspt_new}. These assumptions are primarily needed to guarantee specific properties of the density of the second half-step as the calculation in the proof of Lemma \ref{Lem_regularisation} shows.
\end{remark}

\begin{lemma}\label{Lem_regularisation} Recall $\hE_\delta$ from Equation \eqref{eq_hatEdelta}. Under Assumptions \ref{Ass:Half_Step} and \ref{Aspt_new}, for any $M>0$ and $p\ge 1$ it holds that 
   for $j\in\{1,\dots,\frac Th\}$, \begin{align*}
         \Bigl\|\hE_\delta\bigl[ \xi_{jh}^2|\tiX^{h,\delta}_{jh}\bigr]-\E\bigl[ \xi_{jh}^2|\tiX^{h,\delta}_{jh}\bigr] \Bigr\|_{L^p} \lesssim 
        \Bigl(\delta \sqrt{-\log(\delta)}\Bigr)^{\frac1p}.
    \end{align*}
\end{lemma}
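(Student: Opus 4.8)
The plan is to turn the estimation error into an \emph{explicit} ratio, integrate its $p$-th power against the density of $\tiX_{jh}^{h,\delta}$, and reduce everything to one elementary one-dimensional bound controlled by the sub-Gaussian concentration of $\tiX_{jh}^{h,\delta}$. Fix $j\in\{1,\dots,\tfrac Th\}$ and abbreviate $n(x)=\E\bigl[\xi_{jh}^2\varphi_\lambda(x-\tiX_{jh-h/2}^{h,\delta})\bigr]$, $d(x)=\E\bigl[\varphi_\lambda(x-\tiX_{jh-h/2}^{h,\delta})\bigr]$. By Lemma \ref{Lem_ExactCondexp} the true conditional expectation is $n(x)/d(x)$ at $x$, while by \eqref{eq_hatEdelta} the regularised one is $(n(x)+\delta)/(d(x)+\delta)$, so their difference equals $\delta\,(d(x)-n(x))/\bigl(d(x)(d(x)+\delta)\bigr)$. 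Since $a^2 d(x)\le n(x)\le b^2 d(x)$ by Assumption \ref{Aspt_new}, we get $|d(x)-n(x)|\le\bigl((1-a^2)\vee(b^2-1)\bigr)d(x)=:C_\xi d(x)$ and hence
\begin{align*}
\Bigl|\hE_\delta\bigl[\xi_{jh}^2\big|\tiX_{jh}^{h,\delta}=x\bigr]-\E\bigl[\xi_{jh}^2\big|\tiX_{jh}^{h,\delta}=x\bigr]\Bigr|\le\frac{C_\xi\,\delta}{d(x)+\delta}\qquad\text{for every }x .
\end{align*}
The second half-step in \eqref{Eq:Half_step} adds to $\tiX_{jh-h/2}^{h,\delta}$ the centred Gaussian increment $c_{min}\rhobar(Z_{jh}-Z_{jh-h})$ of variance $\lambda$, which (by the independences used in the proof of Lemma \ref{lem_laws}) is independent of $\tiX_{jh-h/2}^{h,\delta}$; thus $\tiX_{jh}^{h,\delta}$ has density exactly $d$. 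Evaluating the display at $x=\tiX_{jh}^{h,\delta}$ and taking $p$-th moments gives
\begin{align*}
\Bigl\|\hE_\delta\bigl[\xi_{jh}^2\big|\tiX_{jh}^{h,\delta}\bigr]-\E\bigl[\xi_{jh}^2\big|\tiX_{jh}^{h,\delta}\bigr]\Bigr\|_{L^p}^p\le C_\xi^p\,\delta^p\int_{\R}\frac{d(x)}{(d(x)+\delta)^p}\,dx .
\end{align*}

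Next I reduce the integral: since $(d+\delta)^p\ge\delta^{p-1}(d+\delta)$, the integral is at most $\delta^{1-p}\int_\R\frac{d(x)}{d(x)+\delta}\,dx$, so everything will follow from the single elementary estimate
\begin{align*}
I:=\int_{\R}\frac{d(x)}{d(x)+\delta}\,dx\ \lesssim\ \sqrt{-\log\delta}
\end{align*}
with a constant independent of $j,h,\delta$; indeed, combining then yields $\|\cdot\|_{L^p}^p\lesssim\delta^p\cdot\delta^{1-p}\sqrt{-\log\delta}=\delta\sqrt{-\log\delta}$, and the claim follows after taking $p$-th roots. To bound $I$, split $\R$ into $\{|x-x_0|\le R\}$ and its complement for a radius $R=R(\delta)$ to be chosen: on the first set $\frac{d}{d+\delta}\le1$, contributing $\le 2R$; on the second set $\frac{d}{d+\delta}\le d/\delta$, contributing $\le\delta^{-1}\Prob(|\tiX_{jh}^{h,\delta}-x_0|>R)$. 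Here $\tiX^{h,\delta}-x_0$ is a sum of increments which, conditionally on the past, are centred Gaussian with deterministic variance bound $\tfrac{\xi_{ih}^2\sigma^2(ih,\cdot)}{\hE_\delta[\xi_{ih}^2|\cdot]}h\le (bC/a)^2h$ (using $\hE_\delta[\xi_{ih}^2|\cdot]\in[a^2,b^2]$ from Assumption \ref{Aspt_new}), so that the accumulated variance $\Lambda_j$ satisfies $\Lambda_j\le(bC/a)^2\,jh\le(bC/a)^2T$ and an exponential–martingale estimate gives $\Prob(|\tiX_{jh}^{h,\delta}-x_0|>R)\le 2\exp\bigl(-R^2/(2(bC/a)^2T)\bigr)$. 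Choosing $R=\bigl(2(bC/a)^2T\bigr)^{1/2}\sqrt{2\log(1/\delta)}$ makes the tail term $\le 2\delta$ while $2R\lesssim\sqrt{-\log\delta}$; hence $I\le 2R+2\delta\lesssim\sqrt{-\log\delta}$, which closes the argument.

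Two remarks on where the difficulty actually sits. No lower bound on $d$ is ever needed — a naive level-set analysis of $I$ would require one, but the ``split at radius $R(\delta)$'' trick trades it for the single upper-tail estimate. The only point requiring a little care is keeping the constant in that tail bound independent of $\lambda$, which is what the Remark after the theorem asserts: it works because $\Lambda_j$ is controlled by a supremum bound on the diffusion coefficient times $T$ (hence by $a,b,c,C,T$ only), not by $\lambda$; Assumption \ref{Ass:Half_Step} enters only through $\lambda=c_{min}^2\rhobar^2h>0$, which is what makes the $Z$-half-step genuinely regularise and $d$ a bona fide probability density (this is the ``property of the density of the second half-step''). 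The essential ingredient is thus the exact formula of Lemma \ref{Lem_ExactCondexp}; everything else is bookkeeping.
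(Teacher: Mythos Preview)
Your proof is correct and is in fact cleaner than the paper's. Both arguments start from the same pointwise bound $\bigl|\hE_\delta-\E\bigr|(x)\le C_\xi\,\delta/(d(x)+\delta)$, but you then exploit the observation that $d$ is precisely the density of $\tiX_{jh}^{h,\delta}$ (since the second half-step convolves with $\varphi_\lambda$), which turns the $L^p$ norm into $C_\xi^p\delta^p\int d/(d+\delta)^p\,dx$; after the reduction to $\int d/(d+\delta)\,dx$, a single cut at radius $R$ gives $2R$ on the ball and $\delta^{-1}\Prob(|\tiX_{jh}^{h,\delta}-x_0|>R)$ on its complement, and the sub-Gaussian tail finishes. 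The paper instead never uses that $d$ is a density: it introduces the density $f$ of $\tiX_{jh-h/2}^{h,\delta}$, lower-bounds $d(x)\gtrsim\lambda^{-1/2}\int_{A_{k(x)}}f$ via a partition of $[-M,M]$ into intervals $A_k$ of width $\sqrt\lambda/2$, sums over $k$ to obtain the $L^1$ bound $\lesssim\delta M$, and then passes to $L^p$ by the interpolation $\E|Z|^p\le\|Z\|_{L^\infty}^{p-1}\|Z\|_{L^1}$ (which gives exactly the same reduction as your inequality $(d+\delta)^p\ge\delta^{p-1}(d+\delta)$). Your route is shorter and avoids both the auxiliary density $f$ and the $A_k$-partition; on the other hand, the paper's partition argument is the one that recycles verbatim to prove Lemma~\ref{Lem_regular_kernel} in Section~\ref{Sec_Particles}, where the analogue of $d$ is built from a compactly supported kernel $K_\eps$ rather than $\varphi_\lambda$ and is therefore \emph{not} the density of the conditioning variable, so your ``$d$ is the density'' shortcut is unavailable there.
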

\begin{proof}[Proof of Theorem \ref{thm_HScondExp_err}]
        We first choose some $q$ such that $1 < q<p$. The equality $\hE_\delta\bigl[\xi_{0}^2\big|\hX_{0}^{h,\delta}\bigr]=\E[\xi_{0}^2]=\E\bigl[\xi_{0}^2\big|\hX_{0}^{h,\delta}\bigr]$ and 
        Theorem \ref{Thm:Rate_CondExp} tell us that 
        \begin{align*}
        \Bigl|\E\bigl[ f(Y_T)\bigr]-\E\bigl[f(\hX_T^{h,\delta})\bigr]\Bigr|\lesssim h+\sup_{j\in\{1,\dots,\frac Th-1\}} \bigl\| {\hE_\delta\bigl[\xi_{jh}^2\big|\hX_{jh}^{h,\delta}\bigr]}
        -{\E\bigl[\xi_{jh}^2\big|\hX_{jh}^{h,\delta}\bigr]}\bigr\|_{L^q}
    \end{align*}
    for all $j \in \{1,\dots,\frac Th-1\}$, since $\hE_\delta\bigl[\xi_{jh}^2\big|\tiX_{jh}^{h,\delta}\bigr]=\Psi_j\bigl(\tiX_{jh}^{h,\delta}, \mathcal{L}(\xi_{jh},\tiX_{jh-h/2}^{h,\delta})\bigr)$, $\hE_\delta\bigl[\xi_{jh}^2\big|\hX_{jh}^{h,\delta}\bigr]=\Psi_j\bigl(\hX_{jh}^{h,\delta}, \mathcal{L}(\xi_{jh},\tiX_{jh-h/2}^{h,\delta})\bigr)$ and, by Lemma \ref{lem_laws}, $\mathcal{L}(\hX_{jh}^{h,\delta},\xi_{jh}) =\mathcal{L}(\tiX_{jh}^{h,\delta},\xi_{jh})$, we have
$$\bigl\| {\hE_\delta\bigl[\xi_{jh}^2\big|\hX_{jh}^{h,\delta}\bigr]}
        -{\E\bigl[\xi_{jh}^2\big|\hX_{jh}^{h,\delta}\bigr]}\bigr\|_{L^q}
        =\bigl\| {\hE_\delta\bigl[\xi_{jh}^2\big|\tiX_{jh}^{h,\delta}\bigr]}
         -{\E\bigl[\xi_{jh}^2\big|\tiX_{jh}^{h,\delta}\bigr]}\bigr\|_{L^q}.$$
    By Lemma \ref{Lem_regularisation}, we deduce that uniformly for $j \in \{1,\dots,\frac Th-1\}$
        \begin{align*}
            \bigl\| {\hE_\delta\bigl[\xi_{jh}^2\big|\hX_{jh}^{h,\delta}\bigr]}
        -{\E\bigl[\xi_{jh}^2\big|\hX_{jh}^{h,\delta}\bigr]}\bigr\|_{L^q}
        &\lesssim \Bigl(\delta \sqrt{-\log(\delta)}\Bigr)^{\frac1q}.
        \end{align*}
        We conclude by noting that $\bigl(\delta \sqrt{-\log(\delta)}\bigr)^{\frac1q}\lesssim \delta^{\frac1p}$ and that,  by Lemma \ref{lem_laws} again, $\E\bigl[f\bigl(\tiX_T^{h,\delta}\bigr)\bigr]=\E\bigl[f\bigl(\hX_T^{h,\delta}\bigr)\bigr]$.
      \end{proof}

      \begin{proof}[Proof of Lemma \ref{Lem_regularisation}]
    We start by calculating the error in $L^1$. Let $\lambda=c_{min}^2\rhobar^2h$. First we note that by Lemma \ref{Lem_ExactCondexp} and the boundedness of $\xi$
    \begin{align*}
        \Bigl|\hE_\delta\bigl[ \xi_{jh}^2|\tiX^{h,\delta}_{jh}=x\bigr]-\E\bigl[ \xi_{jh}^2|\tiX^{h,\delta}_{jh}=x\bigr]
        \Bigr|
        &=\Biggl|\frac{\delta \E\Bigl[ \varphi_{\lambda}\bigl(x-\tiX_{jh-h/2}^{h,\delta}
        \bigr)\Bigr]-\delta\E\Bigl[\xi_{jh}^2\varphi_{\lambda}\bigl(x-\tiX_{jh-h/2}^{h,\delta}
        \bigr)\Bigr]}
        {\bigl(\E\Bigl[\varphi_{\lambda}\bigl(x-\tiX_{jh-h/2}^{h,\delta}
        \bigr)\Bigr]+\delta\bigr)\E\Bigl[ \varphi_{\lambda}\bigl(x-\tiX_{jh-h/2}^{h,\delta}
        \bigr)\Bigr]}\Biggr|
        \\
        &\le (1+\|\xi\|_{L^\infty}^2) \frac{\delta}{\E\Bigl[ \varphi_{\lambda}\bigl(x-\tiX_{jh-h/2}^{h,\delta}
        \bigr)\Bigr]+\delta}.
    \end{align*}
    Let $f$ denote the density of $\tiX_{jh-h/2}^{h,\delta}$, which exists due to Lemma \ref{Lem:Density}. By the fact that $\varphi_1(x)\gtrsim 1$ for $x \in (-1/2,1/2)$ we see that
    \begin{align*}
        \E\Bigl[ \varphi_{\lambda}\bigl(x-\tiX_{jh-h/2}^{h,\delta}
        \bigr)\Bigr]&= \int_{\mathbb{R}} \varphi_{\lambda}({x-y}) f(y)\,dy
        \\
        &=\int_{\mathbb{R}} \varphi_{1}({y}) f(x -\lambda \sqrt{y})\,dy
        \\
        &\gtrsim \int_{-\frac12}^{\frac12} f(x -\sqrt{\lambda} y)\,dy
        = \frac1{\sqrt{\lambda}}\int_{x-\frac{\sqrt{\lambda}}2}^{x + \frac
        {\sqrt{\lambda}}2} f(y)\,dy.
    \end{align*}

    Let 
    $A_k=\bigl[k\sqrt{\lambda}/2,k\sqrt{\lambda}/2+\sqrt{\lambda}/2\bigr)$ and set $k(x)=n\in \mathbb{N}$ if $x\in \bigl[n\sqrt{\lambda}/2,n\sqrt{\lambda}/2+\sqrt{\lambda}/2\bigr)$. Note that $k(x)=n$ for $x \in A_n$  and also by construction $x \in A_{k(x)}$.
    Then it follows that
    \begin{align*}
        \E\bigl[\varphi_{\lambda}\bigl(x-\tiX_{jh-h/2}^{h,\delta}
        \bigr) \bigr]
        &\gtrsim \frac1{\sqrt{\lambda}} \int_{A_{k(x)}} f(y)\,dy.
    \end{align*}
    In the following we denote by $\lceil x\rceil$ the smallest integer larger than $x \in \mathbb{R}$. We now fix a constant $M>0$.
    By boundedness of $\xi$ 
    \begin{align*}
        \E\Bigl[\Bigl|\hE_\delta\bigl[ \xi_{jh}^2|&\tiX^{h,\delta}_{jh}\bigr]-\E\bigl[ \xi_{jh}^2|\tiX^{h,\delta}_{jh}\bigr]
        \Bigr|\Bigr] 
        \\
        &\le
        2\|\xi\|_{L^\infty} \Prob(|\tiX_{jh-h/2}^{h,\delta}|>M)+\int_{-M}^M (1+\|\xi\|_{L^\infty})^2  \frac{\delta}{\E\bigl[\varphi_{\lambda}\bigl(x-\tiX_{jh-h/2}^{h,\delta}
        \bigr)\bigr]+\delta} f(x)\,dx
        \\
        &\lesssim  \Prob(|\tiX_{jh-h/2}^{h,\delta}|>M)+\sum_{k=-\bigl\lceil\frac M{\sqrt{\lambda}}\bigr\rceil}^{\bigl\lceil\frac M {\sqrt{\lambda}}\bigr\rceil} \int_{A_k}  \frac{\delta}{\E\bigl[\varphi_{\lambda}\bigl(x-\tiX_{jh-h/2}^{h,\delta}
        \bigr)\bigr]+\delta} f(x)\,dx
        \\
        &\lesssim \Prob(|\tiX_{jh-h/2}^{h,\delta}|>M)+\sum_{k=-\bigl\lceil\frac M{\sqrt{\lambda}}\bigr\rceil}^{\bigl\lceil\frac M{\sqrt{\lambda}}\bigr\rceil} \int_{A_k}  \frac{\delta}{\frac1{\sqrt{\lambda}} \int_{A_{k(x)}} f(y)\,dy } f(x)\,dx
        \\
        &\lesssim \Prob(|\tiX_{jh-h/2}^{h,\delta}|>M)+\delta \frac{M}{\sqrt{\lambda}}\sqrt{\lambda}. 
    \end{align*}
    The fact that for any $Z \in L^\infty$
    \begin{align*}
        \mathbb{E}\bigl[|Z|^p\bigr]\le \|Z\|_{L^\infty}^{p-1} \|Z\|_{L^1}
    \end{align*}
    now implies that
    \begin{align*}
        \Bigl\|\hE_\delta\bigl[ \xi_{jh}^2|\tiX^{h,\delta}_{jh}\bigr]-\E\bigl[ \xi_{jh}^2|\tiX^{h,\delta}_{jh}\bigr] \Bigr\|_{L^p} \lesssim {\bigl(\delta M\bigr)^{\frac1p}} + \Prob\bigl(|\tiX_{jh-h/2}^{h,\delta}|>M\bigr)^{\frac1p}.
    \end{align*}

    Lemma \ref{lem:Tail_estim}, the boundedness of $\sigma$ and Assumption \ref{Aspt_new} imply that
    \begin{align*}
        \Prob\bigl( |\tiX_{jh-h/2}^{h,\delta}|>M\bigr) 
        \lesssim \frac{ (b/a)\| \sigma\|_{L^\infty}\sqrt{T}}{M}    
        \exp\bigl( -\frac{M^2}{4(b/a)^2\|\sigma\|_{L^\infty}^2{T}}\bigr).
    \end{align*}
    Choosing $M=\sqrt{-4T (b/a)\|\sigma\|_{L^\infty}^2\log(\delta)}$ and recalling that $\delta<1/2$ we see that $\Prob\bigl( |\tiX_{jh-h/2}^{h,\delta}|>M\bigr) \lesssim \delta$ and by the previous part
    \begin{align*}
        \Bigl\| \hE_\delta\bigl[ \xi_{jh}^2|\tiX^{h,\delta}_{jh}\bigr]-\E\bigl[ \xi_{jh}^2|&\tiX^{h,\delta}_{jh}\bigr] \Bigr\|_{L^p} \lesssim \Bigl(\delta \sqrt{-\log(\delta)}\Bigr)^{\frac1p}.
    \end{align*}
\end{proof}

\section{Propagation of Chaos}\label{Sec_PoC}

In this section we fix the stepsize $h$ such that $\frac Th \in \mathbb{N}$, the lower bound $c_{min}$ from Assumption \ref{Aspt_new} as well as the regularisation parameter $\delta\in (0,1/2)$. We also work in the non-fully correlated setting, i.e.\ under Assumption \ref{Ass:Half_Step}. The main goal of this section is to introduce an interacting particle system with $N$ particles and to study the limit $N \rightarrow \infty$. For $i \in \{1,\dots,N\}$ the $i$-th particle will be denoted by the superscript $(i)$. 
Let $\bigl( B^{(i)},W^{(i)},Z^{(i)}\bigr)_{i=1}^N$ be independent Brownian motions (independent of each other and in $(i)$). Let $\Delta B_{jh}^{(i)}=B_{jh+h}^{(i)}-B_{jh}^{(i)}$ and similarly $\Delta W^{(i)}$ as well as $\Delta Z^{(i)}$. Recall that $\lambda=c_{min}^2\rhobar^2h$ and assume that $\frac Th\in \mathbb{N}$. 

For $j\ge 1$, let $\mu_{jh}=\mathcal{L}\bigl({\xi_{jh},\tiX_{jh-h/2}^{h,\delta}}\bigr)$ and set $\mu_0=\mathcal{L}(\xi_0,x_0)$. Recall the notation  
$\Psi_j(x,\mu_{jh})=\hE_\delta\bigl[ \xi_{jh}^2|\tiX^{h,\delta}_{jh}=x\bigr]$ from \eqref{Eq:Psi_Def}, with the special case that $\Psi_0(x,\mu_0)=\E[\xi_0^2]$.  We consider independent copies of the half-step scheme for $j\in \{0,\dots,\frac Th-1\}$
\begin{align*}
     \tiX^{h,(i),\delta}_{jh+h/2}&=\tiX^{h,(i),\delta}_{jh}+\sigma(jh,\tiX^{h,(i),\delta}_{jh}) \xi_{jh}^{(i)}\Psi_j^{-\frac12}\bigl(\tiX^{h,(i),\delta}_{jh},\mu_{jh}\bigr)\rho \Delta B_{jh}^{(i)}
     \\
     &\qquad+
     \biggl(\sigma^2(jh,\tiX^{h,(i),\delta}_{jh})\bigl(\xi_{jh}^{(i)}\bigr)^2\Psi_j^{-1}\bigl(\tiX^{h,(i),\delta}_{jh},\mu_{jh}\bigr)-c_{min}^2\biggr)^{1/2}
     \rhobar 
     \Delta W_{jh}^{(i)},
     \\
     \tiX^{h,(i),\delta}_{jh+h}&=\tiX_{jh+h/2}^{h,(i),\delta} + c_{min}\rhobar \Delta Z_{jh}^{(i)},
     \\
     \tiX^{h,(i),\delta}_{0}&=x_0.
\end{align*}
Furthermore we introduce the interacting particle scheme
\begin{gather}\label{Eq:Particle_System}
\begin{aligned}
   \tiX^{h,(i),N,\delta}_{jh+h/2}&=\tiX^{h,(i),N,\delta}_{jh}+\sigma(jh,\tiX^{h,(i),N,\delta}_{jh})\xi_{jh}^{(i)}\Psi_j^{-\frac12} \bigl(\tiX^{h,(i),N,\delta}_{jh},\mu_{jh}^N\bigr)\rho \Delta B_{jh}^{(i)}
     \\
     &\qquad+
     \biggl(\sigma^2(t,\tiX^{h,(i),N,\delta}_{jh})\bigl(\xi_{jh}^{(i)}\bigr)^2\Psi_j^{-1} \bigl(\tiX^{h,(i),N,\delta}_{jh},\mu_{jh}^N\bigr)-c_{min}^2\biggr)^{1/2}
     \rhobar 
     \Delta W_{jh}^{(i)},
     \\
     \tiX^{h,(i),N,\delta}_{jh+h}&=\tiX_{jh+h/2}^{h,(i),N,\delta} + {c_{min}}\rhobar \Delta Z_{jh}^{(i)},
     \\
      \tiX^{h,(i),N,\delta}_{0}&=x_0.
\end{aligned}
\end{gather}
Here we denote for $j\ge1$ by $\mu_{jh}^N$ the empirical measure $\mu_{jh}^N(dx,dy)=\frac1N \sum_{i=1}^N \delta_{\xi_{jh}^{(i)},\tiX_{jh-h/2}^{h,(i),N,\delta}}(dx,dy)$ and for consistency $\mu_0^N=\mu_0$ such that $\Psi_0(x_0,\mu_0^N)=\E[\xi_0^2]$.
By 
the recursive structure the measure $\mu_{jh}^N$ is well-defined. 
Recall that by Lemma \ref{lem_laws} for $j\ge 0$ and any $i \in \{1,\dots,N\}$ it holds that $\tiX_{jh}^{h,(i),\delta}\stackrel{d}{=}\hX_{jh}^{h,\delta}$ with the latter defined in Equation \eqref{eq_Xhdelta}. 
\begin{remark}\label{Rem:Psi}
    Recall that for $j\ge 1$ by definition $\Psi_j$ depends on the parameters $\lambda$ and $\delta$ via
    \begin{align*}
        \Psi_j^{-1}\bigl(y,\mu^N_{jh} \bigr)&=\frac
        {
        \frac1N \sum_{k=1}^N   \varphi_\lambda\bigl( y-\tiX_{jh-h/2}^{h,(k),N,\delta} \bigr)+\delta
        }
        { {\frac1N \sum_{k=1}^N \bigl(\xi_{jh}^{(k)}\bigr)^2 \varphi_\lambda\bigl( y-\tiX_{jh-h/2}^{h,(k),N,\delta}\bigr)}+\delta},
        \\
        \Psi_j^{-1}\bigl(y,\mu_{jh}\bigr)&= \frac{
        \E\bigl[ \varphi_\lambda\bigl(y-\tiX_{jh-h/2}^{h,(1),\delta}\bigr)\bigr]+\delta
        }{{\E\bigl[ \bigl(\xi_{jh}^{(1)}\bigr)^2\varphi_\lambda\bigl(y-\tiX_{jh-h/2}^{h,(1),\delta}\bigr)\bigr]}+\delta }.
    \end{align*}
    For $j=0$ we simply have 
    \begin{align*}
        \Psi_0^{-1}\bigl(y,\mu_{0}\bigr)=\Psi_0^{-1}\bigl(y,\mu_{0}^N\bigr)=\E\bigl[\xi_0^2\bigr].
    \end{align*}

    A direct application of Assumption \ref{Aspt_new} shows that $\Psi_j^{-1}\le a^{-1}$, which shows that all terms in the sqare roots are bounded from below by $3 c_{min}^2$.
\end{remark}

\begin{theorem}\label{Thm:HS_Prop_of_chaos}
    Let Assumptions \ref{Ass:Half_Step} and \ref{Aspt_new} hold. Let $\delta \in (0,1/2)$ and let $T,h$ be such that $\frac Th\in \mathbb{N}$. Then there are constants $C,c>0$ such that
    \begin{align*}
        \Bigl\| \tiX_T^{h,(1),\delta} - \tiX_T^{h,(1),N,\delta} \Bigr\|_{L^2}^2
        \le 
         \frac{ c}{N} h^{1/2} \delta^{-1} 
        \exp\Bigl(T
        h^{-1} \log\bigl( C  \delta^{-1}h^{-1/2}\bigr)\Bigr)
    \end{align*}
    \end{theorem}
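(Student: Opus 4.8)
The plan is to prove Theorem~\ref{Thm:HS_Prop_of_chaos} by the classical coupling argument for propagation of chaos (as in \cite[Proposition 4.1]{KohatsuShigeyoshi1997}), tracking carefully the dependence on $h$, $\delta$ and $N$. Throughout I compare the $i$-th interacting particle $\tiX^{h,(i),N,\delta}$ with the $i$-th independent copy $\tiX^{h,(i),\delta}$, both driven by the \emph{same} Brownian increments $\Delta B^{(i)}, \Delta W^{(i)}, \Delta Z^{(i)}$. The natural quantity to control is $e_{jh}^N := \sup_{i}\,\E\bigl[\,\bigl|\tiX^{h,(i),N,\delta}_{jh/2\text{-grid}} - \tiX^{h,(i),\delta}_{jh/2\text{-grid}}\bigr|^2\,\bigr]$ on the half-step grid, and to establish a discrete Gr\"onwall-type recursion $e_{(j+1)h}^N \le (1 + K h^{-1})\,e_{jh}^N + \frac{C}{N}\,h^{1/2}\delta^{-2}$ or similar, where the constant $K$ in front of $e_{jh}^N$ blows up like $\delta^{-1}h^{-1/2}$ because of the (non-)Lipschitz constant of $\Psi_j^{-1}$.

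First I would decompose the one-step error. From the recursion \eqref{Eq:Particle_System} versus its mean-field analogue, after conditioning and using that the square-root coefficients are bounded below by $3c_{min}^2$ and that $\sigma$ is Lipschitz, the increment error splits into: (a) a term from the difference of the drift/diffusion coefficients evaluated at the two positions, which is Lipschitz and contributes $\lesssim (1+Ch)\,e_{jh}^N$ after taking expectations of the squared Gaussian increments (factor $h$ from $\E[(\Delta B)^2]$, but also an $O(1)$ contribution when the $\Psi$-dependence is differentiated — this is where care is needed); (b) a term from $\Psi_j^{-1}(y,\mu_{jh}^N) - \Psi_j^{-1}(y,\mu_{jh})$ at fixed $y$, which is the genuine propagation-of-chaos term; (c) a term from the position-dependence $\Psi_j^{-1}(\tiX^{N},\mu_{jh}^N) - \Psi_j^{-1}(\tiX,\mu_{jh}^N)$, handled by the $x$-Lipschitz bound on $\Psi_j^{-1}$. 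For (c), one differentiates $y\mapsto \varphi_\lambda(y-\cdot)$, producing a factor $\lambda^{-1} = (c_{min}^2\rhobar^2 h)^{-1}$ from $\varphi_\lambda'/\varphi_\lambda$, but since the coefficient is multiplied by $\Delta B$ whose $L^2$-norm is $\sqrt h$, after squaring we get $h \cdot \lambda^{-2}\cdot(\text{something})$; however the denominator of $\Psi_j^{-1}$ is only bounded below by $\delta$ in the worst case (when the empirical Gaussian sum is tiny), so one also picks up $\delta^{-1}$ or $\delta^{-2}$ factors — this is the source of the $\log(C\delta^{-1}h^{-1/2})$ appearing in the exponent after $\frac Th$ iterations of a factor $\approx (1 + C\delta^{-1}h^{-1/2}\cdot h)^{?}$. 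For (b), I would use that $\Psi_j^{-1}$ is Lipschitz in the measure argument with respect to a suitable Wasserstein/bounded-Lipschitz distance — here the regularisation $+\delta$ in numerator and denominator is crucial, giving a Lipschitz constant $\lesssim \delta^{-2}\|\varphi_\lambda\|_{C_1} \lesssim \delta^{-2}\lambda^{-1/2}$ — and then bound $\E[\mathcal W_2(\mu_{jh}^N,\mu_{jh})^2]$ by the standard $\frac{C}{N}$ rate for empirical measures of i.i.d.\ samples (in dimension $1$, with bounded-Lipschitz test functions the rate is $N^{-1}$, not $N^{-1/2}$, because $\varphi_\lambda$ is a fixed smooth bounded function of the sample — one really uses a variance computation $\mathrm{Var}(\frac1N\sum g(\tiX^{(k)})) = \frac1N\mathrm{Var}(g(\tiX^{(1)})) \lesssim \frac{\|g\|_\infty^2}{N}$ with $g = \varphi_\lambda(y-\cdot)$ or $g=\xi^2\varphi_\lambda(y-\cdot)$, uniformly in $y$). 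The factor $h^{1/2}$ in the final bound comes precisely from $\|\varphi_\lambda\|_\infty^2 \cdot (\text{integration in }y) \sim \lambda^{-1}\cdot\lambda^{1/2} = \lambda^{-1/2}\sim h^{-1/2}$ being cancelled, or rather from the $\Delta B$ multiplier; I would track this bookkeeping explicitly.

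Then I would close the argument with a discrete Gr\"onwall lemma: writing $a_j := e_{jh}^N$ I obtain $a_{j+1} \le \gamma\, a_j + \beta$ with $\gamma = 1 + C h \cdot \delta^{-2}\lambda^{-1}$ (roughly; the precise power of $h$ and $\delta$ must be read off from the worst of (a)–(c)) and $\beta = \frac{c}{N}h^{1/2}\delta^{-2}\lambda^{-1}$, whence $a_{T/h} \le \beta\,\frac{\gamma^{T/h}-1}{\gamma-1} \le \frac{\beta}{\gamma-1}\gamma^{T/h} \lesssim \frac{c}{N}\,\delta^{-1}h^{1/2}\exp\bigl(\frac Th\log\gamma\bigr)$, matching the claimed bound once one identifies $\log\gamma \approx \log(C\delta^{-1}h^{-1/2})$ (the constant $1$ inside $\gamma$ being absorbed since $\delta^{-1}h^{-1/2}$ is large). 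Finally, since the $Z$-increment half-step is the \emph{same} Brownian motion for both systems and adds an identical Gaussian, it does not increase the error, so $\bigl\|\tiX^{h,(1),\delta}_T - \tiX^{h,(1),N,\delta}_T\bigr\|_{L^2}^2 = e_{T}^N$, giving the theorem.

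The main obstacle I anticipate is the bookkeeping of the three coupled small/large parameters in step (c): one must differentiate $\Psi_j^{-1}$ in its spatial argument, where $\partial_y\varphi_\lambda(y) = -\frac{y}{\lambda}\varphi_\lambda(y)$ introduces the dangerous $\lambda^{-1}\sim h^{-1}$, the denominator can be as small as $\delta$ introducing $\delta^{-1}$ or $\delta^{-2}$, and all of this is then multiplied by a Gaussian increment of order $\sqrt h$ before squaring — getting the \emph{exact} powers $h^{1/2}\delta^{-1}$ in the prefactor and $\log(C\delta^{-1}h^{-1/2})$ in the exponent (rather than something cruder like $\delta^{-2}h^{-1}$) requires exploiting that in the worst region where the Gaussian-kernel denominator is $O(\delta)$, the numerator difference is also correspondingly small, and using the lower bound $\E[\varphi_\lambda(x-\tiX^{h,\delta}_{jh-h/2})]\gtrsim \lambda^{-1/2}\int_{A_{k(x)}}f$ from the proof of Lemma~\ref{Lem_regularisation} to control things after integrating against the density, exactly as in that lemma. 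In short, the propagation-of-chaos skeleton is routine; the delicate part is re-using the density lower bounds and the structure of $\hE_\delta$ to keep the $\delta$- and $h$-exponents as small as stated.
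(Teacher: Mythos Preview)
Your skeleton (synchronous coupling, split the one-step error into position-Lipschitz, measure-difference, and coefficient-Lipschitz pieces, close with a discrete Gr\"onwall) is exactly the one the paper uses. The gap is in the ``delicate part'' you flag at the end: your proposed fix --- importing the density lower bound $\E[\varphi_\lambda(x-\tiX)]\gtrsim\lambda^{-1/2}\int_{A_{k(x)}}f$ from Lemma~\ref{Lem_regularisation} --- is the wrong tool and will not recover the stated exponents. That bound is an $L^1$-type estimate obtained \emph{after} integrating against the density of $\tiX$; here you need a \emph{pathwise} (deterministic, uniform in $y$ and in the sample) control of the Lipschitz constants feeding into the Gr\"onwall coefficient, and the density lower bound gives nothing of that sort.

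What the paper actually does is different and rests on the elementary Gaussian inequality $|y|\varphi_\lambda(y)\lesssim \gamma^{-1/2}\lambda^{(1-\gamma)/2}\varphi_\lambda(y)^{1-\gamma}$ (Lemma~\ref{Lem:Gauss_facts}), used twice. First, in Lemma~\ref{rem_G_deriv_improv} it yields $\|\Psi_j^{-1/2}(\cdot,\mu)\|_{\mathrm{Lip}}\lesssim \gamma^{-1/2}\lambda^{-(1+\gamma)/2}\delta^{-\gamma}$ (via $\sup_{y\ge 0} y^{1-\gamma}/(y+\delta)\le\delta^{-\gamma}$), replacing your crude $\delta^{-1}\lambda^{-1}$ by something with an \emph{arbitrarily small} negative power of $\delta$. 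Second, in Lemma~\ref{lem:estim_AB} the same trick gives $|\varphi_\lambda(x)-\varphi_\lambda(y)|\lesssim \lambda^{-(1+\gamma)/2}\max\{\varphi_\lambda(x),\varphi_\lambda(y)\}^{1-\gamma}|x-y|$, and then an algebraic estimate on ratios of the form $\frac{\sum\alpha_k+\delta}{\sum\alpha_k\gamma_k+\delta}-\frac{\sum\beta_k+\delta}{\sum\beta_k\gamma_k+\delta}$ extracts only $\delta^{-1/2}$ (not $\delta^{-2}$) from the denominator. It is precisely the combination of these two lemmas that produces the Gr\"onwall factor $1+Ch\delta^{-1}\lambda^{-3/2}$ and hence $\log(C\delta^{-1}h^{-1/2})$ in the exponent; your crude constants would give $1+C\delta^{-2}h^{-1/2}$ or worse.

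A smaller point: in your item (b) you write ``bound $\E[\mathcal W_2(\mu_{jh}^N,\mu_{jh})^2]$ by the standard $C/N$ rate for i.i.d.\ samples'', but $\mu_{jh}^N$ is the empirical measure of the \emph{interacting} particles at the previous half-step, not of i.i.d.\ copies. The paper inserts the intermediate empirical measure of the independent copies (the $\mathcal A_1,\mathcal B_1$ terms) and handles the two pieces separately: the i.i.d.\ piece by conditional variance (giving $N^{-1}\|\varphi_\lambda\|_\infty^2\lesssim N^{-1}\lambda^{-1}$), and the interacting-vs-independent piece by Lemma~\ref{lem:estim_AB}, which is what couples back to $e_{jh}$ in the recursion. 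Without this split your argument is circular.
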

\begin{remark}\label{Rmk:HS_Prop_of_chaos}
    Recall that $\lambda=c_{min}^2 \rhobar^2 h$. 
    The proof below shows that $c,C$ do not depend on $h,\delta$ or $N$. Furthermore $c,C\rightarrow \infty$ as $c_{min},\rhobar \rightarrow 0$. 
\end{remark}

Before we provide the proof of Theorem \ref{Thm_Particle_Rate_Intro} we will give its precise statement
\begin{theorem}\label{Thm_Particle_Rate}
    Let $f$ be three times weakly differentiable with derivatives of polynomial growth. Let $\sigma$ and $\xi$ satisfy Assumption \ref{Aspt_new}. Let Assumption \ref{Ass:Half_Step} hold and let $\delta,h \in (0,1/2)$ such that $\frac Th \in \mathbb{N}$. Then there is a constant $C>0$ such that
    \begin{align*}
         \Bigl|\E\bigl[ f(Y_T)\bigr]-\E\bigl[f\bigl(\tiX_T^{h,(1),N,\delta}\bigr)\bigr]\Bigr|
        \lesssim 
        h + \delta^{1-}
        + \frac1{\sqrt{N}} \delta^{-1} 
        \exp\Bigl(\frac T2
        h^{-1} \log\bigl( C  \delta^{-1}h^{-1/2}\bigr)\Bigr).
    \end{align*}
\end{theorem}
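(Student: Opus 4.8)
The plan is to obtain the bound by a triangle inequality, splitting at the (non-interacting) half-step scheme and then invoking the two error estimates already proved. Write
\begin{align*}
\bigl|\E[f(Y_T)]-\E\bigl[f\bigl(\tiX_T^{h,(1),N,\delta}\bigr)\bigr]\bigr|
\le \bigl|\E[f(Y_T)]-\E\bigl[f\bigl(\tiX_T^{h,\delta}\bigr)\bigr]\bigr|
+\bigl|\E\bigl[f\bigl(\tiX_T^{h,(1),\delta}\bigr)\bigr]-\E\bigl[f\bigl(\tiX_T^{h,(1),N,\delta}\bigr)\bigr]\bigr|,
\end{align*}
where I first observe that $\E\bigl[f\bigl(\tiX_T^{h,\delta}\bigr)\bigr]=\E\bigl[f\bigl(\tiX_T^{h,(1),\delta}\bigr)\bigr]$: the processes $\tiX^{h,(i),\delta}$ are i.i.d.\ copies of $\tiX^{h,\delta}$, being built from mutually independent Brownian motions and the \emph{same} deterministic flow of measures $(\mu_{jh})_{j}$ with $\mu_{jh}=\mathcal{L}(\xi_{jh},\tiX_{jh-h/2}^{h,\delta})$.

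For the first term, Theorem \ref{thm_HScondExp_err} gives, for every $p>1$, the bound $h+\delta^{1/p}$; letting $p\downarrow 1$ this is exactly $h+\delta^{1-}$ in the sense defined in the statement of Theorem \ref{Thm_Particle_Rate_Intro}. For the second (propagation-of-chaos) term I would use that $f'$ has polynomial growth, so by the fundamental theorem of calculus $|f(x)-f(y)|\lesssim |x-y|\bigl(1+|x|^m+|y|^m\bigr)$ for some $m\in\mathbb{N}$, whence by Cauchy--Schwarz
\begin{align*}
\bigl|\E\bigl[f\bigl(\tiX_T^{h,(1),\delta}\bigr)\bigr]-\E\bigl[f\bigl(\tiX_T^{h,(1),N,\delta}\bigr)\bigr]\bigr|
\lesssim \bigl\|\tiX_T^{h,(1),\delta}-\tiX_T^{h,(1),N,\delta}\bigr\|_{L^2}\,
\bigl\|1+\bigl|\tiX_T^{h,(1),\delta}\bigr|^m+\bigl|\tiX_T^{h,(1),N,\delta}\bigr|^m\bigr\|_{L^2}.
\end{align*}
The second factor is $O(1)$ uniformly in $N,h,\delta$: by Assumption \ref{Aspt_new} and Remark \ref{Rem:Psi} (namely $\Psi_j^{-1}\le a^{-1}$) each half-step increment has a coefficient bounded by a universal constant times a Brownian increment, so both the i.i.d.\ copy and the particle have all moments bounded uniformly (e.g.\ via the moment estimate of Lemma \ref{lem:Tail_estim}, which also applies to the particle system since its drift/diffusion coefficients are uniformly bounded). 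For the first factor, Theorem \ref{Thm:HS_Prop_of_chaos} yields
\begin{align*}
\bigl\|\tiX_T^{h,(1),\delta}-\tiX_T^{h,(1),N,\delta}\bigr\|_{L^2}
\le \sqrt{c/N}\;h^{1/4}\delta^{-1/2}\exp\!\Bigl(\tfrac T2 h^{-1}\log\bigl(C\delta^{-1}h^{-1/2}\bigr)\Bigr),
\end{align*}
and since $h,\delta\in(0,1/2)$ we bound $h^{1/4}\delta^{-1/2}\le \delta^{-1}$, absorbing $\sqrt c$ into the hidden constant. Adding the three contributions gives the asserted estimate.

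There is no real obstacle left here: the heavy lifting is in Theorems \ref{thm_HScondExp_err} and \ref{Thm:HS_Prop_of_chaos}, which are taken as given. The only points demanding care are (i) the distributional identity $\tiX_T^{h,\delta}\stackrel{d}{=}\tiX_T^{h,(1),\delta}$, which legitimizes the split; (ii) handling the non-Lipschitz test function $f$ through uniform-in-$N$ moment bounds for the interacting particle system; and (iii) the bookkeeping of the powers of $h$ and $\delta$ needed to recast the $L^2$ propagation-of-chaos rate into the exact form stated.
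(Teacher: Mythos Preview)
Your proposal is correct and follows essentially the same approach as the paper: triangle inequality splitting at $\tiX_T^{h,(1),\delta}$, Theorem~\ref{thm_HScondExp_err} for the first term, and Theorem~\ref{Thm:HS_Prop_of_chaos} together with the local Lipschitz property of $f$ (via polynomial growth of $f'$) for the second. If anything, you are slightly more explicit than the paper in justifying the uniform-in-$N$ moment bound for the interacting particle via Remark~\ref{Rem:Psi} and Lemma~\ref{lem:Tail_estim}, and in the final bookkeeping $h^{1/4}\delta^{-1/2}\le\delta^{-1}$.
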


\begin{proof}[Proof of Theorem \ref{Thm_Particle_Rate}]
    By Theorem \ref{thm_HScondExp_err} we see that for any $p>1$, with hidden constants depending on $p$,
    \begin{align*}
        \Bigl|\E\bigl[ f(Y_T)\bigr]-\E\bigl[f\bigl(\tiX_T^{h,\delta}\bigr)\bigr]\Bigr|\lesssim h + \delta^{1/p}.
    \end{align*}
     Recall that $\tiX_T^{h,\delta}\stackrel{d}{=}\tiX_T^{h,(1),\delta}$.  
    Theorem \ref{Thm:HS_Prop_of_chaos} together with Remark \ref{Rmk:HS_Prop_of_chaos} and the local Lipschitz-property of $f$ together with the polynomial growth of the derivative imply that  
    there is a constant $C>0$ such that
     \begin{align*}
        \Bigl|\E\bigl[f\bigl(\tiX_T^{h,(1),\delta}\bigr)\bigr]-\E\bigl[f\bigl(\tiX_T^{h,(1),N,\delta}\bigr)\bigr]\Bigr|
        &\lesssim \Bigl\|
        \tiX_T^{h,(1),\delta}-\tiX_T^{h,(1),N,\delta}
        \Bigr\|_{L^2}
        \\
        &\lesssim\frac1{\sqrt{N}}
        h^{1/4} \delta^{-1/2}\exp\Bigl(\frac T2
        h^{-1} \log\bigl( C\delta^{-1}h^{-3/2}\bigr)
        \Bigr)
        \\
        &\lesssim\frac1{\sqrt{N}}
         \delta^{-1/2}\exp\Bigl(\frac T2
        h^{-1} \log\bigl( C\delta^{-1}h^{-3/2}\bigr)
        \Bigr).
    \end{align*}
    For the last inequality we used the fact that $h\lesssim 1$.

    \end{proof}
    As $x\mapsto \log(x)$ grows slower than $|x|^\alpha$ with any positive power $\alpha>0$, so that 
    \begin{align*}
        \Bigl|\E\bigl[f\bigl(\tiX_T^{h,(1),\delta}\bigr)\bigr]-\E\bigl[f\bigl(\tiX_T^{h,(1),N,\delta}\bigr)\bigr]\Bigr|
        &\lesssim\frac1{\sqrt{N}} \delta^{-1/2}
        \exp\Bigl( Ch^{-1-} \delta^{0-}\Bigr).
    \end{align*}
    which yields the statement of Theorem \ref{Thm_Particle_Rate_Intro}.

For the proof of propagation of chaos we use Lipschitz properties of the map $\Psi$ established in the following lemma. 
\begin{lemma}\label{rem_G_deriv_improv}
Let $\lambda,\delta \in (0,1)$ and $\mu\in \mathcal{P}_2(\mathbb{R}\times\mathbb{R})$  induced by some random variables $(\xi,X)$ such that $\xi$ is bounded from above and below as in Assumption \ref{Aspt_new}. For $j\ge 0$ consider $\Psi_j$ with its dependence on $\delta$ and $\lambda=c_{min}^2\rhobar^2h$ as defined in \eqref{Eq:Psi_Def}. Then the map $y \mapsto \Psi_j^{-\frac12}(y,\mu)$ is Lipschitz and for all $\gamma \in (0,1)$
\begin{align*}
        \| \Psi_j^{-\frac12}\|_{Lip} \lesssim \gamma^{-\frac12} \lambda^{-\frac{1+\gamma}2} \delta^{-\gamma}.
    \end{align*}
\end{lemma}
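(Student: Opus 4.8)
The plan is to reduce the Lipschitz bound on $\Psi_j^{-\frac12}$ to a uniform bound on the $y$-derivative of $\Psi_j^{-1}(y,\mu)$, and then to estimate that derivative via a pointwise interpolation inequality for the Gaussian kernel. For $j\ge 1$ write $N(y)=\int\varphi_\lambda(y-z)\,d\mu(w,z)$ and $D(y)=\int w^2\varphi_\lambda(y-z)\,d\mu(w,z)$, so that $\Psi_j^{-1}(y,\mu)=\frac{N(y)+\delta}{D(y)+\delta}$ (for $j=0$ the map $\Psi_0^{-\frac12}(\cdot,\mu)$ is constant in $y$, so there is nothing to prove). Since $a\le w\le b$ with $a<1<b$ on the support of $\mu$, one has $a^2N\le D\le b^2N$, hence $b^{-2}\le\Psi_j^{-1}\le a^{-2}$; in particular $\Psi_j^{-\frac12}\in[a,b]$, so by the chain rule $|(\Psi_j^{-\frac12})'|\le\frac b2|(\Psi_j^{-1})'|$. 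Differentiating the quotient — legitimate since $\varphi_\lambda$ and $\varphi_\lambda'$ are bounded, so one may differentiate under the integral sign — and using $\Psi_j^{-1}\le a^{-2}$ together with $D+\delta\ge a^2(N+\delta)$, one arrives at
\[
|(\Psi_j^{-1})'(y)|\lesssim \frac{\int|\varphi_\lambda'(y-z)|\,d\nu(z)}{\delta+\int\varphi_\lambda(y-z)\,d\nu(z)},
\]
where $\nu$ denotes the second marginal of $\mu$ and the hidden constant depends only on $a,b$.

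The heart of the argument is the following pointwise interpolation estimate for the Gaussian kernel: for every $\gamma\in(0,1)$ and every $u\in\R$,
\[
|\varphi_\lambda'(u)|=\frac{|u|}{\lambda}\varphi_\lambda(u)\;\lesssim\;\gamma^{-\frac12}\lambda^{-\frac{1+\gamma}2}\,\varphi_\lambda(u)^{1-\gamma},
\]
with absolute implied constant. This is obtained by writing $|\varphi_\lambda'(u)|=\bigl(\tfrac{|u|}{\lambda}\varphi_\lambda(u)^\gamma\bigr)\,\varphi_\lambda(u)^{1-\gamma}$, substituting $u=\sqrt\lambda v$, and using $\sup_{v}|v|e^{-\gamma v^2/2}=(\gamma e)^{-1/2}$. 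The second ingredient handles the regularisation: for every $u$,
\[
\varphi_\lambda(u)^{1-\gamma}\le\delta^{-\gamma}\bigl(\delta+\varphi_\lambda(u)\bigr),
\]
which follows by distinguishing $\varphi_\lambda(u)\le\delta$ (then $\varphi_\lambda(u)^{1-\gamma}\le\delta^{1-\gamma}$) and $\varphi_\lambda(u)>\delta$ (then $\varphi_\lambda(u)^{-\gamma}\le\delta^{-\gamma}$).

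Combining the two inequalities and integrating against the probability measure $\nu$ yields
\[
\int|\varphi_\lambda'(y-z)|\,d\nu(z)\lesssim\gamma^{-\frac12}\lambda^{-\frac{1+\gamma}2}\delta^{-\gamma}\Bigl(\delta+\int\varphi_\lambda(y-z)\,d\nu(z)\Bigr),
\]
so the ratio in the first display is bounded by $\gamma^{-\frac12}\lambda^{-\frac{1+\gamma}2}\delta^{-\gamma}$ uniformly in $y$. Since $\Psi_j^{-\frac12}$ is smooth in $y$ with this uniform derivative bound, it is Lipschitz with the claimed constant.

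I do not expect a genuine obstacle here; the only care needed is to keep the constants coming from $a,b$ (i.e.\ from Assumption \ref{Aspt_new}) cleanly separate from $\lambda$ and $\delta$, since they must not appear in the final rate. The one pitfall is to resist replacing the interpolation inequality by a cruder decomposition of $\int|\varphi_\lambda'(y-z)|\,d\nu$ according to the size of $|y-z|$: that route produces spurious factors $\sqrt{\log(1/\delta)}$ (and $\sqrt{\log(1/\lambda)}$) in place of the clean $\gamma^{-1/2}\delta^{-\gamma}$, so the interpolation step is the one that must be executed carefully — and it is exactly what converts the $\lambda^{-1/2}$ blow-up of $\varphi_\lambda'$ relative to $\varphi_\lambda$, and the $\delta$ in the denominator, into the stated $\lambda^{-(1+\gamma)/2}\delta^{-\gamma}$.
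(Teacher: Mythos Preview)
Your proof is correct and follows essentially the same line as the paper's: reduce via the chain rule to a bound on $\frac{\int|\varphi_\lambda'(y-z)|\,d\nu}{\delta+\int\varphi_\lambda(y-z)\,d\nu}$, then invoke the Gaussian interpolation $|u|\varphi_\lambda(u)\lesssim\gamma^{-1/2}\lambda^{(1-\gamma)/2}\varphi_\lambda(u)^{1-\gamma}$. The only difference is in the last step: the paper applies Jensen's inequality $\E[\varphi_\lambda^{1-\gamma}]\le(\E[\varphi_\lambda])^{1-\gamma}$ and then bounds $g_\delta(t)=t^{1-\gamma}/(t+\delta)$ by the scaling $g_\delta(t)=\delta^{-\gamma}g_1(t/\delta)\le\delta^{-\gamma}$, whereas you use the pointwise bound $\varphi_\lambda^{1-\gamma}\le\delta^{-\gamma}(\delta+\varphi_\lambda)$ directly; both routes yield the same $\delta^{-\gamma}$ factor, and yours is marginally more elementary.
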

\begin{proof}
     Recall that $\xi$ is bounded from below and above. For $j=0$ the map $\Psi_0$ does not depend on the first parameter, hence this statement is clear. Assume now that $j\ge1$.
     
     Chain rule implies that
    \begin{align*}
        \frac{\partial}{\partial y} \Psi_j^{-\frac12}(y,\mu)&=\frac12\sqrt{ \frac{{\E\bigl[ \xi^2
        \varphi_{\lambda}\bigl(y-X\bigr)\bigr]}+\delta }{
        \E\bigl[ \varphi_{\lambda}\bigl(y-X\bigr)\bigr]+\delta
        }
        }
        \Biggl(
        \frac{
        \E\bigl[ \varphi_{\lambda}'\bigl(y-X\bigr)\bigr] 
        \Bigl(
        \E\bigl[ \xi^2\varphi_{\lambda}\bigl(y-X\bigr)\bigr]+\delta
        \Bigr)      }
        {
        \Bigl({{\E\bigl[ \xi^2\varphi_{\lambda}\bigl(y-X\bigr)\bigr]}+\delta }\Bigr)^2}
        \\
        & \qquad \qquad \qquad 
        - 
        \frac{
        \Bigl(\E\bigl[ \varphi_{\lambda}\bigl(y-X\bigr)\bigr] +\delta\Bigr)
        \E\bigl[ \xi^2\varphi_{\lambda}'\bigl(y-X\bigr)\bigr]
        }
        {
        \Bigl({{\E\bigl[ \xi^2\varphi_{\lambda}\bigl(y-X\bigr)\bigr]}+\delta }\Bigr)^2}
        \Biggr)
    \end{align*}
    The term in the square-root is bounded. By boundedness of $\xi$ we only need to consider the expression
    \begin{align*}
        \frac{
        \E\bigl[ \bigl| \varphi_{\lambda}'\bigl(y-X\bigr)\bigr| \bigr] +\delta
       }
        {
        {{\E\bigl[ \varphi_{\lambda}\bigl(y-X\bigr)\bigr]}+\delta }}.
    \end{align*}
    By Lemma \ref{Lem:Gauss_facts}
    \begin{align*}
        \frac{  \E\bigl[ \bigl| \varphi_{\lambda}'\bigl(y-X\bigr)\bigr| \bigr] +\delta
       }
        {
        {{\E\bigl[ \varphi_{\lambda}\bigl(y-X\bigr)\bigr]}+\delta }}
        &=
        \frac{\frac1\lambda\E\Bigl[ \bigl|y-X\bigr| \varphi_{\lambda}\bigl(y-X\bigr) \Bigr] +\delta 
       }
        {
        {\E\bigl[ \varphi_{\lambda}\bigl(y-X\bigr)\bigr]+\delta }}
        \lesssim
        \frac{\frac{\lambda^{\frac{1-\gamma}2}}{\lambda\sqrt{\gamma}}\E\Bigl[ \varphi_{\lambda}\bigl(y-X\bigr)^{1-\gamma} \Bigr] +\delta 
       }
        {
        {\E\bigl[ \varphi_{\lambda}\bigl(y-X\bigr)\bigr]+\delta }}.
    \end{align*}
    Jensen's inequality now implies that
    \begin{align*}
        \frac{  \E\bigl[ \bigl| \varphi_{\lambda}'\bigl(y-X\bigr)\bigr| \bigr] +\delta
       }
        {
        {{\E\bigl[ \varphi_{\lambda}\bigl(y-X\bigr)\bigr]}+\delta }}
        \lesssim 
        \frac{{\lambda}^{-\frac{1+\gamma}2}\gamma^{-\frac12}\E\Bigl[ \varphi_{\lambda}\bigl(y-X\bigr)\Bigr]^{{1-\gamma}} +\delta 
       }
        {
        {\E\bigl[ \varphi_{\lambda}\bigl(y-X\bigr)\bigr]+\delta }} 
        \lesssim 1+ 
        \frac{{\lambda}^{-\frac{1+\gamma}2}\gamma^{-\frac12}\E\Bigl[ \varphi_{\lambda}\bigl(y-X\bigr)\Bigr]^{{1-\gamma}}    }
        {
        {\E\bigl[ \varphi_{\lambda}\bigl(y-X\bigr)\bigr]+\delta }}.
    \end{align*}
    We now consider the function 
    \begin{align*}
        g_\delta:y\mapsto \frac{y^{1-\gamma}}{y+\delta} 
    \end{align*}
    and observe that since $g_\delta(y)=\delta^{-\gamma}g_1(y/\delta)$, $\sup_y |g_\delta(y)|= \delta^{-\gamma} \sup_y |g_1(y)|\le \delta^{-\gamma}$.
    Plugging this in we see that
    \begin{align*}
         \frac{  \E\bigl[ \bigl| \varphi_{\lambda}'\bigl(y-X\bigr)\bigr| \bigr] +\delta
       }
        {
        {{\E\bigl[ \varphi_{\lambda}\bigl(y-X\bigr)\bigr]}+\delta }}
        \lesssim
        1+ \Bigl( \lambda^{\frac{1+\gamma}2}\sqrt{\gamma} \delta^\gamma\Bigr)^{-1}\lesssim \gamma^{-\frac12} \lambda^{-\frac{1+\gamma}2} \delta^{-\gamma}.
    \end{align*}
\end{proof}

We now are able to show propagation of chaos as a last ingredient for the proof of Theorem \ref{Thm_Particle_Rate}.

\begin{proof}[Proof of Theorem \ref{Thm:HS_Prop_of_chaos}]
    For this proof we suppress $h$ and $\delta$ in the superscripts for readability, meaning that we set $\tiX_{jh}^{(i)}=\tiX_{jh}^{h,(i),\delta}$ as well as $\tiX_{jh}^{(i),N}=\tiX_{jh}^{h,(i),N,\delta}$.
    
    For $j=1$ we actually have that $\tiX_{h/2}^{(i),N}=\tiX_{h/2}^{(i)}$ due to the last equation in Remark \ref{Rem:Psi} and the definition of the recursion schemes.
    Therefore the first Euler-step coincides, i.e.\ $\tiX_{h}^{(i),N}=\tiX_{h}^{(i)}$.    
    We now fix some index $i\in \{1,\dots,N\}$, some grid point $j \in \{1,\dots,\frac Th-1\}$ and study $\tiX_{jh+h}^{(i)}-\tiX_{jh+h}^{(i),N}$. Then
    \begin{align*}
        \tiX_{jh+h}^{(i)}-&\tiX_{jh+h}^{(i),N}=\tiX_{jh}^{(i)}-\tiX_{jh}^{(i),N} \\
        &\qquad + \sigma(jh,\tiX_{jh}^{(i)})\xi_{jh}^{(i)} \Bigl(
        \Psi_j^{-\frac12}\bigl(\tiX_{jh}^{(i)},\mu_{jh}\bigr)
        -
        \Psi_j^{-\frac12}\bigl(\tiX_{jh}^{(i)},\mu^N_{jh} \bigr) \Bigr)\rho \Delta B_{jh}^{(i)}
         &\eqqcolon (1)
        \\
        &\qquad + \sigma(jh,\tiX_{jh}^{(i)})\xi_{jh}^{(i)} \Bigl( \Psi_j^{-\frac12}\bigl(\tiX_{jh}^{(i)},\mu^N_{jh} \bigr) - \Psi_j^{-\frac12}\bigl(\tiX_{jh}^{(i),N},\mu^N_{jh}\bigr) \Bigr)\rho \Delta B_{jh}^{(i)} &\eqqcolon (2)
        \\ 
        &\qquad + \Bigl(\sigma(jh,\tiX_{jh}^{(i)})-\sigma(jh,\tiX_{jh}^{(i),N})\Bigr)\xi_{jh}^{(i)}\Psi_j^{-\frac12}\bigl(\tiX_{jh}^{(i),N},\mu^N_{jh}
        \bigr)\rho\,\Delta B_{jh}^{(i)} &\eqqcolon (3)
        \\ 
        &\qquad + \biggl( \biggl(\sigma^2(jh,\tiX^{(i)}_{jh})\bigl(\xi_{jh}^{(i)}\bigr)^2\Psi_j^{-1}\bigl(\tiX^{(i)}_{jh},\mu_{jh}\bigr)-c_{min}^2\biggr)^{1/2}
        \\
        &\qquad \qquad \qquad -\biggl(\sigma^2(jh,\tiX^{(i),N}_{jh})\bigl(\xi_{jh}^{(i)}\bigr)^2\Psi_j^{-1}\bigl(\tiX^{(i),N}_{jh},\mu_{jh}^N\bigr)-c_{min}^2\biggr)^{1/2}
        \biggr)
     \rhobar 
     \Delta W_{jh}^{(i)}. &\eqqcolon (4)
    \end{align*}

    \paragraph{Estimating expression (2) and (3)}
    By Assumption \ref{Aspt_new} and  Lemma \ref{rem_G_deriv_improv}
    we get the estimate
    \begin{gather}\label{Eq:estimTwoThree}
    \begin{aligned}
        \E\Bigl[(2)^2+(3)^2 \Bigr]&\lesssim \E\Bigl[\bigl(1+\|\Psi_j^{-\frac12}\|_{Lip}^2\bigr) \Bigl(\Delta B_{jh}^{(i)} \Bigr)^2 \Bigl( \tiX_{jh}^{(i)}-\tiX_{jh}^{(i),N}\Bigr)^2\Bigr]
        \\
        &\lesssim h {\gamma}^{-1} 
        \delta^{-2\gamma}  \lambda^{-1-\gamma}
        \E\Bigl[ \Bigl( \tiX_{jh}^{(i)}-\tiX_{jh}^{(i),N}\Bigr)^2\Bigr].
    \end{aligned}
    \end{gather}

    \paragraph{Estimating the term (1)} 
    We start by rewriting
    \begin{align*}
        \Psi_j^{-\frac12}&\bigl(\tiX_{jh}^{(i)},\mu_{jh}\bigr)
        -
        \Psi_j^{-\frac12}\bigl(\tiX_{jh}^{(i)},\mu^N_{jh} \bigr) 
        \\
        &=
        \frac{
        \sqrt{
        \E\bigl[ \varphi_\lambda\bigl(y-\tiX_{jh-h/2}^{(1)}\bigr) \bigr]+\delta
        }
        }{
        \sqrt{\E\bigl[ \bigl(\xi_{jh}^{(1)}\bigr)^2\varphi_\lambda\bigl(y-\tiX_{jh-h/2}^{(1)}\bigr) \bigr]+\delta} }\Biggl|_{y=\tiX_{jh}^{(i)}}
        - \frac
        {
        \sqrt{\frac1N \sum_{k=1}^N   \varphi_\lambda\bigl( \tiX_{jh}^{(i)}-\tiX_{jh-h/2}^{(k)} \bigr)+\delta
        }
        }
        { \sqrt{\frac1N \sum_{k=1}^N \bigl(\xi_{jh}^{(k)}\bigr)^2 \varphi_\lambda\bigl( \tiX_{jh}^{(i)}-\tiX_{jh-h/2}^{(k)}\bigr)+\delta}}
        \\
        &
        +
        \frac
        {
        \sqrt{\frac1N \sum_{k=1}^N   \varphi_\lambda\bigl( \tiX_{jh}^{(i)}-\tiX_{jh-h/2}^{(k)} \bigr)+\delta}
        }
        { \sqrt{\frac1N \sum_{k=1}^N \bigl(\xi_{jh}^{(k)}\bigr)^2 \varphi_\lambda\bigl( \tiX_{jh}^{(i)}-\tiX_{jh-h/2}^{(k)}\bigr)+\delta}}
        -
        \frac{
        \sqrt{\frac1N \sum_{k=1}^N   \varphi_\lambda\bigl( \tiX_{jh}^{(i)}-\tiX_{jh-h/2}^{(k),N} \bigr)+\delta}
        }
        { \sqrt{\frac1N \sum_{k=1}^N \bigl(\xi_{jh}^{(k)}\bigr)^2 \varphi_\lambda\bigl( \tiX_{jh}^{(i)}-\tiX_{jh-h/2}^{(k),N}\bigr)+\delta}}
        \\
        &\eqqcolon \frac{\sqrt{\mathcal{A}_0^i+\delta}}{\sqrt{\mathcal{B}_0^i+\delta}}-\frac{\sqrt{\mathcal{A}^i_1+\delta}}{\sqrt{{\mathcal{B}^i_1}+\delta}}+\frac{\sqrt{\mathcal{A}^i_1+\delta}}{\sqrt{\mathcal{B}^i_1+\delta}}-\frac{\sqrt{\mathcal{A}^i_2+\delta}}{\sqrt{\mathcal{B}^i_2+\delta}}.
    \end{align*}
    By the assumptions on $\xi$ we can apply the first part of Lemma \ref{lem_sqt_estim} to the first two terms and the second part of Lemma \ref{lem_sqt_estim} to the last two terms. We therefore observe
    \begin{align*}
        \E\Bigl[ \Bigl( \Psi_j^{-\frac12}\bigl(\tiX_{jh}^{(i)},\mu_{jh}\bigr)
        -
        \Psi_j^{-\frac12}\bigl(\tiX_{jh}^{(i)},\mu^N_{jh} \bigr) \Bigr)^2\Bigr] \lesssim& \frac1{\delta^2}\Bigl(
        \E\bigl[ \bigl(\mathcal{A}_0^i-\mathcal{A}_1^i \bigr)^2\bigr]+\E\bigl[ \bigl(\mathcal{B}_0^i-\mathcal{B}_1^i \bigr)^2\bigr]\Bigr)
        \\
        &+\E\Bigl[ \Bigl(\frac{{\mathcal{A}^i_1+\delta}}{{\mathcal{B}^i_1+\delta}}-\frac{{\mathcal{A}^i_2+\delta}}{{\mathcal{B}^i_2+\delta}} \Bigr)^2\Bigr].
    \end{align*}
    Note that by the i.i.d.\ structure $\E\bigl[\bigl(\xi_{jh}^{(i)}\bigr)^2\varphi_\lambda\bigl(y-\tiX_{jh-h/2}^{(i)}\bigr)  \bigr]= \frac1N\sum_{k=1}^N \E\bigl[\bigl(\xi_{jh}^{(k)}\bigr)^2 \varphi_\lambda\bigl(y-\tiX_{jh-h/2}^{(k)}\bigr)  \bigr]$. Thus  we see, by independence conditional on $\tiX_{jh}^{(i)}$
    \begin{align*}
        &\E\bigl[ \bigl(\mathcal{B}_0^i-\mathcal{B}_1^i \bigr)^2\bigr] 
        \\
        &=\E\biggl[ \Biggl(\E\Bigl[ \frac1N\sum_{k=1}^N \bigl(\xi_{jh}^{(k)}\bigr)^2 \varphi_\lambda\bigl(y-\tiX_{jh-h/2}^{(k)}\bigr)  \Bigr]\bigg|_{y=\tiX_{jh}^{(i)}}-\frac1N\sum_{k=1}^N \bigl(\xi_{jh}^{(k)}\bigr)^2 \varphi_\lambda\bigl(\tiX_{jh}^{(i)}-\tiX_{jh-h/2}^{(k)}\bigr) 
        \Biggr)^2\biggr]
        \\
        &\lesssim
        \frac{\|\varphi_\lambda\|_{L^\infty}^2}{N^2}+ \E\biggl[ \Biggl(\hspace{-1pt}\E\Bigl[ \frac1N\sum_{\substack{k=1 \\ k \not = i}
        }^N \bigl(\xi_{jh}^{(k)}\bigr)^2 \varphi_\lambda\bigl(y-\tiX_{jh-h/2}^{(k)}\bigr)  \Bigr]\bigg|_{y=\tiX_{jh}^{(i)}}\hspace{-8pt}-\frac1N\sum_{\substack{k=1 \\ k \not = i}}^N \bigl(\xi_{jh}^{(k)}\bigr)^2 \varphi_\lambda\bigl(\tiX_{jh}^{(i)}-\tiX_{jh-h/2}^{(k)}\bigr) \hspace{-1pt}
        \Biggr)^2\biggr]
        \\
        &\lesssim \frac {\|\varphi_\lambda\|_{L^\infty}^2} {N^2}+ \frac1N\E\biggl[    \frac1N \sum_{\substack{k=1\\ k \not = i}
        }^N \E\Bigl[  \biggl( \bigl(\xi_{jh}^{(k)}\bigr)^2 \varphi_\lambda\bigl(y-\tiX_{jh-h/2}^{(k)}\bigr) -\E\bigl[\bigl(\xi_{jh}^{(k)}\bigr)^2 \varphi_\lambda\bigl(y-\tiX_{jh-h/2}^{(k)}\bigr) \bigr]\biggr)^2   \Bigr]\bigg|_{y=\tiX_{jh}^{(i)}}      \biggr]
        \\
        &\lesssim \frac1N \|\varphi_\lambda\|_{L^\infty}^2\lesssim \frac1N \lambda^{-1}.
    \end{align*}
    With similar arguments it follows that $\E\bigl[ \bigl(\mathcal{A}_0^i-\mathcal{A}_1^i \bigr)^2\bigr]\lesssim \frac1N \|\varphi_\lambda\|_{L^\infty}^2\lesssim \frac1N \lambda^{-1}$.

    We now consider $\Bigl(\frac{{\mathcal{A}^i_1+\delta}}{{\mathcal{B}^i_1+\delta}}-\frac{{\mathcal{A}^i_2+\delta}}{{\mathcal{B}^i_2+\delta}} \Bigr)^2$.
    For this term we want to use Lemma \ref{lem:estim_AB} for $\gamma_k=(\xi_{jh}^{(k)})^2$, $\alpha_k=\varphi_\lambda\bigl(\tiX_{jh}^{(i)}-\tiX_{jh-h/2}^{(k)}\bigr)$, $\beta_k=\varphi_\lambda\bigl(\tiX_{jh}^{(i)}-\tiX_{jh-h/2}^{(k),N}\bigr)$ and $\Delta_k=\bigl|\tiX_{jh-h/2}^{(k)}-\tiX_{jh-h/2}^{(k),N}\bigr|$.
    We see for any $x,y\in \mathbb{R}$ with $|x|\le |y|$ by Lemma \ref{Lem:Gauss_facts} and by monotonicity for any $\gamma \in (0,1/2)$
    \begin{align*}
        \bigl| \varphi_\lambda(x)-\varphi_\lambda(y)\bigr| &= \bigl| \varphi_\lambda(|x|)-\varphi_\lambda(|y|)\bigr| = \int_{|x|}^{|y|} \varphi_\lambda'(z)\,dz \le \int_{|x|}^{|y|} \frac z\lambda \varphi_\lambda(z)\,dz
        \\
        &\lesssim \gamma^{-1/2} \lambda^{-\frac{1+\gamma}2} \int_{|x|}^{|y|} \varphi_\lambda(z)^{1-\gamma} \,dz
        \lesssim \gamma^{-1/2} \lambda^{-\frac{1+\gamma}2}  \varphi_\lambda(x)^{1-\gamma} \bigl|y-x\bigr|.
    \end{align*}
    Therefore the assumptions of Lemma \ref{lem:estim_AB} are satisfied with $C=\gamma^{-1/2} \lambda^{-\frac{1+\gamma}2}$ and $p=1-\gamma$. {Using $\max\limits_{k=1,\dots,N}\max\{\alpha_{k},\beta_k\}\lesssim \lambda^{-1/2}$, independently of $N$,} this lemma now tells us that
    \begin{align*}
        \Bigl(\frac{{\mathcal{A}^i_1+\delta}}{{\mathcal{B}^i_1+\delta}}-\frac{{\mathcal{A}^i_2+\delta}}{{\mathcal{B}^i_2+\delta}} \Bigr)^2 & \lesssim \gamma^{-1} \lambda^{-1-\gamma} \lambda^{-\frac12(1-\gamma-1/2)\cdot 2} \delta^{-1}\frac1N\sum_{k=1}^N \Bigl(\tiX_{jh-h/2}^{(k),N}-\tiX_{jh-h/2}^{(k)}\Bigr)^2
        \\
        &\lesssim  \gamma^{-1} \delta^{-1} \lambda^{-3/2}
        \frac1N\sum_{k=1}^N \Bigl(\tiX_{jh-h/2}^{(k),N}-\tiX_{jh-h/2}^{(k)}\Bigr)^2
    \end{align*}
    Recall that Lemma \ref{lem:estim_AB} requires that $1-\gamma=p\in (1/2,1)$, therefore the calculation above works for any $\gamma\in (0,1/2)$.

    By definition we have $\tiX_{jh-h/2}^{(k),N}-\tiX_{jh-h/2}^{(k)}=\tiX_{jh}^{(k),N}-\tiX_{jh}^{(k)}$ for all $k\in \{1,\dots,N\}$ and $ j \in \{1,\dots,T/h\}$. Putting these estimates together, we see that
    \begin{gather}\label{Eq:estimOne}
    \begin{aligned}
        \E\Bigl[ \Bigl(\sigma(jh,\tiX_{jh}^{(i)})\xi_{jh}^{(i)} \Bigl(
        \Psi_j^{-\frac12}\bigl(&\tiX_{jh}^{(i)},\mu_{jh}\bigr)
        -
        \Psi_j^{-\frac12}\bigl(\tiX_{jh}^{(i)},\mu^N_{jh} \bigr) \Bigr) \Delta B_{jh}^{(i)}\Bigr)^2\Bigr]
        \\
        &\lesssim
        \E\Bigl[ \Bigl(
        \Psi_j^{-\frac12}\bigl(\tiX_{jh}^{(i)},\mu_{jh}\bigr)
        -
        \Psi_j^{-\frac12}\bigl(\tiX_{jh}^{(i)},\mu^N_{jh} \bigr) \Bigr)^2\Bigr] h
        \\
        &\lesssim \delta^{-2}\frac{h}{N} \lambda^{-1}+ 
        h\gamma^{-1} \delta^{-1}\lambda^{-3/2}
        \E\Bigl[\frac1N\sum_{k=1}^N \Bigl(\tiX_{jh}^{(k),N}-\tiX_{jh}^{(k)}\Bigr)^2 \Bigr]
        \end{aligned}
    \end{gather}
    \paragraph{Estimating the term (4)}
    By Assumption \ref{Aspt_new} and Remark \ref{Rem:Psi}, the terms in the square-root are bounded from below by $c_{min}^2$, therefore 
    \begin{align*}
        (4)\lesssim &\biggl| \biggl(\sigma^2(jh,\tiX^{(i)}_{jh})\bigl(\xi_{jh}^{(i)}\bigr)^2\Psi_j^{-1}\bigl(\tiX^{(i)}_{jh},\mu_{jh}\bigr)- \sigma^2(jh,\tiX^{(i),N}_{jh})\bigl(\xi_{jh}^{(i)}\bigr)^2\Psi_j^{-1}\bigl(\tiX^{(i),N}_{jh},\mu_{jh}^N\bigr)
        \biggr)
     \rhobar 
     \Delta W_{jh}^{(i)}\biggr| \hspace{-20pt}& 
        \\
        &\lesssim\Bigl| \sigma^2(jh,\tiX_{jh}^{(i)})\bigl(\xi_{jh}^{(i)}\bigr)^2 \Bigl(
        \Psi_j^{-1}\bigl(\tiX_{jh}^{(i)},\mu_{jh}\bigr)
        -
        \Psi_j^{-1}\bigl(\tiX_{jh}^{(i)},\mu^N_{jh} \bigr) \Bigr)\rhobar \Delta W_{jh}^{(i)}\Bigr| & 
        \eqqcolon (4.1) 
        \\
        &\qquad + \Bigl| \sigma^2(jh,\tiX_{jh}^{(i)})\bigl(\xi_{jh}^{(i)}\bigr)^2 \Bigl( \Psi_j^{-1}\bigl(\tiX_{jh}^{(i)},\mu^N_{jh} \bigr) - \Psi_j^{-1}\bigl(\tiX_{jh}^{(i),N},\mu^N_{jh}\bigr) \Bigr)\rhobar \Delta W_{jh}^{(i)} \Bigr|
        &\eqqcolon (4.2)
        \\ 
        &\qquad + \Bigl| \Bigl(\sigma^2(jh,\tiX_{jh}^{(i)})-\sigma^2(jh,\tiX_{jh}^{(i),N})\Bigr)\bigl(\xi_{jh}^i\bigr)^2\Psi_j^{-1}\bigl(\tiX_{jh}^{(i),N},\mu^N_{jh}
        \bigr)\rhobar \Delta W_{jh}^{(i)} \Bigr|
        &\eqqcolon (4.3)
    \end{align*}
    Recalling that $\Psi_j^{-1/2}$ is bounded due to the boundedness of $\xi$ we see that $\|\Psi_j^{-1} \|_{Lip}\lesssim \|\Psi_j^{-\frac12} \|_{Lip}$. By boundedness of $\sigma$ and  $\xi$, we now can repeat the arguments for the terms (2) and (3) from above to see that 
    \begin{align}\label{Eq:estimOneBar}
        \E\Bigl[(4.2)^2+(4.3)^2 \Bigr]
        &\lesssim h  {\gamma}^{-1} 
        \delta^{-2\gamma}
        \lambda^{-1-\gamma}
        \E\Bigl[ \Bigl( \tiX_{jh}^{(i)}-\tiX_{jh}^{(i),N}\Bigr)^2\Bigr]
    \end{align}
    as well as
    \begin{align}\label{Eq:estimTwoThreeBar}
        \E\Bigl[(4.1)^2\Bigr]\lesssim   \frac{h}{ N} \delta^{-2}\lambda^{-1}+ 
        {h}\gamma^{-1} \delta^{-1}\lambda^{-3/2}
        \E\Bigl[\frac1N\sum_{k=1}^N \Bigl(\tiX_{jh-h/2}^{(k),N}-\tiX_{jh-h/2}^{(k)}\Bigr)^2 \Bigr].
    \end{align}

    Recall that by construction $\tiX_{jh-h/2}^{(k),N}-\tiX_{jh-h/2}^{(k)}=\tiX_{jh}^{(k),N}-\tiX_{jh}^{(k)}$.
    Hence, when combining \eqref{Eq:estimTwoThree}, \eqref{Eq:estimOne}, \eqref{Eq:estimOneBar} and \eqref{Eq:estimTwoThreeBar} we see that there is a constant $C>0$ such that
    \begin{align*}
        \E\Bigl[ \Bigl( \tiX_{jh+h}^{(i)}-\tiX_{jh+h}^{(i),N}  \Bigr)^2   \Bigr]\le \Bigl(1 + &Ch\gamma^{-1} 
        \delta^{-2\gamma} \lambda^{-{1-\gamma}}   \Bigr)\E\Bigl[ \Bigl( \tiX_{jh}^{(i)}-\tiX_{jh}^{(i),N}\Bigr)^2\Bigr] 
         \\
         &+ C{h}{\gamma^{-1} \delta^{-1}}\lambda^{-3/2}
         \E\Bigl[\frac1N\sum_{k=1}^N \Bigl(\tiX_{jh}^{(k),N}-\tiX_{jh}^{(k)}\Bigr)^2 \Bigr]
         + C \frac{h}{ N}\delta^{-2} \lambda^{-1}.
    \end{align*}
    Summing up over all particles, choosing some arbitrary $\gamma\in(0,1/2)$ (for example $\gamma=1/3$) and by changing the constant $C$ we see that
    \begin{align*}
       \frac1N \sum_{i=1}^N \E\Bigl[ \Bigl( \tiX_{jh+h}^{(i)}-\tiX_{jh+h}^{(i),N}  \Bigr)^2   \Bigr]
       &\le
       \Bigl(1+C h \delta^{-1}\lambda^{-3/2}
       \Bigr) 
       \E\Bigl[  \frac1N\sum_{i =1}^N \Bigl( \tiX_{jh}^{(i)}-\tiX_{jh}^{(i),N}  \Bigr)^2   \Bigr] 
       \\
       &\qquad       +C \frac{h}{ N} \delta^{-2}\lambda^{-1}.
    \end{align*}
    A discrete Gronwall-type argument, see  Lemma \ref{lem:recurr_relation}, implies that
    \begin{align*}
        \frac1N\sum_{i =1}^N \E\Bigl[ \Bigl( \tiX_{jh+h}^{(i)}-\tiX_{jh+h}^{(i),N}  \Bigr)^2   \Bigr] 
        &\lesssim 
        \frac{h}{ N} \delta^{-2}\lambda^{-1}
        \sum_{j=1}^{T/h}  \Bigl(1+C h \delta^{-1} \lambda^{-3/2}\Bigr)^{j-1}
        \\
        &\lesssim  \frac{h}{ N} \delta^{-2}\lambda^{-1}
        \frac{\Bigl(1+C h \delta^{-1} \lambda^{-3/2} \Bigr)^{T/h}}{C h \delta^{-1} \lambda^{-3/2}}
        \\
        &\lesssim \frac1N \delta^{-1} \lambda^{1/2} 
        \Bigl(1+C h \delta^{-1} \lambda^{-3/2}\Bigr)^{T/h}.
    \end{align*}
    As $\lambda=\rhobar^2c_{min}h\lesssim h$ it follows from changing the constant $C$ that
    \begin{align*}
         \frac1N\sum_{i =1}^N \E\Bigl[ \Bigl( \tiX_{jh+h}^{(i)}-\tiX_{jh+h}^{(i),N}  \Bigr)^2   \Bigr] 
        &\lesssim \frac1{N} \lambda^{1/2} \delta^{-1} 
        \exp\Bigl(
       Th^{-1} \log\bigl( C\delta^{-1} h^{-1/2}\bigr)\Bigr).
    \end{align*}
\end{proof}

\section{Numerical Experiments}\label{Sec:Numerics}

Numerical experimenting can already be found in the literature, see for example \cite{GuyonHL12} or \cite{ReisingerTsianni2023}, where the authors used non-parametric methods for the conditional expectation or \cite{BayerEtal2022}, where kernel regression was used.
 Recall the \emph{half-step} scheme \eqref{Eq:Half_step}. 
The parameters used are 
\begin{multicols}{2}
\begin{itemize}
    \item $\xi$ for the stochastic volatility,
    \item $\sigma$ for the local volatility,
    \item $\delta$ for regularisation,
    \item $h$ for the discretisation length, 
    \item $\rho$ for the correlation between $B$ and $W$
    \item $c_{min}$ for the lower bound in Assumption \ref{Aspt_new}
    \item $N$ for the number of particles, which is equal to the number of Monte Carlo steps.
\end{itemize}
\end{multicols}
For illustration purposes we set $T=1$.
For the experiments we plotted the error of the numerical scheme against the number of simulation steps (corresponding to the reciprocal of step size). We used two test functions: first the smooth function $\cos(x)$ and secondly the function $x\mapsto (e^{x-1}-\frac12)^+$, which is not smooth but corresponds to the payoff of a call option for a stock market model where we model the log-stock $X$. We cannot simulate expectations directly, therefore we use Monte-Carlo estimation by just averaging over all our particles. 

Note that also the expectations are approximated via Monte-Carlo estimation. To avoid visual cluttering we did not plot confidence intervals; these would be of order $1/\sqrt{N}$.

\subsection{Fake Brownian motion}\label{Sec:Plots_FakeBM}

For this section we used $X_0=0$ and $\delta=0.001$. For the stochastic volatility we chose a rough-Bergomi-type model, i.e.\
\begin{align*}
    \xi_t=0.01 + 0.5 \exp\bigl( W_t^H-\frac{t^{2H}}2\bigr),
\end{align*}
with $c_{min}=0.05$\footnote{In theory there is no $c_{min}$ as in Assumption \ref{Aspt_new}, due to unbounded $\xi$. Using only finitely many paths in the algorithm we attempted to make $c_{min}$ as large as possible so that all terms in square roots in \eqref{Eq:Half_step} remained positive. 
Rounding down then led to the choice $c_{min}=0.05$.} 
and where $W_t^H$ denotes a Riemann-Liouville fractional Brownian motion with Hurst parameter $H=0.1$, fully correlated to $W$. Recall that 
$W_t^H=C_H\int_0^t (t-s)^{H-1/2}\,dW_s$, where $C_H$ is chosen such that $\E[(W_t^H)^2]=t^{2h}$ for all $t\in [0,T]$.

The correlation parameter $\rho$ was set as $\rho = -0.7$. We are interested in fake Brownian motion, therefore $\sigma(t,x)\equiv 1$ and $X_0=0$. With the mimicking process being a Brownian motion, we can calculate the reference values exactly: knowing the characteristic function of a standard normal distribution implies that $\E[\cos(W_1)]=e^{-1/2}$. The Black-Scholes formula, which is in closed form, tells us that $\E[(e^{W_1-1}-1/2)^+]\approx 0.269$.

\subsubsection[Error analysis in h]{Error analysis in $h$ and $N$}\label{sec:h_error}

We used stepsizes $h=1,\frac12,...,\frac1{50}$ and sampled $N\in \{1000,2000,3000,5000,8000\}$ paths for fixed regularisation parameter $\delta=0.001$.

Figure \ref{fig:coserror1} shows that the observed error is well within our theoretical error range. For this chosen function the error does not significantly change depending on the number of steps. It can also be seen that, as expected, a higher number of paths leads to a smaller error. In Figure \ref{fig:coserrorlogplot} we consider the logplot of the moving average over three data points and observe that up to a certain point the error behaves like $h+\delta$. The moving average was taken due to noisyness of the data.

When looking at the call-option-type payoff function the convergence looks much rougher, as can be observed in Figure \ref{fig:experror1}. Note that theoretically the payoff function is beyond the scope of this paper, due to unboundedness and by not being sufficiently smooth. Similar to before, in Figure \ref{fig:experrorlogplot} we plotted the moving average of the error on a log-scale. Note that due to the much rougher data the error no longer behaves like $h+\delta$, however the absolute error still becomes small relatively quick.

Lastly, we observe in Figure \ref{fig:var_quadvar1} that the variance of quadratic variation of $X$ does not seem to converge to $0$, independent of the number of paths. This is a good \lq\lq sanity check\rq\rq: our target process is a Brownian motion, and for that process the variance of the quadratic variation is equal to $0$. The previous error plots show, that the payoffs of $X$ get close to the payoffs of Brownian motion, indicating convergence of the respective marginal laws. However, quadratic variation seems to stay away from $0$, indicating that the path properties of $X$ differ from those of Brownian motion.

\begin{multicols}{2}

\begin{figure}[H]
    \centering
    \includegraphics[width=0.45\textwidth]{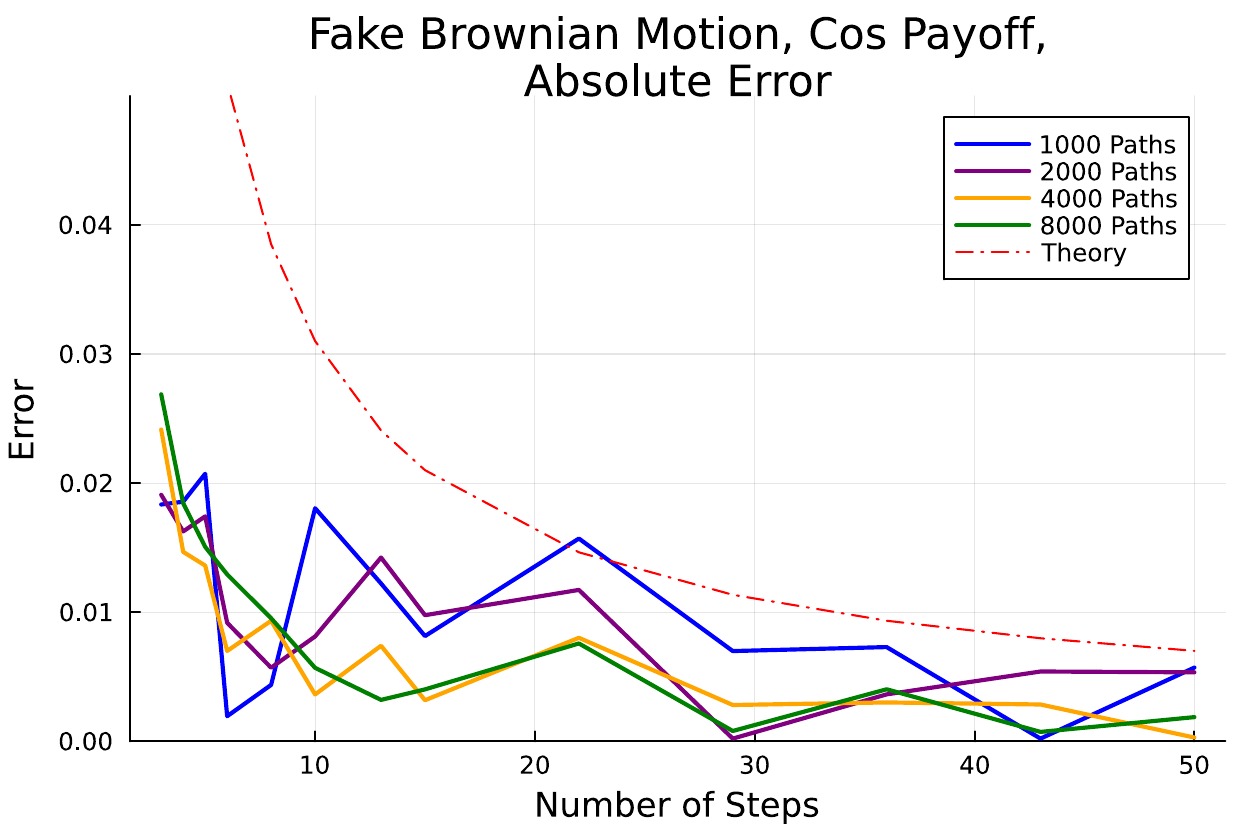}
    \caption{Error for payoff function $\cos(x)$, for a fake geometric Brownian motion. The dashed red curve corresponds to $h+\delta$.}
    \label{fig:coserror1}
\end{figure}

\begin{figure}[H]
    \centering
    \includegraphics[width=0.45\textwidth]{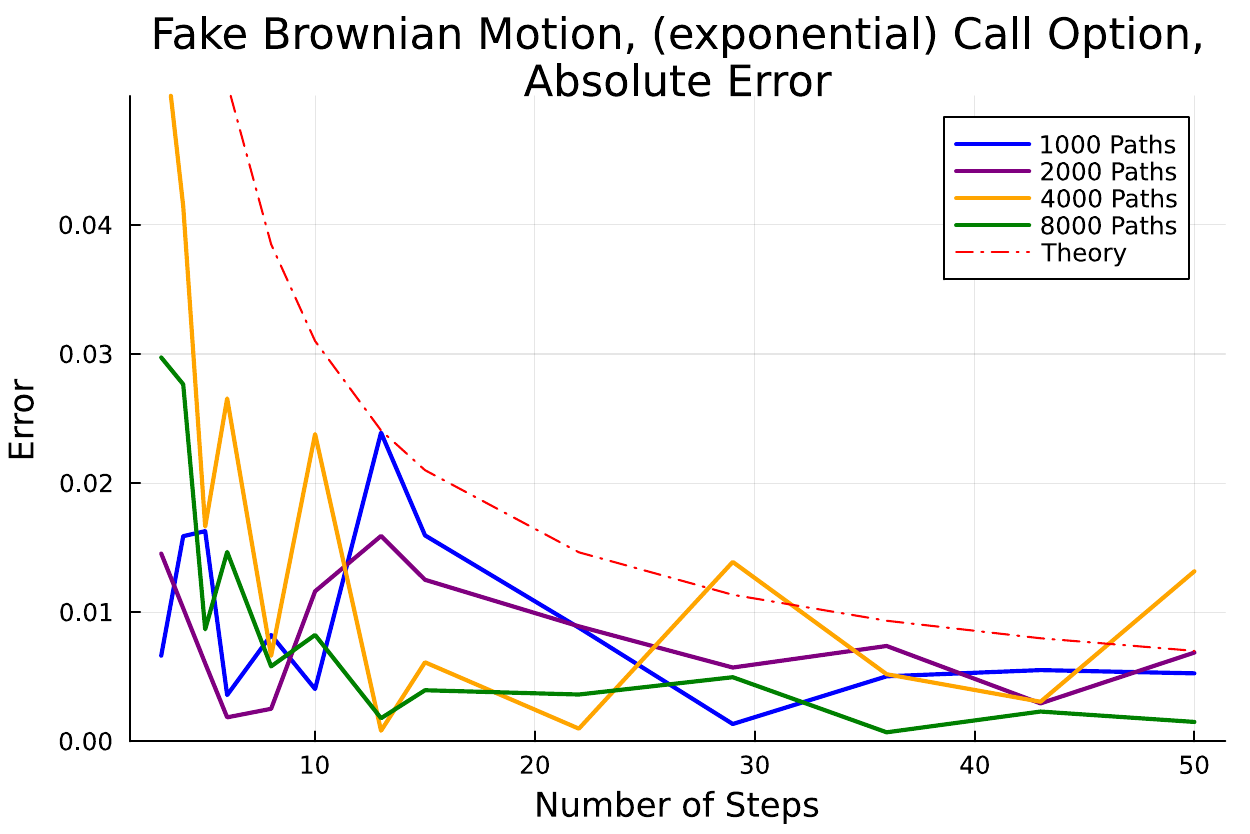}
    \caption{Error for payoff function $f(x)=(e^{x-1}-\frac12)^+$, for a fake geometric Brownian motion. $\,$}
     \label{fig:experror1}
\end{figure}
\end{multicols}


\begin{multicols}{2}

\begin{figure}[H]
    \centering
    \includegraphics[width=0.45\textwidth]{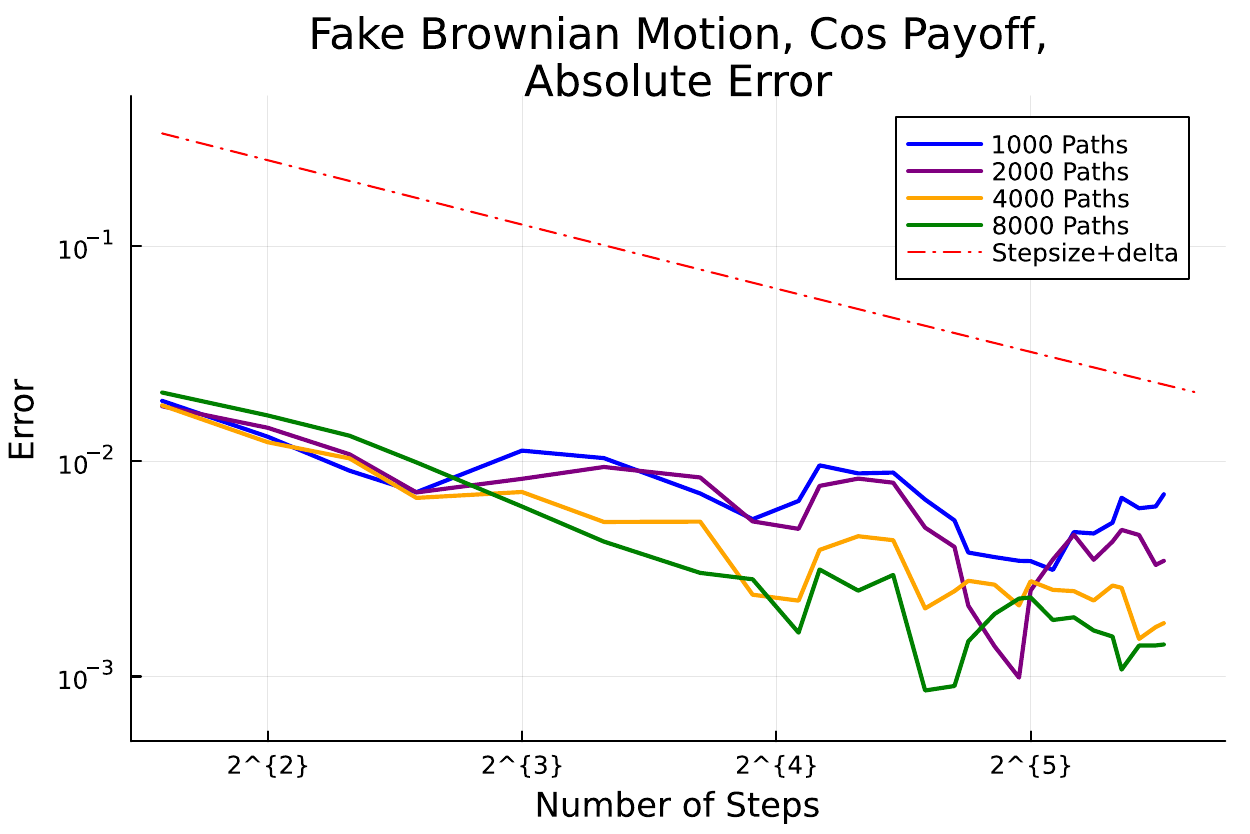}
    \caption{Logplot of error for  $\cos(x)$. }
    \label{fig:coserrorlogplot}
\end{figure}

\begin{figure}[H]
    \centering
    \includegraphics[width=0.45\textwidth]{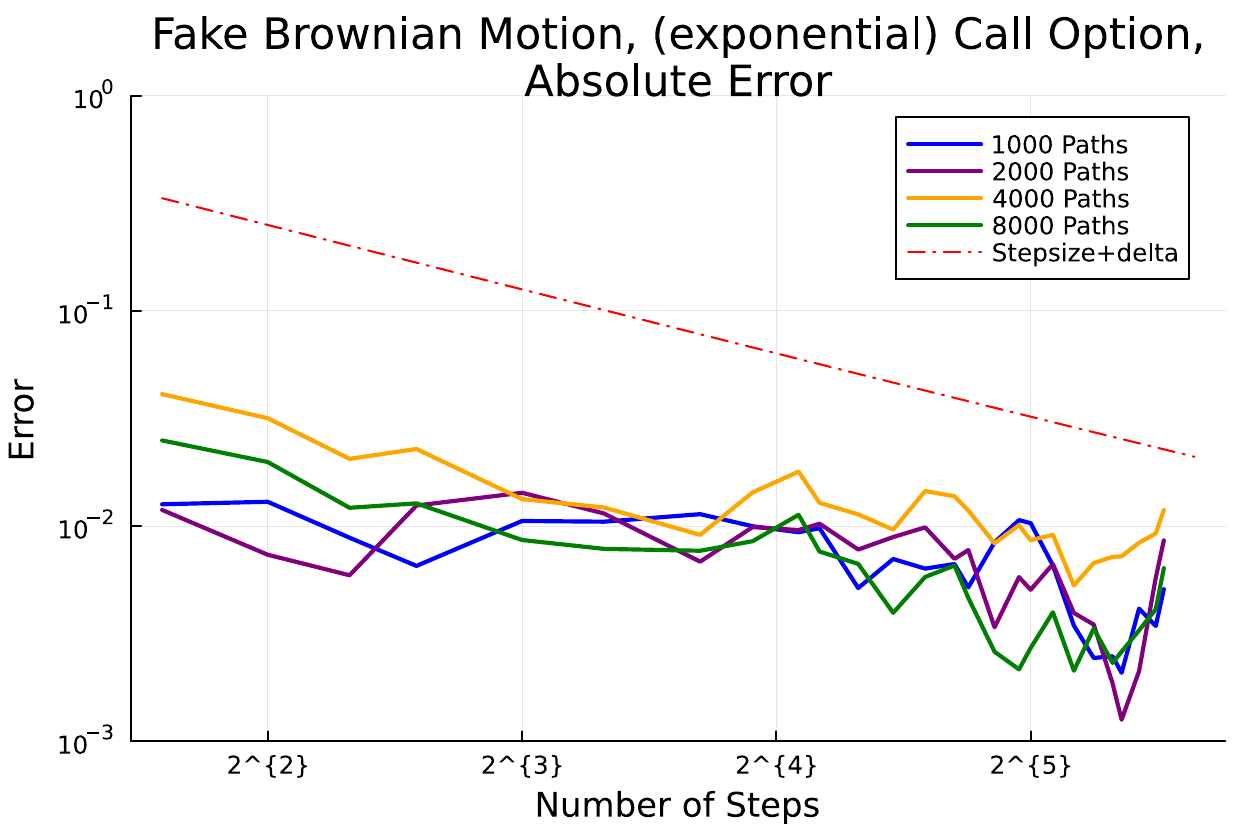}
    \caption{Logplot of error for payoff function $f(x)=(e^{x-1}-\frac12)^+$.}
     \label{fig:experrorlogplot}
\end{figure}

\end{multicols}

\begin{figure}[H]
    \centering
    \includegraphics[width=0.45\textwidth]{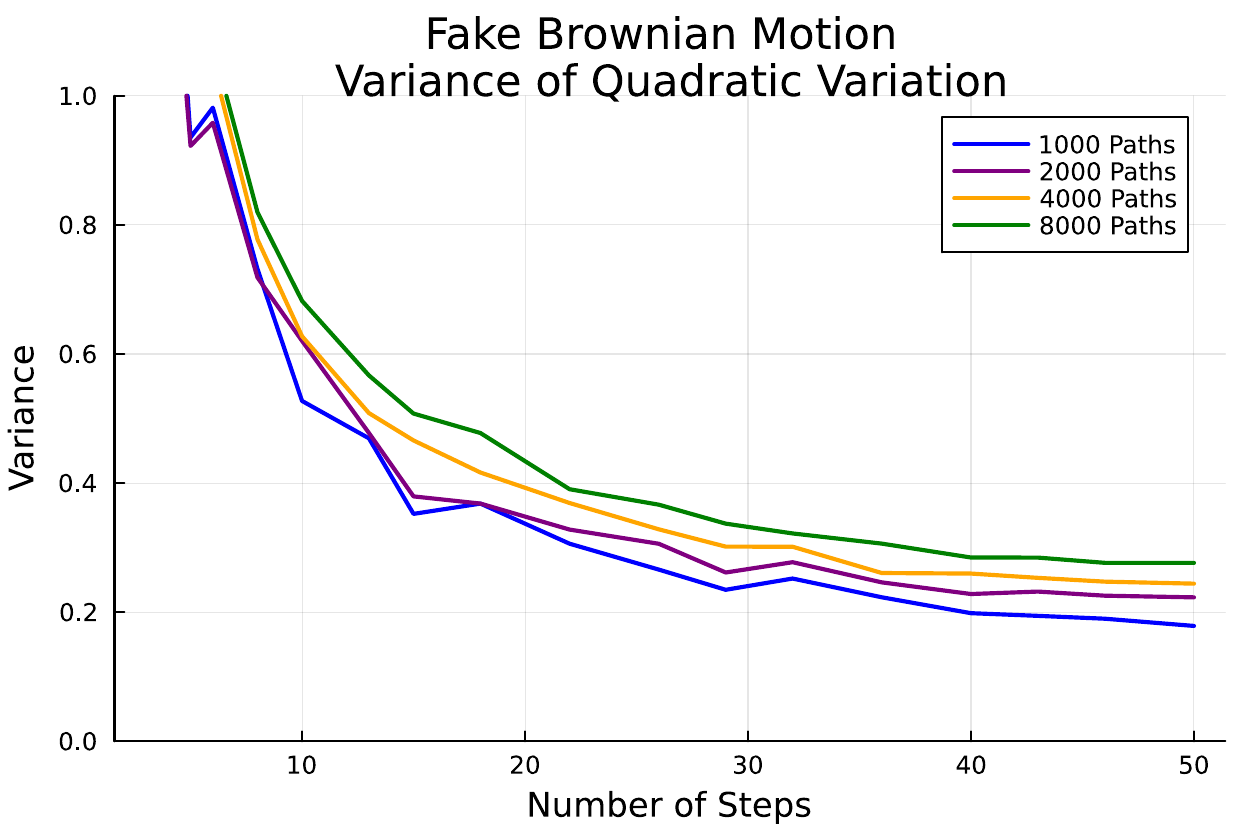}
    \caption{Plot of the variance of the quadratic variation.}
    \label{fig:var_quadvar1}
\end{figure}

\subsubsection[The role of delta]{The role of $\delta$}

Our numerical experimentation showed that there is a strong dependence on the parameter $\delta$, which we want to illustrate in this subsection. We use $N=6000$ and $\delta\in \{0.1,0.05,0.01,0.001\}$. The other parameters remain unchanged.

From Figure \ref{Fig:deltaplot} it can be seen that there is a substantial difference in the error for different values of $\delta$. This probably comes from the fact that the values of 
$\E\bigl[ \varphi_\lambda( y-\tiX_{jh-h/2}^{h,\delta})\xi_{jh}^2\bigr]$ might be small for some $y$ with non-vanishing probability, according to $\mathcal{L}(\tiX_{jh}^{h,\delta})$, 
leading to significant differences coming from the $\delta$-regularisation of the approximation of the conditional expectation as in Equation \eqref{eq_hatEdelta}.
It is also very remarkable that the error does not really depend on the step size. Stability  was already achieved at a step size of $h \approx 0.1$. Note that due to the relatively big number of particles and the smooth test function, the absolute value of the error is quite small. 

\begin{figure}[H]
    \centering
    \includegraphics[width= 0.45\textwidth]{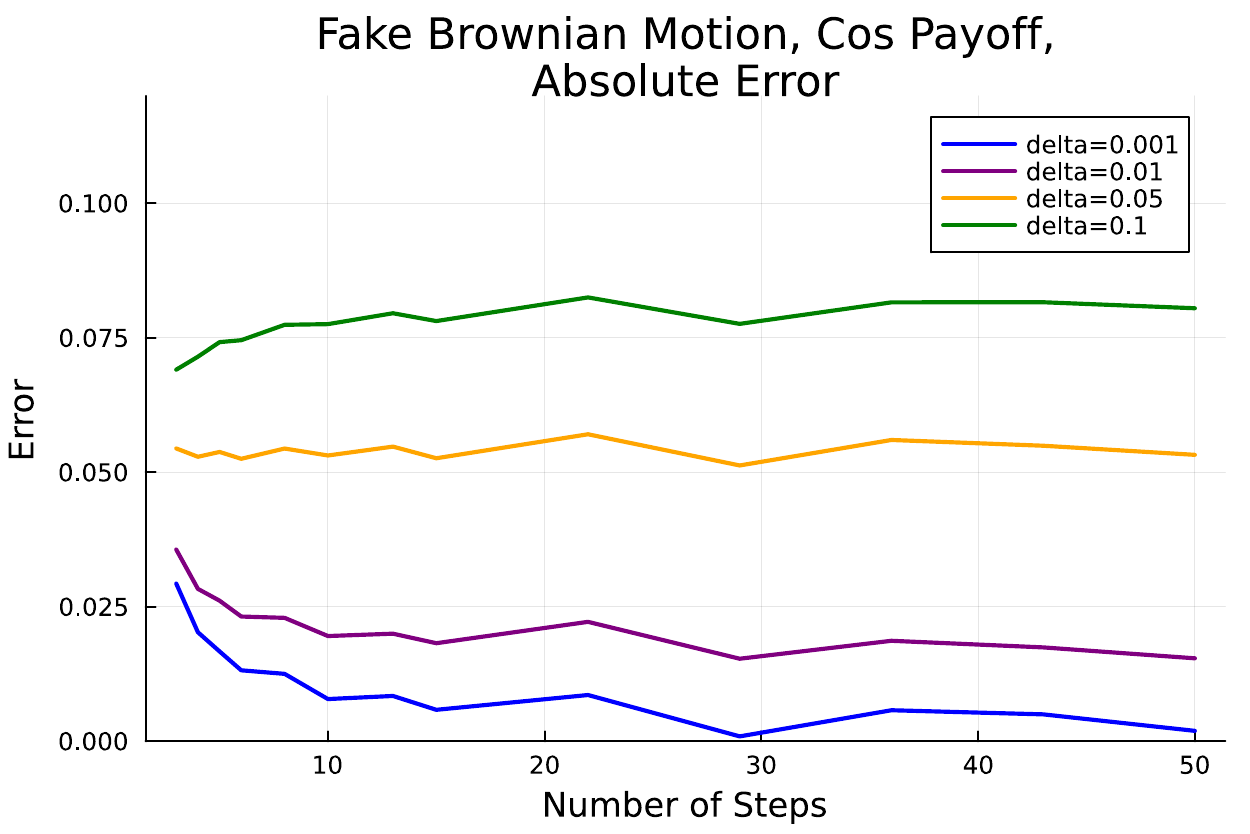}
    \caption{Error plot for differente $\delta$.}
    \label{Fig:deltaplot}
\end{figure}

\subsection{Tanh model}

In this section we use a non-trivial volatility, namely we choose $\sigma(x)=\frac34+\frac{\tanh(x)}4$. Such a toy model was previously introduced by \cite{FordeJacquier11} and has similar properties as required in Assumption \ref{Aspt_new}, due to boundedness and smoothness of $\tanh$. 
For this section we again used the same parameters as in Section \ref{sec:h_error}.

To the best of our knowledge, there does not exist a closed call-price formula for the tanh model. Therefore we used yet another Monte-Carlo simulation with $100$ Euler steps and $90^2$ Monte Carlo steps, but now for the target process $Y$ to approximate $\E\bigl[\cos(Y_1)\bigr]$ as well as $\E\bigl[ f(Y_1)\bigr]$ for the function $f(x)=(e^{x-1}-\frac12)^+$.
For this model we observe that for the smooth payoff function the simulated model is close to the \lq\lq real \rq\rq price and the error plot of Figure \ref{fig:coserror_tanh} has a much smaller magnitude than in Figure \ref{fig:coserror1}. We omit a log-plot due to the fast convergence speed. The same can be said for Figure \ref{fig:experror_tanh}.

\begin{multicols}{2}

\begin{figure}[H]
    \centering
    \includegraphics[width=0.45\textwidth]{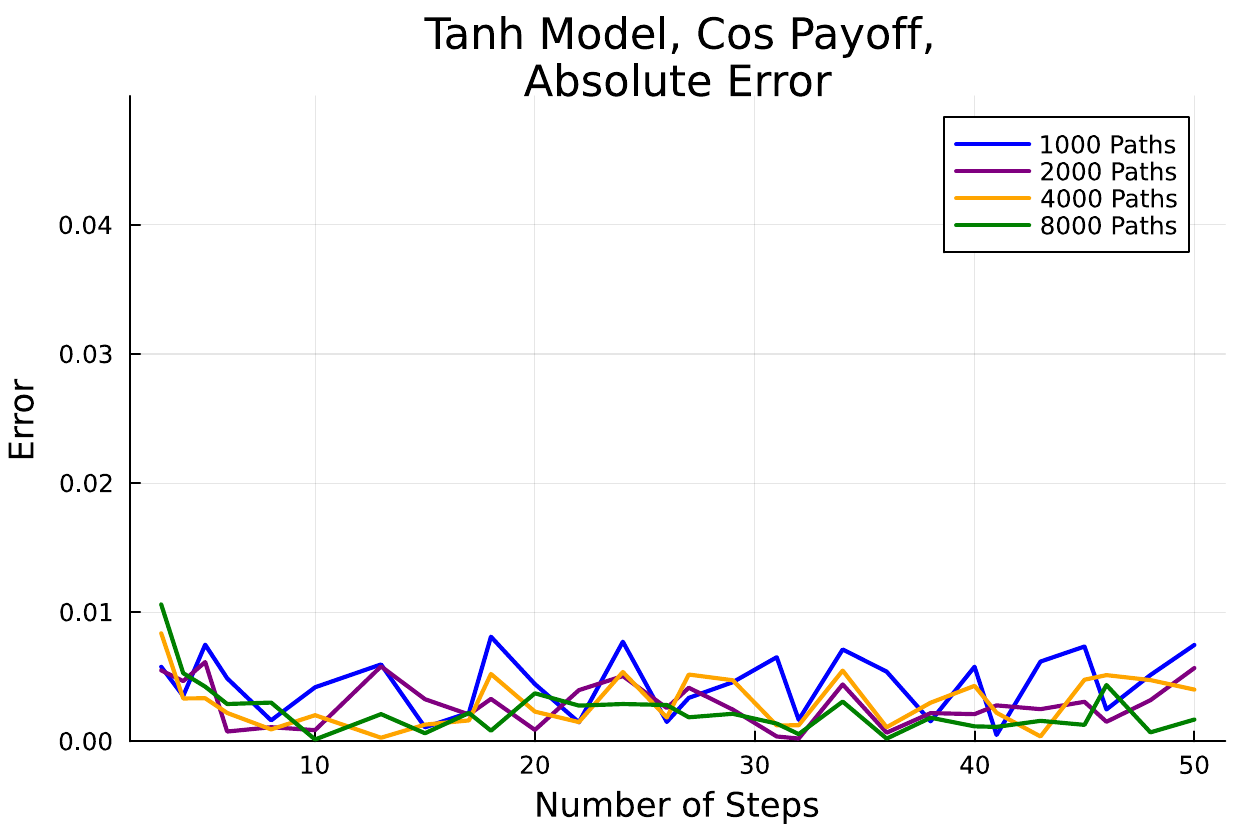}
    \caption{Error for payoff function $\cos(x)$. }
    \label{fig:coserror_tanh}
\end{figure}

\begin{figure}[H]
    \centering
    \includegraphics[width=0.45\textwidth]{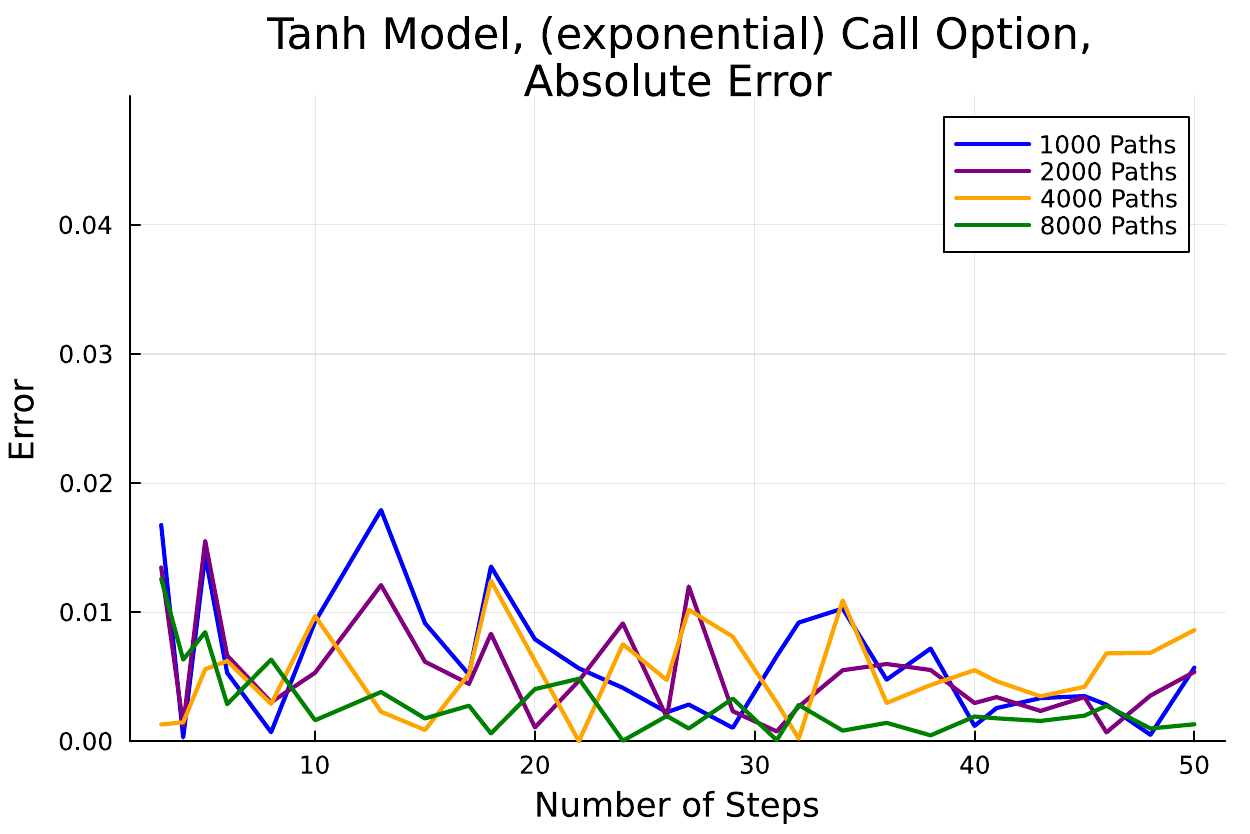}
    \caption{Error for payoff function $f(x)=(e^{x-1}-\frac12)^+$.}
    \label{fig:experror_tanh}
\end{figure}

\end{multicols}

\section{Weak Error for Particle 
Schemes with Nonparametric Estimators for Conditional Expectation}\label{Sec_Particles}

In this section we
still work with an approximation of conditional expectation, but this time there is no closed formula available. This leads us to studying non-parametric methods and to quantify the weak approximation error when using these methods.

For a parameter $\varepsilon \in (0,1)$ we introduce a Kernel function $K_\eps$ for our non-parametric estimation. Similar as above we introduce a parameter $\delta\in (0,1)$ to define the regularised Nadaraya-Watson estimator for conditional expectation:
\begin{definition}\label{Def_hEepsdelta}
    Let $X,\xi$ be two random variables, such that $\xi$ is square integrable. Let $\eps,\delta\in(0,1/2)$ and $K_\eps$ some Kernel satisfying Assumption \ref{Ass:kernel} below. 
    If $\mathbb{V}(X)>0$ define the estimator $\hE_\delta^\eps$ of the conditional expectation as
    \begin{align}\label{Eq:EhatEpsDelta}
        \hE_\delta^\eps\bigl[\xi^2\big|X\bigr]=\frac{
        \E\bigl[ \xi^2 K_\eps\bigl(x-X\bigr)\bigr]\big|_{x=X}+\delta
        }{{\E\bigl[ K_\eps\bigl(x-X\bigr)\bigr]\big|_{x=X}}+\delta }.
    \end{align}
    as well as $ \hE^\eps\bigl[\xi^2\big|X\bigr]= \hE^\eps_0\bigl[\xi^2\big|X\bigr]$.
    In case $\mathbb{V}(X)=0$ set $\hE_\delta^\eps\bigl[\xi^2\big|X\bigr]=\hE^\eps\bigl[\xi^2\big|X\bigr]=\E[\xi^2]$.
\end{definition}
    As we saw in previous sections, this estimator of the conditional expectation 
    can again be understood as a function $ \hE_\delta^\eps \bigl[ \xi_{jh}^2|\tiX^{h,\eps,\delta}_{jh}=x\bigr]\equiv \hPsi_j(x,\mathcal{L}(\xi_{jh},\tiX_{jh}^{h,\eps,\delta})) $, where for measures $\mu$ on $\mathbb{R}^2$ and $j \ge 1$
    the function $\hPsi_j$ is defined via 
    \begin{align}\label{Eq:PsiHat_Def}
        \hPsi_j(x,\mu)= \frac{\int_{\mathbb{R}\times\mathbb{R}}y^2K_{\eps}(x-z)
        \,d\mu(y,z)+\delta}{\int_{\mathbb{R}\times\mathbb{R}}K_{\eps}(x-z)
        \,d\mu(y,z)+\delta}.
    \end{align}
    For $j=0$ we set $\hPsi_0(x,\mu)=\int_{\mathbb{R}^2} y^2 \mu(dy,dz)$.

As done previously, we introduce an interacting particle system.
For this we denote by $\bigl( W^{(i)})_{i=1}^N$ independent Brownian motions. 
For $j\in \{0,\dots,\frac Th-1\}$,  we consider the following Euler scheme:
\begin{align}\label{Eq:Sec7}
    \hX_{jh+h}^{h,(i),N,\eps,\delta}&=\hX_{jh}^{h,(i),N,\eps,\delta}+ \sigma(jh,\hX_{jh}^{h,(i),N,\eps,\delta})\xi_{jh}^{(i)} \hPsi_j^{-\frac12}\bigl(\hX_{jh}^{h,(i),N,\eps,\delta},\mu^N_{jh}\bigr)\bigl( W_{jh+h}^{(i)}- W_{jh}^{(i)}\bigr),
    \\
    \hX_0^{h,(i),N,\eps,\delta}&=x_0,\nonumber
\end{align}
where, for $j\ge 0$ we denote by $\mu_{jh}^N$  the empirical measure of the interacting particle system. More precisely $\mu_{jh}^N(dx,dy)=\frac1N \sum_{j=1}^N \delta_{\xi_{jh}^{(i)},\hX_{jh}^{h,(i),N,\eps,\delta}}(dy,dx)$. 
Then we can show the following.

\begin{theorem}\label{Thm_Particle_Rate2}
    Let Assumption  \ref{Ass:Half_Step} as well as Assumption \ref{Aspt_new} hold and $h,T$ such that $\frac Th \in \mathbb{N}$. Let $K$  satisfy Assumption \ref{Ass:kernel} below and $f$ be three times weakly differentiable with derivatives of polynomial growth. Then for $h,\eps,\delta\in(0,1/2)$ 
    satisfying $\eps \lesssim h$ there is some $C>0$, such that
    \begin{align*}
         \Bigl|\E\bigl[ f(Y_T)\bigr]-\E\bigl[f\bigl(\hX_T^{h,(1),N,\eps,\delta}\bigr)\bigr]\Bigr|
        &\lesssim 
        h + \eps h^{-1} + \delta^{1-}  
        + \frac1{\sqrt{N}}
        \eps
        \exp\Bigl(
        \frac T{2}h^{-1} \log\bigl( C h \delta^{-2}\eps^{-4}\bigr)\Bigr).
    \end{align*}
\end{theorem}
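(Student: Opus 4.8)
The plan is to mirror the two-step proof of Theorem~\ref{Thm_Particle_Rate}. First I would bound the ``mean-field'' discretisation error $\bigl|\E[f(Y_T)]-\E[f(\hX_T^{h,(1),\eps,\delta})]\bigr|$, where $\hX^{h,(1),\eps,\delta}$ is the McKean--Vlasov analogue of \eqref{Eq:Sec7}, i.e.\ the scheme obtained by replacing (recursively in $j$) the empirical measure $\mu_{jh}^N$ by the true marginal law $\mu_{jh}=\mathcal{L}(\xi_{jh},\hX_{jh}^{h,(1),\eps,\delta})$; by Definition~\ref{Def_hEepsdelta} this is precisely the scheme \eqref{eq_Xhat} with $\hE=\hE_\delta^\eps$ and driving noise $W^{(1)}$. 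Then I would bound the propagation-of-chaos error $\bigl\|\hX_T^{h,(1),\eps,\delta}-\hX_T^{h,(1),N,\eps,\delta}\bigr\|_{L^2}$, and finally combine the two via the triangle inequality together with the local Lipschitz/polynomial-growth property of $f$ and the uniform moment bounds of Lemma~\ref{lem:Tail_estim}.

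For the first step, Theorem~\ref{Thm:Rate_CondExp} applies with $\hE=\hE_\delta^\eps$ (its required uniform two-sided bound following from Definition~\ref{Def_hEepsdelta} and Assumption~\ref{Aspt_new}) and gives
\[
  \bigl|\E[f(Y_T)]-\E[f(\hX_T^{h,(1),\eps,\delta})]\bigr|\lesssim h+\sup_{j}\bigl\|\hE_\delta^\eps[\xi_{jh}^2|\hX_{jh}^{h,(1),\eps,\delta}]-\E[\xi_{jh}^2|\hX_{jh}^{h,(1),\eps,\delta}]\bigr\|_{L^p}
\]
for every $p>1$. I would split this conditional-expectation error as $\hE_\delta^\eps-\hE_0^\eps$ plus $\hE_0^\eps-\E[\,\cdot\,]$. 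The first (the $\delta$-part) is handled exactly as in Lemma~\ref{Lem_regularisation} with $\varphi_\lambda$ replaced by $K_\eps$: it equals $\delta$ times a ratio whose denominator is bounded below using the Gaussian-type lower bound on the density of $\hX_{jh}$ coming from the uniformly elliptic Euler increment together with a lower bound on $K_\eps$ over an $O(\eps)$-interval, and after the tail cut of Lemma~\ref{lem:Tail_estim} this contributes $\lesssim\delta^{1/p}(-\log\delta)^{1/(2p)}$. The second (the $\eps$-part) is the kernel-smoothing bias, which—unlike in the half-step scheme, where Lemma~\ref{Lem_ExactCondexp} makes it vanish—is nonzero here and, as quantified in Section~\ref{Sec:Cond_exp_estim}, is of order $\eps h^{-1}$. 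Hence, $p>1$ being arbitrary, $\bigl|\E[f(Y_T)]-\E[f(\hX_T^{h,(1),\eps,\delta})]\bigr|\lesssim h+\eps h^{-1}+\delta^{1-}$.

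For the second step I would carry out the propagation-of-chaos argument of Section~\ref{Sec:POC}, following the proof of Theorem~\ref{Thm:HS_Prop_of_chaos} line by line. One writes the one-step difference $\hX_{jh+h}^{(i)}-\hX_{jh+h}^{(i),N}$ and decomposes it into: the error from replacing $\mu_{jh}$ by $\mu_{jh}^N$ at the fixed point $\hX_{jh}^{(i)}$; the error from the spatial Lipschitz continuity of $y\mapsto\hPsi_j^{-1/2}(y,\mu_{jh}^N)$; and the Lipschitz-in-$\sigma$ error. The first is controlled by a conditional law-of-large-numbers estimate whose prefactor is $\|K_\eps\|_{L^\infty}^2\sim\eps^{-2}$ (yielding the $O(1/N)$); the second uses a Lipschitz estimate for $\hPsi_j^{-1/2}$ analogous to Lemma~\ref{rem_G_deriv_improv}, now of order $\eps^{-(1+\gamma)}\delta^{-\gamma}$ and relying on the smoothness and decay of $K$ postulated in Assumption~\ref{Ass:kernel}; the third on boundedness of $\sigma',\xi$. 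Collecting terms, using the lower bound on the square-root expressions and summing over $i$, one reaches (schematically) a recursion
\[
  \tfrac1N\textstyle\sum_i\E\bigl[(\hX_{jh+h}^{(i)}-\hX_{jh+h}^{(i),N})^2\bigr]\le\bigl(1+Ch\,\delta^{-2}\eps^{-4}\bigr)\,\tfrac1N\textstyle\sum_i\E\bigl[(\hX_{jh}^{(i)}-\hX_{jh}^{(i),N})^2\bigr]+C\tfrac{h}{N}\eps^{-2},
\]
to which the discrete Gronwall Lemma~\ref{lem:recurr_relation} applies, giving $\bigl\|\hX_T^{h,(1),\eps,\delta}-\hX_T^{h,(1),N,\eps,\delta}\bigr\|_{L^2}^2\lesssim\tfrac1N\eps^{2}\exp\bigl(Th^{-1}\log(Ch\delta^{-2}\eps^{-4})\bigr)$.

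Combining the two bounds via local Lipschitzness of $f$—so that $\bigl|\E[f(\hX_T^{h,(1),\eps,\delta})]-\E[f(\hX_T^{h,(1),N,\eps,\delta})]\bigr|\lesssim\bigl\|\hX_T^{h,(1),\eps,\delta}-\hX_T^{h,(1),N,\eps,\delta}\bigr\|_{L^2}$ after using the polynomial-growth moment estimates—yields the claimed inequality, the hypothesis $\eps\lesssim h$ being used only to absorb subordinate powers of $\eps$ and $h$. The hard part will be the propagation-of-chaos step: without the exact conditional-expectation identity of Lemma~\ref{Lem_ExactCondexp} that is available in the half-step setting, the bandwidth $\eps$ must be carried through the whole recursion, the Lipschitz constants of the kernel map blow up polynomially in $\eps^{-1}$ and $\delta^{-1}$—making the Gronwall factor $Ch\delta^{-2}\eps^{-4}$ genuinely larger than the half-step one—and one must handle carefully the fact that $\hX_{jh}^{(k),N}$ appears simultaneously as the conditioning variable and inside the empirical measure defining $\hPsi_j$; establishing the $\eps h^{-1}$ bias bound of Section~\ref{Sec:Cond_exp_estim} is the other delicate point.
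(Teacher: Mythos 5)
Your proposal follows the paper's proof route essentially verbatim: the same triangle-inequality decomposition into a mean-field error (Theorem~\ref{thm_condExp_err}, itself obtained from Theorem~\ref{Thm:Rate_CondExp} plus the split $\hE_\delta^\eps-\hE^\eps$ and $\hE^\eps-\E[\cdot]$ handled by Lemma~\ref{Lem_regular_kernel} and Lemma~\ref{Lem:FrakCBound}) and a propagation-of-chaos error (Theorem~\ref{Lem:Prop_of_chaos}), combined via the local Lipschitz/polynomial-growth property of $f$ and Lemma~\ref{lem:Tail_estim}. One small slip: in the kernel setting the Lipschitz constant of $y\mapsto\hPsi_j^{-1/2}(y,\mu)$ is $\delta^{-1}\|K_\eps'\|_{L^\infty}\sim\delta^{-1}\eps^{-2}$ (Lemma~\ref{Lem:G_deriv}), not $\eps^{-(1+\gamma)}\delta^{-\gamma}$ as you wrote — the interpolation trick of Lemma~\ref{rem_G_deriv_improv} is specific to the Gaussian tail and does not apply to a compactly supported $C_1$ kernel — but since you then write the Gronwall coefficient correctly as $Ch\,\delta^{-2}\eps^{-4}=Ch\,(\delta^{-1}\eps^{-2})^2$, the error does not affect the rest of the argument.
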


Note  that the weak error rate observed in Theorem \ref{Thm_Particle_Rate2} is worse than the one observed in Theorem \ref{Thm_Particle_Rate}.
Indeed, choosing $p\approx 1$, $\delta =h$ and  $ \varepsilon = h^2 $ in the latter (so that $h + \varepsilon h^{- 1} \sim h$) we see $h^{- 3 / 2}$ in the ``log-exponent'' for the half-step scheme, compared with $h^{- 9}$ without half-step.

The remaining sections focus on proving Theorem \ref{Thm_Particle_Rate2}: 
Section \ref{Sec:Cond_exp_estim} establishes the convergence speed of the Nadaraya-Watson estimator for conditional expectation. This is then extended by showing convergence with the additional regularisation parameter. Finally, in Section \ref{Sec:POC} we show propagation of chaos and convergence to the target process, using results from previous sections.

\subsection{Nonparametric Estimator for Conditional Expectation}\label{Sec:Cond_Exp_Estimators}

Following up on the discussion in Section \ref{Approximation} above we now propose a method for a suitable estimator $\hE$ for conditional expectation. 
Let $\bigl(W^{(i)},\xi^{(i)}\bigr)_{i=1,\dots,N}$ be independent copies of $(W,\xi)$. Assume that, under a simplified notation, $\hX_{jh}^{h,(i)}$ is already defined via some interacting particle system, as in Equation \eqref{Eq:Sec7}. Let $K_\varepsilon:\mathbb{R}\rightarrow\mathbb{R}$ be a kernel function parameterized by $\varepsilon$ such that $K_\eps$ converges to the Dirac distribution $\delta_0$ as $\eps\rightarrow 0$. 
Then we define for $j\ge 1$
\begin{align}\label{eq_hEeps}
    \hE^{\eps,N}\bigl[\bigl(\xi_{jh}^{(i)}\bigr)^2\big|\hX_{jh}^{h,(i)}\bigr]=
    \begin{cases}
        \frac{\sum_{k=1}^N \bigl(\xi_{jh}^{(k)}\bigr)^2 K_\eps\bigl(\hX_{jh}^{h,(i)}-\hX_{jh}^{h,(k)}\bigr)}{\sum_{k=1}^N  K_\eps\bigl(\hX_{jh}^{h,(i)}-\hX_{jh}^{h,(k)}\bigr)} & \text{for } j \ge 1,
        \\
        \E[\bigl(\xi_{0}^{(i)}\bigr)^2\bigr]
        & \text{for } j=0
    \end{cases}.
\end{align}
This estimator is also referred to as Nadaraya-Watson estimator. It has the great advantage that the approximation of the conditional expectation depends smoothly on the condition $\hX_{jh}^{h,(i)}.$ To balance the singularities of the fraction, similarly to Section \ref{Sec:Half_Step}, we reintroduce the regularisation parameter $\delta>0$ for the estimator 
\begin{align}\label{eq_hEepsdelta}
    \hE_{\delta}^{\eps,N}\bigl[\bigl(\xi^{(i)}_{jh}\bigr)^2\big|\hX_{jh}^{h,(i)}\bigr]=
    \begin{cases}
        \frac{\frac1N\sum_{k=1}^N \bigl(\xi_{jh}^{(k)}\bigr)^2 K_\eps\bigl(\hX_{jh}^{h,(i)}-\hX_{jh}^{h,(k)}\bigr)+\delta}{\frac1N\sum_{k=1}^N  K_\eps\bigl(\hX_{jh}^{h,(i)}-\hX_{t}^{h,(k)}\bigr)+\delta} & \text{for } j \ge 1,
        \\
         \E[\bigl(\xi_{0}^{(i)}\bigr)^2\bigr]
        & \text{for } j=0
    \end{cases}.
\end{align}
Note that for $j=0$ we have the equality $  \hE^{\eps,N}\bigl[\bigl(\xi_{0}^{(i)}\bigr)^2\big|\hX_{0}^{h,(i)}\bigr]= \hE^{\eps,N}_\delta\bigl[\bigl(\xi_{0}^{(i)}\bigr)^2\big|\hX_{0}^{h,(i)}\bigr]=\hE[\bigl(\xi_{0}^{(i)}\bigr)^2\bigr]$.
This method was also proposed in \cite{GuyonHL2013}, where the authors introduced this method for a practical audience and has been further developed in \cite{Muguruza2019}, where the author proposes a similar algorithm but the kernels $K_\eps$ also depend on the particles.

The computation time is of order $N^2$, which makes good simulations computationally expensive. In other applications, comparable to estimating at just one single time point the so-called \emph{one-fifth-rule}, see for example \cite[Section 2.2.1]{LiScott07}, is used, i.e.\ one chooses $\eps \approx N^{-1/5}$.
 For the Kernel $K_\eps$ we have the following
 \pagebreak[2]
\begin{assumption}\label{Ass:kernel}
    For $\eps>0$ we define a function $K_\eps: \mathbb{R}\rightarrow [0,\infty)$ satisfying the following properties:
    \begin{enumerate}
        \item \emph{$L^1$-Scaling:} $K_\eps(\cdot)=\frac1\eps K_1\bigl(\frac\cdot\eps\bigr)$,
        \item \emph{Compact support:} $\operatorname{supp} K_1 \subseteq [-1,1]$,
        \item \emph{Regularity:}
        $K_1 \in C_1$,
        \item \emph{Stability near zero:} There is a constant $c$ such that $K_1 \ge c\one_{(-\frac12,\frac12)}$.
    \end{enumerate}
\end{assumption}
\begin{example}
    The function 
    \begin{align*}
        K_1(x)=\max\Bigl\{\bigl( 1- x^2\bigr),0\Bigr\}^2
    \end{align*}
satisfies Assumption \ref{Ass:kernel}, 2.-4.\end{example}

\begin{remark}[Kernel Method]
    In  \cite{BayerEtal2022} the authors introduced a kernel method for approximating conditional expectation: similar to the particle method the idea is to approximate the function determined by the conditional expectation and not the (stochastic) random variable by itself. 
Recall that for $\xi,X \in L^1$ the conditional expectation is given by $\E\bigl[\xi\big|X\bigr]=f\bigl(X\bigr)$ where
\begin{align*}
    f=\operatorname*{argmin}_{g \in \mathcal{L}^0} \E\bigl[\bigl(g(X)-\xi\bigr)^2 \bigr].
\end{align*}
Here $\mathcal{L}^0$ denotes the class of measurable functions $g:\mathbb{R}\rightarrow \mathbb{R}$. The authors then introduced a class of functions $\mathcal{H}$, corresponding to a reproducing kernel Hilbert space endowed with a scalar product $\langle .,. \rangle_{\mathcal{H}}$ as well as a regularizing parameter $\theta>0$. They then proposed the ridge regression
\begin{align*}
    f\approx \operatorname*{argmin}_{g \in \mathcal{H}} \Bigl\{ \E\bigl[\bigl(g(X)-\xi\bigr)^2 \bigr] + \theta \|g\|_{\mathcal{H}}\Bigr\}.
\end{align*}
If the function of the conditional expectation is already in the reproducing kernel Hilbert space, then this yields a very good approximation. For more general functions, that are not element in the RKHS, one can obtain good approximations if the function is in the completion of the RKHS w.r.t.\ a suitable metric.
\end{remark}

\subsection{Approximation: Kernel Method and Regularisation}\label{Sec:Cond_exp_estim}

We start this section by showing $L^p$ convergence for the Nadaraya-Watson estimator.

We define the regularized Euler scheme for some arbitrary $\eps,\delta \in (0,1)$
\begin{gather}\label{eq_Xhepsdelta} 
\begin{aligned}
    \hX_{jh+h}^{h,\eps,\delta}&=\hX_{jh}^{h,\eps,\delta}+\frac{\xi_{jh}\sigma\bigl(jh,\hX_{jh}^{h,\eps,\delta}\bigr)}{\sqrt{\hE_\delta^\eps\bigl[\xi_{jh}^2\big|\hX_{jh}^{h,\eps,\delta}\bigr]}} \bigl(W_{jh+h}-W_{jh}\bigr),
    \\
    \hX_0^{h,\eps,\delta}&=x_0.
\end{aligned}
\end{gather}
\begin{remark}
    Due to the ellipticity assumption the condition $\mathbb{V}(\hX_{jh}^{h,\eps,\delta})=0$ in Definition \ref{Def_hEepsdelta} is satisfied if and only if $j=0$, making the choice of the estimator similar to the one for the particle system in Equation \eqref{eq_hEepsdelta} above.
\end{remark}
Similar to Theorem \ref{thm_HScondExp_err} above we can now quantify the error introduced by the estimator $\hE_\delta^\eps$ in the following theorem.
\begin{theorem}\label{thm_condExp_err}
        Suppose Assumptions \ref{Ass:Half_Step} and \ref{Aspt_new} hold. Let $K_\eps$ satisfy Assumption \ref{Ass:kernel}. Let $h,T>0$ such that $T/h\in{\mathbb N}$, $\eps,\delta \in (0,1)$ and $ \hX_{jh}^{h,\eps,\delta}$ be defined in Equation \eqref{eq_Xhepsdelta} for $j\in \{0,\dots,\frac Th\}$. Recall $\lambda$ from Equation \eqref{Eq_lambda}.
        Let $f$ be three times differentiable with derivatives of polynomial growth, then it holds for any $p>1$ that
        \begin{align}\label{Eq:CondExp_Err_2}
            \Bigl|\E\bigl[ f(Y_T)\bigr]-\E\bigl[f\bigl(\hX_T^{h,\eps,\delta}\bigr)\bigr]\Bigr|\lesssim h + \eps h^{-1} e^{\frac {\eps^2}{\lambda}} + \delta^{\frac1p}.
        \end{align}
         Here the hidden constant explodes as $p\rightarrow 1$ or $p \rightarrow \infty$.
\end{theorem}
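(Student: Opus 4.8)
\textbf{Proof plan for Theorem~\ref{thm_condExp_err}.}
The starting point is Theorem~\ref{Thm:Rate_CondExp}, applied to the scheme \eqref{eq_Xhepsdelta} with $\hE=\hE_\delta^\eps$. Its hypotheses hold: $f$ is three times weakly differentiable with derivatives of polynomial growth by assumption, and since $a^2\le\xi_{jh}^2\le b^2$ and the maps $x\mapsto\frac{a^2x+\delta}{x+\delta}$, $x\mapsto\frac{b^2x+\delta}{x+\delta}$ are monotone (using $a<1<b$), the estimator $\hE_\delta^\eps[\xi_{jh}^2|\cdot]=\frac{\E[\xi_{jh}^2K_\eps(\cdot)]+\delta}{\E[K_\eps(\cdot)]+\delta}$ takes values in $[a^2,b^2]$, hence is bounded above and below by constants independent of $h,\eps,\delta$; at $j=0$ it equals $\E[\xi_0^2]=\E[\xi_0^2|\hX_0^{h,\eps,\delta}]$ because $\hX_0^{h,\eps,\delta}=x_0$. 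Consequently, for every $p>1$,
\begin{align*}
\bigl|\E[f(Y_T)]-\E\bigl[f\bigl(\hX_T^{h,\eps,\delta}\bigr)\bigr]\bigr|\;\lesssim\;h+\sup_{1\le j\le T/h-1}\Bigl\|\hE_\delta^\eps\bigl[\xi_{jh}^2\big|\hX_{jh}^{h,\eps,\delta}\bigr]-\E\bigl[\xi_{jh}^2\big|\hX_{jh}^{h,\eps,\delta}\bigr]\Bigr\|_{L^p},
\end{align*}
and it remains to bound the supremum by $\eps h^{-1}e^{\eps^2/\lambda}+\delta^{1/p}$, up to relabelling $p$.

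I would then split the conditional-expectation error as
\begin{align*}
\hE_\delta^\eps\bigl[\xi_{jh}^2\big|\hX_{jh}^{h,\eps,\delta}\bigr]-\E\bigl[\xi_{jh}^2\big|\hX_{jh}^{h,\eps,\delta}\bigr]=\Bigl(\hE_\delta^\eps-\hE_0^\eps\Bigr)\bigl[\xi_{jh}^2\big|\hX_{jh}^{h,\eps,\delta}\bigr]+\Bigl(\hE_0^\eps\bigl[\xi_{jh}^2\big|\hX_{jh}^{h,\eps,\delta}\bigr]-\E\bigl[\xi_{jh}^2\big|\hX_{jh}^{h,\eps,\delta}\bigr]\Bigr),
\end{align*}
i.e.\ into a regularisation part and a kernel-bias part. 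The regularisation part is handled exactly as in the proof of Lemma~\ref{Lem_regularisation}: it is bounded in absolute value by $(1+b^2)\,\delta/\bigl(\E[K_\eps(x-\hX_{jh}^{h,\eps,\delta})]+\delta\bigr)$ at $\hX_{jh}^{h,\eps,\delta}=x$; using the ``stability near zero'' property $K_1\ge c\one_{(-1/2,1/2)}$ of Assumption~\ref{Ass:kernel} in place of the Gaussian lower bound, one gets $\E[K_\eps(x-\hX_{jh}^{h,\eps,\delta})]\gtrsim\tfrac1\eps\int_{A_{k(x)}}f$ with $f$ the density of $\hX_{jh}^{h,\eps,\delta}$ (which exists by the elliptic last-increment structure, cf.\ Lemma~\ref{Lem:Density}) and $A_k$ boxes of width $\eps/2$; localising to $[-M,M]$, using $\E[|Z|^p]\le\|Z\|_{L^\infty}^{p-1}\|Z\|_{L^1}$, the sub-Gaussian tail estimate of Lemma~\ref{lem:Tail_estim}, and the choice $M\sim\sqrt{-\log\delta}$ gives a contribution $\lesssim(\delta\sqrt{-\log\delta})^{1/p}\lesssim\delta^{1/q}$ for any $q>p$; after relabelling $p$ this is the $\delta^{1/p}$ term (whence also the blow-up as $p\to1$).

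The analytic core is the kernel-bias term. Write $g_j(x):=\E[\xi_{jh}^2|\hX_{jh}^{h,\eps,\delta}=x]$; since $K_\eps\ge0$ is supported in $[-\eps,\eps]$, $\hE_0^\eps[\xi_{jh}^2|\hX_{jh}^{h,\eps,\delta}=x]$ is a convex combination of the values $g_j(y)$, $|y-x|\le\eps$, so its distance to $g_j(x)$ is at most the oscillation of $g_j$ on $[x-\eps,x+\eps]$, which is $\le\eps\,\|g_j'\|_{L^\infty([x-\eps,x+\eps])}$, and it suffices to control the Lipschitz constant of $g_j$ with explicit $\lambda$-dependence. For this I would condition $\hX_{jh}^{h,\eps,\delta}$ on the $\sigma$-field $\mathcal H_j$ generated by $\mathcal F_{(j-1)h}$ and $B_{jh}-B_{(j-1)h}$ — so that $\xi_{jh}$ is $\mathcal H_j$-measurable while $\bB_{jh}-\bB_{(j-1)h}$ is independent of $\mathcal H_j$; then $\hX_{jh}^{h,\eps,\delta}\,|\,\mathcal H_j\sim\mathcal N(M_j,v_j)$ with $M_j$ being $\mathcal H_j$-measurable and, by Assumption~\ref{Aspt_new},
\begin{align*}
v_j=\rhobar^2h\,\frac{\xi_{(j-1)h}^2\,\sigma^2\bigl((j-1)h,\hX_{(j-1)h}^{h,\eps,\delta}\bigr)}{\hE_\delta^\eps[\xi_{(j-1)h}^2|\hX_{(j-1)h}^{h,\eps,\delta}]}\in[4\lambda,\,C\lambda].
\end{align*}
This yields $g_j(z)=\E[\xi_{jh}^2\varphi_{v_j}(z-M_j)]/\E[\varphi_{v_j}(z-M_j)]$, hence $|g_j'(z)|\lesssim\E[|\varphi_{v_j}'(z-M_j)|]/\E[\varphi_{v_j}(z-M_j)]$; the Gaussian estimates of Lemma~\ref{Lem:Gauss_facts} (of the type $|\varphi_v'(u)|\lesssim v^{-1/2}\varphi_{2v}(u)$) reduce this to ratios $\E[\varphi_{2v_j}(z-M_j)]/\E[\varphi_{v_j}(z-M_j)]$, controlled through $\|\varphi_{2v_j}\|_{L^\infty}\lesssim\lambda^{-1/2}$, a sub-Gaussian lower bound for the density $z\mapsto\E[\varphi_{v_j}(z-M_j)]$ of $\hX_{jh}^{h,\eps,\delta}$ (again Lemma~\ref{Lem:Density}), and density-ratio bounds over the $\eps$-window of the form $\varphi_v(z-w)/\varphi_v(z)\le e^{\eps^2/(2v)}e^{\eps|z|/v}$ — which is where the factor $e^{\eps^2/\lambda}$ is born. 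One then obtains $\bigl|\hE_0^\eps[\xi_{jh}^2|\hX_{jh}^{h,\eps,\delta}=x]-g_j(x)\bigr|\lesssim\eps\,h^{-1}e^{\eps^2/\lambda}(1+|x|)^{N}$ uniformly in $j$, and taking $L^p$ norms against the (sub-Gaussian) law of $\hX_{jh}^{h,\eps,\delta}$, using once more Lemma~\ref{lem:Tail_estim}, gives the $\eps h^{-1}e^{\eps^2/\lambda}$ contribution. The main obstacle is precisely this last step: obtaining the kernel-bias bound uniformly in $j$ with the correct explicit dependence on $\eps$ and $\lambda$ — in particular a usable lower bound on the Nadaraya–Watson denominator $\E[K_\eps(x-\hX_{jh}^{h,\eps,\delta})]$ together with a matching sub-Gaussian lower bound for the density of $\hX_{jh}^{h,\eps,\delta}$ — this being exactly the point where the absence of an exact conditional-expectation identity (available only for the half-step scheme, cf.\ Lemma~\ref{Lem_ExactCondexp}) forces the extra exponential factor.
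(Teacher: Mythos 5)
Your high-level structure is exactly the paper's: apply Theorem~\ref{Thm:Rate_CondExp} (after checking the boundedness of $\hE_\delta^\eps$, which you do correctly), split the conditional-expectation error into a $\delta$-regularisation part and a $\delta=0$ kernel-bias part, treat the former as in Lemma~\ref{Lem_regularisation} with $K_\eps$ replacing $\varphi_\lambda$ (this is precisely Lemma~\ref{Lem_regular_kernel}), and treat the latter by conditioning to expose the Gaussian last increment — your $\mathcal H_j$ is the paper's use of Lemma~\ref{Lem:Density} with $Z_0=\hX_{(j-1)h}^{h,\eps,\delta}+\rho\,\tfrac{\xi\sigma}{\sqrt{\hE_\delta^\eps}}\Delta B$ and $\eta=\rhobar\,\tfrac{\xi\sigma}{\sqrt{\hE_\delta^\eps}}$, and your $g_j$ is their $m$. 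You also correctly identify the mean-value/compact-support step giving $\eps\sup_{|z-x|\le\eps}|m'(z)|$, and you are right that the $e^{\eps^2/\lambda}$ factor enters through a Gaussian density-ratio estimate over the $\eps$-window (the paper's Lemma~\ref{Lem:Gauss_facts}, part~2).

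The gap is in how you propose to close the argument. You write that you would control the ratio $\E[\varphi_{2v_j}(z-M_j)]/\E[\varphi_{v_j}(z-M_j)]$ via $\|\varphi_{2v_j}\|_{L^\infty}\lesssim\lambda^{-1/2}$ together with ``a sub-Gaussian lower bound for the density $z\mapsto\E[\varphi_{v_j}(z-M_j)]$ (again Lemma~\ref{Lem:Density})''. Lemma~\ref{Lem:Density} provides only the convolution formula for the density, not any lower bound, and no such lower bound is established anywhere in the paper. Worse, the route does not go through even if one tries to supply it. The only $j$-uniform lower bound available from the single-step Gaussian structure is of the form $f_{Z_1}(z)\gtrsim\lambda^{-1/2}e^{-cz^2/\lambda}$ on a compact set (since $v_j\asymp\lambda\asymp h$), which decays in $z$ far faster than the sub-Gaussian upper bound $e^{-z^2/(4b^2T)}$ of Lemma~\ref{lem:Tail_estim}; the resulting bound $|m'(z)|\lesssim\lambda^{-1/2}e^{cz^2/\lambda}$ is not $p$-integrable against the law of $\hX_{jh}^{h,\eps,\delta}$ for any $h$. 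If instead one hoped for a $j$-independent lower bound $f_{Z_1}(z)\gtrsim e^{-z^2/c_2}$ with $c_2$ of order $T$, that would be a genuinely nontrivial statement (it aggregates variance over the whole time horizon and is not a consequence of Lemma~\ref{Lem:Density}); and even granting it, the $L^p$-integral $\int e^{pz^2/c_2}e^{-z^2/c_3}\,dz$ only converges under a fragile relation between $p$, $c_2$, $c_3$ that cannot be guaranteed for all $p>1$. Also, your intermediate claim that the bias bound is of the form $\eps h^{-1}e^{\eps^2/\lambda}(1+|x|)^N$ is not accurate: the ratio $\E[\varphi_{2v_j}]/\E[\varphi_{v_j}]$ is not polynomially bounded in $x$.

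The paper avoids pointwise lower bounds for the density entirely. In Lemma~\ref{Lem:FrakCBound}, after the step $|m'(x)|\lesssim h^{-1}\,\E\bigl[\varphi_{h\eta^2}(x-Z_0)|x-Z_0|\bigr]/\E\bigl[\varphi_{h\eta^2}(x-Z_0)\bigr]$, it introduces an auxiliary $\gamma\in(0,1)$ and uses Lemma~\ref{Lem:Gauss_facts} (parts 1 and 2) together with Jensen to reach $\sup_{|z-x|\le\eps}|m'(z)|\lesssim C_{h,\gamma}\,\E\bigl[\varphi_{h\eta^2}(x-Z_0)^{1+\gamma}\bigr]^{-\gamma}$ with $C_{h,\gamma}=h^{-(1+\gamma+\gamma^2)/2}\gamma^{-1/2}e^{\eps^2/\lambda}$. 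Then, computing $\|\cdot\|_{L^p}^p$ directly against the density $f_{Z_1}$, the negative power of $\E[\varphi^{1+\gamma}]$ is absorbed using Jensen again, a weight $(1+x^2)^{\pm}$, H\"older, and Lemma~\ref{lem:Tail_estim}; the exponent $\beta_{\gamma,p}=\tfrac{1}{1+\gamma}-\gamma p$ is kept in $(0,1)$ by choosing $\gamma=\tfrac{1}{4p}$ (whence also the blow-up of the constant as $p\to1,\infty$ that the theorem records). You would need to replace your density-lower-bound step with this $\gamma$-parametrised Jensen/H\"older computation to close the argument.
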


\begin{remark}
    The proof of Lemma \ref{Lem:FrakCBound} and the definition of $\lambda$ shows that the second term explodes as $\rhobar \rightarrow 0$. As $\lambda$ linearly depends on $h$, the exponential in the second term is no issue as one would choose $\eps\lesssim h$ anyway to have small contribution from the term $\eps h^{-1}$. 
\end{remark}

\begin{lemma}\label{Lem_regular_kernel} Assume the setting of Theorem \ref{thm_condExp_err} and recall $\hE^\eps_\delta$ from Definition \ref{Def_hEepsdelta}. 
    Then for any $p\ge 1$ it holds that 
    \begin{align*}
        \Bigl\| \hE_\delta^\eps\bigl[\xi_{jh}^2| \hX_{jh}^{h,\eps,\delta} \bigr]-\hE^\eps\bigl[\xi_{jh}^2| \hX_{jh}^{h,\eps,\delta} \bigr] \Bigr\|_{L^p} \lesssim 
        \Bigl(\delta \sqrt{-\log(\delta)}\Bigr)^{\frac1p}.
    \end{align*}
\end{lemma}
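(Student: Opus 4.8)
The plan is to mimic the proof of Lemma~\ref{Lem_regularisation} line by line, with the Gaussian density $\varphi_\lambda$ replaced by the kernel $K_\eps$ and the length scale $\sqrt\lambda$ replaced by the support radius $\eps$. For $j=0$ both estimators equal $\E[\xi_0^2]$ by Definition~\ref{Def_hEepsdelta}, so the difference vanishes; fix $j\ge1$. Abbreviate $a(x)=\E\bigl[\xi_{jh}^2K_\eps(x-\hX_{jh}^{h,\eps,\delta})\bigr]$ and $b(x)=\E\bigl[K_\eps(x-\hX_{jh}^{h,\eps,\delta})\bigr]$; then a one-line computation gives
\[
  \hE_\delta^\eps\bigl[\xi_{jh}^2\big|\hX_{jh}^{h,\eps,\delta}=x\bigr]-\hE^\eps\bigl[\xi_{jh}^2\big|\hX_{jh}^{h,\eps,\delta}=x\bigr]
  =\frac{\delta\bigl(b(x)-a(x)\bigr)}{b(x)\bigl(b(x)+\delta\bigr)},
\]
and since $0\le a(x)\le\|\xi\|_{L^\infty}^2\,b(x)$, the left-hand side is bounded in absolute value by $(1+\|\xi\|_{L^\infty}^2)\,\delta/(b(x)+\delta)$.

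Next I would lower-bound $b(x)$. In the scheme \eqref{eq_Xhepsdelta} the diffusion coefficient $\xi_{jh}\sigma(jh,\cdot)/\sqrt{\hE_\delta^\eps[\xi_{jh}^2|\cdot]}$ lies between $ac/b=2c_{min}$ and $bC/a$ by Assumption~\ref{Aspt_new} (because $\hE_\delta^\eps[\xi_{jh}^2|\cdot]\in[a^2,b^2]$ there, $a<1<b$), so, conditionally on $\mathcal F_{jh}$, the increment is Gaussian with variance $\ge(2c_{min})^2h$; hence $\hX_{jh}^{h,\eps,\delta}$ has a density $f$ bounded by $(8\pi c_{min}^2h)^{-1/2}$, and Lemma~\ref{lem:Tail_estim} gives it a sub-Gaussian tail. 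By the ``stability near zero'' property, $K_\eps\ge\frac c\eps\one_{(-\eps/2,\eps/2)}$, so $b(x)\ge\frac c\eps\int_{x-\eps/2}^{x+\eps/2}f(y)\,dy$. Partitioning $\R$ into intervals $A_k=[k\eps/2,(k+1)\eps/2)$ and noting that $A_{k(x)}\subseteq[x-\eps/2,x+\eps/2]$, where $k(x)$ is the index with $x\in A_{k(x)}$, gives $b(x)\ge\frac c\eps\int_{A_{k(x)}}f$. Splitting the expectation over $\{|\hX_{jh}^{h,\eps,\delta}|\le M\}$ and its complement and summing over the $O(M/\eps)$ intervals meeting $[-M,M]$,
\[
  \E\Bigl[\bigl|\hE_\delta^\eps[\xi_{jh}^2|\hX_{jh}^{h,\eps,\delta}]-\hE^\eps[\xi_{jh}^2|\hX_{jh}^{h,\eps,\delta}]\bigr|\Bigr]
  \lesssim\Prob\bigl(|\hX_{jh}^{h,\eps,\delta}|>M\bigr)+\sum_{A_k\cap[-M,M]\ne\emptyset}\int_{A_k}\frac{\delta}{\frac c\eps\int_{A_k}f}\,f(x)\,dx
  \lesssim\Prob\bigl(|\hX_{jh}^{h,\eps,\delta}|>M\bigr)+\delta M,
\]
since each interval contributes at most $\delta\eps/c$, so the sum is $O(\delta M)$, independently of $\eps$ and of $j$.

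To upgrade to $L^p$, I would use $\E[|Z|^p]\le\|Z\|_{L^\infty}^{p-1}\|Z\|_{L^1}$ with $Z=\hE_\delta^\eps[\xi_{jh}^2|\cdot]-\hE^\eps[\xi_{jh}^2|\cdot]$, bounded by boundedness of $\xi$, obtaining
\[
  \bigl\|\hE_\delta^\eps[\xi_{jh}^2|\hX_{jh}^{h,\eps,\delta}]-\hE^\eps[\xi_{jh}^2|\hX_{jh}^{h,\eps,\delta}]\bigr\|_{L^p}\lesssim(\delta M)^{1/p}+\Prob\bigl(|\hX_{jh}^{h,\eps,\delta}|>M\bigr)^{1/p}.
\]
Finally, the sub-Gaussian tail $\Prob(|\hX_{jh}^{h,\eps,\delta}|>M)\lesssim M^{-1}e^{-cM^2/T}$ from Lemma~\ref{lem:Tail_estim} together with the choice $M\asymp\sqrt{-\log\delta}$ makes this probability $\lesssim\delta$ and turns $\delta M$ into $\delta\sqrt{-\log\delta}$, which is the claim.

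The argument is essentially routine once this scaffold is in place; the two points deserving care are (i) the existence of a bounded density and a sub-Gaussian tail for the regularised Euler scheme \eqref{eq_Xhepsdelta}, which rely on ellipticity and the two-sided bounds on $\sigma$ and $\xi$ exactly as in the half-step setting, and (ii) checking that summing the per-interval bound $\delta\eps/c$ over the $O(M/\eps)$ relevant intervals leaves a total independent of $\eps$, so that the rate $(\delta\sqrt{-\log\delta})^{1/p}$ holds uniformly in both $\eps$ and $j$, as needed for the subsequent proof of Theorem~\ref{thm_condExp_err}. I do not anticipate a genuine obstacle.
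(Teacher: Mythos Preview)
Your proposal is correct and follows exactly the approach the paper takes: the paper's own proof simply says ``the proof is the same as the proof of Lemma~\ref{Lem_regularisation}, with $K_\eps$ in place of $\varphi_\lambda$ and $\eps$ in place of $\sqrt\lambda$,'' together with the observation that Assumption~\ref{Ass:kernel} (stability near zero) supplies the lower bound $\int K_1(y)f(x+\eps y)\,dy\gtrsim\int_{-1/2}^{1/2}f(x+\eps y)\,dy$ that replaces the corresponding Gaussian estimate. You have carried out precisely this substitution, including the interval partition, the $L^1$-to-$L^p$ interpolation, and the choice $M\asymp\sqrt{-\log\delta}$; the two points you flag as requiring care are the right ones, and your treatment of them is sound.
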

\begin{proof}
    The proof is the same as the proof as Lemma \ref{Lem_regularisation}, with $K_\eps=\varphi_\lambda$ and by replacing $\sqrt{\lambda}$ by $\varepsilon$. 
    Note that  Assumption \ref{Ass:kernel} tells us, that for any positive function $f$ it holds that 
    \begin{align*}
         \int_{-1}^{1} K_1({y}) f(x+\eps y)\,dy 
       \gtrsim \int_{-\frac12}^{\frac12} f(x+\eps y)\,dy.
    \end{align*}
\end{proof}

      \begin{lemma}\label{Lem:FrakCBound}
          Assume the setting of Theorem \ref{thm_condExp_err} and recall $\hE^\eps_\delta$ from Definition \ref{Def_hEepsdelta}. It holds for  $j \in \{1,\dots,\frac Th\}$ that
          \begin{align*}
               \Bigl\| \E\bigl[\xi_{jh}^2| \hX_{jh}^{h,\eps,\delta} \bigr]-\hE^\eps\bigl[\xi_{jh}^2| \hX_{jh}^{h,\eps,\delta} \bigr] \Bigr\|_{L^p}
               \lesssim \eps h^{-1} e^{\frac {\eps^2}{\lambda}}
          \end{align*}
      \end{lemma}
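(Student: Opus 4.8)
The plan is to reduce $\E[\xi_{jh}^2\mid \hX_{jh}^{h,\eps,\delta}]$ and its Nadaraya--Watson analogue $\hE^\eps[\xi_{jh}^2\mid\hX_{jh}^{h,\eps,\delta}]$ to ratios of Gaussian averages and then exploit the Gaussian density estimates of Lemma \ref{Lem:Gauss_facts}. First I would set up a representation in the spirit of Lemma \ref{Lem_Cond_exp_gauss}. Writing $\hX_{jh}^{h,\eps,\delta}=\hX_{(j-1)h}^{h,\eps,\delta}+c_{(j-1)h}(W_{jh}-W_{(j-1)h})$ with $W=\rho B+\rhobar\bB$, the part of the increment carried by $\bB$ is, conditionally on the $\sigma$-algebra generated by $\xi_{jh}$, $\hX_{(j-1)h}^{h,\eps,\delta}$ and $B_{jh}-B_{(j-1)h}$, an independent centred Gaussian of variance $v=c_{(j-1)h}^2\rhobar^2h$; Assumption \ref{Aspt_new} together with $a^2\le \hE_\delta^\eps\le b^2$ forces $4\lambda\le v\lesssim\lambda$ where $\lambda=c_{min}^2\rhobar^2h$. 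Hence, with $(\mu,v)$ measurable w.r.t.\ that $\sigma$-algebra and $\xi_{jh}^2\in[a^2,b^2]$,
\[
g(x):=\E\bigl[\xi_{jh}^2\mid\hX_{jh}^{h,\eps,\delta}=x\bigr]=\frac{\E[\xi_{jh}^2\varphi_v(x-\mu)]}{\E[\varphi_v(x-\mu)]}=:\frac{m(x)}{p(x)},
\]
and $p$ is exactly the density of $\hX_{jh}^{h,\eps,\delta}$. Since, by Definition \ref{Def_hEepsdelta} with $\delta=0$, $\hE^\eps[\xi_{jh}^2\mid\hX_{jh}^{h,\eps,\delta}=x]=(K_\eps\star m)(x)/(K_\eps\star p)(x)$, a direct computation gives the key identity
\[
g(x)-\hE^\eps\bigl[\xi_{jh}^2\mid\hX_{jh}^{h,\eps,\delta}=x\bigr]=\frac{\int K_\eps(x-y)\,p(y)\,\bigl(g(x)-g(y)\bigr)\,dy}{(K_\eps\star p)(x)}.
\]

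Next I would quantify the regularity of $g$ at scale $\eps$. Differentiating the ratio, $g'(t)=p(t)^{-1}\E[(g(t)-\xi_{jh}^2)\varphi_v'(t-\mu)]$, so $|g'(t)|\le(b^2-a^2)p(t)^{-1}\E[|\varphi_v'(t-\mu)|]$; combining the Gaussian bound $|\varphi_v'(z)|=\tfrac{|z|}{v}\varphi_v(z)\lesssim\gamma^{-1/2}\lambda^{-(1+\gamma)/2}\varphi_v(z)^{1-\gamma}$ (valid since $v\ge4\lambda$) from Lemma \ref{Lem:Gauss_facts} with Jensen's inequality yields $|g'(t)|\lesssim\gamma^{-1/2}\lambda^{-(1+\gamma)/2}p(t)^{-\gamma}$ for every $\gamma\in(0,1)$. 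Inserting $g(x)-g(y)=\int_{[y,x]}g'$ into the key identity, bounding $(K_\eps\star p)(x)$ from below through the stability-near-zero item of Assumption \ref{Ass:kernel} (which gives $(K_\eps\star p)(x)\gtrsim\eps^{-1}\int_{x-\eps/2}^{x+\eps/2}p$), and comparing $p(t)$ for $|t-x|\le\eps$ with the local mass of $p$ near $x$ — here the worst-case ratio, between two points at distance $O(\eps)$, of the underlying Gaussian of variance of order $\lambda$ (after a completion of the square) contributes the factor $e^{C\eps^2/\lambda}$ — I would arrive at a pointwise bound of the shape $|g(x)-\hE^\eps[\ldots](x)|\lesssim\eps\,\lambda^{-(1+\gamma)/2}\gamma^{-1/2}p(x)^{-\gamma}e^{C\eps^2/\lambda}$ on a region $|x|\le M$. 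On $|x|>M$ one uses the trivial bound $|g(x)-\hE^\eps[\ldots](x)|\le b^2-a^2$ and the Gaussian tail estimate of Lemma \ref{lem:Tail_estim}, exactly as in the proof of Lemma \ref{Lem_regularisation}, choosing $M$ large.

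Finally I would take the $L^p$ norm. Since the law of $\hX_{jh}^{h,\eps,\delta}$ has density $p$, one has $\|g(\hX_{jh}^{h,\eps,\delta})-\hE^\eps[\ldots]\|_{L^p}^p=\int|g(x)-\hE^\eps[\ldots](x)|^p p(x)\,dx$, and the $p(x)$ weight absorbs the $p(x)^{-\gamma}$ blow-up once $\gamma<1/p$ is fixed; collecting constants gives $\|g(\hX_{jh}^{h,\eps,\delta})-\hE^\eps[\ldots]\|_{L^p}\lesssim\eps\,\lambda^{-1/2}e^{C\eps^2/\lambda}$, and since $\lambda=c_{min}^2\rhobar^2h$ and $h<1$ this is $\lesssim\eps\,h^{-1}e^{\eps^2/\lambda}$, the claim; the step $j=1$, where $\hX_h^{h,\eps,\delta}$ is a single Gaussian around $x_0$, fits the same scheme. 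The main obstacle is the middle step: $g$ is genuinely \emph{not} Lipschitz with a $\lambda$-favourable constant — its derivative explodes precisely where $p$ is small, so one cannot simply pull out $\operatorname{Lip}(g)\cdot\eps$. The density weights $p(y)$ and $(K_\eps\star p)(x)$ must be kept together with $g(x)-g(y)$ inside the integral to extract cancellation, and one has to track carefully how comparing Gaussian densities of variance of order $\lambda$ across an $\eps$-window simultaneously generates the $e^{\eps^2/\lambda}$ factor and interacts with the large-$|x|$ cutoff.
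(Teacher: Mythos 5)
Your proposal follows essentially the same route as the paper's proof: you represent the density of $\hX_{jh}^{h,\eps,\delta}$ as a Gaussian mixture via the $\bB$-increment (the paper's Lemma~\ref{Lem:Density}), derive the same key identity expressing the error as a kernel-weighted average of increments of the conditional-expectation function $g$, bound $|g'|$ by $\gamma^{-1/2}\lambda^{-(1+\gamma)/2}p(\cdot)^{-\gamma}$ via Lemma~\ref{Lem:Gauss_facts}, extract the $e^{C\eps^2/\lambda}$ factor by comparing Gaussian densities across an $\eps$-window, and absorb $p(x)^{-\gamma}$ into the $L^p$ density weight with $\gamma<1/p$. The only noticeable differences are cosmetic: the paper's mean-value formulation makes the $(K_\eps\star p)$ denominator cancel exactly (so its lower bound via the stability-near-zero condition, which you invoke, is not actually needed), the paper controls the spatial integral by the weighted Hölder trick with $(1+x^2)/(1+x^2)$ rather than a cutoff at $M$, and your claimed intermediate rate $\eps\lambda^{-1/2}$ slightly understates the accumulated $h^{-\gamma/2}$-type corrections (the paper's exact exponent is between $-1/2$ and $-1$), but since $\lambda\asymp h<1$ this is still dominated by the stated $\eps h^{-1}e^{\eps^2/\lambda}$.
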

      \begin{proof}
            We fix some index $j \in \{1,\dots,T/h\}$. 
            Under Assumption \ref{Ass:Half_Step} we can write 
            \begin{align*}
                \hX_{jh}^{h,\eps,\delta}&=\hX_{jh-h}^{h,\eps,\delta}+\frac{\xi_{jh-h}\sigma\bigl(jh-h,\hX_{jh-h}^{h,\eps,\delta}\bigr)}{\sqrt{\hE_\delta^\eps\bigl[\xi_{jh-h}^2\big|\hX_{jh-h}^{h,\eps,\delta}\bigr]}} 
                \rho
                \bigl(B_{jh}-B_{jh-h}\bigr)
                \\
                &\qquad \qquad + \frac{\xi_{jh-h}\sigma\bigl(jh-h,\hX_{jh-h}^{h,\eps,\delta}\bigr)}{\sqrt{\hE_\delta^\eps\bigl[\xi_{jh-h}^2\big|\hX_{jh-h}^{h,\eps,\delta}\bigr]}} 
                \rhobar
                \bigl(\bB_{jh}-\bB_{jh-h}\bigr),
            \end{align*}
            for two independent Brownian motions $B$ and $\bB$ such that $\xi$ and $\bB$ are independent.

           We can apply Lemma \ref{Lem:Density} with $Z_1= \hX_{jh}^{h,\eps,\delta}$, $\eta=\rhobar \frac{\xi_{jh-h}\sigma(jh-h,\hX_{jh-h}^{h,\eps,\delta})}{\sqrt{\hE_\delta^\eps[\xi_{jh-h}^2|\hX_{jh-h}^{h,\eps,\delta}]}}$ 
           and $Z_0=\hX_{jh-h}^{h,\eps,\delta}+\rho \frac{\xi_{jh-h}\sigma(jh-h,\hX_{jh-h}^{h,\eps,\delta})}{\sqrt{\hE_\delta^\eps[\xi_{jh-h}^2|\hX_{jh-h}^{h,\eps,\delta}]}}(B_{jh}-B_{jh-h})$. 
           Note that $\eta >\rhobar c_{min}$.
           We denote $Z_1=\hX_{jh}^{h,\eps,\delta}$ and $\xi\coloneqq \xi_{jh-h}$.
           By independence of $(Z_0,\xi)$ and $\Delta \bB_{jh-h}\coloneqq (\bB_{jh}-\bB_{jh-h}\bigr)$, the density of $Z_1$ with respect to the Lebesgue measure is 
           $f_{Z_1}(x)=\E\Bigl[\varphi_{h\eta^2}\bigl(x-Z_0\bigr) \Bigr]$ and for $\psi:\R\to\R$ measurable and bounded,
           \begin{align*}
               \E\bigl[ \xi^2\psi(Z_1)\bigr]=\E\Bigl[\int_\R\xi^2\psi(Z_0+x)\varphi_{h\eta^2}\bigl(x\bigr)dx\Bigr]=\int_\R \E\Bigl[\xi^2\varphi_{h\eta^2}\bigl(x-Z_0\bigr) \Bigr]\psi(x)dx
           \end{align*}
           so that
           \begin{align*}\E\bigl[ \xi^2|Z_1\bigr]&=m(Z_1)\mbox{ with}\\
        m(x)&:= \frac1{f_{Z_1}(x)}\E\Bigl[\xi^2 \varphi_{h\eta^2}\bigl(x-Z_0\bigr) \Bigr].
    \end{align*}

        We use the tower property of conditional expectation and see by the support properties in Assumption \ref{Ass:kernel} that there is some ($\mathcal{F}_{jh}$-measurable) $x^*$ taking values in $(x-\eps,x+\eps)$ such that
\begin{align*}
        \E\bigl[\xi^2\big|Z_1\bigr]-\frac{\E\bigl[K_\eps(Z_1-x)\xi^2]}{\E\bigl[K_\eps(Z_1-x)]}\bigg|_{x=Z_1}
        \\&=
        \frac{\E\bigl[m(x)K_\eps(Z_1-x)\bigr]-\E\Bigl[K_\eps(Z_1-x)\E\bigl[\xi^2\big|Z_1\bigr]\Bigr]}{\E\bigl[K_\eps(Z_1-x)]}\bigg|_{x=Z_1}
        \\&=
        \frac{\E\bigl[m(x)K_\eps(Z_1-x)\bigr]-\E\bigl[K_\eps(Z_1-x)m(Z_1)\bigr]}{\E\bigl[K_\eps(Z_1-x)]}\bigg|_{x=Z_1}
        \\
        &=\frac{\E\bigl[\bigl(Z_1-x\bigr)m'\bigl(x^*\bigr)K_\eps(Z_1-x)\bigr]}{\E\bigl[K_\eps(Z_1-x)]}\bigg|_{x=Z_1}
        \\
        &\le \eps\sup_{z \in (Z_1-\eps,Z_1+\eps)}|m'(z)|.
    \end{align*}
    By the chain rule the derivative is given by
    \begin{align*}
        m'(x)=\frac1{f_{Z_1}(x)} \E\Bigl[\xi^2 \partial_x \varphi_{h\eta^2}\bigl(x-Z_0\bigr) \Bigr] - \frac1{f_{Z_1}^2(x)}\E\Bigl[\xi^2 \varphi_{h\eta^2}\bigl(x-Z_0\bigr) \Bigr]\E\Bigl[\partial_x\varphi_{h\eta^2}\bigl(x-Z_0\bigr)\Bigr]. 
    \end{align*}
    Recall that $\xi^2$ is bounded and that for $\lambda>0$, $\partial_x \varphi_\lambda(x)=-\frac x\lambda \varphi_\lambda(x)$.  The fact that $\eta$ is bounded from below and the boundedness of $m(x)=\frac1{f_{Z_1}(x)}\E\bigl[\xi^2 \varphi_{h\eta^2}\bigl(x-Z_0\bigr) \bigr]$ imply that
    \begin{align*}
        \bigl|m'(x)\bigr|\lesssim \frac{2}{h} \frac{\E\Bigl[\varphi_{h\eta^2}\bigl(x-Z_0\bigr)\bigl|x-Z_0\bigr|\Bigr]}{\E\Bigl[\varphi_{h\eta^2}\bigl(x-Z_0\bigr)\Bigr]}.
    \end{align*}

    We now want to find an upper bound of the r.h.s.\
    whose $p$-th power is integrable w.r.t.\ the density of $Z_1$. We introduce an auxiliary parameter $\gamma\in(0,1)$ such that $\gamma \le \frac1{2p}$.
    By Lemma \ref{Lem:Gauss_facts} and Jensen's inequality, as well as the boundedness of $\eta$ from below,
    \begin{align*}
        \bigl|m'(x)\bigr|\lesssim h^{-1+1/2-\gamma/2} \gamma^{-1/2} \frac{\E\Bigl[\varphi_{h\eta^2}\bigl(x-Z_0\bigr)^{1-\gamma}\Bigr]}{\E\Bigl[\varphi_{h\eta^2}\bigl(x-Z_0\bigr)\Bigr]}
        \lesssim h^{
        -\frac{1+\gamma}2} \gamma^{-\frac12} \E\Bigl[\varphi_{h\eta^2}\bigl(x-Z_0\bigr)\Bigr]^{-\gamma}.
    \end{align*}
    Lemma \ref{Lem:Gauss_facts} furthermore tells us that, recalling $h\eta^2 \ge \lambda$ with $\lambda$ defined in \eqref{Eq_lambda},
    \begin{align*}
        \sup_{z \in (x-\eps,x+\eps)}|m'(z)| &\lesssim  h^{
        -\frac{1+\gamma}2} \gamma^{-\frac12}\Bigl(e^{\frac{\eps^2}{\gamma \lambda}} \sqrt{h}^{-\gamma}\Bigr)^{\gamma}\E\Bigl[\varphi_{h\eta^2}\bigl(x-Z_0\bigr)^{1+\gamma}\Bigr]^{-\gamma}
        \\
        &\lesssim  \underbrace{h^{
        -\frac{1+\gamma+\gamma^2}2} \gamma^{-\frac12}e^{\frac{\eps^2}{ \lambda}}}_{C_{h,\gamma}} \E\Bigl[\varphi_{h\eta^2}\bigl(x-Z_0\bigr)^{1+\gamma}\Bigr]^{-\gamma}.
    \end{align*}
    Using our representation of $f_{Z_1}$, Jensen's inequality and once more Lemma \ref{Lem:Gauss_facts} imply
    \begin{align*}
    \Bigl\| \E\bigl[\xi_{}^2| Z_1 \bigr]-\hE^\eps\bigl[\xi_{}^2| Z_1 \bigr] \Bigr\|_{L^p}^p
    &\lesssim
    \eps^p
        \Bigl\|
        \sup_{z \in (Z_1-\eps,Z_1+\eps)}m'(z)
        \Bigr\|_{L^p}^p 
        \\
        &\lesssim \eps^p C_{h,\gamma}^p \int_{\mathbb{R}} \E\Bigl[\varphi_{h\eta^2}\bigl(x-Z_0\bigr)^{1+\gamma}\Bigr]^{-\gamma p} \E\Bigl[\varphi_{h\eta^2}\bigl(x-Z_0\bigr)\Bigr] \,dx
        \\
        &\lesssim \eps^p
        C_{h,\gamma}^p \int_{\mathbb{R}} \E\Bigl[\varphi_{h\eta^2}\bigl(x-Z_0\bigr)^{1+\gamma}\Bigr]^{\frac{1}{1+\gamma}-\gamma p}  \,dx
        \\
        &\lesssim \eps^p
        C_{h,\gamma}^p {h}^{-\frac\gamma2\bigl(\frac{1}{1+\gamma}-\gamma p\bigr)}\int_{\mathbb{R}} \E\Bigl[\varphi_{h\eta^2}\bigl(x-Z_0\bigr)\Bigr]^{\frac{1}{1+\gamma}-\gamma p}  \,dx.
    \end{align*}
    We now try to find a value for $\gamma$, that can depend on $p$, such that the last integral remains finite. 
    Denoting $\beta_{\gamma,p}=\frac{1}{1+\gamma}-\gamma p \in (0,1),$ it follows from H\"older's inequality that
    \begin{align*}
        \int_{\mathbb{R}} \E\Bigl[\varphi_{h\eta^2}\bigl(x-Z_0\bigr)&\Bigr]^{\beta_{\gamma,p}}  \,dx =\int_{\mathbb{R}} \E\Bigl[\varphi_{h\eta^2}\bigl(x-Z_0\bigr)\Bigr]^{\beta_{\gamma,p}} \frac{1+x^2}{1+x^2} \,dx 
        \\
        &\le \biggl(\int_{\mathbb{R}} \E\Bigl[\varphi_{h\eta^2}\bigl(x-Z_0\bigr)\Bigr] \bigl({1+x^2}\bigr)^{\frac{1}{\beta_{\gamma,p}}} \,dx\biggr)^{\beta_{\gamma,p}} \biggl(\int_{\mathbb{R}}\Bigl(\frac{1}{1+x^2}\Bigr)^{\frac1{1-\beta_{\gamma,p}}} \,dx \biggr)^{1-\beta_{\gamma,p}}.
    \end{align*}
    We choose $\gamma = \frac 1{4p}$, ensuring that $\frac14\le \beta_{1/(4p),p} \le \frac34$, meaning that we have bounds that do not depend on $p$.
    Lemma \ref{lem:Tail_estim} guarantees the boundedness of
    \begin{align*}
       \biggl(\int_{\mathbb{R}} &\E\Bigl[\varphi_{h\eta^2}\bigl(x-Z_0\bigr)\Bigr] \bigl({1+x^2}\bigr)^{\frac{1}{\beta_{1/(4p),p}}} \,dx\biggr)^{\frac{\beta_{1/(4p),p}}{p}} \biggl(\int_{\mathbb{R}}\Bigl(\frac{1}{1+x^2}\Bigr)^{\frac1{1-\beta_{1/(4p),p}}} \,dx \biggr)^{\frac{1-\beta_{1/(4p),p}}{p}}
       \\
        &\lesssim \E\Bigl[ \bigl(1+Z_1^2\bigr)^4 \Bigr].
    \end{align*}
    Basic algebra tells us that
    \begin{align*}
        C_{h,\gamma}{h}^{-\frac\gamma{2p}\bigl(\frac{1}{1+\gamma}-\gamma p\bigr)}&=
        C_{h,1/(4p)}{h}^{-\frac1{8p^2}\beta_{1/(4p),p}}
        \\
        &=
        \exp\Bigl(\log(h)\Bigl( 
        -\frac{1+\frac1{4p}+\frac1{16p^2}}2 -\frac1{8p^2}\beta_{1/(4p),p}\Bigr)\Bigr) (4p)^{\frac12}e^{\frac{\eps^2}{ \lambda}}
        \\
        &\lesssim \exp\Bigl(\log(h)\Bigl( 
        -\frac12-\frac1{p}\Bigl(\frac18+\frac1{32p}+\frac1{8p}\times\frac34
        \Bigr)
        \Bigr)\Bigr) \sqrt{p} e^{\frac{\eps^2}{ \lambda}}
        \\
        & \lesssim h^{-1} e^{\frac {\eps^2}{\lambda}}.
    \end{align*}
    Combining everything and substituting back we see that
    \begin{align*}
         \Bigl\| \E\bigl[\xi_{jh}^2| \hX_{jh}^{h,\eps,\delta} \bigr]-\hE^\eps\bigl[\xi_{jh}^2| \hX_{jh}^{h,\eps,\delta} \bigr] \Bigr\|_{L^p}
         \lesssim \eps h^{-1} e^{\frac {\eps^2}{\lambda}}.
    \end{align*}

      \end{proof}

\begin{proof}[Proof of Theorem \ref{thm_condExp_err}]
        Theorem \ref{Thm:Rate_CondExp} tells us that
        \begin{align*}
        \Bigl|\E\bigl[ f(Y_T)\bigr]-\E\bigl[f(\hX_T^{h,\eps,\delta})\bigr]\Bigr|\lesssim h+\sup_{j\in\{0,\dots,\frac Th\}} \bigl\| {\hE_\eps^\delta\bigl[\xi_{jh}^2\big|\hX_{jh}^{h,\eps,\delta}\bigr]}
        -{\E\bigl[\xi_{jh}^2\big|\hX_{jh}^{h,\eps,\delta}\bigr]}\bigr\|_{L^p}.
    \end{align*}
    For $j=0$, by definition ${\hE_\eps^\delta\bigl[\xi_{0}^2\big|\hX_{0}^{h,\eps,\delta}\bigr]}
        ={\E\bigl[\xi_{0}^2\big|\hX_{0}^{h,\eps,\delta}\bigr]}=\E[\xi_0^2]$, hence the difference above is equal to $0$.
    Let $q$ such that $1< q<p$. Basic triangle inequality tells us that
    \begin{align*}
            \bigl\| {\hE^\eps_\delta\bigl[\xi_{jh}^2\big|\hX_{jh}^{h,\eps,\delta}\bigr]}
        -{\E\bigl[\xi_{jh}^2\big|\hX_{jh}^{h,\eps,\delta}\bigr]}\bigr\|_{L^q}
        &\le  
         \bigl\| {\hE^\eps\bigl[\xi_{jh}^2\big|\hX_{jh}^{h,\eps,\delta}\bigr]}
        -{\hE_\delta^\eps\bigl[\xi_{jh}^2\big|\hX_{jh}^{h,\eps,\delta}\bigr]}\bigr\|_{L^q}
        \\
        &\qquad + 
        \bigl\| {\hE^\eps\bigl[\xi_{jh}^2\big|\hX_{jh}^{h,\eps,\delta}\bigr]}
        -{\E\bigl[\xi_{jh}^2\big|\hX_{jh}^{h,\eps,\delta}\bigr]}\bigr\|_{L^q} .
        \end{align*}
        Lemma  \ref{Lem_regular_kernel} tells us, uniformly for all $j \in \{1,\dots,\frac Th\}$
        \begin{align*}
             \bigl\| {\hE^\eps\bigl[\xi_{jh}^2\big|\hX_{jh}^{h,\eps,\delta}\bigr]}
        -{\hE_\delta^\eps\bigl[\xi_{jh}^2\big|\hX_{jh}^{h,\eps,\delta}\bigr]}\bigr\|_{L^q}
        \lesssim
        \Bigl(\delta \sqrt{-\log(\delta)}\Bigr)^{\frac1 q}.
        \end{align*}
        By Lemma  \ref{Lem:FrakCBound} we observe that, again uniformly for all $j \in \{1,\dots,\frac Th\}$
        \begin{align*}
             \bigl\| {\hE^\eps\bigl[\xi_{jh}^2\big|\hX_{jh}^{h,\eps,\delta}\bigr]}
        -{\E\bigl[\xi_{jh}^2\big|\hX_{jh}^{h,\eps,\delta}\bigr]}\bigr\|_{L^q} 
        \lesssim 
        \eps h^{-1} e^{\frac {\eps^2}{\lambda}}
        .
        \end{align*}
        The claim then follows by combining these estimates and from the fact that $\bigl(\delta \sqrt{-\log(\delta)}\bigr)^{\frac1q}\lesssim \delta^{\frac1p}$ as well as the monotonicity of $L^p$ norms.
    \end{proof}
    
\subsection{Propagation of Chaos for Euler Scheme}\label{Sec:POC}
In this section we fix the stepsize $h$, as well as the regularisation parameters $\eps,\delta\in(0,1)$. For $i \in \{1,\dots,N\}$ we will denote the $i$-th particle by the superscript $(i)$. 
Let $\bigl( W^{(i)})_{i=1}^N$ be independent Brownian motions. Let $\Delta W^{(i)}_{jh}=W^{(i)}_{jh+h}-W^{(i)}_{jh}$ and assume that $\frac Th\in \mathbb{N}$. Let $\mu_{jh}=\mathcal{L}\bigl({\xi_{jh},\hX_{jh}^{h,(1),\eps,\delta}}\bigr)$ and recall the notation $\hPsi_j$ from \eqref{Eq:PsiHat_Def}. For $j\in \{0,\dots,\frac Th-1\}$,  we consider independent copies of the following non-interacting particle scheme 
\begin{align*}
    \hX_{jh+h}^{h,(i),\eps,\delta}&=\hX_{jh}^{h,(i),\eps,\delta}+ \sigma(jh,\hX_{jh}^{h,(i),\eps,\delta})\xi_{jh}^{(i)}  \hPsi_j^{-\frac12}\bigl(\hX_{jh}^{h,(i),\eps,\delta},\mu_{jh}\bigr)\Delta W_{jh}^{(i)},
    \\
    \hX_0^{h,(i),\eps,\delta}&=x_0,
    \\
    \mu_{jh}&=\mathcal{L}\bigl({\xi_{jh},\hX_{jh}^{h,(1),\eps,\delta}}\bigr).
\end{align*}
Furthermore we recall the interacting particle scheme
\begin{align}\label{Eq:XhatEpsN}
    \hX_{jh+h}^{h,(i),N,\eps,\delta}&=\hX_{jh}^{h,(i),N,\eps,\delta}+ \sigma(jh,\hX_{jh}^{h,(i),N,\eps,\delta})\xi_{jh}^{(i)} \hPsi_j^{-\frac12}\bigl(\hX_{jh}^{h,(i),N,\eps,\delta},\mu^N_{jh}\bigr)\Delta W_{jh}^{(i)},
    \\
    \hX_0^{h,(i),N,\eps,\delta}&=x_0,\nonumber
\end{align}
where, for $j\ge 1$ we denote by $\mu_{jh}^N$  the empirical measure of the interacting particle system, $\mu_{jh}^N(dx,dy)=\frac1N \sum_{j=1}^N \delta_{\xi_{jh}^{(i)},\hX_{jh}^{h,(i),N,\eps,\delta}}(dy,dx)$ and for consistency we set $\mu_0^N=\mu_0$. 
\begin{remark}\label{Rem_PsiHat}
    Recall that, for $j\ge 1$, by definition the function $\hPsi_j$ depends on $\eps$ and $\delta$ via
    \begin{align*}
        \hPsi_j^{-1}\bigl(y,\mu^N_{jh} \bigr)&=\frac
        {
        \frac1N \sum_{k=1}^N   K_\eps\bigl( y-\hX_{jh}^{h,(k),N,\eps,\delta} \bigr)+\delta
        }
        { {\frac1N \sum_{k=1}^N \bigl(\xi_{jh}^{(k)}\bigr)^2 K_\eps\bigl( y-\hX_{jh}^{h,(k),N,\eps,\delta}\bigr)}+\delta},        
        \\
        \hPsi_j^{-1}\bigl(y,\mu_{jh}\bigr)&= \frac{
        \E\bigl[ K_\eps\bigl(y-\hX_{jh}^{h,(1),\eps,\delta}\bigr)\bigr]+\delta
        }{{\E\bigl[ \bigl(\xi_{jh}^{(1)}\bigr)^2K_\eps\bigl(y-\hX_{jh}^{h,(1),\eps,\delta}\bigr)\bigr]}+\delta }.
    \end{align*}
    Also, by convention
    \begin{align*}
        \hPsi_0^{-1}\bigl(y,\mu_{0} \bigr)=\hPsi_0^{-1}\bigl(y,\mu^N_{0} \bigr)=\frac1{\E[\xi_0^2]}.
    \end{align*}
    In particular the function $\hPsi_j$ satisfies $\hPsi_j( \hX_{jh}^{h,(i),N,\eps,\delta},\mu_{jh}^N)=\hE_\delta^{\eps,N}[(\xi_{jh}^{(i)})^2| \hX_{jh}^{h,(i),N,\eps,\delta}]$ for $\hE_\delta^{\eps,N}$ defined in  Equation \eqref{eq_hEepsdelta} above.
\end{remark}
\begin{theorem}\label{Lem:Prop_of_chaos}
    Under Assumption \ref{Aspt_new} and \ref{Ass:kernel} the following hold:
    \begin{enumerate}[(i)]
        \item   There is a constant $C>0$ such that
        \begin{align*}
        \Bigl\| \hX_T^{h,(1),\eps,\delta} - \hX_T^{h,(1),N,\eps,\delta} \Bigr\|_{L^2} \lesssim \eps
        \exp\Bigl(C \delta^{-2} \eps^{-4}\Bigr)  N^{-1/2}.
    \end{align*}
    \item  If $\eps\lesssim h$ there is a constant $\tilde C$ such that
    \begin{align*}
         \Bigl\| \hX_T^{h,(1),\eps,\delta} - \hX_T^{h,(1),N,\eps,\delta} \Bigr\|_{L^2} 
        &\lesssim \eps
        \exp\Bigl(\frac T2h^{-1}
        \log\bigl( \tilde C h \delta^{-2}\eps^{-4}\bigr)\Bigr)N^{-1/2}.
    \end{align*}
    \end{enumerate}

\end{theorem}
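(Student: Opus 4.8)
The plan is to run, step by step, the argument used for Theorem~\ref{Thm:HS_Prop_of_chaos}, with the Gaussian density $\varphi_\lambda$ replaced by the kernel $K_\eps$ and the length scale $\sqrt\lambda$ replaced by $\eps$; here the scheme~\eqref{Eq:XhatEpsN} has neither a half-step nor a $c_{min}$-correction, so the one-step error splits into only three groups and no lower bound on a density is needed. Throughout I abbreviate $\hX_{jh}^{(i)}:=\hX_{jh}^{h,(i),\eps,\delta}$ and $\hX_{jh}^{(i),N}:=\hX_{jh}^{h,(i),N,\eps,\delta}$, and I use two uniform a priori bounds. First, since $a^2\le\xi_{jh}^2\le b^2$, the formula~\eqref{Eq:PsiHat_Def} (and its $j=0$ convention) yields $\hPsi_j(\cdot,\mu)\in[a^2,b^2]$, hence $\hPsi_j^{-1/2}(\cdot,\mu)\in[b^{-1},a^{-1}]$ uniformly in $j,\mu,\eps,\delta$; in particular differences of $\hPsi_j^{-1/2}$ are bounded by $\tfrac b2$ times the corresponding differences of $\hPsi_j^{-1}$. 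Second, differentiating~\eqref{Eq:PsiHat_Def} in the first variable and using $\|K_\eps\|_{L^\infty}\lesssim\eps^{-1}$, $\|K_\eps'\|_{L^\infty}\lesssim\eps^{-2}$ (Assumption~\ref{Ass:kernel}), the bound $\hPsi_j^{-1}\le a^{-2}$ and the $+\delta$ in the denominators, one gets the uniform Lipschitz estimate $\|\hPsi_j^{-1/2}(\cdot,\mu)\|_{Lip}\lesssim\eps^{-2}\delta^{-1}$ (a crude analogue of Lemma~\ref{rem_G_deriv_improv}).

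Next, fix $i$ and a grid point $j\ge1$ (for $j=0$ one has $\mu_0^N=\mu_0$ and $\hPsi_0^{-1}\equiv\E[\xi_0^2]^{-1}$ deterministic, so $\hX_h^{(i),N}=\hX_h^{(i)}$ and the recursion effectively starts at $j=1$). Subtracting~\eqref{Eq:XhatEpsN} from its non-interacting copy and using that the increment $\Delta W_{jh}^{(i)}$ is independent of $\mathcal{F}_{jh}$ (so all cross terms with the $\mathcal{F}_{jh}$-measurable $\hX_{jh}^{(i)}-\hX_{jh}^{(i),N}$ vanish), one writes $\E\bigl[(\hX_{jh+h}^{(i)}-\hX_{jh+h}^{(i),N})^2\bigr]$ as $\E\bigl[(\hX_{jh}^{(i)}-\hX_{jh}^{(i),N})^2\bigr]$ plus $O(h)$ times the sum of the squared $L^2$-norms of three quantities (multiplied by bounded $\sigma,\xi$ factors): (1) $\hPsi_j^{-1/2}(\hX_{jh}^{(i)},\mu_{jh})-\hPsi_j^{-1/2}(\hX_{jh}^{(i)},\mu_{jh}^N)$; (2) $\hPsi_j^{-1/2}(\hX_{jh}^{(i)},\mu_{jh}^N)-\hPsi_j^{-1/2}(\hX_{jh}^{(i),N},\mu_{jh}^N)$; (3) $\sigma(jh,\hX_{jh}^{(i)})-\sigma(jh,\hX_{jh}^{(i),N})$. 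Groups (2) and (3) are immediate from the Lipschitz bounds on $\hPsi_j^{-1/2}$ and on $\sigma$, giving a contribution $\lesssim h\,\eps^{-4}\delta^{-2}\,\E\bigl[(\hX_{jh}^{(i)}-\hX_{jh}^{(i),N})^2\bigr]$.

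For group (1), insert the intermediate empirical measure $\tilde\mu_{jh}^N=\tfrac1N\sum_k\delta_{(\xi_{jh}^{(k)},\hX_{jh}^{(k)})}$ of the true non-interacting particles. Writing $\hPsi_j^{-1}$ as $\tfrac{A+\delta}{B+\delta}$ and using $\hPsi_j^{-1}\le a^{-2}$, the difference $\hPsi_j^{-1}(\hX_{jh}^{(i)},\tilde\mu_{jh}^N)-\hPsi_j^{-1}(\hX_{jh}^{(i)},\mu_{jh})$ is bounded by $\delta^{-1}$ times $|A_1-\bar A|+|B_1-\bar B|$; a law-of-large-numbers estimate $\E[|A_1-\bar A|^2]+\E[|B_1-\bar B|^2]\lesssim N^{-1}\|K_\eps\|_{L^\infty}^2\lesssim N^{-1}\eps^{-2}$ — valid for a deterministic evaluation point and extended to $\hX_{jh}^{(i)}$ by peeling off the $k=i$ summand and conditioning on $\hX_{jh}^{(i)}$, exactly as for Theorem~\ref{Thm:HS_Prop_of_chaos} — yields a contribution $\lesssim h\,N^{-1}\eps^{-2}\delta^{-2}$. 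Similarly $\hPsi_j^{-1}(\hX_{jh}^{(i)},\tilde\mu_{jh}^N)-\hPsi_j^{-1}(\hX_{jh}^{(i)},\mu_{jh}^N)$ is bounded by a constant times $\delta^{-1}\,\|K_\eps'\|_{L^\infty}\,\tfrac1N\sum_k|\hX_{jh}^{(k)}-\hX_{jh}^{(k),N}|\lesssim\eps^{-2}\delta^{-1}\bigl(\tfrac1N\sum_k(\hX_{jh}^{(k)}-\hX_{jh}^{(k),N})^2\bigr)^{1/2}$, contributing $\lesssim h\,\eps^{-4}\delta^{-2}\,\tfrac1N\sum_k\E[(\hX_{jh}^{(k)}-\hX_{jh}^{(k),N})^2]$. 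Collecting the three groups, squaring, taking expectations and averaging over $i$, one obtains, with $e_j:=\tfrac1N\sum_{i=1}^N\E[(\hX_{jh}^{(i)}-\hX_{jh}^{(i),N})^2]$ and a constant $C$ independent of $h,\delta,N$,
\begin{equation*}
e_{j+1}\le\bigl(1+Ch\,\eps^{-4}\delta^{-2}\bigr)\,e_j+C\,\frac{h}{N}\,\eps^{-2}\delta^{-2},\qquad e_0=e_1=0.
\end{equation*}

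Finally, the discrete Gronwall lemma (Lemma~\ref{lem:recurr_relation}) gives $e_{T/h}\lesssim \tfrac{h}{N}\eps^{-2}\delta^{-2}\cdot\tfrac{(1+Ch\eps^{-4}\delta^{-2})^{T/h}}{Ch\eps^{-4}\delta^{-2}}\lesssim\tfrac{\eps^2}{N}\,(1+Ch\eps^{-4}\delta^{-2})^{T/h}$. For part~(i) bound $(1+Ch\eps^{-4}\delta^{-2})^{T/h}\le e^{CT\eps^{-4}\delta^{-2}}$ and take square roots; for part~(ii) note that $\eps\lesssim h$ forces $Ch\eps^{-4}\delta^{-2}$ to be bounded below by a constant (indeed $\gtrsim h^{-3}\ge1$), so $\log(1+Ch\eps^{-4}\delta^{-2})\le\log(\tilde Ch\eps^{-4}\delta^{-2})$ and therefore $(1+Ch\eps^{-4}\delta^{-2})^{T/h}\le\exp\bigl(\tfrac Th\log(\tilde Ch\delta^{-2}\eps^{-4})\bigr)$, and again take square roots. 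I expect the only genuinely delicate point to be the bookkeeping in group (1): one must cleanly separate the propagation-of-chaos fluctuation (of size $O(N^{-1/2}\eps^{-1})$, which produces the prefactor $\eps N^{-1/2}$) from the particle-to-particle discrepancy, and keep precise track of the powers of $\eps$ and $\delta$ so that they combine into exactly $\delta^{-2}\eps^{-4}$ in the exponent; once the uniform bounds $\hPsi_j^{-1}\le a^{-2}$, $\|\hPsi_j^{-1/2}\|_{Lip}\lesssim\eps^{-2}\delta^{-1}$, $\|K_\eps\|_{L^\infty}\lesssim\eps^{-1}$, $\|K_\eps'\|_{L^\infty}\lesssim\eps^{-2}$ are in place, everything else is routine.
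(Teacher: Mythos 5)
Your proposal is correct and follows essentially the same route as the paper's proof: the same three-term decomposition of the one-step difference, the same uniform Lipschitz bound on $\hPsi_j^{-1/2}$ of size $\delta^{-1}\|K_\eps'\|_{L^\infty}\lesssim\eps^{-2}\delta^{-1}$ (the paper's Lemma~\ref{Lem:G_deriv}), the same insertion of the intermediate empirical measure of non-interacting particles to split the law-of-large-numbers fluctuation from the particle-to-particle discrepancy, the same discrete Gronwall recurrence $e_{j+1}\le(1+Ch\eps^{-4}\delta^{-2})e_j + C\frac hN\eps^{-2}\delta^{-2}$, and the same two ways of bounding $(1+Ch\eps^{-4}\delta^{-2})^{T/h}$ for parts (i) and (ii). The only cosmetic difference is that you pass through $\hPsi_j^{-1}$ and use the a priori bounds $\hPsi_j\in[a^2,b^2]$ to transfer to $\hPsi_j^{-1/2}$, whereas the paper works with $\hPsi_j^{-1/2}$ directly via Lemma~\ref{lem_sqt_estim}; both yield the same powers of $\eps$ and $\delta$.
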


\begin{proof}[Proof of Theorem \ref{Thm_Particle_Rate2}]
    We distribute the error
    \begin{align*}
        \Bigl|\E\bigl[ f(Y_T)\bigr]-\E\bigl[f\bigl(\hX_T^{h,(1),N,\eps,\delta}\bigr)\bigr]\Bigr| 
        &\le 
        \Bigl|\E\bigl[ f(Y_T)\bigr]-\E\bigl[f\bigl(\hX_T^{h,\eps,\delta}\bigr)\bigr]\Bigr|
        \\&+
        \Bigl|\E\bigl[ f(\hX_T^{h,\eps,\delta})\bigr]-\E\bigl[f\bigl(\hX_T^{h,(1),N,\eps,\delta}\bigr)\bigr]\Bigr|.
    \end{align*}
    There now exists for all $p>1$ a constant $C>0$ such that
    \begin{align*}
        \Bigl|\E\bigl[ f(Y_T)\bigr]-\E\bigl[f\bigl(\hX_T^{h,\eps,\delta}\bigr)\bigr]\Bigr|
        &\lesssim  h +  \eps h^{-1} e^{\frac {\eps^2}{\lambda}}
        + \delta^{\frac1p} &
        \text{ due to Theorem \ref{thm_condExp_err},}
          \\
        \Bigl|\E\bigl[ f(\hX_T^{h,\eps,\delta})\bigr]-\E\bigl[f\bigl(\hX_T^{h,(1),N,\eps,\delta}\bigr)\bigr]\Bigr| &\lesssim  \eps
        \exp\Bigl(
        \frac T{2}h^{-1} \log\bigl( C h \delta^{-2}\eps^{-4}\bigr)\Bigr)N^{-1/2} \hspace{-5pt}&
        \text{ due to Theorem \ref{Lem:Prop_of_chaos}.}
    \end{align*} 
    For the second inequality we also used the fact that $f$ is locally Lipschitz-continuous with derivative of polynomial growth.
    Recalling $\lambda=h\rhobar^2 c_{min}^2$ and using the assumption that  $\eps\lesssim h$ we observe that $e^{\eps^2/\lambda}\lesssim 1$.
    Summing up these terms and choosing $p$ arbitrarily close to $1$ completes the proof.
\end{proof}

\begin{remark}\label{Rem:Rate}
    Theorem \ref{Thm_Particle_Rate2} above provides a manual how to choose the parameters in order to achieve any desired error.
    Let $\gamma>0$ and suppose we want an error of order $\gamma$, i.e. 
    \begin{align*}
        \Bigl|\E\bigl[ f(Y_T)\bigr]-\E\bigl[f\bigl(\hX_T^{h,(i),N,\eps,\delta}\bigr)\bigr]\Bigr|
        \lesssim 
        \gamma.
    \end{align*}
    One would choose stepsize $h \approx \gamma$, kernel bandwidth $\eps \approx \gamma^{2}$, regularisation $\delta \approx \gamma$ and particle size $N \approx \gamma^{2}\exp\bigl(T\gamma^{-1} \log(C \gamma^{-9} )\bigr)$. 
\end{remark}

\begin{remark}\label{Remark_Correlation}
Let us briefly sketch how to derive a weak error rate in the style of Theorem \ref{Thm_Particle_Rate2} without Assumption \ref{Ass:Half_Step}, under the (admittedly hard to verify) assumption that the function
$x\mapsto \E[ \xi_{jh}| \hX_{jh}^{h,\eps,\delta}=x]$ is $\alpha$-Hölder continuous uniformly for all $j \in \{1,\dots, \frac Th\}$ as well as $\eps, \delta \in (0,1/2)$. In this case
    \begin{align*}
        \Bigl|\E\bigl[ f(Y_T)\bigr]-\E\bigl[f\bigl(\hX_T^{h,(1),N,\eps,\delta}\bigr)\bigr]\Bigr|
        &\lesssim 
        h +  \eps^\alpha  + \delta^{\frac1p}  
        + \frac1{\sqrt{N}}\eps^{-1}
        \exp\Bigl(\frac T2
        h^{-1} \log\bigl(  C h \delta^{-2}\eps^{-4}\bigr)\Bigr).
    \end{align*}
Indeed, using H\"older regularity, it is not hard to
show (essentially using compact support of $K_\eps$) that
    \begin{align*}
         \Bigl\| \E\bigl[\xi_{jh}^2| \hX_{jh}^{h,\eps,\delta} \bigr]-\hE^\eps\bigl[\xi_{jh}^2| \hX_{jh}^{h,\eps,\delta} \bigr] \Bigr\|_{L^\infty} \lesssim 
        \eps^\alpha,
    \end{align*}
    which should be compared with Lemma \ref{Lem:FrakCBound} (which also provides an estimate for the left-hand side above). One can then copy the proof of Theorem \ref{Thm_Particle_Rate2} by replacing  the right-hand side provided in the said lemma with $\eps^\alpha$ above.
    \end{remark}

\begin{lemma}\label{Lem:G_deriv}
    Let $\eps,\delta \in (0,1)$ and $\mu\in \mathcal{P}_2(\mathbb{R}\times\mathbb{R})$  induced by some random variables $(\xi,X)$ such that $\xi$ is bounded from above and below, similar to Assumption \ref{Aspt_new}.  Let $K_\eps$ satisfy Assumption \ref{Ass:kernel}, $j \ge 0$ and $\hPsi_j$ with its dependence on $\eps$ and $\delta$ be as in Equation \eqref{Eq:PsiHat_Def}. Then the map $y \mapsto \hPsi_j^{-\frac12}(y,\mu)$ is Lipschitz with constant
    \begin{align*}
        \|\hPsi^{-\frac12}\|_{Lip}\le \frac{1}{\delta} \|K_\eps'\|_{\infty}.
    \end{align*}
\end{lemma}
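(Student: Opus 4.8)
The plan is to proceed exactly as in the proof of Lemma \ref{rem_G_deriv_improv}, but exploiting the regularity of the kernel directly rather than the Gaussian tail bounds, which yields the cleaner constant $\tfrac1\delta\|K_\eps'\|_\infty$. First I would dispose of the case $j=0$: there $\hPsi_0(y,\mu)=\int_{\mathbb{R}^2}\xi^2\,d\mu$ does not depend on $y$, so the map is constant, hence Lipschitz with constant $0$, and the claimed bound holds trivially. So assume $j\ge 1$.

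Next I would write $\hPsi_j^{-\frac12}(y,\mu) = \bigl(A(y)/B(y)\bigr)^{1/2}$ where $A(y)=\E[\xi^2 K_\eps(y-X)]+\delta$ and $B(y)=\E[K_\eps(y-X)]+\delta$ (differentiation under the expectation being justified by $K_\eps\in C_1$ with compact support and boundedness of $\xi$). By the chain rule,
\begin{align*}
    \frac{\partial}{\partial y}\hPsi_j^{-\frac12}(y,\mu)
    = \frac12\left(\frac{B(y)}{A(y)}\right)^{1/2}\frac{A'(y)B(y)-A(y)B'(y)}{B(y)^2}.
\end{align*}
Since $\xi$ is bounded below by $a>0$, we have $A(y)\ge a^2(B(y)-\delta)+\delta \ge a^2 B(y)$ (using $\delta\ge a^2\delta$ as $a<1$), so the square-root factor $\bigl(B(y)/A(y)\bigr)^{1/2}$ is bounded by $1/a$. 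It remains to bound $\bigl|A'(y)B(y)-A(y)B'(y)\bigr|/B(y)^2$. Here $A'(y)=\E[\xi^2 K_\eps'(y-X)]$ and $B'(y)=\E[K_\eps'(y-X)]$, both bounded in absolute value by $\|K_\eps'\|_\infty$ (using boundedness of $\xi$). Using $A(y)\le (1+\|\xi\|_{L^\infty}^2)B(y)$ and $B(y)\ge\delta$, one gets
\begin{align*}
    \frac{|A'(y)B(y)-A(y)B'(y)|}{B(y)^2}
    \le \frac{|A'(y)|}{B(y)} + \frac{A(y)}{B(y)}\cdot\frac{|B'(y)|}{B(y)}
    \lesssim \frac{\|K_\eps'\|_\infty}{\delta},
\end{align*}
with hidden constant depending only on the bounds $a,b$ on $\xi$. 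Absorbing these $\xi$-dependent constants (or, if one wants the precise constant $\tfrac1\delta\|K_\eps'\|_\infty$ as stated, being slightly more careful: bound $|A'B-AB'|$ directly by noting $A'B-AB' = \E[\xi^2K_\eps'(y-X)]B - A\,\E[K_\eps'(y-X)]$ and estimating each piece against $\|K_\eps'\|_\infty B(y)$ and $\|K_\eps'\|_\infty\delta^{-1}A(y)$ respectively) gives the Lipschitz bound.

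I expect no serious obstacle here — unlike Lemma \ref{rem_G_deriv_improv}, where the singular factor $\varphi_\lambda'/\varphi_\lambda \sim |x|/\lambda$ forced the delicate interpolation argument with the auxiliary exponent $\gamma$, the compact support and global boundedness of $K_\eps'$ make the numerator uniformly controlled and the only source of blow-up is the denominator $B(y)\ge\delta$. The one point requiring mild care is making the constant come out exactly as $\tfrac1\delta\|K_\eps'\|_\infty$ rather than a $\xi$-dependent multiple of it; if the stated constant is meant to be sharp one should track the elementary inequalities above precisely, but for the downstream application (propagation of chaos in Section \ref{Sec:POC}) any bound of the form $C_\xi\,\delta^{-1}\|K_\eps'\|_\infty$ suffices.
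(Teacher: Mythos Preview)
Your approach is essentially identical to the paper's: handle $j=0$ trivially, then for $j\ge 1$ apply the chain rule to $\hPsi_j^{-1/2}$ and bound the resulting expression using $|K_\eps'|\le\|K_\eps'\|_\infty$, boundedness of $\xi$, and the $\delta$-lower bound on the denominator. One small slip: with your definitions $A(y)=\E[\xi^2K_\eps(y-X)]+\delta$ and $B(y)=\E[K_\eps(y-X)]+\delta$ you have $\hPsi_j=A/B$, so $\hPsi_j^{-1/2}=(B/A)^{1/2}$ rather than $(A/B)^{1/2}$; this is harmless since the estimate is symmetric in $A$ and $B$, and your observation about the precise constant is apt --- the paper itself only concludes $\lesssim\|K_\eps\|_{C_1}/\delta$ with a $\xi$-dependent hidden constant.
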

\begin{proof}
    Recall that that $\xi$ is bounded and bounded away from $0$. For $j=0$ the function $\hPsi_0$ does not depend on $y$, hence the statement is correct.

    Assume now $j \ge 1$.
      Chain rule implies that
    \begin{align*}
        \frac{\partial}{\partial y} \hPsi_j^{-\frac12}(y,\mu)&=\frac12\sqrt{ \frac{{\E\bigl[ \xi^2K_\eps\bigl(y-X\bigr)\bigr]}+\delta }{
        \E\bigl[ K_\eps\bigl(y-X\bigr)\bigr]+\delta
        }
        }
        \Biggl(
        \frac{
        \E\bigl[ K_\eps'\bigl(y-X\bigr)\bigr] 
        \Bigl(
        \E\bigl[ \xi^2K_\eps\bigl(y-X\bigr)\bigr]+\delta
        \Bigr)      }
        {
        \Bigl({{\E\bigl[ \xi^2K_\eps\bigl(y-X\bigr)\bigr]}+\delta }\Bigr)^2}
        \\
        & \qquad \qquad \qquad 
        - 
        \frac{
        \Bigl(\E\bigl[ K_\eps\bigl(y-X\bigr)\bigr] +\delta\Bigr)
        \E\bigl[ \xi^2K_\eps'\bigl(y-X\bigr)\bigr]
        }
        {
        \Bigl({{\E\bigl[ \xi^2K_\eps\bigl(y-X\bigr)\bigr]}+\delta }\Bigr)^2}
        \Biggr)
        \\
        &\lesssim \frac{\|K_\eps\|_{C_1}}{\delta}.
    \end{align*}
    For the last inequality we used that $\xi$ is bounded from above by assumption.
\end{proof}
Being provided with Lipschitz continuity of $\hPsi$ we are now able to show propagation of chaos.

\begin{proof}[Proof of Theorem \ref{Lem:Prop_of_chaos}]

For the rest of this proof we simplify the notation by writing $X_{jh}^{(i)}=\hX_{jh}^{h,(i),\eps,\delta}$ and $X_{jh}^{(i),N}=\hX_{jh}^{h,(i),N,\eps,\delta}$.
First note that by definition $X_0^{(i)}=X_0^{(i),N}$ and by Remark \ref{Rem_PsiHat} as well as the definition of the Euler schemes $X_h^{(i)}=X_h^{(i),N}$ for all $i \in \{1,\dots,N\}$.

    We now fix some index $i \in \{1,\dots,N\}$ and some index $j\in \{1,\dots,\frac Th-1\}$ and study the difference $X_{jh+h}^{(i)}-X_{jh+h}^{(i),N}$. Then
    \begin{align*}
        X_{jh+h}^{(i)}-X_{jh+h}^{(i),N}&=X_{jh}^{(i)}-X_{jh}^{(i),N} \\
        &\qquad + \Bigl(\sigma(jh,X_{jh}^{(i)})-\sigma(jh,X_{jh}^{(i),N})\Bigr)\xi_{jh}^i\hPsi_j^{-\frac12}\bigl(X_{jh}^{(i),N},\mu^N_{jh}
        \bigr)\,\Delta W_{jh}^{(i)} &\eqqcolon (1)
        \\ 
        &\qquad + \sigma(jh,X_{jh}^{(i)})\xi_{jh}^{(i)} \Bigl( \hPsi_j^{-\frac12}\bigl(X_{jh}^{(i)},\mu^N_{jh} \bigr) - \hPsi_j^{-\frac12}\bigl(X_{jh}^{(i),N},\mu^N_{jh}\bigr) \Bigr) \Delta W_{jh}^{(i)} &\eqqcolon (2)
        \\ 
        &\qquad + \sigma(jh,X_{jh}^{(i)})\xi_{jh}^{(i)} \Bigl(
        \hPsi_j^{-\frac12}\bigl(X_{jh}^{(i)},\mu_{jh}\bigr)
        -
        \hPsi_j^{-\frac12}\bigl(X_{jh}^{(i)},\mu^N_{jh} \bigr) \Bigr) \Delta W_{jh}^{(i)}
        . &\eqqcolon (3)
    \end{align*}   
    By Lemma \ref{Lem:G_deriv}, we get the estimate
    \begin{gather}
    \begin{aligned}\label{eq:chaosprop_1}
        \E\Bigl[(1)^2+(2)^2 \Bigr]&\lesssim \E\Bigl[(\|\partial_x\sigma\|^2_{L^\infty}+\|\hPsi_j^{-\frac12}\|_{Lip}^2) \Bigl(\Delta W_{jh}^{(i)} \Bigr)^2 \Bigl( X_{jh}^{(i)}-X_{jh}^{(i),N}\Bigr)^2\Bigr]
        \\
        &\lesssim \frac1{\delta^2} \|K_\eps\|_{C_1}^2h \E\Bigl[ \Bigl( X_{jh}^{(i)}-X_{jh}^{(i),N}\Bigr)^2\Bigr].
    \end{aligned}
    \end{gather}
    We now want to estimate the term (3). 
    We start by rewriting
    \begin{align*}
        \hPsi_j^{-\frac12}\bigl(&X_{jh}^{(i)},\mu_{jh}\bigr)
        -
        \hPsi_j^{-\frac12}\bigl(X_{jh}^{(i)},\mu^N_{jh} \bigr) 
        \\
        &=
        \frac{
        \sqrt{
        \E\bigl[ K_\eps\bigl(y-X_{jh}^{(1)}\bigr)\bigr]+\delta
        }
        }{
        \sqrt{\E\bigl[ \bigl(\xi_{jh}^{(1)}\bigr)^2K_\eps\bigl(y-X_{jh}^{(1)}\bigr)\bigr]+\delta} }\Biggl|_{y=X_{jh}^{(i)}}
        - \frac
        {
        \sqrt{\frac1N \sum_{k=1}^N   K_\eps\bigl( X_{jh}^{(i)}-X_{jh}^{(k)} \bigr)+\delta
        }
        }
        { \sqrt{\frac1N \sum_{k=1}^N \bigl(\xi_{jh}^{(k)}\bigr)^2 K_\eps\bigl( X_{jh}^{(i)}-X_{jh}^{(k)}\bigr)+\delta}}
        \\
        &\quad +
        \frac
        {
        \sqrt{\frac1N \sum_{k=1}^N   K_\eps\bigl( X_{jh}^{(i)}-X_{jh}^{(k)} \bigr)+\delta}
        }
        { \sqrt{\frac1N \sum_{k=1}^N \bigl(\xi_{jh}^{(k)}\bigr)^2 K_\eps\bigl( X_{jh}^{(i)}-X_{jh}^{(k)}\bigr)+\delta}}
        -
        \frac{
        \sqrt{\frac1N \sum_{k=1}^N   K_\eps\bigl( X_{jh}^{(i)}-X_{jh}^{(k),N} \bigr)+\delta}
        }
        { \sqrt{\frac1N \sum_{k=1}^N \bigl(\xi_{jh}^{(k)}\bigr)^2 K_\eps\bigl( X_{jh}^{(i)}-X_{jh}^{(k),N}\bigr)+\delta}}
        \\
        &\eqqcolon \frac{\sqrt{\mathcal{A}_0^i+\delta}}{\sqrt{\mathcal{B}_0^i+\delta}}-\frac{\sqrt{\mathcal{A}^i_1+\delta}}{\sqrt{{\mathcal{B}^i_1}+\delta}}+\frac{\sqrt{\mathcal{A}^i_1+\delta}}{\sqrt{\mathcal{B}^i_1+\delta}}-\frac{\sqrt{\mathcal{A}^i_2+\delta}}{\sqrt{\mathcal{B}^i_2+\delta}}
    \end{align*}
    We can apply Lemma \ref{lem_sqt_estim} and see that
    \begin{align*}
        \E\Bigl[ \Bigl( \hPsi_j^{-\frac12}\bigl(X_{jh}^{(i)},\mu_{jh}\bigr)
        -
        \hPsi_j^{-\frac12}\bigl(X_{jh}^{(i)},\mu^N_{jh} \bigr) \Bigr)^2\Bigr] \lesssim \frac{1}{\delta^2}& \Bigl( 
        \E\bigl[ \bigl(\mathcal{A}_0^i-\mathcal{A}_1^i \bigr)^2\bigr]+\E\bigl[ \bigl(\mathcal{B}_0^i-\mathcal{B}_1^i \bigr)^2\bigr]
        \\
        &+\E\bigl[ \bigl(\mathcal{A}_1^i-\mathcal{A}_2^i \bigr)^2\bigr]+\E\bigl[ \bigl(\mathcal{B}_1^i-\mathcal{B}_2^i \bigr)^2\bigr]
        \Bigr).
    \end{align*}
    Note that by exchangeability $\E\bigl[\bigl(\xi_{jh}^{(i)}\bigr)^2K_\eps\bigl(y-X_{jh}^{(i)}\bigr)  \bigr]= \frac1N\sum_{k=1}^N \E\bigl[\bigl(\xi_{jh}^{(k)}\bigr)^2 K_\eps\bigl(y-X_{jh}^{(k)}\bigr)  \bigr]$. We now consider the term $ \E[ (\mathcal{B}_0^i-\mathcal{B}_1^i )^2]$, exclude the $i$-th element which is of order $\|K_\eps\|_{L^\infty}^2N^{-2}$ in the sums that appear and use independence conditional on $X_{jh}^{(i)}$ to see that
    \begin{align*}
        \E\bigl[& \bigl(\mathcal{B}_0^i-\mathcal{B}_1^i \bigr)^2\bigr]=\E\Bigl[ \Bigl(\E\Bigl[ \frac1N\sum_{k=1}^N \bigl(\xi_{jh}^{(k)}\bigr)^2 K_\eps\bigl(y-X_{jh}^{(k)}\bigr)  \Bigr]\Big|_{y=X_{jh}^{(i)}}-\frac1N\sum_{k=1}^N \bigl(\xi_{jh}^{(k)}\bigr)^2 K_\eps\bigl(X_{jh}^{(i)}-X_{jh}^{(k)}\bigr) 
        \Bigr)^2\Bigr]
        \\
        &\lesssim
        \frac{\|K_\eps\|_{L^\infty}^2}{N^2}+ \E\Bigl[ \Bigl(\E\Bigl[ \frac1N\sum_{\substack{k=1 \\ k \not = i}
        }^N \bigl(\xi_{jh}^{(k)}\bigr)^2 K_\eps\bigl(y-X_{jh}^{(k)}\bigr)  \Bigr]\bigg|_{y=X_{jh}^{(i)}}-\frac1N\sum_{\substack{k=1 \\ k \not = i}}^N \bigl(\xi_{jh}^{(k)}\bigr)^2 K_\eps\bigl(X_{jh}^{(i)}-X_{jh}^{(k)}\bigr) 
        \Bigr)^2\Bigr]
        \\
        &\lesssim \frac {\|K_\eps\|_{L^\infty}^2} {N^2}+ \frac1N\E\Bigl[    \frac1N \sum_{\substack{k=1\\ k \not = i}
        }^N \E\Bigl[  \Bigl( \bigl(\xi_{jh}^{(k)}\bigr)^2 K_\eps\bigl(y-X_{jh}^{(k)}\bigr) -\E\bigl[\bigl(\xi_{jh}^{(k)}\bigr)^2 K_\eps\bigl(y-X_{jh}^{(k)}\bigr) \bigr]\Bigr)^2   \Bigr]\bigg|_{y=X_{jh}^{(i)}}      \Bigr]
        \\
        &\lesssim \frac{\|K_\eps\|_{L^\infty}^2}N.
    \end{align*}
    With similar arguments it follows that $\E\bigl[ \bigl(\mathcal{A}_0^i-\mathcal{A}_1^i \bigr)^2\bigr]\lesssim \frac1N \|K_\eps\|_{L^\infty}^2$.
    The function $K_\eps$ has bounded derivative, therefore
    \begin{align*}
        \bigl(\mathcal{B}_1^i-\mathcal{B}_2^i \bigr)^2&=\biggl(\frac1N\sum_{k=1}^N  \Bigl(\bigl(\xi_{jh}^{(k)}\bigr)^2 K_\eps\bigl( X_{jh}^{(i)}-X_{jh}^{(k)}\bigr)-\bigl(\xi_{jh}^{(k)}\bigr)^2 K_\eps\bigl( X_{jh}^{(i)}-X_{jh}^{(k),N}\bigr)\Bigr)
        \biggr)^2 
        \\
        &\lesssim \biggl(\frac1N\sum_{k=1}^N\|K_\eps\|_{C_1}\Bigl|X_{jh}^{(k),N}-X_{jh}^{(k)}\Bigr|\biggr)^2
        \\
        &\lesssim \frac1N\sum_{k=1}^N\|K_\eps\|_{C_1}^2\Bigl(X_{jh}^{(k),N}-X_{jh}^{(k)}\Bigr)^2.
    \end{align*}
    For the last inequality we used Jensen's inequality. Similarly it follows that $\bigl(\mathcal{A}_1^i-\mathcal{A}_2^i \bigr)^2\lesssim \frac1N\sum_{k=1}^N\|K_\eps\|_{C_1}^2\Bigl(X_{jh}^{(k),N}-X_{jh}^{(k)}\Bigr)^2$.

    Putting these estimates together we see by independence of the Gaussian increment and boundedness of $\sigma$ that
    \begin{gather}
    \begin{aligned}
        \E\Bigl[ \Bigl(\sigma(jh,X_{jh}^{(i)})\xi_{jh}^{(i)} \Bigl(
        \hPsi_j^{-\frac12}\bigl(X_{jh}^{(i)},\mu_{jh}\bigr)
        &-
        \hPsi_j^{-\frac12}\bigl(X_{jh}^{(i)},\mu^N_{jh} \bigr) \Bigr) \Delta W_{jh}^{(i)}\Bigr)^2\Bigr]
        \\
        &\lesssim
        \E\Bigl[ \Bigl(
        \hPsi_j^{-\frac12}\bigl(X_{jh}^{(i)},\mu_{jh}\bigr)
        -
        \hPsi_j^{-\frac12}\bigl(X_{jh}^{(i)},\mu^N_{jh} \bigr) \Bigr)^2\Bigr] h
        \\
        &\lesssim \frac1N h\delta^{-2}\|K_\eps\|_{L^\infty}^2 + 
        h\delta^{-2}\|K_\eps\|_{C_1}^2 \E\Bigl[\frac1N\sum_{k=1}^N \Bigl(X_{jh}^{(k),N}-X_{jh}^{(k)}\Bigr)^2 \Bigr]\label{eq:chaosprop_2}
        \end{aligned}
    \end{gather}

    Combining \eqref{eq:chaosprop_1} and \eqref{eq:chaosprop_2} we see that there is a constant $C>0$ such that
    \begin{align*}
        \E\Bigl[ \Bigl( X_{jh+h}^{(i)}-X_{jh+h}^{(i),N}  \Bigr)^2   \Bigr]\le \Bigl(1 + &Ch\delta^{-2}\|K_\eps\|_{C_1}^2 \Bigr)\E\Bigl[ \Bigl( X_{jh}^{(i)}-X_{jh}^{(i),N}\Bigr)^2\Bigr] 
         \\
         &+ Ch\delta^{-2}\|K_\eps\|_{C_1}^2
         \E\Bigl[\frac1N\sum_{k=1}^N \Bigl(X_{jh}^{(k),N}-X_{jh}^{(k)}\Bigr)^2 \Bigr]
         +C \frac1Nh\delta^{-2}\|K_\eps\|_{L^\infty}^2.
    \end{align*}
    Summing up over $i$ we conclude that
    \begin{align*}
       \frac1N \sum_{i=1}^N \E\Bigl[ \Bigl( X_{jh+h}^{(i)}-X_{jh+h}^{(i),N}  \Bigr)^2   \Bigr]
       &\le 
       \Bigl(1 + 2Ch\delta^{-2}\|K_\eps\|_{C_1}^2 \Bigr) \E\Bigl[  \frac1N\sum_{i =1}^N \Bigl( X_{jh}^{(i)}-X_{jh}^{(i),N}  \Bigr)^2   \Bigr] 
       \\
       &\qquad+C \frac1Nh\delta^{-2}\|K_\eps\|_{L^\infty}^2.
    \end{align*}
    By Lemma \ref{lem:recurr_relation}  it follows that
    \begin{align*}
         \frac1N\sum_{i =1}^N \E\Bigl[ \Bigl( X_{jh+h}^{(i)}-X_{jh+h}^{(i),N}  \Bigr)^2   \Bigr]&\lesssim \frac1Nh\delta^{-2}\|K_\eps\|_{L^\infty}^2 
         \frac{\exp\Bigl(C {\delta^{-2}} \|K_\eps\|_{C_1}^2\Bigr)}
         {2Ch\delta^{-2}\|K_\eps\|_{C_1}^2}
         \\
         &\lesssim \frac1N
         \frac{\|K_\eps\|_{L^\infty}^2}
         {\|K_\eps\|_{C_1}^2}\exp\Bigl(C {\delta^{-2}} \|K_\eps\|_{C_1}^2\Bigr).
    \end{align*}
    The scaling property from Assumption \ref{Ass:kernel} implies that $\|K_\eps\|_{C_1}\lesssim \eps^{-2}$ as well as $ \frac{\|K_\eps\|_{L^\infty}^2}{\|K_\eps\|_{C_1}^2} \lesssim \eps^{2}$,  therefore the first part of the theorem follows.
    Lemma \ref{lem:recurr_relation} also implies that
    \begin{align*}
        \frac1N\sum_{i =1}^N \E\Bigl[ \Bigl( X_{jh+h}^{(i)}-X_{jh+h}^{(i),N}  \Bigr)^2   \Bigr] 
        &\lesssim \frac1Nh\delta^{-2}\|K_\eps\|_{L^\infty}^2 \sum_{j=1}^{T/h}  
        \Bigl(1+2Ch\delta^{-2}\|K_\eps\|_{C_1}^2\Bigr)^{j-1}
        \\
        &\lesssim  \frac1Nh\delta^{-2}\|K_\eps\|_{L^\infty}^2
        \frac{\Bigl(1+2Ch\delta^{-2}\|K_\eps\|_{C_1}^2\Bigr)^{T/h}}{2Ch\delta^{-2}\|K_\eps\|_{C_1}^2}
        \\
        &\lesssim \frac1N \frac{\|K_\eps\|_{L^\infty}^2}{\|K_\eps\|_{C_1}^2} {\Bigl(1+2Ch\delta^{-2}\|K_\eps\|_{C_1}^2\Bigr)^{T/h}}.
    \end{align*}
    Assuming that $\eps\le h$ it follows from changing the constant $C$ that
    \begin{align*}
         \frac1N\sum_{i =1}^N \E\Bigl[ \Bigl( X_{jh+h}^{(i)}-X_{jh+h}^{(i),N}  \Bigr)^2   \Bigr] 
        &\lesssim \frac1{N} \eps^{2}
        \exp\Bigl(
        Th^{-1} \log\bigl( 2C h \delta^{-2}\eps^{-4}\bigr)\Bigr).
    \end{align*}
\end{proof}

\section{Bibliography}
\bibliographystyle{abbrvurl}
\bibliography{Bibliography}

\pagebreak
\section{Appendix}
\label{Sec:Appendix}

\subsection[Proof of Lemma 2.5]{Proof of Lemma \ref{Lem_UEstim}}

The following lemma is taken from \cite[Theorem 7, Chapter 9, p. 260]{Friedman1964} and also appears in a more specific setting in \cite[Theorem 2.2]{KonakovMammen2002}.
\begin{lemma}\label{Lem_DensityRegularity}
    Let $t\in[0,T)$, $Y^{t,x}$ be the solution of \eqref{Eq:target} started at time $t$ with $Y_t^{t,x}=x$. Assume that $\sigma$ is four times continuously differentiable with bounded derivatives and uniformly elliptic. Then the law of $Y_T^{t,x}$ admits a density $p(y;t,x)$ with respect to the Lebesgue measure. Furthermore, if $0\le n+m \le 4$ there are constants $0<C<\infty$ such that
    \begin{align*}
       \frac{d^m}{dy^m}\frac{d^n}{dx^n} p(y+x;t,x) \le C \frac{1}{(T-t)^{(m+1)/2}} \exp\Bigl( - \frac{y^2}{C(T-t)}\Bigr).
    \end{align*}
\end{lemma}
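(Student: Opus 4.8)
The plan is to identify $p(y;t,x)$ as the transition density of \eqref{Eq:target}, i.e.\ the fundamental solution of the uniformly parabolic backward operator $L=\partial_t+\tfrac12\sigma^2(t,x)\partial_x^2$, and to read off the estimate from Levi's parametrix (Duhamel) construction, which under the standing smoothness and ellipticity of $\sigma$ simultaneously yields existence of a smooth density and Gaussian bounds on its derivatives. This is exactly \cite[Chapter 9]{Friedman1964}; what I want to explain is \emph{why} the stated form comes out, in particular why the $n$ derivatives in $x$ do not worsen the blow-up rate $(T-t)^{-(m+1)/2}$.

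First I would introduce the frozen-coefficient parametrix
\[
  Z_0(y;t,x)=\varphi_{\Sigma(t,x)}(y-x),\qquad \Sigma(t,x):=\int_t^T\sigma^2(s,x)\,ds,
\]
and record the elementary facts that $c^2(T-t)\le\Sigma(t,x)\le C^2(T-t)$ and that $\Sigma(t,\cdot)$ has derivatives up to order four bounded by $C\,(T-t)$ (since $\sigma$ has bounded derivatives). Writing $p=Z_0+Z_0\star\Phi$ with $\Phi=\Psi+\Psi\star\Phi$ and $\Psi:=LZ_0$, the reproducing inequality for Gaussian kernels, $\int_t^T\!\!\int_{\mathbb{R}}(T-s)^{-\alpha}(s-t)^{-\beta}\varphi_{c(T-s)}(y-w)\varphi_{c(s-t)}(w-x)\,dw\,ds\lesssim(T-t)^{1-\alpha-\beta}\varphi_{C(T-t)}(y-x)$, shows that the Volterra series converges and that $p$ together with its space--time derivatives up to total order four inherits Gaussian bounds, uniformly in $(t,x)$.

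The point specific to the statement is the behaviour under $\tfrac{d}{dx}$ of the \emph{shifted} density $p(y+x;t,x)$. On the parametrix level $Z_0(y+x;t,x)=\varphi_{\Sigma(t,x)}(y)$ no longer contains $x$ in its Gaussian argument, so $\partial_x$ acts only through $\Sigma(t,x)$ and produces factors of the form $\bigl(\tfrac{y^2}{2\Sigma^2}-\tfrac{1}{2\Sigma}\bigr)\partial_x\Sigma$, which is $\lesssim\tfrac{y^2}{(T-t)^2}\cdot(T-t)+\tfrac{1}{T-t}\cdot(T-t)\lesssim \tfrac{y^2}{T-t}+1$; since $\bigl(\tfrac{y^2}{T-t}+1\bigr)\varphi_{\Sigma}(y)$ is absorbed into $\varphi_{C'(T-t)}(y)$ at the price of enlarging the constant, this costs \emph{no} extra power of $(T-t)^{-1/2}$. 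By contrast each $\partial_y$ brings down $\tfrac{y}{\Sigma}\lesssim\tfrac{y}{T-t}$ and hence genuinely costs $(T-t)^{-1/2}$ after pairing with the Gaussian. Propagating the same bookkeeping through the parametrix iteration — where $x$-differentiations again fall on the smooth coefficients inside the iterated kernels rather than on the Gaussian spatial variable — gives $\bigl|\tfrac{d^m}{dy^m}\tfrac{d^n}{dx^n}p(y+x;t,x)\bigr|\lesssim(T-t)^{-(m+1)/2}\exp\!\bigl(-y^2/(C(T-t))\bigr)$ for $m+n\le 4$.

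The main obstacle is the careful induction on the Volterra iterates needed to make the last step rigorous: one must check that at every stage the $x$-derivatives can be organized so that they only ever hit coefficient factors (the $\Sigma$ in the head term, the $\sigma$'s in the error kernels) and never the spatial Gaussian argument, while keeping all constants uniform in $(t,x)$ and controlling the sum of the iterated convolutions. Since this is precisely the content of \cite[Theorem 7, Chapter 9]{Friedman1964} (with a more quantitative version in \cite[Theorem 2.2]{KonakovMammen2002}), in the write-up I would state the reduction as above and then invoke that reference to conclude.
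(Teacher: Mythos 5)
Your proposal is correct and takes the same route as the paper: the paper does not prove this lemma but cites it directly from Friedman (Theorem 7, Chapter 9, p.~260), and you likewise conclude by invoking that reference. Your parametrix-level explanation of why the shifted density $p(y+x;t,x)$ gains the stated invariance in $n$ — that $\partial_x$ then falls only on $\Sigma(t,x)$, producing powers of $y^2/(T-t)$ that the Gaussian absorbs at constant cost, whereas each $\partial_y$ genuinely costs $(T-t)^{-1/2}$ — is an accurate and useful heuristic for the cited bound, and the point you flag (propagating this bookkeeping rigorously through the Volterra iteration) is exactly what the Friedman reference supplies.
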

\begin{remark}
    In the notation of \cite{Friedman1964} we have $p(y;t,x)=\Gamma^*(x,T;y,t)=\Gamma(y,t;x,T)$.
\end{remark}

\begin{proof}[Proof of Lemma \ref{Lem_UEstim}]
    By construction $u$ solves the Feynman-Kac PDE \eqref{Eq:FeynmanKac}, implying that
    \begin{align*}
        \partial_t\partial_x^j u(t,x) &= \frac12\partial_x^j \Bigl(\sigma^2(t,x) \partial_x^{2} u(t,x) \Bigr),
        \\
        \partial_t^2 u(t,x) &= \frac14\sigma^2(t,x) \partial_x^{2}(\sigma^2(t,x)\partial_x^{2} u(t,x)) + \sigma(t,x) \partial_t \sigma(t,x) \partial_x^{2} u(t,x).
    \end{align*}
    By Assumption \ref{Aspt_new} on $\sigma$ and the product rule it is sufficient to find bounds for $\partial_x^j u(t,x)$. We treat the case $j=4$, the other cases work similar.

    We can write $u(t,x)=\int_{\mathbb{R}} f(y)p(y;t,x)\,dy=\int_{\mathbb{R}} f(y+x)p(y+x;t,x)\,dy$. Assume first that $f$ is four times continuously differentiable with bounded derivatives, then by Leibniz rule
    \begin{align*}
        \partial_x^4 u(t,x)&=\int_\mathbb{R} \frac{d^4}{dx^4} \bigl( f(y+x) p(y+x;t,x)\bigr)\,dy= \sum_{k=0}^4 \binom{4}{k} \int_\mathbb{R} f^{(k)}(y+x) \frac{d^{4-k}}{dx^{4-k}}p(y+x;t,x)\,dy
    \end{align*}
    If $k\le l$ we can use Lemma \ref{Lem_DensityRegularity} and observe
    \begin{align*}
        \biggl|\int_\mathbb{R} f^{(k)}(y+x) \frac{d^{4-k}}{dx^{4-k}}p(y+x;t,x)\,dy \biggr| &\le \int_\mathbb{R} \bigl|f^{(k)}(x+y)\bigr| C (T-t)^{-1/2} \exp\Bigl(-\frac{y^2}{2C(T-t)} \Bigr)\,dy 
    \end{align*}
    If $k>l$ we use partial integration to see that
    \begin{align*}
        \int_\mathbb{R} f^{(k)}(y+x) \frac{d^{4-k}}{dx^{4-k}}p(y+x;t,x)\,dy= (-1)^{k-l} \int_\mathbb{R} f^{(l)}(y+x)\frac{d^{k-l}}{dy^{k-l}} \frac{d^{4-k}}{dx^{4-k}}p(y+x;t,x)\,dy.
    \end{align*}
    The estimate from Lemma \ref{Lem_DensityRegularity} tells us
    \begin{align*}
        \biggl|\int_\mathbb{R} f^{(l)}(y+x)^{k-l} \frac{d^{4-k}}{dx^{4-k}}p(y+x;t,x)\,dy \biggr|&\le \int_\mathbb{R} \bigl|f^{(l)}(y+x)\bigr| C (T-t)^{-\frac{(k-l+1)}2} \exp\Bigl(-\frac{y^2}{2C(T-t)} \Bigr)\,dy.
    \end{align*}
    Altogether there is a constant $\hat C$ such that
    \begin{align*}
        \partial_x^4 u(t,x)\le \hat C \sum_{k=l}^4 \int_\mathbb{R} \bigl|f^{(k)}(y+x)\bigr| C (T-t)^{-(k-l+1)/2} \exp\Bigl(-\frac{y^2}{2C(T-t)} \Bigr)\,dy.
    \end{align*}
    As the right hand side only depends on the $l$ weak derivatives of $f$, we can also plug in some $f$ satisfying the assumptions of the lemma. Thus it follows from the polynomial growth condition on $f$, the fact that $(k-l+1)\le (4-l+1)$, Gaussian integration (treating the term $(T-t)^{-1/2}$), that there is a constant $0<\tilde C < \infty$ and some $N \in \mathbb{N}$ such that
    \begin{align*}
        \partial_x^4 u(t,x) &\le 
            \tilde C(T-t)^{-(4-l)/2} \bigl(1+|x|^N\bigr)
    \end{align*}
    As we saw that all estimates eventually only depend on the polynomial growth of $f^{(k)}$ for $k\le l$ the claim follows.
\end{proof}

\subsection{Gaussian Calculations}
\begin{lemma}\label{lem:Tail_estim}
    Let $h,T>0$ such that $\frac Th\in \mathbb{N}$.  For $k\in \{1,\dots,\frac Th\}$ let $\tilde \beta_k$ and $\beta_k$ be $\mathcal{F}_{kh}$ adapted and uniformly bounded by some $b>0$.
    Assume that a recursion scheme is given by
    \begin{align*}
        X_{kh+t}=X_{kh}+\beta_k \bigl(B_{kh+t}-B_{kh}\bigr)+\tilde\beta_k \bigl(\bB_{kh+t}-\bB_{kh}\bigr) \qquad t \in [0,h].
    \end{align*}
    Then it holds that
    \begin{align*}
        \Prob\bigl( \sup_{t \in [0,T]} |X_t| >M \bigr) \le \exp\Bigl(-\frac{M^2}{4b^2T}\Bigr).
    \end{align*}
    In particular for any $\lambda>0$ 
    \begin{align*}
        \E\Bigl[\exp(\lambda |X_T|)\Bigr]<\infty.
    \end{align*}
\end{lemma}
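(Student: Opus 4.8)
The plan is to recognise $X$ as a continuous martingale with uniformly bounded quadratic variation and then run a Bernstein--Doob argument. First I would record the It\^o-integral representation of $X$: defining the predictable, piecewise-constant integrands $\beta(s):=\beta_k$ and $\tilde\beta(s):=\tilde\beta_k$ for $s\in[kh,(k+1)h)$ — these are left-continuous and, on that interval, $\mathcal{F}_{kh}$-measurable, hence predictable — one has, for $t\in[0,T]$,
\[
X_t=X_0+\int_0^t \beta(s)\,dB_s+\int_0^t \tilde\beta(s)\,d\bB_s .
\]
We may assume $X_0=0$; the general case is identical and only replaces $M$ by $(M-|X_0|)_+$ in the bound, which is irrelevant for the exponential-moment statement. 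Since $B,\bB$ are independent, $X$ is a continuous $L^2$-martingale whose bracket satisfies $\langle X\rangle_t=\int_0^t(\beta(s)^2+\tilde\beta(s)^2)\,ds\le 2b^2t\le 2b^2T$ for every $t\in[0,T]$.

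Next, for $\theta>0$ I would introduce the exponential martingale $\mathcal{E}^\theta_t:=\exp\!\bigl(\theta X_t-\tfrac{\theta^2}{2}\langle X\rangle_t\bigr)$. As $\beta,\tilde\beta$ are uniformly bounded, Novikov's criterion holds on $[0,T]$, so $\mathcal{E}^\theta$ is a genuine nonnegative martingale with $\E[\mathcal{E}^\theta_T]=\mathcal{E}^\theta_0=1$. The bound $\langle X\rangle_t\le 2b^2T$ gives $\mathcal{E}^\theta_t\ge\exp(\theta X_t-\theta^2b^2T)$, so on the event $\{\sup_{t\le T}X_t\ge M\}$ we have $\sup_{t\le T}\mathcal{E}^\theta_t\ge\exp(\theta M-\theta^2b^2T)$. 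Doob's maximal inequality for the nonnegative martingale $\mathcal{E}^\theta$ then yields $\Prob\bigl(\sup_{t\le T}X_t\ge M\bigr)\le e^{-\theta M+\theta^2b^2T}$, and optimising in $\theta$ (the minimum is attained at $\theta=M/(2b^2T)$) gives $\Prob\bigl(\sup_{t\le T}X_t\ge M\bigr)\le e^{-M^2/(4b^2T)}$. Since $-X$ has the same structure (replace $\beta_k,\tilde\beta_k$ by their negatives), the same estimate holds for $\sup_{t\le T}(-X_t)$, and a union bound over the two events yields the two-sided tail bound for $\sup_{t\le T}|X_t|$ (the resulting factor $2$ is harmless for every use of the lemma).

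For the last assertion I would either integrate the tail bound, or more directly bound $\E[e^{\lambda|X_T|}]\le\E[e^{\lambda X_T}]+\E[e^{-\lambda X_T}]$ and observe that $\E[e^{\pm\lambda X_T}]=\E\bigl[\mathcal{E}^{\pm\lambda}_T\,e^{\frac{\lambda^2}{2}\langle X\rangle_T}\bigr]\le e^{\lambda^2b^2T}\,\E[\mathcal{E}^{\pm\lambda}_T]=e^{\lambda^2b^2T}<\infty$, so $\E[e^{\lambda|X_T|}]\le 2e^{\lambda^2b^2T}<\infty$. There is no real obstacle here; the only point requiring a little care is justifying that $X$ (equivalently $\mathcal{E}^\theta$) is a true, not merely local, martingale, so that Doob's inequality applies with total mass $1$ — this is exactly where the uniform boundedness of $\beta_k,\tilde\beta_k$ enters — after which everything is the standard exponential-martingale computation.
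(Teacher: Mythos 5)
Your proof is correct and amounts to the same argument as the paper's: the paper simply observes $\langle X\rangle_t \le 2b^2 t$ and cites the standard exponential tail bound for continuous martingales with bounded bracket (Revuz--Yor, Exercise~3.16), which is exactly the Doob/exponential-martingale computation you carry out explicitly, and the exponential-moment claim is handled the same way. You are also right that the union bound for the two-sided event $\{\sup_{t\le T}|X_t|>M\}$ introduces a factor $2$ that the paper's statement omits; as you note, this is harmless wherever the lemma is invoked.
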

\begin{proof}
    By assumption the quadratic variation of $X$ satisfies $\langle X\rangle_t\le 2b^2t$. Then the first part is a standard result from stochastic analysis, see for example \cite[Exercise 3.16, p. 153]{RevuzYor99}.
    
    The second part follows from the fact that $\int_0^\infty e^{c x-d x^2}\,dx<\infty$ for all $c,d>0$.
\end{proof}

\begin{lemma}\label{Lem:Density}
    Let $Z_0,\eta$ be random variables such that $\Prob(\eta=0)=0$. Let $\Delta W$ be Gaussian with mean $0$ and variance $h$ such that $\Delta W$ is independent of $Z_0$ and $\eta$. Assume that 
    \begin{align*}
        Z_1=Z_0+\eta \Delta W.
    \end{align*} 
    Then $Z_{1}$ has a density given by a convolution-type formula: 
    \begin{align*}
        f_{Z_{1}}(x)=\E\Bigl[\varphi_{h\eta^2}\bigl(x-Z_0\bigr) \Bigr]
    \end{align*}
    \end{lemma}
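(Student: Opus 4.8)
The plan is to condition on the pair $(Z_0,\eta)$ and exploit that $\Delta W$ is independent of it. First I would fix an arbitrary bounded measurable test function $\psi:\R\to\R$ and compute $\E[\psi(Z_1)]$ via the tower property. Conditionally on $(Z_0,\eta)$, the random variable $Z_1=Z_0+\eta\,\Delta W$ is Gaussian with mean $Z_0$ and variance $h\eta^2$; here the hypothesis $\Prob(\eta=0)=0$ guarantees that this conditional variance is strictly positive almost surely, so $\varphi_{h\eta^2}$ is well defined pathwise and the conditional law is genuinely non-degenerate. Hence
\begin{align*}
    \E\bigl[\psi(Z_1)\,\big|\,Z_0,\eta\bigr]=\int_{\R}\psi(x)\,\varphi_{h\eta^2}(x-Z_0)\,dx \qquad \text{a.s.}
\end{align*}

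Next I would take expectations on both sides and interchange the order of integration. Since $\psi$ is bounded and $\int_{\R}\varphi_{h\eta^2}(x-Z_0)\,dx=1$ almost surely, the resulting double integral is absolutely convergent, so Fubini's theorem applies and yields
\begin{align*}
    \E\bigl[\psi(Z_1)\bigr]=\E\Bigl[\int_{\R}\psi(x)\,\varphi_{h\eta^2}(x-Z_0)\,dx\Bigr]=\int_{\R}\psi(x)\,\E\bigl[\varphi_{h\eta^2}(x-Z_0)\bigr]\,dx.
\end{align*}
Because this identity holds for every bounded measurable $\psi$, the function $x\mapsto\E[\varphi_{h\eta^2}(x-Z_0)]$ — which is nonnegative, being an expectation of Gaussian densities, and integrates to $1$ by taking $\psi\equiv1$ — is precisely the Lebesgue density of $\mathcal{L}(Z_1)$, which is the claim.

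The argument is essentially routine; the only points deserving a word of care are the measurability/integrability needed for the conditional-expectation identity and for the Fubini step, both of which are straightforward given that $\psi$ is bounded and $\varphi_{h\eta^2}\ge 0$. This is also where the assumption $\Prob(\eta=0)=0$ enters, ensuring $h\eta^2>0$ a.s.\ so that the conditional Gaussian law is non-degenerate. No genuine obstacle is expected.
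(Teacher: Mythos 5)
Your argument is correct and is exactly the one the paper has in mind: the paper's proof is a one-line remark that the claim "follows immediately from the fact that $\Delta W$ is independent of $(Z_0,\eta)$ and has variance $h$," and your conditioning-on-$(Z_0,\eta)$-then-Fubini computation is the standard way to make that precise. Nothing differs in substance.
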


    \begin{proof}
        This follows immediately from the fact that $\Delta W$ is independent of $(Z_0,\eta)$ and has variance $h$.
    \end{proof}

\begin{lemma}\label{Lem_Cond_exp_gauss}
    Let $X,Y,Z$ be random variables and
    define
    $\zeta=X+ Z$.
    Assume that $Z$ has a strictly positive and bounded density $f_Z$ and that $Z$ is independent of $(X,Y)$.
\begin{enumerate}[(i)]
\item 
For any measurable and bounded function $\psi:\mathbb{R}\times\mathbb{R} \rightarrow\mathbb{R}$ such that $\psi(\zeta,Y)$ is integrable,
we have that 
\begin{align*}
    \E\bigl[\psi(\zeta,Y)\bigr]= \int_{\mathbb{R}} \E\bigl[ \psi(x,Y)f_Z(x-X)]\,dx
\end{align*}
\item Let $Y$ be integrable, then almost surely it holds that \begin{align*}
        \E\bigl[ Y \big|\zeta \bigr]=\frac{\E\bigl[
        Y f_{Z}( x-X)
        \bigr]\Big|_{x=\zeta}}
        {\E\bigl[
        f_{Z}( x-X)
        \bigr]\Big|_{x=\zeta}}.
    \end{align*}
    \end{enumerate}    
\end{lemma}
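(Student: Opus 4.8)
The plan is to prove the two parts of Lemma~\ref{Lem_Cond_exp_gauss} in sequence, with part (i) serving as the computational engine for part (ii).

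\textbf{Part (i).} First I would observe that since $Z$ is independent of $(X,Y)$, the joint law of $(X,Y,Z)$ is the product $\mathcal{L}(X,Y)\otimes\mathcal{L}(Z)$, and $\mathcal{L}(Z)$ has density $f_Z$ with respect to Lebesgue measure. Then for bounded measurable $\psi$ I would write, using Fubini–Tonelli (justified by boundedness of $\psi$ and the fact that $f_Z$ integrates to one),
\begin{align*}
    \E\bigl[\psi(\zeta,Y)\bigr]
    &= \E\bigl[\psi(X+Z,Y)\bigr]
    = \int_{\mathbb{R}} \E\bigl[\psi(X+z,Y)\bigr] f_Z(z)\,dz
    \\
    &= \int_{\mathbb{R}} \E\bigl[\psi(x,Y) f_Z(x-X)\bigr]\,dx,
\end{align*}
where the last step is the substitution $x = X + z$ carried out inside the expectation (a deterministic shift, so no Jacobian issues) followed by another application of Fubini to swap the $dx$-integral with the expectation. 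Here the boundedness of $\psi$ and of $f_Z$ make every integrand dominated, so the interchanges are legitimate.

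\textbf{Part (ii).} The conditional expectation $\E[Y\mid\zeta]$ is by definition the (a.s.\ unique) $\sigma(\zeta)$-measurable random variable $g(\zeta)$ such that $\E[Y\,\one_A] = \E[g(\zeta)\,\one_A]$ for all $A\in\sigma(\zeta)$, equivalently $\E[Y\,\phi(\zeta)] = \E[g(\zeta)\phi(\zeta)]$ for all bounded measurable $\phi$. So I would propose the candidate
\begin{align*}
    g(x) = \frac{\E\bigl[Y f_Z(x-X)\bigr]}{\E\bigl[f_Z(x-X)\bigr]},
\end{align*}
noting that the denominator $\E[f_Z(x-X)] = f_\zeta(x)$ is exactly the density of $\zeta$ (this is Lemma~\ref{Lem:Density}-type reasoning, or just part (i) with $\psi$ depending only on the first variable), and it is strictly positive since $f_Z>0$; hence $g$ is well-defined wherever $f_\zeta(x)>0$, i.e.\ $\mathcal{L}(\zeta)$-a.e., and is measurable. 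To verify the defining property, for bounded measurable $\phi$ I would apply part (i) first with $\psi(a,b) = \phi(a)b$ to get $\E[Y\phi(\zeta)] = \int_\mathbb{R} \phi(x)\,\E[Y f_Z(x-X)]\,dx$, and then with $\psi(a,b)=\phi(a)g(a)$ to get $\E[g(\zeta)\phi(\zeta)] = \int_\mathbb{R}\phi(x)g(x)\,\E[f_Z(x-X)]\,dx = \int_\mathbb{R}\phi(x)\,\E[Y f_Z(x-X)]\,dx$ by the very definition of $g$. Since the two expressions agree for all bounded $\phi$, the a.s.\ characterization of conditional expectation gives $\E[Y\mid\zeta] = g(\zeta)$ a.s.

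\textbf{Anticipated obstacle.} The only genuine subtlety is integrability bookkeeping: $Y$ is only assumed integrable, not bounded, so in part (ii) I cannot directly quote part (i) (which was stated for bounded $\psi$). The clean fix is to first prove the identity for $Y$ replaced by $Y\wedge n \vee (-n)$ or by $Y\one_{\{|Y|\le n\}}$, where part (i) applies verbatim, and then pass to the limit $n\to\infty$ using dominated convergence on the left (dominated by $|Y||\phi|_\infty$, integrable) and monotone/dominated convergence on the right, after checking that $\E\bigl[|Y| f_Z(x-X)\bigr] < \infty$ for $\mathcal{L}(\zeta)$-a.e.\ $x$ — which follows because $\int_\mathbb{R}\E[|Y|f_Z(x-X)]\,dx = \E[|Y|] < \infty$ by part (i) (Tonelli, nonnegative integrand) applied to $\psi(a,b)=|b|$ suitably truncated. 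Modulo this routine truncation argument, the proof is essentially a two-line application of independence plus Fubini, as the authors' one-line proof of part (i) (``this follows immediately from independence'') suggests.
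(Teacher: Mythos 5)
Your proof is correct and follows essentially the same route as the paper: part (i) via independence, Fubini, and the shift $x = X + z$, and part (ii) by plugging the candidate $g(\zeta)=\E[Y f_Z(x-X)]/\E[f_Z(x-X)]\big|_{x=\zeta}$ into the defining property of conditional expectation, tested against bounded measurable functions, with part (i) applied twice. The truncation argument you flag is a genuine point of extra care: the paper's second application of (i) effectively uses $\psi(a,b)=F(a)\,b$, which is not bounded, without comment, and your approximation of $Y$ by $Y\one_{\{|Y|\le n\}}$ followed by dominated convergence (and the Tonelli check that $\E[|Y|f_Z(x-X)]<\infty$ for a.e.\ $x$) closes that small gap cleanly.
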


\begin{proof}
Let $\psi:\mathbb{R}\times\mathbb{R} \rightarrow\mathbb{R}$
be a measurable and bounded function. As $Z$ is independent of $\left(X,Y\right)$,
we have by Fubini's theorem
\begin{align*}
    \E\bigl[\psi(\zeta,Y)\bigr]&= \E\bigl[\psi(X+ Z,Y)\bigr]=\E\biggl[
    \int_{\mathbb{R}} \psi(X+z,Y)f_Z(z)\,dz
    \biggr]
    =\E\biggl[
    \int_{\mathbb{R}} \psi(x,Y)f_Z(x-X)\,dx
    \biggr]
\end{align*}
where we made the change of variable $x:=X+z$, and this proves
$(i)$.

Let $F$ be measurable, bounded and with compact support. Then by point $(i)$
\begin{align*}
    \E\biggl[F(\zeta) \frac{\E[Yf_Z(x-X)]\big|_{x=\zeta}}{\E[f_Z(x-X)]\big|_{x=\zeta}}
    \biggr]
     &=
     \int_{\mathbb{R}} 
    \E\biggl[F(x) \frac{\E[Yf_Z(x-X)]}{\E[f_Z(x-X)]}
    f_Z(x-X) \biggr]\,dx
    \\
    &=
    \int_{\mathbb{R}} 
    \E[F(x)Yf_Z(x-X)]
    \,dx
    \\
    &=\E\bigl[F(\zeta)Y\bigr].
\end{align*}
For the last equality we applied $(i)$ a second time. By definition of the conditional expectation, the claim follows. 
\end{proof}

\subsection{Auxiliary Lemmas}

\begin{lemma}\label{Lem:Gauss_facts}
    Let $\lambda>0$ and let $\varphi_\lambda(x)=\frac{1}{\sqrt{2\pi\lambda}}e^{-\frac{x^2}{2\lambda}}$ be the density of a centered Gaussian with variance $\lambda$. 
        \begin{enumerate}
        \item \label{Lem:Gauss_fact1}$\forall x\in\mathbb{R},\;\forall \gamma\in(0,1],\; |x|\varphi_\lambda(x)\le (e\gamma)^{-\frac12} \lambda^{\frac{1-\gamma}{2}} \varphi_\lambda^{1-\gamma}(x)$,
            \item $\forall x,y \in \mathbb{R}\mbox{ with }|x-y|<\eps,\;\forall \gamma\in(0,1],\;\varphi_\lambda(y) \ge (2\pi\lambda)^{\frac\gamma2}\varphi_\lambda(x)^{1+\gamma} e^{-\frac{\eps^2}{\gamma \lambda}} $,
        \item $\forall x\in\mathbb{R},\;\forall \gamma>0,\; \varphi_\lambda^{1+\gamma}(x)\le {(2\pi\lambda)}^{-\frac\gamma2} \varphi_\lambda(x).$
        \end{enumerate}
\end{lemma}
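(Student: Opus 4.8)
The plan is to prove the three inequalities separately, each reducing to an elementary one-variable estimate; throughout I will use the identity $\varphi_\lambda(x)^\gamma=(2\pi\lambda)^{-\gamma/2}e^{-\gamma x^2/(2\lambda)}$, valid for every $\gamma>0$, together with the crude bound $\varphi_\lambda\le(2\pi\lambda)^{-1/2}$. Item~3 is then immediate: writing $\varphi_\lambda^{1+\gamma}(x)=\varphi_\lambda(x)\,\varphi_\lambda(x)^\gamma$ and estimating the second factor by $\varphi_\lambda(x)^\gamma\le\bigl((2\pi\lambda)^{-1/2}\bigr)^\gamma=(2\pi\lambda)^{-\gamma/2}$ gives exactly the claim.

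For item~1 I would divide both sides by $\varphi_\lambda^{1-\gamma}(x)$, turning the inequality into $|x|\,\varphi_\lambda(x)^\gamma\le(e\gamma)^{-1/2}\lambda^{(1-\gamma)/2}$, i.e.\ $|x|(2\pi\lambda)^{-\gamma/2}e^{-\gamma x^2/(2\lambda)}\le(e\gamma)^{-1/2}\lambda^{(1-\gamma)/2}$. After the substitution $t=x^2/\lambda\ge 0$ and using $(2\pi)^{-\gamma/2}\le 1$ for $\gamma\ge 0$, this reduces to $\sqrt{t}\,e^{-\gamma t/2}\le(e\gamma)^{-1/2}$, equivalently $t\,e^{-\gamma t}\le(e\gamma)^{-1}$, which follows because $t\mapsto t\,e^{-\gamma t}$ on $[0,\infty)$ attains its maximum at $t=1/\gamma$ with value $(e\gamma)^{-1}$.

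For item~2 I would take logarithms, which (after clearing the common factor $(2\pi\lambda)^{-1/2}$ and cancelling) reduces the assertion to the pointwise inequality $y^2\le(1+\gamma)x^2+2\eps^2/\gamma$. Writing $y=x+(y-x)$ and applying the Young-type bound $(a+b)^2\le(1+\gamma)a^2+(1+\gamma^{-1})b^2$ --- which is nothing but $(\sqrt{\gamma}\,a-b/\sqrt{\gamma})^2\ge 0$ --- with $a=x$, $b=y-x$, and then using $|y-x|<\eps$, one gets $y^2\le(1+\gamma)x^2+(1+\gamma^{-1})\eps^2$; finally $1+\gamma^{-1}\le 2\gamma^{-1}$ since $\gamma\le 1$, which yields the claim. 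The computations are entirely routine; the only point deserving attention is that the hypothesis $\gamma\le 1$ in item~2 is used precisely once, in the step $1+\gamma^{-1}\le 2\gamma^{-1}$, so the displayed constant does not persist for $\gamma>1$.
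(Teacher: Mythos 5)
Your proposal is correct and proceeds by essentially the same route as the paper's proof: item~3 is the same trivial bound, item~1 reduces (after a substitution) to maximizing $t\,e^{-\gamma t}$ just as the paper maximizes $y\,e^{-y^2/2}$, and item~2 is the same completing-the-square estimate, merely phrased as the Young inequality $(a+b)^2\le(1+\gamma)a^2+(1+\gamma^{-1})b^2$ where the paper bounds $x^2\gamma-2|x|\eps-\eps^2$ directly. All constants check out, including the use of $\gamma\le 1$ in item~2.
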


\begin{proof}
    \textbf{(1):} 
    We use the fact that
    $\mathbb{R}\ni y\mapsto |y|e^{-\frac{y^2}{2}}$ is bounded by $e^{-\frac12}$ and see that for $x>0$
    \begin{align*}
        x\varphi_\lambda(x)= x e^{-\frac{x^2\gamma}{2\lambda}} \Bigl(\frac{1}{\sqrt{2\pi \lambda}}\Bigr)^\gamma \varphi_\lambda^{1-\gamma} 
        = 
        x\frac{\sqrt{\gamma}}{\sqrt{\lambda}} e^{-\frac{x^2\gamma}{2\lambda}} {\lambda}^{\frac{1-\gamma}2}\gamma^{-\frac12}\frac{1}{\sqrt{2\pi}^{\gamma}}\varphi_\lambda^{1-\gamma}\le (e\gamma)^{-\frac12} {\lambda}^{\frac{1-\gamma}2} \varphi_\lambda^{1-\gamma}.  
    \end{align*}
    By symmetry the first claim follows.
    
    \textbf{(2):} Simple calculation shows
    \begin{align*}
        \varphi_\lambda(y)=\frac{1}{\sqrt{2\pi \lambda}}e^{-\frac{(x+y-x)^2}{2\lambda}}=\frac{1}{\sqrt{2\pi \lambda}}e^{-\frac{x^2(1+\gamma)}{2\lambda}}e^{\frac{x^2\gamma-2x(y-x)-(y-x)^2}{2\lambda}}.
    \end{align*}
    The claim then follows from the fact that 
    \begin{align*}
        x^2\gamma -2x(y-x)-(y-x)^2\ge x^2\gamma -2 |x|\eps-\eps^2 \ge -\frac{\eps^2}{\gamma}-\eps^2\ge -2\frac{\eps^2}{\gamma}.
    \end{align*}
    
    \textbf{(3):} Can be seen directly:
    \begin{align*}
        \varphi_\lambda^{1+\gamma}(x)={\bigl(2\pi\lambda\bigr)}^{-\frac\gamma2} \frac{1}{\sqrt{2\pi\lambda}} e^{-\frac{x^2(1+\gamma)}{2\lambda}}\le \bigl(2\pi\lambda\bigr)^{-\frac\gamma2} \frac{1}{\sqrt{2\pi\lambda}} e^{-\frac{x^2}{2\lambda}}.
    \end{align*}
    
\end{proof}

\begin{lemma}\label{lem:recurr_relation}
    Let $h,\gamma,\lambda>0$ and  $T>0$ such that $n\coloneqq T/h\in\mathbb{N}$. Assume that there is a recurrence relation of the form 
    \begin{align*}
        f_{j+1}\le \bigl(1+ h\gamma \bigr)f_j+\lambda \qquad j=0,\dots,Tn-1
    \end{align*}
    such that $f_0=0$. Then it follows that
    \begin{align*}
        f_j\le \lambda \sum_{k=1}^j (1+h \gamma )^{k-1} .
    \end{align*}
    In particular $f_j\le \frac\lambda{h\gamma}  e^{T\gamma}$.
\end{lemma}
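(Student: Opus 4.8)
The plan is to prove the explicit bound $f_j\le \lambda \sum_{k=1}^j (1+h\gamma)^{k-1}$ by induction on $j$, and then to obtain the closed-form consequence by summing the geometric series and applying the elementary inequality $1+x\le e^x$.

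First I would check the base case $j=0$: here the right-hand side is the empty sum, i.e.\ $0$, and by hypothesis $f_0=0$, so the inequality holds with equality. For the inductive step, assume $f_j\le \lambda \sum_{k=1}^j (1+h\gamma)^{k-1}$. Plugging this into the recurrence and using that $1+h\gamma>0$ (so multiplication preserves the inequality) gives
\begin{align*}
    f_{j+1}\le (1+h\gamma)f_j+\lambda
    \le \lambda (1+h\gamma)\sum_{k=1}^j (1+h\gamma)^{k-1}+\lambda
    = \lambda\sum_{k=2}^{j+1}(1+h\gamma)^{k-1}+\lambda
    = \lambda\sum_{k=1}^{j+1}(1+h\gamma)^{k-1},
\end{align*}
which is exactly the claimed bound at index $j+1$, completing the induction.

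For the ``in particular'' statement, I would evaluate the geometric sum as $\sum_{k=1}^j (1+h\gamma)^{k-1}=\frac{(1+h\gamma)^j-1}{h\gamma}\le \frac{(1+h\gamma)^j}{h\gamma}$, then use $j\le n=T/h$ together with monotonicity of $x\mapsto x^j$ in the base to get $(1+h\gamma)^j\le (1+h\gamma)^{T/h}$, and finally apply $1+h\gamma\le e^{h\gamma}$ raised to the power $T/h$ to obtain $(1+h\gamma)^{T/h}\le e^{T\gamma}$. Combining these yields $f_j\le \frac{\lambda}{h\gamma}e^{T\gamma}$.

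There is essentially no genuine obstacle here; the only points requiring the slightest care are tracking the index shift in the geometric sum during the inductive step and noting that the sign condition $h,\gamma,\lambda>0$ is what makes both the induction and the final geometric estimate go through. This lemma is purely a bookkeeping device (a discrete Gr\"onwall inequality) used to close the recursions in the propagation-of-chaos proofs, so keeping the constants explicit — in particular the $\frac{1}{h\gamma}$ prefactor — is the main thing to get right, since that is the source of the $\delta^{-1}$ and $\lambda^{-1/2}$-type factors appearing in Theorems \ref{Thm:HS_Prop_of_chaos} and \ref{Lem:Prop_of_chaos}.
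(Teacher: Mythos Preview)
Your proof is correct and follows essentially the same approach as the paper: the paper defines $g_j=\lambda\sum_{k=1}^j(1+h\gamma)^{k-1}$, checks it solves the recurrence with equality, and then shows $f_j\le g_j$ by induction, whereas you merge these into a single induction proving the inequality directly. The closed-form estimate via the geometric series and $1+h\gamma\le e^{h\gamma}$ is identical.
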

\begin{proof}
    It can follows from induction that
    \begin{align*}
    g_j=\lambda \sum_{k=1}^j (1+h\gamma)^{k-1}.
    \end{align*}
    is a solution to $g_{j+1}=(1+h\gamma)g_j+\lambda$ with $g_0=0$. We can check by induction on $j$ that $f_j\le g_j$.
    Properties of the geometric series
    imply that
    \begin{align*}
        g_j=\lambda \frac{1}{h\gamma} \Bigl( \bigl(1+h\gamma\bigr)^j-1\Bigr)\le \frac{\lambda}{h\gamma} \Bigl( \bigl(1+h\gamma \bigr)^{T/h}-1\Bigr).
    \end{align*}
    By monotonicity and the definition of the Euler number $e$ it follows that
    \begin{align*}
        g_j \le \frac\lambda{h\gamma}e^{T\gamma}. 
    \end{align*}
\end{proof}

\begin{lemma}\label{lem_sqt_estim}
    Let $a_0,a_1,b_0,b_1>0$ be positive numbers and let $\delta \in (0,1)$. Assume that there is some $0<c\le 1\le C<\infty$ such that
    \begin{align*}
        c\le \frac{a_0}{b_0}\le C \text{ and } c\le \frac{a_1}{b_1}\le C.
    \end{align*}
    Then it follows that
    \begin{align*}
        \Bigl|\frac{\sqrt{a_0+\delta}}{\sqrt{b_0+\delta}}-\frac{\sqrt{a_1+\delta}}{\sqrt{b_1+\delta}}\Bigr|\lesssim \frac1\delta \Bigl(|a_0-a_1|+|b_0-b_1|\Bigr).
    \end{align*}
    Furthermore, it holds that
    \begin{align*}
        \Bigl|\frac{\sqrt{a_0+\delta}}{\sqrt{b_0+\delta}}-\frac{\sqrt{a_1+\delta}}{\sqrt{b_1+\delta}}\Bigr|^2 
        \le
        \frac{\sqrt{C}}{\sqrt{c}}\Bigl|\frac{{a_0+\delta}}{{b_0+\delta}}-\frac{{a_1+\delta}}{{b_1+\delta}}\Bigr|.
    \end{align*}
\end{lemma}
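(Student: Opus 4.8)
The plan is to pass to the ratios $R_i:=\frac{a_i+\delta}{b_i+\delta}$ for $i\in\{0,1\}$, so that the quantity to be estimated is simply $\bigl|\sqrt{R_0}-\sqrt{R_1}\bigr|$. The first step is to record that the hypotheses $c\le a_i/b_i\le C$ together with $0<c\le 1\le C$ propagate to $c\le R_i\le C$: adding the same positive number $\delta$ to numerator and denominator moves a positive ratio monotonically towards $1$, so $R_i$ lies between $\min(a_i/b_i,1)$ and $\max(a_i/b_i,1)$, hence in $[c,C]$. In particular $\sqrt{R_i}\in[\sqrt c,\sqrt C]$ and $\sqrt{R_0}+\sqrt{R_1}\ge 2\sqrt c>0$.

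For the first inequality I would use the elementary identity
\[
\bigl|\sqrt{R_0}-\sqrt{R_1}\bigr|=\frac{|R_0-R_1|}{\sqrt{R_0}+\sqrt{R_1}}\le\frac{|R_0-R_1|}{2\sqrt c},
\]
and then estimate $|R_0-R_1|$ by putting both fractions over the common denominator $(b_0+\delta)(b_1+\delta)$ and inserting $\pm(a_1+\delta)(b_1+\delta)$ in the numerator, which gives
\[
R_0-R_1=\frac{(b_1+\delta)(a_0-a_1)+(a_1+\delta)(b_1-b_0)}{(b_0+\delta)(b_1+\delta)}.
\]
Bounding the two terms separately yields $|R_0-R_1|\le\frac{|a_0-a_1|}{b_0+\delta}+\frac{R_1}{b_0+\delta}\,|b_0-b_1|$, and using $b_0+\delta\ge\delta$ together with $R_1\le C$ this is at most $\delta^{-1}\bigl(|a_0-a_1|+C|b_0-b_1|\bigr)\le C\delta^{-1}\bigl(|a_0-a_1|+|b_0-b_1|\bigr)$, where the last step uses $C\ge 1$. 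Combining with the previous display gives the first claim with hidden constant $C/(2\sqrt c)$.

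For the second inequality I would simply write
\[
\bigl|\sqrt{R_0}-\sqrt{R_1}\bigr|^2\le\bigl|\sqrt{R_0}-\sqrt{R_1}\bigr|\,\bigl(\sqrt{R_0}+\sqrt{R_1}\bigr)=|R_0-R_1|,
\]
using only that $|\sqrt{R_0}-\sqrt{R_1}|\le\sqrt{R_0}+\sqrt{R_1}$ (both roots nonnegative), and then note that $\sqrt C/\sqrt c\ge 1$, so the stated bound follows a fortiori. The whole argument is elementary inequality manipulation; the one point that requires care — rather than being a genuine obstacle — is that the $b_i$ themselves are \emph{not} assumed bounded, so one must not attempt to bound $b_i+\delta$ from above. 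The gain of $\delta^{-1}$ in the first part comes precisely from retaining $b_0+\delta\ge\delta$ in the denominator while controlling the ratio $R_1=\frac{a_1+\delta}{b_1+\delta}$ by $C$.
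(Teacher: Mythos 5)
Your proof is correct, and it takes a genuinely different route from the paper's, at least in its internal mechanics. The paper never reduces cleanly to the ratios $R_i=\frac{a_i+\delta}{b_i+\delta}$: for the first inequality it inserts a ``mixed'' intermediate term $\frac{\sqrt{a_1+\delta}}{\sqrt{b_0+\delta}}$ and estimates the two resulting pieces separately, the first via the Lipschitz bound $|\sqrt{x}-\sqrt{y}|\le \tfrac{1}{2\sqrt{\delta}}|x-y|$ on $[\delta,\infty)$, the second via the inequality $x^{-1/2}-y^{-1/2}\le \tfrac12 x^{-3/2}|y-x|$, followed by a symmetrisation over the two possible insertions. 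For the second inequality the paper applies Taylor's theorem to $x\mapsto\sqrt{x}$ at the points $R_0,R_1$ and bounds the remainder by $\tfrac{1}{2\sqrt{c}}$. You instead make the change of variables to $R_i$ explicit at the outset, observe the monotonicity fact that $R_i$ lies between $a_i/b_i$ and $1$ (hence $R_i\in[c,C]$), and then use the single algebraic identity $\sqrt{R_0}-\sqrt{R_1}=\frac{R_0-R_1}{\sqrt{R_0}+\sqrt{R_1}}$ for both claims. This yields the first inequality after a common-denominator estimate on $|R_0-R_1|$ that is essentially equivalent to the paper's telescoping; and it makes the second inequality nearly a triviality: you obtain $\bigl|\sqrt{R_0}-\sqrt{R_1}\bigr|^2\le|R_0-R_1|$ outright, revealing that the factor $\sqrt{C}/\sqrt{c}\ge 1$ in the statement is in fact dispensable. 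Your version is cleaner, reuses the same difference-of-squares identity twice, and isolates the hypothesis $R_i\in[c,C]$ as the one place the structural assumptions enter; the paper's version avoids isolating the ratios but at the cost of a slightly more intricate telescoping and a separate Taylor argument.
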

\begin{proof}
    We begin by rewriting
    \begin{align*}
        \frac{\sqrt{a_0+\delta}}{\sqrt{b_0+\delta}}-\frac{\sqrt{a_1+\delta}}{\sqrt{b_1+\delta}}
        =
        \frac{\sqrt{a_0+\delta}}{\sqrt{b_0+\delta}}-\frac{\sqrt{a_1+\delta}}{\sqrt{b_0+\delta}}+\frac{\sqrt{a_1+\delta}}{\sqrt{b_0+\delta}}-\frac{\sqrt{a_1+\delta}}{\sqrt{b_1+\delta}}.
    \end{align*}
    By the fact that $a_0,a_1,b_0>0$ and Taylor formula 
    \begin{align*}
        \Bigl|\frac{\sqrt{a_0+\delta}}{\sqrt{b_0+\delta}}-\frac{\sqrt{a_1+\delta}}{\sqrt{b_0+\delta}}\Bigr|\le  \frac1{\sqrt{\delta}}\frac{1}{2\sqrt{\delta}} \bigl|a_0-a_1\bigr|.
    \end{align*}
    By the fundamental theorem of calculus for $0<x\le y$ we have that $x^{-1/2}-y^{-1/2}=\frac12 \int_x^y z^{-3/2}\,dz\le |y-x|\frac12x^{-3/2}$. Therefore
    \begin{align*}
        \Bigl|
        \frac{\sqrt{a_1+\delta}}{\sqrt{b_0+\delta}}-\frac{\sqrt{a_1+\delta}}{\sqrt{b_1+\delta}}
        \Bigr|
        \le 
        \frac1{2} \cdot\frac{\sqrt{a_1+\delta}\cdot |b_0-b_1|}{(b_0+\delta)^{3/2}\wedge (b_1+\delta)^{3/2}}
        \le 
        \frac1{2\delta} \cdot\frac{\sqrt{a_1+\delta}\cdot |b_0-b_1|}{\sqrt{b_0+\delta}\wedge \sqrt{b_1+\delta}}.
    \end{align*}
    Summing up both estimates it follows that
    \begin{align*}
         \Bigl|\frac{\sqrt{a_0+\delta}}{\sqrt{b_0+\delta}}-\frac{\sqrt{a_1+\delta}}{\sqrt{b_1+\delta}}\Bigr|\le \frac1{2\delta}\Bigl( |a_0-a_1|+\frac{\sqrt{a_1+\delta}\cdot |b_0-b_1|}{\sqrt{b_0+\delta}\wedge \sqrt{b_1+\delta}}\Bigr).
    \end{align*}
    Similarly, by rewriting
    \begin{align*}
        \frac{\sqrt{a_0+\delta}}{\sqrt{b_0+\delta}}-\frac{\sqrt{a_1+\delta}}{\sqrt{b_1+\delta}}
        =
        \frac{\sqrt{a_0+\delta}}{\sqrt{b_0+\delta}}-\frac{\sqrt{a_0+\delta}}{\sqrt{b_1+\delta}}+\frac{\sqrt{a_0+\delta}}{\sqrt{b_1+\delta}}-\frac{\sqrt{a_1+\delta}}{\sqrt{b_1+\delta}},
    \end{align*}
    it follows from the same arguments that
     \begin{align*}
         \Bigl|\frac{\sqrt{a_0+\delta}}{\sqrt{b_0+\delta}}-\frac{\sqrt{a_1+\delta}}{\sqrt{b_1+\delta}}\Bigr|\le \frac1{2\delta}\Bigl( |a_0-a_1|+\frac{\sqrt{a_0+\delta}\cdot |b_0-b_1|}{\sqrt{b_0+\delta}\wedge \sqrt{b_1+\delta}}\Bigr).
    \end{align*}
    By combining these estimates we see that
     \begin{align*}
         \Bigl|\frac{\sqrt{a_0+\delta}}{\sqrt{b_0+\delta}}-\frac{\sqrt{a_1+\delta}}{\sqrt{b_1+\delta}}\Bigr|&\le \frac1{2\delta}\Bigl( |a_0-a_1|+|b_0-b_1|\frac{\sqrt{a_0+\delta}\wedge \sqrt{a_0+\delta}}{\sqrt{b_0+\delta}\wedge \sqrt{b_1+\delta}}\Bigr)
         \\
         &\le \frac1{2\delta}\Bigl( |a_0-a_1|+|b_0-b_1|C\Bigr).
    \end{align*}
    For the second claim observe that by Taylor formula applied to $x\mapsto \sqrt{x}$ again
    \begin{align*}
        \Bigl|\frac{\sqrt{a_0+\delta}}{\sqrt{b_0+\delta}}-\frac{\sqrt{a_1+\delta}}{\sqrt{b_1+\delta}}\Bigr|&\le \frac12 \min\Bigl( 
        \frac{\sqrt{b_0+\delta}}{\sqrt{a_0+\delta}},\frac{\sqrt{b_1+\delta}}{\sqrt{a_1+\delta}}
        \Bigr)  \Bigl|\frac{{a_0+\delta}}{{b_0+\delta}}-\frac{{a_1+\delta}}{{b_1+\delta}}\Bigr|
        \\
        &\le \frac1{2\sqrt{c}} \Bigl|\frac{{a_0+\delta}}{{b_0+\delta}}-\frac{{a_1+\delta}}{{b_1+\delta}}\Bigr|.
    \end{align*}
\end{proof}

\begin{lemma}\label{lem:estim_AB}
    Let $(\alpha_k)_{k=1}^n,(\beta_k)_{k=1}^n,(\gamma_k)_{k=1}^n$ and $(\Delta_k)_{k=1}^n$ be positive sequences and assume that $\alpha_k \le A$ as well as $a\le \gamma_k \le b$ for some $a\le 1\le A,b$. 
    
    Let $\delta > 0$ and assume that $|\alpha_k-\beta_k|\le C \max\bigl\{\alpha_k^p,\beta_k^p \bigr\} \Delta_k$ for some $C>0$ and $1/2\le p\le 1$. Then it holds that
   {
    \begin{align*}
    \Bigl|\frac{\frac1n\sum_{k=1}^n \alpha_k +\delta}{\frac1n\sum_{k=1}^n \alpha_k\gamma_k +\delta}-\frac{\frac1n\sum_{k=1}^n \beta_k +\delta}{\frac1n\sum_{k=1}^n \beta_k\gamma_k +\delta}
        \Bigr|
        \lesssim C \max_{j=1,\dots,n}\max\{\alpha_j,\beta_j\}^{p-\frac1{2}} \delta^{-1/2} \sqrt{\frac1n \sum_{k=1}^n \Delta_k^2}
        \end{align*}
    }
\end{lemma}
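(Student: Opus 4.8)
The plan is to combine the elementary ``difference of two quotients'' splitting with a single Cauchy--Schwarz step, the one subtlety being that one must anchor the splitting on the \emph{right} denominator. Write $\bar\alpha=\frac1n\sum_{k}\alpha_k$, $\overline{\alpha\gamma}=\frac1n\sum_k\alpha_k\gamma_k$, and similarly $\bar\beta,\overline{\beta\gamma}$; set $u_0=\bar\alpha+\delta$, $v_0=\overline{\alpha\gamma}+\delta$, $u_1=\bar\beta+\delta$, $v_1=\overline{\beta\gamma}+\delta$, and $M=\max_{j}\max\{\alpha_j,\beta_j\}$. If $\bar\alpha=\bar\beta=0$ the left-hand side is $|1-1|=0$, so I may assume $\max\{\bar\alpha,\bar\beta\}>0$ and, by symmetry of the relevant hypotheses, that $\bar\alpha\ge\bar\beta$ (the case $\bar\beta\ge\bar\alpha$ being identical after exchanging $v_0\leftrightarrow v_1$, bounding $u_0/v_0\le 1/a$ with only $\gamma_k\ge a$ and $a\le1$). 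Anchoring on $v_0$ and using $u_1/v_1\le 1/a$ (because $\overline{\beta\gamma}\ge a\bar\beta$ and $a\le 1$), together with $|u_0-u_1|=|\bar\alpha-\bar\beta|\le\frac1n\sum_k|\alpha_k-\beta_k|$ and $|v_1-v_0|\le b\,\frac1n\sum_k|\alpha_k-\beta_k|$ (because $a\le\gamma_k\le b$), one gets
\[
\Bigl|\tfrac{u_0}{v_0}-\tfrac{u_1}{v_1}\Bigr|\le\frac{|u_0-u_1|}{v_0}+\frac{u_1}{v_1}\cdot\frac{|v_1-v_0|}{v_0}\le\Bigl(1+\tfrac ba\Bigr)\frac{1}{v_0}\cdot\frac1n\sum_{k}|\alpha_k-\beta_k|,
\]
so it remains to estimate $\bigl(\tfrac1n\sum_k|\alpha_k-\beta_k|\bigr)/v_0$.

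For the numerator, since $p\ge\tfrac12$ I factor $\max\{\alpha_k,\beta_k\}^p=\max\{\alpha_k,\beta_k\}^{p-\frac12}\cdot\max\{\alpha_k,\beta_k\}^{\frac12}\le M^{p-\frac12}\max\{\alpha_k,\beta_k\}^{\frac12}$, so that the hypothesis $|\alpha_k-\beta_k|\le C\max\{\alpha_k,\beta_k\}^p\Delta_k$ and Cauchy--Schwarz give
\[
\frac1n\sum_{k}|\alpha_k-\beta_k|\le C\,M^{p-\frac12}\,\frac1n\sum_{k}\max\{\alpha_k,\beta_k\}^{\frac12}\Delta_k\le C\,M^{p-\frac12}\sqrt{\tfrac1n\sum_{k}\max\{\alpha_k,\beta_k\}}\;\sqrt{\tfrac1n\sum_{k}\Delta_k^2}.
\]
For the denominator, the key observation is that one should not merely use $v_0\ge\delta$, but AM--GM: $v_0=\overline{\alpha\gamma}+\delta\ge 2\sqrt{\delta\,\overline{\alpha\gamma}}\ge 2\sqrt{a\delta\,\bar\alpha}$. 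Since $\frac1n\sum_k\max\{\alpha_k,\beta_k\}\le\bar\alpha+\bar\beta\le 2\bar\alpha$, the mass $\bar\alpha$ cancels, giving $\sqrt{\tfrac1n\sum_k\max\{\alpha_k,\beta_k\}}/v_0\le 1/\sqrt{2a\delta}$. Inserting this into the two displays above yields the claim, with a hidden constant depending only on $a$ and $b$ (the bound $\alpha_k\le A$ is not actually used).

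The one genuinely delicate step is the last one: a naive bound $v_0\ge\delta$ would only produce the factor $\delta^{-1}$, and the improvement to $\delta^{-1/2}$ requires both the AM--GM estimate on the denominator and the choice to anchor on whichever of $v_0,v_1$ carries the larger mass, so that the extra $\sqrt{\tfrac1n\sum_k\max\{\alpha_k,\beta_k\}}$ produced by Cauchy--Schwarz is exactly compensated. Everything else is routine quotient manipulation.
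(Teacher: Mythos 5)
Your proof is correct and takes essentially the same approach as the paper's: the same quotient decomposition $\frac{u_0}{v_0}-\frac{u_1}{v_1}=\frac{u_0-u_1}{v_0}+\frac{u_1(v_1-v_0)}{v_0v_1}$ anchored on the dominant side, the bound $u_1/v_1\le 1/a$, Cauchy--Schwarz to separate $\Delta_k$, extraction of a $\max_j\{\cdot\}^{p-\frac12}$ factor, and an AM--GM step to upgrade $\delta^{-1}$ to $\delta^{-1/2}$. The only differences are cosmetic --- you normalise by $\bar\alpha\ge\bar\beta$ and extract the supremum before Cauchy--Schwarz, while the paper normalises by $\sum_k\alpha_k^{2p}\ge\sum_k\beta_k^{2p}$ and extracts it afterwards (via the bound $\sqrt{y}/(y+\delta)\le\tfrac12\delta^{-1/2}$) --- and both lead to the same cancellation and constants.
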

\begin{proof}
    By symmetry we assume without loss of generality that $\sum_{k=1}^n \beta_k^{2p} \le \sum_{k=1}^n \alpha_k^{2p}$ implying in particular $\sum_{k=1}^n \max\bigl\{\alpha_k^{2p},\beta_k^{2p} \bigr\}\le 2 \sum_{k=1}^n \alpha_k^{2p}$.
    We then rewrite
    \begin{align*}
        \frac{\frac1n\sum_{k=1}^n \alpha_k +\delta}{\frac1n\sum_{k=1}^n \alpha_k\gamma_k +\delta}-\frac{\frac1n\sum_{k=1}^n \beta_k +\delta}{\frac1n\sum_{k=1}^n \beta_k\gamma_k +\delta}
        &=
        \frac{\frac1n\sum_{k=1}^n \bigl(\alpha_k-\beta_k\bigr)}{\frac1n\sum_{k=1}^n \alpha_k\gamma_k +\delta}
        \\
        &\qquad+
        \frac{\bigl(\frac1n\sum_{k=1}^n \beta_k+\delta\bigr)\frac1n\sum_{k=1}^n \gamma_k\bigl(\beta_k-\alpha_k\bigr)}{\bigl(\frac1n\sum_{k=1}^n \alpha_k\gamma_k+\delta\bigr)\bigl(\frac1n\sum_{k=1}^n \beta_k\gamma_k+\delta\bigr)} 
    \end{align*}
    By our assumption
    \begin{align*}
        \Bigl| \frac{\bigl(\frac1n\sum_{k=1}^n \beta_k+\delta\bigr)\frac1n\sum_{k=1}^n \gamma_k\bigl(\beta_k-\alpha_k\bigr)}{\bigl(\frac1n\sum_{k=1}^n \beta_k\gamma_k+\delta\bigr)\bigl(\frac1n\sum_{k=1}^n \alpha_k\gamma_k+\delta\bigr)} \Bigr|
        \le 
        \frac1a \cdot \Bigl| \frac{\frac1n\sum_{k=1}^n \gamma_k\bigl(\beta_k-\alpha_k\bigr)}{\frac1n\sum_{k=1}^n \alpha_k\gamma_k+\delta}\Bigr|\le\frac ba \cdot \frac{\frac1n\sum_{k=1}^n \bigl|\beta_k-\alpha_k\bigr|}{\frac1n\sum_{k=1}^n \alpha_k\gamma_k+\delta}.
    \end{align*}
    By the assumptions and the Cauchy-Schwarz inequality, we see that
    \begin{align*}
        \Bigl|
        \frac{\frac1n\sum_{k=1}^n \bigl|\alpha_k-\beta_k\bigr|}{\frac1n\sum_{k=1}^n \alpha_k\gamma_k +\delta}
        \Bigr|
        \le
        C\frac{\frac1n\sum_{k=1}^n \max\bigl\{\alpha_k^p,\beta_k^p \bigr\} \Delta_k}{\frac1n\sum_{k=1}^n \alpha_k\gamma_k +\delta}
        \le
        C\frac{\sqrt{\frac1n\sum_{k=1}^n \max\bigl\{\alpha_k^{2p},\beta_k^{2p} \bigr\}} }{\frac1n\sum_{k=1}^n \alpha_k\gamma_k +\delta}\sqrt{
        \frac1n \sum_{k=1}^n \Delta_k^2
        }
    \end{align*}
    { 
    Our assumption at the beginning of the proof implies that
    \begin{align*}
        \frac{\sqrt{\frac1n\sum_{k=1}^n \max\bigl\{\alpha_k^{2p},\beta_k^{2p} \bigr\}} }{\frac1n\sum_{k=1}^n \alpha_k\gamma_k +\delta} \le
        \frac2a \cdot 
        \frac{\sqrt{\frac1n\sum_{k=1}^n \alpha_k^{2p}}}{\frac1n\sum_{k=1}^n \alpha_k +\delta}
        \le 
        \frac{2\max_{j=1}^n\alpha_k^{p-\frac1{2}}}a \cdot
        \frac{\Bigl(\frac1n\sum_{k=1}^n \alpha_k\Bigr)^{\frac{1}{2}}
        }{\Bigl(\frac1n\sum_{k=1}^n \alpha_k\Bigr) +\delta}.
    \end{align*}
    }
    {
    The last factor can be described by the function $y\mapsto \frac{\sqrt{y}}{y+\delta}$ on $\mathbb{R}^+$, which is bounded by $\delta^{-1/2}/2$. By combining everything, we see that
    \begin{align*}
         \Bigl|\frac{\frac1n\sum_{k=1}^n \alpha_k +\delta}{\frac1n\sum_{k=1}^n \alpha_k\gamma_k +\delta}-\frac{\frac1n\sum_{k=1}^n \beta_k +\delta}{\frac1n\sum_{k=1}^n \beta_k\gamma_k +\delta}
        \Bigr|
        \le \bigl(1+\frac ba\bigr)\frac1a 
        \max_{j=1,\dots,n}\alpha_j^{p-\frac1{2}} \delta^{-1/2}C \sqrt{
        \frac1n \sum_{k=1}^n \Delta_k^2
        }.
    \end{align*}
    }
\end{proof}

\end{document}